\documentclass[12pt,a4paper,final]{amsart}
\usepackage{amssymb}
\usepackage{hyperref}
\usepackage[notcite, notref]{showkeys}
\usepackage[utf8]{inputenc}
\usepackage[figuresright]{rotating}




\setlength{\voffset}{-1in}
\setlength{\topmargin}{1.5cm}
\setlength{\hoffset}{-1in}
\setlength{\oddsidemargin}{2cm}
\setlength{\evensidemargin}{2cm}
\setlength{\textwidth}{17cm}
\setlength{\textheight}{24.5cm}
\setlength{\footskip}{0.5cm}

\theoremstyle{plain}
\newtheorem{theorem}{Theorem}[section]
\newtheorem{proposition}[theorem]{Proposition}

\newtheorem{lemma}[theorem]{Lemma}

\theoremstyle{definition}

\newtheorem{remark}[theorem]{Remark}

\newtheorem{notation}[theorem]{Notation}

\theoremstyle{remark}

\numberwithin{equation}{section}


\newcommand{\Z}{\mathbb Z}

\newcommand{\C}{\mathbb C}

\newcommand{\fe}{\mathfrak e}
\newcommand{\ff}{\mathfrak f}
\newcommand{\fg}{\mathfrak g}
\newcommand{\fh}{\mathfrak h}

\newcommand{\fl}{\mathfrak l}

\newcommand{\fp}{\mathfrak p}

\newcommand{\ft}{\mathfrak t}

\DeclareMathOperator{\GL}{GL}

\DeclareMathOperator{\SO}{SO}
\DeclareMathOperator{\SU}{SU}
\DeclareMathOperator{\Sp}{Sp}

\DeclareMathOperator{\Spin}{Spin}

\newcommand{\so}{\mathfrak{so}}
\newcommand{\spp}{\mathfrak{sp}}
\newcommand{\su}{\mathfrak{su}}

\newcommand{\op}{\operatorname}
\newcommand{\Id}{\textup{Id}}

\newcommand{\mi}{\mathrm{i}}

\DeclareMathOperator{\Span}{Span}
\newcommand{\inner}[2]{\langle {#1},{#2}\rangle }
\newcommand{\innerdots}{\langle {\cdot},{\cdot}\rangle }
\newcommand{\ee}{\varepsilon}

\DeclareMathOperator{\Sym}{Sym}

\DeclareMathOperator{\Ad}{Ad}
\DeclareMathOperator{\ad}{ad}
\DeclareMathOperator{\Cas}{Cas}

\DeclareMathOperator{\st}{st}

\DeclareMathOperator{\Spec}{Spec}

\newcommand{\kil}{\operatorname{B}}

\newcommand{\PP}{\mathcal{P}}


\newcommand{\DD}{\mathbb{D}}

\setcounter{tocdepth}{1}

\title[First Laplace eigenvalue]
{First Laplace eigenvalue of strongly isotropy irreducible spaces}

\author{Emilio~A.~Lauret$^1$}
\address{$^1$Instituto de Matemática (INMABB), Departamento de Matemática, Universidad Nacional del Sur (UNS)-CONICET, Bahía Blanca, Argentina.}
\email{emilio.lauret@uns.edu.ar, fiorela.rossi@uns.edu.ar}

\author{Fiorela Rossi Bertone$^1$}

\author{Alejandro Tolcachier$^2$}
\address{$^2$FAMAF, Universidad Nacional de Córdoba and CIEM-CONICET, Av. Medina Allende s/n, X5000HUA
	Córdoba, Argentina}
\email{atolcachier@unc.edu.ar}

\subjclass[2020]{58C40, 53C30}
\keywords{first eigenvalue, Laplace-Beltrami operator, standard manifold, Einstein manifold, isotropy irreducible space.}
\thanks{The first and second named authors were supported by a grant from SGCYT--UNS (PGI 24/ZL25). 
Additionally, the first and third named authors were supported by CONICET (PIP 11220210100343CO and 11220210100606CO, respectively).}

\date{\today}

\begin{document}

\begin{abstract}
We study the smallest positive eigenvalue $\lambda_1$ of the Laplace-Beltrami operator associated with any compact strongly isotropy irreducible space. 
We provide an explicit expression for all simply connected cases. 
Furthermore, every strongly isotropy irreducible space is automatically an Einstein manifold, and we prove for each of them that $E<\lambda_1\leq 16E$, where $E$ denotes the corresponding Einstein constant. 
\end{abstract} 

\maketitle


\section{Introduction}

The smallest positive eigenvalue $\lambda_1(M,g)$ of the Laplace--Beltrami operator associated with a compact Riemannian manifold $(M,g)$ is of fundamental importance in geometric analysis, playing a central role in understanding the interplay between analysis, geometry, and topology. 
For instance, the celebrated Lichnerowicz theorem asserts that a positive lower bound on the Ricci curvature yields an explicit lower bound for the first eigenvalue, while Obata’s theorem characterizes the equality case by showing that the sphere is, essentially, the only manifold attaining it. 
Similarly, Cheeger’s theorem relates this eigenvalue to the isoperimetric constant of the manifold, establishing a deep link between spectral analysis and global geometry. 
Moreover, the first eigenvalue also plays a central role in geometric and dynamical phenomena, such as the rate of convergence of the heat flow and the stability of certain metrics in variational problems.

A connected Riemannian manifold $(M,g)$ is said to be \emph{isotropy irreducible} if the isotropy group of all isometries at a point $p$ acts irreducibly on $T_pM$, for every $p\in M$. 
It is automatically homogeneous, say $M=G/H$, with $g$ a $G$-invariant metric. 
We assume that $G/H$ is compact, in which case $G$ can be taken to be compact and connected, and $H$ a closed subgroup of $G$. 
It turns out that the only $G$-invariant metric on $G/H$ is, up to positive scaling, the standard metric $g_{\st}$ induced by the Killing form $\kil_\fg$ of $\fg$. 
Moreover, $(G/H,g_{\st})$ is a compact Einstein manifold with positive scalar curvature.
We denote by $E(G/H,g_{\st})$ its Einstein constant, which satisfies $\frac14\leq E(G/H,g_{\st})\leq \frac12$. 

Although (non-symmetric) isotropy irreducible spaces play an important role in differential geometry (see e.g.\ \cite[Ch.~III]{Wolf68}, \cite[\S7.D]{Besse}, \cite{WangZiller93}, \cite{HeintzeZiller96}) 
and are currently a common source of examples for different kinds of problems (see e.g.\ \cite{BerestovskiiGorbatsevich14}, \cite{ChrysikosGustadWinther19},  \cite{Morimoto21}, \cite{ChenChenZhu21}, \cite{Narita24}, \cite{XuTan24}), 
they have not been studied from a spectral point of view until recently (see \cite{Schwahn-Lichnerowicz}, \cite{LauretTolcachier-nu-stab}, \cite{ProvenzanoStubbe25}).

This article is motivated by the following problem posed by Saloff-Coste in \cite[page 198]{Saloff-Coste10}:
\begin{quote}
\it Decide whether or not there is a constant $c$ such that 
$$\lambda_1(G/H,g)\leq c\, E(G/H,g)$$
for every compact isotropy irreducible space $G/H$ and any $G$-invariant metric $g$ on $G/H$. 
\end{quote}

Note that the functional $\lambda_1(G/H,g)/E(G/H,g)$ is invariant up to positive scaling of $g$. 
Consequently, we can assume without losing generality that $g=g_{\st}$, the standard metric on $G/H$.

Our main result (Theorem~\ref{thm1:Saloff-Coste} below) provides an answer for the large subfamily of (compact) \emph{strongly isotropy irreducible spaces}: homogeneous spaces $G/H$ such that the connected component $H_0$ of $H$ acts irreducibly on $T_{eH}G/H$. 
(Note that every simply connected strongly isotropy irreducible space $G/H$ with $G$ connected satisfies $H$ is connected, and therefore $G/H$ is also an isotropy irreducible space.) 
They were classified independently by Manturov~\cite{Manturov61a,Manturov61b,Manturov66}, Wolf~\cite{Wolf68}, and Krämer~\cite{Kramer}, consisting of ten infinite families and 33 isolated cases (see Tables~\ref{table4:isotropyirred-families},\ref{table4:isotropyirredexcepcion-classical},\ref{table4:isotropyirredexcepcion-exceptional} for the simply connected cases). 
The remaining (compact) isotropy irreducible spaces were classified by Wang and Ziller~\cite{WangZiller91}.

\begin{theorem}\label{thm1:Saloff-Coste}
If $G/H$ is any compact strongly isotropy irreducible space, then 
$$
1
<
\frac{\lambda_1(G/H,g_{\st})}{E(G/H,g_{\st})} 
\leq 
16 
. 
$$
\end{theorem}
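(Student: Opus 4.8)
The plan is to exploit the representation-theoretic description of the Laplace spectrum on a compact homogeneous space $G/H$ with the standard metric. By the Peter--Weyl theorem, the eigenvalues of $-\Delta$ on $(G/H,g_{\st})$ are exactly the Casimir eigenvalues $\lambda_\pi = \inner{\lambda_\pi+2\rho}{\lambda_\pi}$ (with respect to the normalization induced by $\kil_\fg$) of those irreducible representations $\pi$ of $G$ that contain a nonzero $H$-fixed vector, i.e.\ the \emph{spherical representations}. So the first step is to reduce the theorem to the purely algebraic statement: for every compact strongly isotropy irreducible pair $(G,H)$, the minimal positive Casimir eigenvalue among spherical representations lies strictly between $E$ and $16E$, where the Einstein constant $E$ is itself read off from the Casimir of the isotropy representation (this is essentially the content of the fact $\tfrac14\le E\le\tfrac12$ recalled in the introduction, since $E=\inner{\lambda_{\text{iso}}+2\rho}{\lambda_{\text{iso}}}/(2\cdot 2) $-type formula makes $E$ computable case by case). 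The lower bound $\lambda_1>E$ should follow from a general, uniform argument and is expected to be the easy half: indeed, for Einstein manifolds of positive scalar curvature the Lichnerowicz--Obata circle of ideas gives $\lambda_1\ge \tfrac{n}{n-1}\Scal/n = \tfrac{n}{n-1}E > E$, with equality only for the round sphere, which here occurs only in a degenerate family (and even there the ratio is $n/(n-1)>1$); so I would invoke Lichnerowicz directly, or reprove it in the homogeneous setting via the Casimir, to get the strict inequality with room to spare.

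The substantive work is the upper bound $\lambda_1\le 16E$, and here I would split into the isolated cases and the ten infinite families. For the 33 isolated examples (Tables~\ref{table4:isotropyirredexcepcion-classical} and \ref{table4:isotropyirredexcepcion-exceptional}) the strategy is a finite check: in each case identify one explicit ``small'' spherical representation of $G$ — typically the representation on $C^\infty(G/H)$ of lowest positive degree, which one finds by branching the standard/defining representation of $G$ (or its symmetric square, adjoint, etc.) to $H$ and locating an $H$-trivial summand — compute its Casimir eigenvalue and the Einstein constant, and verify the ratio is $\le 16$. This is routine but voluminous; I would present it as a table. For the ten infinite families (Table~\ref{table4:isotropyirred-families}), the point is to produce, \emph{uniformly in the parameters}, a spherical representation whose Casimir-to-$E$ ratio is bounded by $16$. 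Concretely, for a family of the form $G/H$ with $G$ classical of rank growing with the parameter, one exhibits a fixed ``shape'' of highest weight (e.g.\ a fundamental weight or twice one, the weight of $\Sym^2$ of the standard representation, or of $\Lambda^2$) that is always spherical, and shows that its Casimir eigenvalue divided by $E$ stays $\le 16$ as the parameter tends to infinity. This requires: (i) a branching-rule argument that the chosen representation really contains an $H$-fixed vector for all parameter values — often because $H$ is the stabilizer of a tensor (a form, an algebra structure, a subspace) on which $G$ acts, so $\Sym^2$ or $\Lambda^2$ of the standard module visibly has invariants; and (ii) an asymptotic estimate $\inner{\mu+2\rho}{\mu}/E \le 16 + o(1)$, which for classical groups reduces to comparing two explicit rational functions of the rank. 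The main obstacle I anticipate is exactly step (ii) for the families where the ``obvious'' small spherical representation has Casimir eigenvalue that, relative to $E$, approaches a limit uncomfortably close to $16$ — forcing a careful choice of which spherical representation to use (a smaller one may not be spherical, a safe one may have too large a Casimir) and a non-trivial inequality rather than a crude bound; the constant $16$ in the statement strongly suggests that some family is genuinely tight or nearly so, so the estimates cannot be wasteful.

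A secondary obstacle is bookkeeping the normalizations: the Casimir eigenvalues, the Killing form, and the Einstein constant must all be expressed in a single consistent inner product on $\ft^*$, and for each simple factor of $\fg$ one needs the correct ratio between $\kil_\fg$ and the trace form in the standard representation (the dual Coxeter number enters here). I would fix this once and for all at the start of the proof and then the family computations become mechanical. A further point to handle is non-simply-connected and non-connected cases: if $H$ is disconnected, the spherical representations of $(G,H)$ are a subset of those of $(G,H_0)$, so $\lambda_1(G/H,g_{\st}) \ge \lambda_1(G/H_0,g_{\st})$ for the lower bound — which is automatic — and for the upper bound one must check that the specific small representation chosen for $H_0$ is also $H$-invariant, i.e.\ its $H_0$-fixed line is preserved by the component group; this is typically true because that line is canonically attached to an invariant tensor and hence $\Aut$-invariant, but it needs a remark. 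Likewise, if $G$ is not simply connected one passes to the universal cover, which only enlarges the set of available spherical representations, again harmless for our two-sided bound. Putting these pieces together — Lichnerowicz for the lower bound, a finite table for the isolated cases, and a uniform small-spherical-representation-plus-asymptotics argument for the families — yields Theorem~\ref{thm1:Saloff-Coste}, and as a by-product the explicit value of $\lambda_1$ in the simply connected cases claimed in the abstract.
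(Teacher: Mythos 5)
Your lower-bound argument via Lichnerowicz is a clean and genuinely different route from the paper's. The paper establishes $\lambda_1>E$ by explicit case inspection (Urakawa's tables for symmetric spaces, the tables of Section~\ref{sec:lambda_1(simplyconnected)} for non-symmetric ones, plus the Wang--Ziller inequality $\tfrac14\le E\le\tfrac12$); your observation that Lichnerowicz already gives $\lambda_1\ge\tfrac{n}{n-1}E>E$ uniformly for every positive Einstein manifold is shorter, sharper (it recovers the exact asymptotic $n/(n-1)\to1$ for spheres), and does not rely on the classification. That half of your proposal is a legitimate improvement in economy.

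The upper bound, however, contains a real gap, and it sits precisely where you wave it away. You write that for disconnected isotropy the $H_0$-fixed line is ``typically'' preserved by the component group because it is ``canonically attached to an invariant tensor,'' and that non-simple-connectedness is ``harmless for our two-sided bound.'' Both claims are false in the generality needed. Every compact strongly isotropy irreducible space is $G/K$ with $G$ simply connected, $H\subset K\subset N_G(H)$, and $K/H$ finite; the spherical representations of $(G,K)$ are a proper subset of those of $(G,H)$, so $\lambda_1$ only goes \emph{up} when you pass to a quotient, and there is no reason for the $H$-fixed line of your chosen small representation $\pi_\Lambda$ to be pointwise $K$-fixed. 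The paper's Lemma~\ref{lem4:N/H-finite} is exactly the device that repairs this: when $K/H$ is finite abelian (or $\dim V_{\pi_\Lambda}^H=1$), it produces a $K$-fixed vector inside $\pi_\Lambda\otimes\pi_\Lambda^*$, giving only the weaker bound $\lambda_1(G/K,g_{\st})\le\lambda^{\pi_{\Lambda+\Lambda^*}}$, roughly a factor of four worse than $\lambda^{\pi_\Lambda}$. This degradation is not a technicality; it is the reason the constant in the theorem is $16$ and not something like $4$ or $8$. You also miss that $N_G(H)/(Z(G)\cdot H)$ is non-abelian in exactly one case (Family IV with $n=4$, where it is $\mathbb S_3$), which forces a further separate argument (Remark~\ref{rem4:upperbound-Sp(4)/Sp(1)xSO(4)}) using a rank-one fixed space and the still larger representation $\pi_{8\omega_1}$; this single case is where $\lambda_1/E$ actually reaches $16$. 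Without the $\pi\otimes\pi^*$ mechanism and without isolating the non-abelian normalizer case, your plan proves the bound only for the simply connected presentations and cannot close on the stated constant.

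A smaller but related slip: ``if $G$ is not simply connected one passes to the universal cover, which only enlarges the set of available spherical representations'' confuses the direction of the covering. Replacing $G$ by $\widetilde G$ leaves the set of spherical representations of the pair unchanged (an irreducible of $\widetilde G$ with an $\widetilde H$-fixed vector factors through $G$); what changes, and in the unfavorable direction for the upper bound, is enlarging $H$ to $K$.
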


\begin{remark}
The lower bound in Theorem~\ref{thm1:Saloff-Coste} is sharp. 
In fact, for spheres $G/H=\SO(n+1)/\SO(n)\simeq S^n$, one has that ${\lambda_1(G/H,g_{\st})}/{E(G/H,g_{\st})} =\frac{n}{2(n-1)}/\frac12\to1$ as $n\to+\infty$. 
One easily checks that if we omit the cases where $(G/H,g_{\st})$ is isometric to a round sphere 
	(e.g.\ $G/H$ equal to 
	$\SO(n+1)/\SO(n)$ for any $n\geq 1$, 
	or $\SU(2)/\{e\} \simeq S^3$, 
	or $\SU(4)/\Sp(2)\simeq S^5$, 
	or $\op{G}_2/\SU(3)\simeq S^6$, 
	or $\Spin(7)/\op{G}_2\simeq S^7$), 
we get ${\lambda_1(G/H,g_{\st})}/{E(G/H,g_{\st})} \geq \frac{13}{9}\approx 1.44$ attained only at $G/H=\op{E}_6/\op{F}_4$.

The authors do not know whether the upper bound in Theorem~\ref{thm1:Saloff-Coste} is sharp. 
It was established (see Remark~\ref{rem4:upperbound-Sp(4)/Sp(1)xSO(4)}) that, if $G'/H'$ is covered by $G/H= (\Sp(4)/\Z_2)/(\Sp(1)/\Z_2\times\SO(4)/\Z_2)$ (Family IV, $n=4$), then 
${\lambda_1(G'/H',g_{\st})}/{E(G'/H',g_{\st})}\leq 16$.
Curiously, $G/H$ is the only simply connected strongly isotropy irreducible space for which the group $N_G(H)/(Z(G)\cdot H)$ is non-abelian, where $N_G(H)$ is the normalizer of $H$ in $G$ and $Z(G)$ denotes the center of $G$. 
In general, the strongly isotropy irreducible spaces covered by a simply connected strongly isotropy irreducible space $G/H$ depend on the group $N_G(H)/(Z(G)\cdot H)$ (see Remark~\ref{rem:universalcover}). 
\end{remark}

In the course of the proof, we obtained the explicit expression of $\lambda_1(G/H,g_{\st})$ (and its multiplicity) for each simply connected strongly isotropy irreducible space $G/H$ (see Tables~\ref{table4:isotropyirred-families2}--\ref{table4:isotropyirredexcepcion-exceptional}). 
The case of irreducible compact symmetric spaces was completed by Urakawa~\cite{Urakawa86}. 
Most of the isolated cases were computed in \cite{LauretTolcachier-nu-stab} with the help of a computer. 
Here we determine $\lambda_1(G/H,g_{\st})$ for the ten infinite families, as well as for six isolated cases that could not be handled computationally, as the required memory exceeded what was available due to the high rank of $\fg_\C$.
Tables~\ref{table4:isotropyirred-families2}--\ref{table4:isotropyirredexcepcion-exceptional} also show, in each row, lower and upper bounds for $\lambda_1(G'/H',g_{\st})$, for an arbitrary strongly isotropy irreducible space $G'/H'$ covered by $G/H$.

\subsection*{Organization of the paper}
Section~\ref{sec:preliminaries} provides the Lie-theoretical preliminaries used in the computation of the first eigenvalue for the simply connected non-symmetric strongly isotropy irreducible spaces carried out in Section~\ref{sec:lambda_1(simplyconnected)}. 
The last section is devoted to proving Theorem~\ref{thm1:Saloff-Coste}, which provides lower and upper bounds for $\lambda_1(G'/H',g_{\st})$ for an arbitrary strongly isotropy irreducible space $G'/H'$.

\subsection*{Acknowledgments}
The authors wish to express their thanks to Jorge Lauret and Marco Radeschi for several helpful comments on an earlier version of this article.

\begin{table}
	\caption{The 10 families of non-symmetric simply connected strongly isotropy irreducible spaces.}\label{table4:isotropyirred-families}
	{\renewcommand{\arraystretch}{1.5}\small
		\begin{tabular}{cccccc} 
			No. & $G/H$  & $(d\rho)_\C$ &Cond. & $Z(G)$ 
			& $N_G(H)/ZH$
			\\ [1ex]
			\hline \hline \\ [-4ex]
			I & 
			$\frac{\SU\big(\frac{n(n-1)}{2}\big)/\Z_d}{\SU(n)/\Z_n}$ 
			& $\sigma_{\eta_2}$ &
						{\tiny 
							\begin{tabular}{c}
								$n\geq 5$,\\[-4pt]
								$d=\frac{n}{2}$, $n$ even\\[-4pt]
								$d=n$, $n$ odd
							\end{tabular}
						}
			& $\Z_{\frac{n(n-1)}{2d}}$  & $e$
			\\ 
			\hline \\ [-4ex]
			
			II & 
			$\frac{\SU\big(\frac{n(n+1)}{2}\big)/\Z_d}{\SU(n)/\Z_n}$
			& $\sigma_{2\eta_1}$  &
			{\tiny 
				\begin{tabular}{c}
					$n\geq 3$,\\[-4pt]
					$d=\frac{n}{2}$, $n$ even\\[-4pt]
					$d=n$, $n$ odd
				\end{tabular}
						}
				& $\Z_{\frac{n(n+1)}{2d}}$  & $e$
			\\ \hline \\ [-4ex]
			III	& 
			$\frac{\SU(pq)/\Z_d}{\big(\SU(p)/\Z_p\big)\times \big(\SU(q)/\Z_q\big)}$ &$\sigma_{\eta_1}\widehat\otimes \,\, \sigma_{\eta_1'}'$ & 
			{\tiny $\begin{array}{c} 
				2\leq p\leq q, \\[-4pt]
				p+q\neq 4 \\[-4pt] 
				d=\op{lcm}(p,q)
			\end{array}$ }
			& $\Z_{\frac{pq}{d}}$  & $e$
			\\ \hline \\ [-3.5ex]
			IV & 
			$\begin{array}{c}
				\frac{\Sp(n)/\Z_2}{(\Sp(1)/\Z_2)\times (\SO(n)/\Z_2)}, \\[-6pt]
				{\text{\tiny $n$ even}} \\
				\frac{\Sp(n)/\Z_2}{(\Sp(1)/\Z_2)\times \SO(n)}, \\[-6pt]
				{{\text{\tiny $n$ odd}}} 
			\end{array}$
			 & 
			$\sigma_{\eta_1}\widehat\otimes \,\, \sigma_{\eta_1'}'$ & 
			$n\geq 3$
			& $e$ & \begin{tabular}{c}
				$e$, \tiny{$n$ odd}\\[-3pt]
				$\mathbb{S}_3$, \tiny{$n=4$}\\[-3pt]
				$\Z_2$, \tiny{$n>4$ even}
			\end{tabular}
			\\ \hline \\ [-3ex]
			
			V & 
			$\begin{array}{c}
				\frac{\Spin(n^2-1)}{\SU(n)/\Z_n}, \\[-6pt]
				{\text{\tiny $n$ odd}} \\
				\frac{\SO(n^2-1)}{\SU(n)/\Z_n}, \\[-6pt]
				{\text{\tiny $n$ even}} 
			\end{array}$ 
			&
			$\sigma_{\eta_1+\eta_{n-1}}$ &
			$n\geq3$
			& \begin{tabular}{c}
				$\Z_2\oplus\Z_2$, \tiny{$n$ odd}\\[-3pt]
				$e$, \tiny{$n$ even}
			\end{tabular} & \begin{tabular}{c}
			$e$, \tiny{$n\equiv 1,2\pmod 4$}\\[-3pt]
			$\Z_2$, \tiny{$n\equiv 0,3\pmod 4$}
			\end{tabular}
			\\ \hline \\ [-3ex]
			VI & 
			$\begin{array}{c}
				\frac{\Spin(2n^2-n-1)}{\Sp(n)/\Z_2},\\[-6pt]
				 {\text{\tiny $n\equiv 0,1\pmod 4$}} \\
				\frac{\SO(2n^2-n-1)}{\Sp(n)/\Z_2}, \\[-6pt]
				{\text{\tiny $n\equiv 2,3\pmod 4$}} 
			\end{array}$  &
			$\sigma_{\eta_2}$  & 
						$n\geq 3$ & 
						\begin{tabular}{c}
							$\Z_2$, \tiny{$n\equiv 0,3\pmod 4$}\\[-3pt]
							$\Z_2\oplus \Z_2$, \tiny{$n\equiv 1\pmod 4$}\\[-3pt]
							$e$, \tiny{$n\equiv 2\pmod 4$}
						\end{tabular} & $e$
			\\ \hline \\ [-3ex]
						VII  & 
			$\begin{array}{c}
				\frac{\Spin(2n^2+n)}{\Sp(n)/\Z_2}, \\[-6pt]
				{\text{\tiny $n\equiv 0,3\pmod 4$}} \\
				\frac{\SO(2n^2+n)}{\Sp(n)/\Z_2}, \\[-6pt]
				{\text{\tiny $n\equiv 1,2\pmod 4$}} 
			\end{array}$ &
			$\sigma_{2\eta_1}$  & 
									$n\geq 3$ & 
									\begin{tabular}{c}
										$\Z_2\oplus \Z_2$, \tiny{$n\equiv 0,3\pmod 4$}\\[-3pt]
										$e$, \tiny{$n\equiv 1\pmod 4$}\\[-3pt]
										$\Z_2$, \tiny{$n\equiv 2\pmod 4$}
									\end{tabular} & $e$
			\\
			\hline \\ [-3ex]
				VIII & 
			$\frac{\SO(4n)/\Z_2}{(\Sp(1)/\Z_2)\times(\Sp(n)/\Z_2)}$ & 
			$\sigma_{\eta_1}\widehat\otimes \,\, \sigma_{\eta_1'}'$  & 
				$n\geq 2$ & $e$ & $e$
			\\[3pt]
			\hline \\ [-3ex]
			
			IX   & 
			$\begin{array}{c}
				\frac{\Spin\big(\frac{n(n-1)}{2}\big)}{\SO(n)/\Z_2},\\[-6pt]
				 {\text{\tiny $n\equiv 0\pmod 4$}} \\
				 	\frac{\SO\big(\frac{n(n-1)}{2}\big)}{\SO(n)/\Z_2},\\[-6pt]
				 	{\text{\tiny $n\equiv 2 \pmod 4$}}\\
				\frac{\SO\big(\frac{n(n-1)}{2}\big)}{\SO(n)},\\[-6pt]
				 {\text{\tiny $n\equiv 1,3 \pmod 4$}} 
			\end{array}$ &
			$\begin{array}{cc} 
					\sigma_{2\eta_2} \\
					\sigma_{\eta_2}
				\end{array}$  & 
				$\begin{array}{cc} n=5, \\ n\geq 7 \end{array}$  & 
				\begin{tabular}{c}
					$e$, \tiny{$n\equiv 2,3\pmod 4$}\\[-3pt]
					$\Z_2$, \tiny{$n\equiv 1\pmod 4$}\\[-3pt]
					$\Z_2\oplus \Z_2$, \tiny{$n\equiv 0\pmod 8$}\\[-3pt]
					$\Z_4$, \tiny{$n\equiv 4\pmod 8$}
				\end{tabular} & 
				\begin{tabular}{c}
					$e$, \tiny{$n\equiv 0,1,3\pmod 4$}\\[-3pt]
					$\Z_2$, \tiny{$n\equiv 2\pmod 4$}
				\end{tabular}
			\\ 			\hline \\ [-3ex]
			X    & 
			$\begin{array}{c}
				\frac{\Spin\big(\frac{(n-1)(n+2)}{2}\big)}{\SO(n)/\Z_2},\\[-5pt]
				 {\text{\tiny $n\equiv 0\pmod 4$}} \\[2pt]
				 	\frac{\SO\big(\frac{(n-1)(n+2)}{2}\big)}{\SO(n)/\Z_2},\\[-5pt]			 	{\text{\tiny $n\equiv 2 \pmod 4$}}\\[2pt]
					\frac{\SO\big(\frac{(n-1)(n+2)}{2}\big)}{\SO(n)},\\[-5pt]
				 {\text{\tiny $n\equiv 1,3 \pmod 4$}} 
				\end{array}$  & 
			$\sigma_{2\eta_1}$
			& 	$n\geq 5$
				&
				\begin{tabular}{c}
					$\Z_2$, \tiny{$n\equiv 0,1,2\pmod 4$}\\[-3pt]
					$e$, \tiny{$n\equiv 2\pmod 4$}
				\end{tabular} & $e$
			\\ 
			\hline
		\end{tabular}
		
\medskip 

See Remark~\ref{rem:embedding} for details of the embedding $H\subset G$. 

	}
\end{table}

\begin{table}
	\caption{Bounds for $\lambda_1(G'/H',g_{\st})$ for the 10 families of non-symmetric strongly isotropy irreducible spaces.}\label{table4:isotropyirred-families2}
	{\renewcommand{\arraystretch}{1.5}\small
\begin{tabular}{ccccc} 
No. & $\mathfrak{g} /\mathfrak{h}$  & $E$ &
\begin{tabular}{c}
\text{Lower bound}\\[-8pt]
$\lambda_1(G/H,g_{\st})$ 
\end{tabular} 
& \text{Upper bound}
			\\ [1ex]
			\hline \hline \\ [-4ex]
			I & 
			$\frac{\su\big(\frac{n(n-1)}{2}\big)}{\su(n)}$ 
			& $\frac{1}{4}+\frac{2}{n(n-2)}$
			&\begin{tabular}{c}
				$\lambda^{\pi_{3\omega_1}}=\frac{42}{25}$, \, \tiny{$n=6$}\\
				$\lambda^{\pi_{2\omega_1+2\omega_{N-1}}}= 2+\frac{4}{n(n-1)}$
			\end{tabular}
			&$ \lambda^{\pi_{2\omega_1+2\omega_{N-1}}}=2+\frac{4}{n(n-1)}$ 
			\\ 
			\hline \\ [-4ex]
			
			II & 
		$\frac{\su\big(\frac{n(n+1)}{2}\big)}{\su(n)}$
			& $\frac{1}{4}+\frac{2}{n(n+2)}$
			&			\begin{tabular}{c}
				$\lambda^{\pi_{3\omega_1}}=\frac{15}{8}$, \, \tiny{$n=3$}\\
				$\lambda^{\pi_{2\omega_1+2\omega_{N-1}}}= 
				2+\frac{4}{n(n+1)}$
			\end{tabular}
			&$ \lambda^{\pi_{2\omega_1+2\omega_{N-1}}}=2+\frac{4}{n(n+1)}$ 
			\\ \hline \\ [-4ex]
			III	& 
			$\frac{\su(pq)}{\su(p)\oplus \su(q)}$ &
			$\frac{1}{2}+ \frac{p^2+q^2}{p^2q^2}$
			&  
			$\lambda^{\pi_{\omega_2+\omega_{pq-2}}}=2-\frac{2}{pq}$
			&  
			$\lambda^{\pi_{\omega_2+\omega_{pq-2}}}=2-\frac{2}{pq}$
			\\[4pt] \hline \\ [-3.5ex]
			IV & 
			$	\frac{\spp(n)}{\spp(1)\oplus \so(n)}$
			 & 
			$\frac{3}{8}+\frac{8-n}{8n(n+1)}$
			&
			$\lambda^{\pi_{2\omega_2}}= 2-\frac{1}{n+1}$
				&
				\begin{tabular}{c}
					$\lambda^{\pi_{8\omega_1}}=\frac{32}{5}$, \, \tiny{$n=4$}\\
			$ \lambda^{\pi_{4\omega_2}}=4+\frac{2}{n+1}$
			\end{tabular}
			\\ \hline \\ [-3ex]
			
			V & 
			$\frac{\so(n^2-1)}{\su(n)}$
			&
			$\frac{1}{4}+\frac{1}{n^2-3}$ &
			$\lambda^{\pi_{\ee_1+\ee_2+\ee_3}}= \frac{3}{2}-\frac{3}{2(n^2-3)}$
			& $\lambda^{\pi_{2\ee_1+2\ee_2+2\ee_3}}=3$
			\\[4pt] \hline \\ [-3ex]
			VI & 
			$\frac{\so(2n^2-n-1)}{\spp(n)}$  &
			$\frac{1}{4}+\frac{1}{(n^2-1)(2n-3)}$
			& $\lambda^{\pi_{3\omega_1}}= \frac{3}{2}+\frac{9}{2(2n^2-n-3)}$
			& $ \lambda^{\pi_{6\omega_1}}=3+\frac{18}{2n^2-n-3}$
			\\[4pt] \hline \\ [-3ex]
						VII  & 
		$\frac{\so(2n^2+n)}{\spp(n)}$ &
			$\frac{1}{4}+\frac{1}{2n^2+n-2}$ &
			$\lambda^{\pi_{\omega_3}}= \frac{3}{2}-\frac{3}{2(2n^2+n-2)}$
			& $\lambda^{\pi_{2\omega_3}}=3$
			\\[4pt]
			\hline \\ [-3ex]
				VIII & 
			$\frac{\so(4n)}{\spp(1)\oplus\spp(n)}$ & 
			$\frac{3}{8}+\frac{n+4}{8n(2n+1)}$ &
			$\lambda^{\pi_{\ee_1+\ee_2+\ee_3+\ee_4}}= 2-\frac{2}{2n-1}$
				& $\lambda^{\pi_{\ee_1+\ee_2+\ee_3+\ee_4}}=2-\frac{2}{2n-1}$
			\\[4pt]
			\hline \\ [-3ex]
			
			IX   & 
			$\frac{\so\big(\frac{n(n-1)}{2}\big)}{\so(n)}$  &
			 $\frac{1}{4}+\frac{2}{n^2-n-4}$
			& 	$\lambda^{\pi_{\omega_3}}= \frac{3}{2}-\frac{3}{n^2-n-4}$
				& $\lambda^{\pi_{2\omega_3}}=3$
			\\[4pt] 			\hline \\ [-3ex]
			X    & 
			$\frac{\so\big(\frac{(n-1)(n+2)}{2}\big)}{\so(n)}$  & 
			$\frac{1}{4}+\frac{2n}{(n^2-4)(n+3)}$
			& 	$\lambda^{\pi_{3\omega_1}}= \frac{3}{2}+\frac{9}{n^2+n-6}$
				& $\lambda^{\pi_{6\omega_1}}=3+\frac{18}{n^2+n-4}$
			\\[4pt] 
			\hline
		\end{tabular}
		
\medskip 

$E$ abbreviates $E(G/H,g_{\st})$, the Einstein constant of $(G/H,g_{\st})$.

The lower and upper bounds correspond to $\lambda_1(G'/H', g_{\mathrm{st}})$ for an arbitrary strongly isotropy irreducible space $G'/H'$ associated with each family (i.e.\ $\mathfrak{g}' = \mathfrak{g}$ and $\mathfrak{h}' = \mathfrak{h}$).

The lower bound equals $\lambda_1(G/H, g_{\mathrm{st}})$, where $G/H$ is the simply connected presentation in Table~\ref{table4:isotropyirred-families}. 

The upper bound follows by Theorem~\ref{thm4:Lambda+Lambda^*}, Proposition~\ref{prop4:N=ZH+pi(Z)trivial}, or  Remark~\ref{rem4:Z=eN_G(H)=H}, \ref{rem4:upperbound-Sp(4)/Sp(1)xSO(4)} or \ref{rem4:improved-upperbound}.

	}
\end{table}

{
\renewcommand{\arraystretch}{1.5}
\begin{table}[!ht]
\caption{Isolated simply connected strongly isotropy irreducible spaces with $G$ classical.}\label{table4:isotropyirredexcepcion-classical}

\begin{tabular}[t]{c c c c c r@{\,}lr@{\,}lcc} 
\hline 
\hline
No & $G$
 & $H$
 & $(d\rho)_\C$ & $E$& 
\multicolumn{2}{c}{
\begin{tabular}{c}
{\small \text{Lower bound}}\\[-8pt]
$\lambda_1(G/H,g_{\st})$ 
\end{tabular}
}
& \multicolumn{2}{c}{{\small \text{Upper bound}}}
&$Z(G)$&$\frac{N_G(H)}{ZH}$ 
\\ [0.5ex]
\hline 
\hline 
 1&$\SU(16)/\Z_4$ & $\SO(10)/\Z_2$  & $\sigma_{\eta_4}$  &$\frac{11}{32}$  
 &$\lambda^{\pi_{2\omega_2}}$&$=\frac{63}{32}$ 
 & $\lambda^{\pi_{4\omega_2}}$&$=\frac{35}{8}$
 & $\Z_4$ & $e$
 \\  \hline 
 2&$\SU(27)/\Z_3$ & $\op{E}_6/\Z_3$    & $\sigma_{\eta_1}$ & $\frac{11}{36}$ 
 &$\lambda^{\pi_{3\omega_1}}$&$=\frac{130}{81}$ 
 &$\lambda^{\pi_{6\omega_1}}$&$=\frac{29}{9}$
 & $\Z_9$ & $e$
 \\ \hline
 3&$\Spin(7)$  & $\op{G}_2$    & $\sigma_{\eta_1}$ & $\frac{9}{20}$  
 &$\lambda^{\pi_{\omega_3}}$&$=\frac{21}{40}$ 
 &$\lambda^{\pi_{2\omega_3}}$&$=\frac{6}{5}$
 & $\Z_2$ & $e$
  \\ \hline
 4& $\SO(133)$ & $\op{E}_7/\Z_2$   & $\sigma_{\eta_3}$ & $\frac{135}{524}$ 
 & $\lambda^{\pi_{\omega_3}}$&$=\frac{195}{131}$
 & $\lambda^{\pi_{\omega_3}}$&$=\frac{195}{131}$ 
 & $e$ & $e$
 \\ \hline
 5&$\Sp(2)/\Z_2$  & $\SO(3)$  & $\sigma_{3\eta_1}$ & $\frac{9}{20}$ 
 &$\lambda^{\pi_{4\omega_1}}$&$=\frac{8}{3}$ 
 &$\lambda^{\pi_{4\omega_1}}$&$=\frac{8}{3}$
 & $e$ & $e$
 \\ \hline
 6&$\Sp(7)/\Z_2$  & $\Sp(3)/\Z_2$ & $\sigma_{\eta_3}$  & $\frac{29}{80}$  
 &$\lambda^{\pi_{4\omega_1}}$&$=\frac{9}{4}$ 
 &$\lambda^{\pi_{4\omega_1}}$&$=\frac{9}{4}$ 
 & $e$ & $e$
 \\ \hline
 7&$\Sp(10)/\Z_2$ & $\SU(6)/\Z_6$  & $\sigma_{\eta_3}$  & $\frac{15}{44}$  
 &$\lambda^{\pi_{4\omega_1}}$&$=\frac{24}{11}$ 
 & $\lambda^{\pi_{8\omega_1}}$&$=\frac{56}{11}$
 & $e$ & $\Z_2$
 \\ \hline
 8&$\Sp(16)/\Z_2$ & $\SO(12)/\Z_2$ & $\sigma_{\eta_5}$ & $\frac{43}{136}$   
 &$\lambda^{\pi_{4\omega_1}}$&$=\frac{36}{17}$ 
 &$\lambda^{\pi_{4\omega_1}}$&$=\frac{36}{17}$ 
 & $e$ & $e$
 \\ \hline
 9&$\Sp(28)/\Z_2$ & $\op{E}_7/\Z_2$  &  $\sigma_{\eta_7}$ & $\frac{17}{58}$  
 &$\lambda^{\pi_{4\omega_1}}$&$=\frac{60}{29}$ 
 &$\lambda^{\pi_{4\omega_1}}$&$=\frac{60}{29}$ 
 & $e$ & $e$
 \\ \hline
10&$\Spin(14)$ & $\op{G}_2$  &  $\sigma_{\eta_2}$ & $\frac{1}{3}$    
&$\lambda^{\pi_{\omega_3}}$&$=\frac{11}{8}$ 
& $\lambda^{\pi_{2\omega_3}}$&$=3$
& $\Z_4$ & $e$
\\ \hline
11& $\Spin(16)/\Z_2 \;\dag$
& $\SO(9)$ &  $\sigma_{\eta_4}$ & $\frac{23}{56}$  
&$\lambda^{\pi_{2\omega_2}}$&$=\frac{15}{7}$ 
& $\lambda^{\pi_{2\omega_2}}$&$=\frac{15}{7}$ 
& $\Z_2$ & $e$
\\ \hline
12&$\Spin(26)$ & $\op{F}_4$  &  $\sigma_{\eta_4}$ & $\frac{1}{3}$    
&$\lambda^{\pi_{3\omega_1}}$&$=\frac{27}{16}$ 
& $\lambda^{\pi_{6\omega_1}}$&$=\frac{15}{4}$    
& $\Z_4$ & $e$
\\ \hline
13&$\Spin(42)$ & $\Sp(4)/\Z_2$& $\sigma_{\eta_4}$  & $\frac{19}{70}$   
&$\lambda^{\pi_{2\omega_2}}$&$=\frac{41}{20}$ 
& $\lambda^{\pi_{2\omega_2}}$&$=\frac{41}{20}$ 
& $\Z_4$ & $e$
\\ \hline
14&$\Spin(52)$ & $\op{F}_4$&  $\sigma_{\eta_1}$  & $\frac{27}{100}$  
& $\lambda^{\pi_{\omega_3}}$&$=\frac{147}{100}$ 
& $\lambda^{\pi_{2\omega_3}}$&$=3$
& $\Z_2\oplus\Z_2$ & $e$
\\ \hline
15&$\SO(70)/\Z_2$ & $\SU(8)/\Z_8$ & $\sigma_{\eta_4}$  & $\frac{179}{680}$ 
& $\lambda^{\pi_{2\omega_2}}$&$=\frac{69}{34}$ 
& $\lambda^{\pi_{4\omega_2}}$&$=\frac{71}{17}$
& $e$ & $\Z_2$
\\ \hline
16& $\Spin(78)$ & $\op{E}_6/\Z_3$ & $\sigma_{\eta_2}$   & $\frac{5}{19}$  
& $\lambda^{\pi_{\omega_3}}$&$=\frac{225}{152}$ 
& $\lambda^{\pi_{2\omega_3}}$&$=3$
& $\Z_4$ & $e$
\\ \hline
17&$\Spin(128)/\Z_2 \; \dag $
& $\SO(16)/\Z_2$ & $\sigma_{\eta_7}$ & $\frac{173}{672}$ 
& $\lambda^{\pi_{2\omega_2}}$&$=\frac{127}{63}$ 
& $\lambda^{\pi_{2\omega_2}}$&$=\frac{127}{63}$ 
& $\Z_2$ & $e$
\\ \hline
18&$\Spin(248)$ & $\op{E}_8$ &  $\sigma_{\eta_8}$  & $\frac{125}{492}$ 
& $\lambda^{\pi_{\omega_3}}$&$=\frac{245}{164}$ 
& $\lambda^{\pi_{2\omega_3}}$&$=3$
& $\Z_2\oplus\Z_2$ & $e$
\\  \hline 
\end{tabular}

\

$\dag$ The quotient $\Spin(N)/\Z_2$ is not isomorphic to $\SO(N)$. 

Analogous references as in Table~\ref{table4:isotropyirred-families2} hold for this table. 

\end{table}

\begin{table} 
\caption{Isolated simply connected strongly isotropy irreducible spaces with $G$ exceptional} \label{table4:isotropyirredexcepcion-exceptional}

\begin{tabular}[t]{cc  c cr@{\;}lr@{\;}lcc}
 \hline 
 \hline
No & $G$ & $H$ 
&$E$ & 
\multicolumn{2}{c}{
\begin{tabular}{c}
{\small \text{Lower bound}}\\[-8pt]
$\lambda_1(G/H,g_{\st})$ 
\end{tabular}
}
& \multicolumn{2}{c}{{\small \text{Upper bound}}}
& $Z(G)$ & $\frac{N_G(H)}{ZH}$
\\ 
 \hline 
 \hline 
19&
	$\op{E}_6$ &  
	$\SU(3)/\Z_3$             
	 &
	$\frac{11}{36}$ &
	$\lambda^{\pi_{2\omega_1}}$&$=\frac{14}{9}$ 
	& $\lambda^{\pi_{2\omega_1+2\omega_6}}$&$=\frac{10}{3}$ 
	& $\Z_3$ & $e$
\\  \hline 
20&
	$\op{E}_6/\Z_3$ & 
	$\SU(3)^3/(\Z_3\times\Z_3)$  & 
	$\frac{5}{12}$ &
	$\lambda^{\pi_{\omega_1+\omega_6}}$&$=\frac{3}{2}$
	& $\lambda^{\pi_{\omega_1+\omega_6}}$&$=\frac{3}{2}$ 
	& $e$ & $e$
\\ \hline 
21&
	$\op{E}_6$ & 
	$\op{G}_2$               & 
	$\frac{25}{72}$  &
	$\lambda^{\pi_{2\omega_1}}$&$=\frac{14}{9}$
	& $\lambda^{\pi_{2\omega_1+2\omega_6}}$&$=\frac{10}{3}$
	& $\Z_3$ & $e$
\\ \hline  
22&
	$\op{E}_6/\Z_3$ & 
	$(\SU(3)/\Z_3) \times \op{G}_2$   & 
	$\frac{19}{48}$ &
	$\lambda^{\pi_{\omega_1+\omega_6}}$&$=\frac{3}{2}$
	& $\lambda^{\pi_{\omega_1+\omega_6}}$&$=\frac{3}{2}$ 
	& $e$ & $e$
\\ \hline 
23&
	$\op{E}_7$ & 
	$\SU(3)/\Z_3$ & 
	$\frac{71}{252}$  &
	$\lambda^{\pi_{2\omega_7}}$&$=\frac{5}{3}$
	& $\lambda^{\pi_{4\omega_7}}$&$=\frac{11}{3}$
	& $\Z_3$ & $\Z_2$
\\ \hline 
24&
	$\op{E}_7/\Z_2$ & 
	$(\SU(6)\times\SU(3))/\Z_6$& 
	$\frac{5}{12}$ &
	$\lambda^{\pi_{\omega_6}}$&$=\frac{14}{9}$
	& $\lambda^{\pi_{2\omega_6}}$&$=\frac{10}{3}$
	& $e$ & $\Z_2$
\\  \hline 
25&
	$\op{E}_7/\Z_2$ & 
	$\op{G}_2\times (\Sp(3)/\Z_2)$  & 
	$\frac{7}{18}$  &
	$\lambda^{\pi_{\omega_6}}$&$=\frac{14}{9}$
	& $\lambda^{\pi_{\omega_6}}$&$=\frac{14}{9}$
	& $e$ & $e$
\\ \hline 
26&
	$\op{E}_7/\Z_2$ & 
	$\SO(3)\times\op{F}_4$   & 
	$\frac{47}{108}$  &
	$\lambda^{\pi_{\omega_6}}$&$=\frac{14}{9}$
	& $\lambda^{\pi_{\omega_6}}$&$=\frac{14}{9}$
	& $e$ & $e$
\\ \hline 
27&
	$\op{E}_8$ & 
	$\SU(9)/\Z_3$      & 
	$\frac{5}{12}$ &
	$\lambda^{\pi_{\omega_7}}$&$=2$
	& $\lambda^{\pi_{2\omega_7}}$&$=\frac{21}{5}$
	& $e$ & $\Z_2$
\\ \hline 
28&
	$\op{E}_8$ & 
	$(\op{E}_6\times\SU(3))/\Z_3$  & 
	$\frac{5}{12}$ &
	$\lambda^{\pi_{\omega_1}}$&$=\frac{8}{5}$
	& $\lambda^{\pi_{2\omega_1}}$&$=\frac{10}{3}$
	& $e$ & $\Z_2$
\\ \hline 
29&
	$\op{E}_8$ & 
	$\op{G}_2\times\op{F}_4$    & 
	$\frac{23}{60}$  &
	$\lambda^{\pi_{\omega_1}}$&$=\frac{8}{5}$
	& $\lambda^{\pi_{\omega_1}}$&$=\frac{8}{5}$ 
	& $e$ & $e$
\\ \hline 
30&
	$\op{F}_4$ & 
	$(\SU(3)\times\SU(3))/\Z_3$ & 
	$\frac{5}{12}$  &
	$\lambda^{\pi_{\omega_3}}$&$=\frac{4}{3}$
	& $\lambda^{\pi_{2\omega_3}}$&$=3$ 
	& $e$ & $\Z_2$
\\ \hline 
31&
	$\op{F}_4$ & 
	$\SO(3)\times\op{G}_2$ & 
	$\frac{29}{72}$  &
	$\lambda^{\pi_{2\omega_3}}$&$=\frac{13}{9}$
	& $\lambda^{\pi_{2\omega_3}}$&$=\frac{13}{9}$ 
	& $e$ & $e$
\\ \hline 
32&
	$\op{G}_2$ & 
	$\SO(3)$          & 
	$\frac{43}{112}$  &
	$\lambda^{\pi_{2\omega_2}}$&$=\frac{5}{2}$
	& $\lambda^{\pi_{2\omega_2}}$&$=\frac{5}{2}$ 
	& $e$ & $e$
\\ \hline 
33&
	$\op{G}_2$ & 
	$\SU(3)$           & 
	$\frac{5}{12}$    &
	$\lambda^{\pi_{\omega_1}}$&$=\frac{1}{2}$
	& $\lambda^{\pi_{2\omega_1}}$&$=\frac{7}{6}$
	& $e$ & $\Z_2$
\\  \hline 
\end{tabular}

\smallskip 

The embedding $(d\rho)_\C$ is omitted;
see \cite[Table~5]{LauretTolcachier-nu-stab}.

Analogous references as in Table~\ref{table4:isotropyirred-families2} hold for this table. 

\end{table} 
}

\section{Preliminaries}\label{sec:preliminaries}

This section is devoted to introducing Lie-theoretical preliminaries that will be used mainly in Section~\ref{sec:lambda_1(simplyconnected)}.

\subsection{The first Laplace eigenvalue of standard homogeneous manifolds}

A Riemannian manifold $(M,g)$ is called \emph{homogeneous} if its isometry group acts transitively on $M$. 
If there is an action of a Lie group $G$ by isometries on $M$ which is transitive, then $(M,g)$ is called \emph{$G$-homogeneous}. 
In this case, $M$ is diffeomorphic to $G/H$, where $H$ is the isotropy subgroup of the $G$-action at some point of $M$, and $(M,g)$ is isometric to $(G/H,g')$ for some $G$-invariant metric $g'$ on $M$. 

A $G$-homogeneous Riemannian manifold $(G/H,g)$ (here, $g$ is $G$-invariant) is called \emph{standard} if the inner product $g_{eH}$ on the tangent space at the point $eH$ coincides with $-\kil_{\fg}|_{\fp}$. 
(Note that $G$ must be compact and semisimple in order for the Killing form  $\kil_{\fg}|_{\fp}$ to be negative definite.) 
In addition, we assume that $G$ is connected. 

Every Riemannian manifold has a distinguished differential operator, the \emph{Laplace-Beltrami operator}. 
We next describe its spectrum on a standard Riemannian manifold $(G/H,g_{\st})$. 
The Casimir element $\Cas_\fg$ of $\fg$ is in the center of $\mathcal U(\fg_\C)$, thus $(d\pi)_\C(\Cas_\fg)$ commutes with $\pi(a)$ for all $a\in G$. 
Schur's Lemma implies that $(d\pi)_\C(\Cas_\fg)$ acts on $V_\pi$ by a scalar $\lambda^\pi$, for any $\pi\in\widehat G$, that is,
\begin{equation*}
(d\pi)_\C(\Cas_\fg)\cdot v=\lambda^\pi\, v
\qquad\forall\, v\in V_\pi.
\end{equation*}  
Moreover, Freudenthal's formula yields that 
\begin{equation}\label{eq:Freudenthal}
\lambda^\pi=
\kil_\fg^*(\Lambda_\pi,\Lambda_\pi+2\rho_\fg)
.
\end{equation} 
Here, $\Lambda_\pi$ denotes the highest weight of $\pi$ with respect to a maximal torus $T$ of $G$, and $\kil_\fg^*$ stands for the extension of $\kil_\fg$ to $\fg_\C^*\supset \ft_\C^*$. 
We set $\widehat G_H=\{\pi\in\widehat G: V_\pi^H\neq0\}$, the \emph{spherical representations} of the pair $(G,H)$. 
In the sequel, we will abbreviate $\innerdots=\kil_\fg^*(\cdot,\cdot)$ restricted to $(\mi\ft)^*\times (\mi\ft)^*$.

The next result is well known (see e.g.\ \cite[\S 5.6]{Wallach-book}).
\begin{theorem}\label{thm:Spec(standard)} 
Let $(G/H,g_{\st})$ be a connected, standard homogeneous space. 
The spectrum of its associated Laplace-Beltrami operator is given by 
$$
\Spec(G/H,g_{\st})
:=\Big\{\!\!\Big\{ 
	\underbrace{\lambda^\pi,\dots,\lambda^\pi}_{d_\pi\times d_\pi^H\text{-times} } 
	: \pi\in\widehat G_H 
\Big\}\!\!\Big\}
,
$$
where $d_\pi=\dim V_\pi$ and $d_\pi^H=\dim V_\pi^H$. 
In particular, the smallest positive eigenvalue is given by 
$$
\lambda_1(G/H,g_{\st})
=\min \left\{\lambda^\pi: \pi\in\widehat G_H\smallsetminus\{1_G\}\right\}
,
$$
and its multiplicity is given by 
\begin{equation*}
\sum_{\pi\in\widehat G_H\,:\, \lambda^\pi=\lambda_1(G/H,g_{\st})} 
\dim V_{\pi}\cdot\dim V_{\pi}^H.
\end{equation*}
\end{theorem}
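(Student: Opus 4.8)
The statement to prove is Theorem~\ref{thm:Spec(standard)}: the spectrum of the Laplace-Beltrami operator on a standard homogeneous space $(G/H, g_{\st})$ is the multiset of Casimir eigenvalues $\lambda^\pi$ (each with multiplicity $d_\pi \cdot d_\pi^H$) over spherical representations $\pi \in \widehat{G}_H$.

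The plan is to invoke the Peter–Weyl decomposition of $L^2(G/H)$ together with the identification of the Laplace-Beltrami operator on a standard homogeneous space with (the negative of) the Casimir operator acting on functions. Let me describe the steps.

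\textbf{Step 1: Decompose $L^2(G/H)$.} First I would identify $L^2(G/H)$ with the space of right-$H$-invariant functions in $L^2(G)$. By the Peter–Weyl theorem, $L^2(G) \cong \widehat{\bigoplus}_{\pi \in \widehat G} V_\pi \otimes V_\pi^*$ as a $G \times G$-representation, where $G$ acts on the left via $\pi$ on the first factor and on the right via the dual on the second. Taking $H$-invariants on the right factor gives $L^2(G/H) \cong \widehat{\bigoplus}_{\pi \in \widehat G} V_\pi \otimes (V_\pi^*)^H$, and $(V_\pi^*)^H \neq 0$ precisely when $\pi \in \widehat G_H$ (noting $\dim (V_\pi^*)^H = \dim V_\pi^H = d_\pi^H$ since $H$ is compact, so representations are self-dual in the relevant sense, or one simply uses that conjugation pairs these dimensions). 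Thus $L^2(G/H)$ decomposes into $G$-isotypic components, the $\pi$-component being isomorphic to $d_\pi^H$ copies of $V_\pi$, hence of dimension $d_\pi \cdot d_\pi^H$.

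\textbf{Step 2: Identify $\Delta$ with the Casimir on a standard metric.} The key geometric input is that for the standard metric $g_{\st}$ (i.e.\ $g_{eH} = -\kil_\fg|_\fp$), the Laplace-Beltrami operator $\Delta$ acting on smooth functions on $G/H$, pulled back to right-$H$-invariant functions on $G$, coincides with $-(d\lambda)(\Cas_\fg)$ where $\lambda$ is the left-regular representation and $\Cas_\fg$ is built from the Killing form. This is where one uses that $G$ is compact semisimple (so $-\kil_\fg$ is a bi-invariant metric on $G$ inducing $g_{\st}$ on the quotient via a Riemannian submersion with totally geodesic fibers), and the standard fact that on a bi-invariant metric the Laplacian on functions is the Casimir; the submersion then descends this to $G/H$. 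I would cite \cite[\S 5.6]{Wallach-book} for this identification, as the problem permits.

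\textbf{Step 3: Apply Schur plus Freudenthal.} On each $G$-isotypic component $\cong V_\pi^{\oplus d_\pi^H}$, the operator $-(d\lambda)(\Cas_\fg)$ acts as $(d\pi)_\C(\Cas_\fg)$ acts on $V_\pi$ — but wait, with the sign convention fixed so that eigenvalues are nonnegative — namely by the scalar $\lambda^\pi$ from Schur's Lemma, computed by Freudenthal's formula~\eqref{eq:Freudenthal}. Hence each such component contributes the eigenvalue $\lambda^\pi$ with multiplicity $\dim V_\pi \cdot \dim V_\pi^H = d_\pi \cdot d_\pi^H$. Collecting over all $\pi \in \widehat G_H$ yields the stated multiset. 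The smallest positive eigenvalue is then $\min\{\lambda^\pi : \pi \in \widehat G_H \setminus \{1_G\}\}$ since the trivial representation $1_G$ contributes the eigenvalue $0$ (constants), and its multiplicity is the sum of $d_\pi d_\pi^H$ over those $\pi$ achieving the minimum.

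\textbf{Main obstacle.} The only genuinely non-formal point is Step~2: verifying that the standard normalization of the metric is exactly the one making $\Delta = -(d\lambda)(\Cas_\fg)$ with no scaling discrepancy, and handling the sign so that eigenvalues come out as $\lambda^\pi = \kil_\fg^*(\Lambda_\pi, \Lambda_\pi + 2\rho_\fg) \geq 0$. Since this is classical and the excerpt explicitly allows citing \cite[\S 5.6]{Wallach-book}, the proof reduces to assembling Peter–Weyl, this identification, and Freudenthal's formula. The remaining bookkeeping (that $\dim V_\pi^H = \dim (V_\pi^*)^H$, which holds because $V_\pi^* \cong \overline{V_\pi}$ for $G$ compact and complex conjugation is a real-linear isomorphism of the $H$-fixed subspaces) is routine.
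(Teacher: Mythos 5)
The paper offers no proof of this statement; it simply records it as well known and cites \cite[\S5.6]{Wallach-book}. Your proposal reconstructs exactly the standard argument that the cited reference uses — Peter--Weyl decomposition of $L^2(G/H)$ into $G$-isotypic blocks $V_\pi\otimes (V_\pi^*)^H$, identification of the Laplacian of the standard metric with the Casimir via the Riemannian submersion $G\to G/H$ with totally geodesic fibers, and Schur's Lemma together with Freudenthal's formula — so it is correct and matches the paper's intended source in both approach and detail.
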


Note that any $\pi'\in\widehat G_H\smallsetminus\{1_G\}$  gives an upper bound for $\lambda_1(G/H,g_{\st})$, namely $\lambda_1(G/H,g_{\st})\leq \lambda^{\pi'}$. 
Moreover, one has that  $\lambda_1(G/H,g_{\st})=\lambda^{\pi'}$ if and only if $\pi'\in\widehat G_H\smallsetminus\{1_G\}$ and 
\begin{equation}\label{eq:sufficient-lambda_1}
V_{\pi}^H=0
\text{ for all }
\pi\in\widehat G\smallsetminus\{1_G\}
\text{ satisfying }\lambda^{\pi}<\lambda^{\pi'}. 
\end{equation}

\begin{lemma}\label{lem3:dimV_pi^H}
	Let $H$ be a closed subgroup of a compact Lie group $G$, and let $\pi$ be a finite-dimensional representation of $G$.  
	Then $V_\pi^H=0$ if  and only if the trivial representation $1_H$ of $H$ does not occur in the decomposition in irreducible $H$-representations of the restriction $\pi|_H$ of $\pi$ to $H$. 
\end{lemma}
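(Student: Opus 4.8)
The plan is to use complete reducibility of representations of compact groups together with Schur's lemma applied to each irreducible summand; the statement is essentially a bookkeeping consequence of these two facts.

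First I would observe that, since $G$ is compact and $H$ is a closed subgroup, $H$ is itself a compact Lie group. Hence the restriction $\pi|_H$ is completely reducible: averaging any Hermitian inner product on $V_\pi$ over $H$ produces an $H$-invariant Hermitian inner product, so $V_\pi$ decomposes as an orthogonal direct sum $V_\pi=\bigoplus_{j} W_j$ of $H$-invariant subspaces on which $H$ acts by irreducible representations $\tau_j$; by definition, $1_H$ occurs in $\pi|_H$ precisely when $\tau_j\cong 1_H$ for some $j$.

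Next, since taking $H$-fixed vectors commutes with finite direct sums, $V_\pi^H=\bigoplus_j W_j^H$. For each $j$ the subspace $W_j^H$ is $H$-invariant, so by irreducibility of $\tau_j$ it equals either $0$ or all of $W_j$, and it equals $W_j$ exactly when $\tau_j$ is the trivial representation $1_H$. Therefore $V_\pi^H\neq 0$ if and only if at least one $\tau_j$ is trivial, which is exactly the assertion that $1_H$ occurs in the decomposition of $\pi|_H$ into irreducibles. (The same computation yields the sharper identity $\dim V_\pi^H=\int_H\chi_\pi(h)\,dh$, i.e.\ $\dim V_\pi^H$ equals the multiplicity of $1_H$ in $\pi|_H$, which is what is actually used when computing $d_\pi^H$ in Theorem~\ref{thm:Spec(standard)}.)

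There is no genuine obstacle here; the only points requiring care are invoking compactness of $H$ to obtain both complete reducibility and an invariant inner product, and the elementary remark that an $H$-invariant subspace of an irreducible $H$-module is either $0$ or the whole module.
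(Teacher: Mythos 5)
Your proof is correct and is essentially the paper's argument spelled out: the paper simply cites the standard fact that $\dim V_\pi^H$ equals the multiplicity of $1_H$ in $\pi|_H$, and your complete-reducibility-plus-Schur decomposition is precisely the standard proof of that fact (with the character integral formula as a pleasant bonus observation).
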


\begin{proof}
	This follows immediately from the fact that $\dim V_\pi^H$ is precisely the number of times that $1_H$ occurs in $\pi|_H$. 
\end{proof}

\subsection{Strongly isotropy irreducible spaces}

A homogeneous space $G/H$ is called \emph{isotropy irreducible} if the isotropy representation on the tangent space $T_{eH}G/H$ of the isotropy group $H$ is irreducible. 
Similarly, $G/H$ is called \emph{strongly isotropy irreducible} if the identity connected component $H^o$ of $H$ acts irreducibly on $T_{eH}G/H$. 
Of course, both notions coincide when $H$ is connected. 

Strongly isotropy irreducible spaces were classified independently by Manturov~\cite{Manturov61a,Manturov61b,Manturov66}, Wolf~\cite{Wolf68}, and Krämer~\cite{Kramer}. 
The remaining isotropy irreducible spaces were classified by Wang and Ziller~\cite{WangZiller91}. 

All non-symmetric simply connected strongly isotropy irreducible spaces are listed in Tables~\ref{table4:isotropyirred-families}, \ref{table4:isotropyirredexcepcion-classical}, and  \ref{table4:isotropyirredexcepcion-exceptional}. 
They consist of 10 families, 18 isolated cases with $\fg$ classical, and 15 isolated cases with $\fg$ exceptional. 
For each entry $G/H$ in the tables, $G$ is a compact connected simple Lie group, $H$ is a connected closed subgroup of $G$, $G/H$ is simply connected, $G$ acts effectively on $G/H$, and the linear isotropy action of $H$ on the tangent space of $G/H$ is irreducible. 

\begin{remark}\label{rem:Kconexo}
Let $G'/H'$ be an arbitrary presentation of any of the spaces listed in Tables~\ref{table4:isotropyirred-families},%
\ref{table4:isotropyirredexcepcion-classical},%
\ref{table4:isotropyirredexcepcion-exceptional}. 
If $G'$ is connected, then since $G/H = G'/H'$ is simply connected, the subgroup $H'$ must also be connected. 
Consequently, $G'/H'$ is indeed an isotropy irreducible space.
\end{remark}

We fix $T$ a maximal torus of $G$ such that $T\cap H$ is a maximal torus of $H$, and also a simple root system in the corresponding root system $\Phi(\fg_\C,\ft_\C)$. 
The Highest Weight Theorem ensures a correspondence between irreducible representations of $G$ (resp.\ of $\fg_\C$ of finite dimension) and the set of dominant $G$-integral weights $\PP^+(G)$ (resp.\ algebraically integral weights $\PP^+(\fg_\C)$).
For $\Lambda$ in $\PP^+(G)$ or $\PP^+(\fg_\C)$, we denote by $\pi_{\Lambda}$ the corresponding irreducible representation. 
We next identify those $\Lambda\in\PP^+(\fg_\C)$ satisfying $\lambda^{\pi_{\Lambda}} \leq 1$ for each complex simple Lie algebra $\fg_\C$ (see \cite[\S3]{SemmelmannWeingart22}). 
We take the ordering of simple roots given in Bourbaki's book \cite[\S{}VI.4]{Bourbaki-Lie4-6} for each complex simple Lie algebra $\fg_\C$.
This determines an ordering of the fundamental weights, which can be found in \cite[Table~1]{LauretTolcachier-nu-stab}.

\begin{notation}\label{notation:G_rho}
Let $H'$ be a compact and connected Lie group. 
For a complex representation $\rho$ of $H'$ of dimension $N$, we denote by $G_\rho$ the subgroup of $\GL(V_\rho)$ such that $\rho(H')\subset G_\rho$, where $G_\rho\simeq\SU(N)$, $\SO(N)$, or $\Sp(N/2)$ according to whether $\rho$ is of complex, real or quaternionic type. 
In all these cases, $(\fg_\rho)_\C$ is a classical complex simple Lie algebra, and the highest weight of its standard representation $\st_{(\fg_\rho)_\C}$ is always the first fundamental weight $\omega_1$. 
\end{notation}

\begin{remark}[Embedding of $H$ into $G$] \label{rem:embedding}
Let $G/H$ be any entry in Tables~\ref{table4:isotropyirred-families} or \ref{table4:isotropyirredexcepcion-classical}. 
Note that $\fg_\C$ is a complex simple Lie algebra of classical type.

Let $H'$ denote the connected and simply connected Lie group with Lie algebra $\fh'=\fh$. 
The Lie algebra representation $(d\rho)_\C$ of $\fh_\C'$ corresponding to $G/H$ (indicated in the tables) defines a classical Lie algebra $\fg_\rho\simeq \su(N),\so(N),\spp(N/2)$ according to whether $(d\rho)_\C$ is of complex, real or quaternionic type, where $N$ is the dimension of $(d\rho)_\C$. 

One can check that $\fg=\fg_\rho$.  
If $\rho:H'\to G$ is the only Lie group homomorphism with $(d\rho)_\C$ indicated in the table, then $H=\rho(H')\simeq H'/\op{Ker}(\rho)$. 
Note that $G$ and $G_\rho$ share the same universal cover, but they do not necessarily coincide. 
\end{remark}

We are now in position to give general conditions ensuring $V_\pi^H=0$ for certain $\pi\in\widehat G$. 
Note that, since $H$ is connected (see Remark~\ref{rem:Kconexo}), 
\begin{equation}
V_\pi^H=V_\pi^{\fh_\C}=V_\pi^{\fh_\C'}=\{v\in V_\pi: (d\pi)_\C(X)\cdot v=0\quad\forall \, X\in\fh_\C'\} 
\qquad\forall\,\pi\in\widehat G. 
\end{equation}

\begin{lemma}\label{lem:omega2-no-esferica}
Let $G/H$ be an entry in Tables~\ref{table4:isotropyirred-families} or \ref{table4:isotropyirredexcepcion-classical}, and let $\rho:H'\to G$ be as in Remark~\ref{rem:embedding}. 
Then
\begin{enumerate}
\item\label{item:st^H=0} 
The standard representation $\st_{\fg_\C}=\pi_{\omega_1}$,  when restricted to $\fh_\C$, satisfies $V_{\pi_{\omega_1}}^{\fh_\C'}=0$. 

\item\label{item:Ad^H=0} 
The adjoint representation $\Ad:G\to\SO(\fg)\simeq\SO(\dim\fg)$ of $G$ satisfies $V_{\Ad}^{H}=0$. 

\item\label{item:omega_2^H=2omega1=0} 
If $\fg_\C$ is of type $B_n$, $C_n$, or $D_n$, then $V_{\pi_{2\omega_1}}^{\fh_\C'}=V_{\pi_{\omega_2}}^{\fh_\C'}=0$.

\end{enumerate}
\end{lemma}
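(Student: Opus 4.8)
The key observation is that all three representations in question arise as constituents of tensor powers of the standard representation $\st_{\fg_\C}=\pi_{\omega_1}$, so the plan is to bootstrap from part~\eqref{item:st^H=0}. First, I would establish~\eqref{item:st^H=0} directly from the classification: by Remark~\ref{rem:embedding}, $\fg=\fg_\rho$ and the standard representation $\st_{\fg_\C}$ restricted to $\fh_\C'$ is precisely the irreducible representation $(d\rho)_\C$ indicated in the tables (the isotropy representation, which is irreducible and nontrivial since $\dim G/H\geq 1$). Since $(d\rho)_\C$ is irreducible and of positive dimension $N\geq 2$, it contains no trivial $\fh_\C'$-subrepresentation, hence $V_{\pi_{\omega_1}}^{\fh_\C'}=0$; equivalently, via Lemma~\ref{lem3:dimV_pi^H}, $1_H$ does not occur in $\st_{\fg_\C}|_H$.

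For~\eqref{item:Ad^H=0}: the adjoint representation $\Ad\colon G\to\SO(\fg)$ decomposes under $H$ as $\fg=\fh\oplus\fp$, where $\fp\cong T_{eH}G/H$ carries the (irreducible, nontrivial) isotropy representation. The $\fh$-summand carries the adjoint representation of $\fh$ on itself; since $\fh$ is a compact Lie algebra acting on itself via $\ad$, the trivial subrepresentation in $\fh$ is exactly the center $Z(\fh)$. But for every entry in the tables $\fh$ is semisimple (as one checks case by case — the only subtlety is Family~IV and similar, where $\fh=\spp(1)\oplus\so(n)$ or $\spp(1)\oplus\spp(n)$, still semisimple), so $Z(\fh)=0$ and $\fh$ contains no trivial $H$-subrepresentation. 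The $\fp$-summand is irreducible and nontrivial by hypothesis, so it contains no trivial subrepresentation either. Hence $V_{\Ad}^H=0$.

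For~\eqref{item:omega_2^H=2omega1=0}: when $\fg_\C$ is of type $B_n$, $C_n$, or $D_n$, the second symmetric and exterior powers of the standard representation decompose as $\Sym^2(\st_{\fg_\C})$ and $\Lambda^2(\st_{\fg_\C})$, and one of these two is the adjoint representation $\pi_{\theta}$ (the other being $\pi_{2\omega_1}$ or $\pi_{\omega_2}$ respectively, possibly shifted by a trivial summand coming from the invariant bilinear form). Concretely: for $B_n$ and $D_n$, $\Lambda^2\st=\pi_{\omega_2}=\Ad$ and $\Sym^2\st=\pi_{2\omega_1}\oplus\C$; for $C_n$, $\Sym^2\st=\pi_{2\omega_1}=\Ad$ and $\Lambda^2\st=\pi_{\omega_2}\oplus\C$. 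In all cases, both $\pi_{2\omega_1}$ and $\pi_{\omega_2}$ embed as $\fh_\C'$-subrepresentations of $\st_{\fg_\C}\otimes\st_{\fg_\C}$. It therefore suffices to show $(\st_{\fg_\C}\otimes\st_{\fg_\C})^{\fh_\C'}$ has dimension at most one (coming from the invariant form), and that this one-dimensional invariant does not lie inside the $\pi_{2\omega_1}$ or $\pi_{\omega_2}$ piece — equivalently, that $(d\rho)_\C\otimes(d\rho)_\C$ contains the trivial $\fh_\C'$-representation with multiplicity exactly one. Since $(d\rho)_\C$ is irreducible, the multiplicity of $1_{\fh_\C'}$ in $(d\rho)_\C\otimes(d\rho)_\C\cong (d\rho)_\C\otimes\overline{(d\rho)_\C}^{\,*}$ (using self-duality of $(d\rho)_\C$, which holds because $\fg_\rho$ is orthogonal or symplectic) equals $\dim\Hom_{\fh_\C'}((d\rho)_\C,(d\rho)_\C)=1$ by Schur. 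The unique invariant is the $\fg_\C$-invariant bilinear form, which sits in the trivial summand $\C$ of $\Sym^2$ (type $C_n$) or $\Lambda^2$ (types $B_n,D_n$), hence not in $\pi_{2\omega_1}$ or $\pi_{\omega_2}$. Therefore both $V_{\pi_{2\omega_1}}^{\fh_\C'}$ and $V_{\pi_{\omega_2}}^{\fh_\C'}$ vanish.

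The main obstacle I anticipate is bookkeeping in~\eqref{item:Ad^H=0} and~\eqref{item:omega_2^H=2omega1=0}: one must be certain that $\fh$ is semisimple (no central circle factor) for \emph{every} entry of the tables, and one must correctly track which of $\Sym^2\st$, $\Lambda^2\st$ carries the extra trivial summand in each Dynkin type so as to locate the $\fg_\C$-invariant form on the correct side. Both are finite checks, not deep, but they are where an error would creep in. An alternative, more uniform route for~\eqref{item:omega_2^H=2omega1=0} avoiding the semisimplicity check is to argue purely at the level of $\st_{\fg_\C}\otimes\st_{\fg_\C}$: irreducibility of $(d\rho)_\C$ forces $\dim(\st\otimes\st)^{\fh_\C'}=1$ via Schur and self-duality, and then one only needs that the single invariant lies in the trivial $\fg_\C$-summand, which is immediate since that invariant is the restriction of the $\fg_\C$-invariant form.
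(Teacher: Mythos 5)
Your proof is correct and takes essentially the same route as the paper: part~\eqref{item:st^H=0} from the fact that $\pi_{\omega_1}|_{\fh_\C'}\simeq(d\rho)_\C$ is irreducible and nontrivial; part~\eqref{item:Ad^H=0} from $\fg=\fh\oplus\fp$ with $\fh$ semisimple (so $\ad$ has no trivial part) and $\fp$ irreducible of dimension $>1$; and part~\eqref{item:omega_2^H=2omega1=0} from $\pi_{\omega_1}\otimes\pi_{\omega_1}\simeq\pi_0\oplus\pi_{2\omega_1}\oplus\pi_{\omega_2}$ combined with Schur's lemma and self-duality of $\rho$ to force $\rho\otimes\rho$ to contain $1_{\fh_\C'}$ exactly once. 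One cosmetic slip: in your second mention you swap which of $\Sym^2\st,\ \Lambda^2\st$ carries the trivial $\C$-summand for types $B_n,D_n$ versus $C_n$ (your first statement of this is correct), but as you yourself observe at the end, the argument only requires that the unique $\fh_\C'$-invariant in $\st\otimes\st$ is already absorbed by the $\fg_\C$-trivial $\pi_0$, so the slip is harmless.
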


We prove this result after recalling some Lie-theoretical tools. 
Let $\fl$ be an arbitrary compact Lie algebra, and let us denote by $\fl_\C$ its complexification.

\begin{remark}[Exterior and symmetric products of the standard representation] \label{rem:ExtSym-standard}
We denote by $\bigwedge^p(\theta)$  (resp.\ $\Sym^k(\theta)$) the $p$-exterior (resp.\ the $k$-symmetric) power of $\theta$, which has underlying vector space equal to $\bigwedge^p (V_\theta)$ (resp.\ $\Sym^k(V_\rho)$), for any integer $0\leq p\leq \dim V_\theta$ (resp.\ $k\geq0$). 
Notice that $\Lambda^0(\theta) =\Sym^0(\theta) =1_{\fl_\C}$ and $\Lambda^1(\theta) =\Sym^1(\theta) =\theta$.

If $\fl_\C$ is simple of type $A_n$, we have that $\pi_{\omega_p}\simeq \textstyle{\bigwedge^p}(\pi_{\omega_1})$,  $\pi_{k\omega_1}\simeq \Sym^k(\pi_{\omega_1})$, and $\pi_{\omega_p}^*\simeq\pi_{\omega_{n+1-p}}$ for all integers $k\geq1$, $1\leq p\leq n$. 

If $\fl_\C$ is simple of type $B_n$ or $D_n$, $\omega_p=\ee_1+\dots+\ee_p$ if $1\leq p\leq n-1$ for type $B_n$ and if $1\leq p\leq n-2$ for type $D_n$, 
$\Sym^k(\pi_{\omega_1})\simeq \pi_{k\omega_1}\oplus \Sym^{k-2}(\pi_{\omega_1})$ 
for all integers $k\geq2$,
and 
\begin{equation*}
\textstyle{\bigwedge^p}(\pi_{\omega_1})
\simeq 
\begin{cases}
\pi_{\ee_1+\dots+\ee_p} &\quad\text{if }1\leq p\leq n-1, \\
\pi_{\ee_1+\dots+\ee_n} &\quad\text{if }p=n\text{ and $\fl_\C$ is of type $B_n$}, \\
\pi_{\ee_1+\dots+\ee_n}\oplus \pi_{\ee_1+\dots+\ee_{n-1}-\ee_n} &\quad\text{if }p=n\text{ and $\fl_\C$ is of type $D_n$}.
\end{cases}
\end{equation*}

If $\fl_\C$ is simple of type $C_n$,  $\textstyle{\bigwedge^p}(\pi_{\omega_1}) \simeq \pi_{\omega_p}\oplus \textstyle{\bigwedge^{p-2}}(\pi_{\omega_1})$, and
$\Sym^k(\pi_{\omega_1})\simeq \pi_{k\omega_1}$ 
for all integers $k\geq0$, $1\leq p\leq n$. 
\end{remark}

\begin{remark}\label{rem:rhoxrho^*}
Assume that $\fl_\C$ is simple. 
For finite-dimensional irreducible representations $\theta_1,\theta_2$ of $\fl_\C$, it is well known that $\theta_1\otimes\theta_2$ contains the trivial representation $1_{\fl_\C}$ of $\fl_\C$ if and only if $\theta_1^*\simeq\theta_2$.
In this case, $1_{\fl_\C}$ occurs exactly once. 
Moreover, when $\theta:=\theta_1=\theta_2$, 
$\theta\otimes\theta \simeq \Sym^2(\theta) \oplus {\textstyle\bigwedge^2}(\theta)$ and, 
	$1_{\fh_\C}$ occurs in $\Sym^2(\theta)$ if and only if $\theta$ is of real type, and 
	$1_{\fh_\C}$ occurs in ${\textstyle\bigwedge^2}(\theta)$ if and only if $\theta$ is of quaternionic type. 
\end{remark}

\begin{remark}[Tensor formulas]\label{rem:KoikeTerada}
Assume $\fl_\C$ is simple of type $B_n$, $C_n$ or $D_n$, with $n\geq3$. 
For non-negative integers $s\leq r$ and $q\leq p\leq n-2$, one has that (see e.g.\ \cite[p.~509--510]{KoikeTerada87} or \cite[\S1.64--1.66]{DictionaryOnLieAlgebras})
\begin{align*}
	\pi_{r\omega_1}\otimes \pi_{s\omega_1}&= \bigoplus_{j=0}^{s}\bigoplus_{i=0}^{j} \pi_{(r+s-2j)\omega_1+(j-i)\omega_2},\\
	\pi_{r\omega_1}\otimes \pi_{\omega_p}&= \pi_{r\omega_1+\omega_p}\oplus \pi_{(r-1)\omega_1+\omega_{p+1}} \oplus \pi_{(r-2)\omega_1+\omega_{p}}\oplus \pi_{(r-1)\omega_1+\omega_{p-1}},\\
	\pi_{\omega_p}\otimes \pi_{\omega_q}&= \bigoplus_{j=0}^{q}\bigoplus_{i=0}^{j} \pi_{\omega_{p+q-j-i}+\omega_{j-i}}.
\end{align*}
Here, $\omega_k=0$ if $k>n$. 
The assumption $p\leq n-2$ is not essential. 
Formulas also hold for $p=n-1$ for type $B_n$ and $p=n-1,n$ for type $C_n$.
Moreover, they hold for $p=n$ for type $B_n$ replacing $\omega_n$ by $\ee_1+\dots+\ee_n=2\omega_n$, and for $p=n-1,n$ for type $D_n$ replacing $\omega_{n-1}$ and $\omega_n$ by $\ee_1+\dots+\ee_{n-1}=\omega_{n-1}+\omega_n$ and $\ee_1+\dots+\ee_n=2\omega_n$ respectively. 

When $\fl_\C$ is of type $A_n$, similar identities are provided by Pieri's formula. 
We will directly cite \cite[Prop.~15.25]{FultonHarris-book} each time we use it to avoid stating it here. 
\end{remark}

\begin{proof}[Proof of Lemma~\ref{lem:omega2-no-esferica}]
For \eqref{item:st^H=0}, since $\pi_{\omega_1}$ is the standard representation of $\fg_\C$, we clearly have that $\pi_{\omega_1}|_{\fh_\C'} \simeq (d\rho)_\C$. 
Now, since $(d\rho)_\C$ is irreducible and non-trivial in all cases, we deduce $V_{\pi_{\omega_1}}^{\fh_\C'}=0$ by Lemma~\ref{lem3:dimV_pi^H}.

We now prove \eqref{item:Ad^H=0}. 
The underlying vector space of $\Ad$ is $\fg$. 
Its decomposition into irreducible $H$-modules is $\fg=\fh\oplus \fp$ (because $H$ is connected), where $\fh$ is the adjoint representation of $H$ and $\fp$ is an $H$-irreducible module of dimension $>1$ by assumption. 
Neither summand is trivial, thus $V_{\Ad}^H=0$ by Lemma~\ref{lem3:dimV_pi^H}.

We next prove \eqref{item:omega_2^H=2omega1=0}.
Assume that $\fg_\C$ is of type $B_n$, $C_n$, or $D_n$. 
It follows from Remark~\ref{rem:KoikeTerada} that
$
\pi_{\omega_1}\otimes \pi_{\omega_1}\simeq 
\pi_0\oplus \pi_{2\omega_1} \oplus \pi_{\omega_2}. 
$
Hence 
\begin{equation*}
1+\dim V_{\pi_{2\omega_1}}^{\fh_\C'} +\dim V_{\pi_{\omega_2}}^{\fh_\C'}
= \dim V_{\pi_{\omega_1}\otimes \pi_{\omega_1}}^{\fh_\C'}
.
\end{equation*}
It remains to show that the right-hand side equals 1 using Lemma~\ref{lem3:dimV_pi^H}. 

We have that 
$\big(\pi_{\omega_1}\otimes \pi_{\omega_1}\big)|_{\fh_\C'} 
= \pi_{\omega_1}|_{\fh_\C'}\otimes \pi_{\omega_1}|_{\fh_\C'}
\simeq\rho\otimes\rho
$.
Since $\rho$ is $\fh_\C'$-irreducible and self-adjoint (because it is of real or quaternionic type, since $G$ is of type $B_n,C_n,D_n$; see Remark~\ref{rem:embedding}), it follows from Remark~\ref{rem:rhoxrho^*} that $1_{\fh_\C'}$ occurs exactly once in $\rho\otimes \rho$. 
\end{proof}

\begin{remark}
Lemma~\ref{lem:omega2-no-esferica} cannot be extended to irreducible symmetric spaces. 
For instance, the embedding $\rho$ of the round sphere $S^n=\SO(n+1)/\SO(n)$ is not irreducible. 
Note that $\rho$ is irreducible in the cases considered in Lemma~\ref{lem:omega2-no-esferica}, and this fact was essential in the proof. 
Actually, in the sphere, one has $V_{\pi_{\omega_1}}^{\SO(n)}\neq0$, 
$V_{\pi_{2\omega_1}}^{\SO(n)}\neq0$, 
thus 
$\lambda^{\pi_{\omega_1}}, \lambda^{\pi_{2\omega_1}}\in\Spec(S^n,g_{\st})$.
Moreover, $\lambda_1(\SO(n+1)/\SO(n),g_{\st}) =\lambda^{\pi_{\omega_1}}$. 
\end{remark}

\begin{notation}\label{not:irrep-G-H}
We will set some conventions for each time we have a homogeneous space $G/H$ and we deal with representations of $G$ and $H$ simultaneously (e.g.\ when considering branching rules from $G$ to $H$). 

We will always fix a maximal torus $T$ of $G$ such that $T\cap H$ is a maximal torus of $H$. 
We use Bourbaki's choice (see \cite[\S{}VI.4]{Bourbaki-Lie4-6}) for the ordering of simple roots and fundamental weights. 
The next table describes the notation used for each group:
\begin{equation*}
\begin{array}{ccccc}
\text{group} &\text{irred.\ rep.} & \text{highest weight} & \text{fund.\ weights} 
\\ \hline
\rule{0pt}{12pt}
G & \pi & \Lambda & \omega_i \\
H & \sigma & \zeta & \eta_i \\
\end{array}
.
\end{equation*}
\end{notation}

When $\fh$ is semisimple, we will use prime symbols to designate the second factor.
For instance, when $\fh=\fh_1\oplus\fh_2$ is the decomposition of $\fh$ in simple ideals, 
any irreducible representation of $\fh$ is of the form $\sigma\widehat\otimes \,\sigma'$ with $\sigma$ an irreducible representation of $\fh_1$ and $\sigma'$ an irreducible representation of $\fh_2$.
Similarly, 
the highest weights are of the form $\zeta+\zeta'$, and the fundamental weights are $\eta_i,\eta_j'$.

\begin{remark}[Exterior and symmetric product]
\label{rem:ExtSym(suma)o(tensor)}

Let $\theta_1,\theta_2$ be finite-dimensional representations of $\fl_\C$. 
Then, as $\fl_\C$-modules, we have 
\begin{equation*}
\label{eq2:Lambda^2(V+W)-Sym^2(V+W)}
\begin{aligned}
{\textstyle \bigwedge^2}(\theta_1\oplus \theta_2) &
\simeq {\textstyle \bigwedge^2}(\theta_1)
\oplus {\textstyle \bigwedge^2}(\theta_2)
\oplus \theta_1\otimes \,\theta_2
,
\\
\Sym^2(\theta_1\oplus \theta_2) &
\simeq \Sym^2(\theta_1)
\oplus \Sym^2(\theta_2)
\oplus \theta_1\otimes \,\theta_2
.
\end{aligned}
\end{equation*}

Now, let $\fl_1, \fl_2$ be arbitrary compact Lie algebras and $\tau_1,\tau_2$ finite-dimensional representations of $(\fl_1)_\C,(\fl_2)_\C$ respectively. 
It is well known that
\begin{equation*}
\label{eq2:Lambda^2(VxW)-Sym^2(VxW)}
\begin{aligned}
{\textstyle \bigwedge^2}(\tau_1\widehat\otimes \, \tau_2) &
\simeq {\textstyle \bigwedge^2}(\tau_1) \widehat\otimes \, \Sym^2(\tau_2)
\oplus \Sym^2(\tau_1) \widehat\otimes \, {\textstyle \bigwedge^2}(\tau_2)
,
\\
\Sym^2(\tau_1\widehat\otimes \, \tau_2) &
\simeq \Sym^2(\tau_1) \widehat\otimes \, \Sym^2(\tau_2)
\oplus {\textstyle \bigwedge^2}(\tau_1) \widehat\otimes \, {\textstyle \bigwedge^2}(\tau_2)
.
\end{aligned}
\end{equation*}
\end{remark}

\begin{remark}\label{rem:autovalor_suma_dominantes}
For $\Lambda,\Lambda'\in\PP^+(\fl_\C)$, one has that  $\lambda^{\pi_{\Lambda+\Lambda'}}>\lambda^{\pi_{\Lambda}}+\lambda^{\pi_{\Lambda'}}$. 
Indeed, \eqref{eq:Freudenthal} gives
\begin{align*}
	\lambda^{\pi_{\Lambda+\Lambda'}} &=	\langle \Lambda+\Lambda'+2\rho, \Lambda+\Lambda' \rangle = \langle \Lambda+2\rho, \Lambda \rangle + \langle \Lambda'+2\rho, \Lambda' \rangle + 2\langle \Lambda,\Lambda' \rangle \\
	&= \lambda^{\pi_{\Lambda}}+\lambda^{\pi_{\Lambda'}} +  2\langle \Lambda,\Lambda' \rangle > \lambda^{\pi_{\Lambda}}+\lambda^{\pi_{\Lambda'}},
\end{align*}
because $\langle\Lambda,\Lambda'\rangle>0$, since both are dominant. 
In particular, $\lambda^{\pi_{\Lambda}}> \sum_j b_j\lambda^{\pi_{\omega_j}}$ if  $\Lambda= \sum_j b_j\omega_j$.
\end{remark}

\section{First Laplace eigenvalue}\label{sec:lambda_1(simplyconnected)}

In this section we provide an explicit expression for $\lambda_1(G/H,g_{\st})$ for each simply connected non-symmetric strongly isotropy irreducible space $G/H$. 
Many cases were already determined in \cite{LauretTolcachier-nu-stab} with the help of a computer (see \cite[\S4.7]{LauretTolcachier-nu-stab} for details). 
What remains are all the ten infinite families and the six isolated cases (Nos.~4, 14--18) where the calculations exceeded the available memory due to the large rank of $\fg_\C$.

The proof is carried out case by case, although some cases will be treated together.
For instance, Theorem~\ref{thm:lambda1(SO(N)/Ad(H))} simultaneously covers three families and five isolated cases.

\subsection{Isotropy subgroup embedded by the adjoint representation}

Let $H'$ be a compact, connected and simple Lie group of dimension $N$.
Its adjoint representation is clearly irreducible and of real type; thus $\Ad :H'\to\SO(N)$ (see Notation~\ref{notation:G_rho}). 
It turns out that 
$$
M_{H'}:=\SO(N)/\Ad(H')
$$ 
is an isotropy irreducible space. 
We omit the case $H'=\SU(2)$ since $\dim M_{H'}=0$. 

Let us denote by $\widetilde M_{H'}$ the universal cover of $M_{H'}$. 
Wolf also provided the realization 
$$
\widetilde M_{H'}=G/H
$$  
for each $H'$,  
with $G$ a connected Lie group acting effectively. 
The realization $G/H$ is the one appearing in the tables, namely, 
Families V, VII, and IX when $\fh'$ is unitary, symplectic, or orthogonal respectively, and the Isolated cases Nos.~4, 10, 14, 16, and 18 when $\fh'$ is of type $E_7$, $G_2$, $F_4$, $E_6$, and $E_8$ respectively.
Note that $G$ is $\SO(N)$ or $\Spin(N)$ in all cases and $\fh\simeq\fh'$.

\begin{theorem}\label{thm:lambda1(SO(N)/Ad(H))}
Let $H'$ be a compact, connected, and simple Lie group of dimension $N\geq8$. 
Then,  
$$
\lambda_1(\widetilde M_{H'},g_{\st})
= \lambda^{\pi_{\ee_1+\ee_2+\ee_3}} 
= \frac{3(N-3)}{2(N-2)}
= \frac32-\frac{3}{2(N-2)}
.
$$
Its multiplicity in $\Spec(G/H,g_{\st})$ is equal to $\dim V_{\pi_{\ee_1+\ee_2+\ee_3}} =\binom{N}{3}$ if $\fh'\not\simeq \su(3)$, and $168$ for $\fh'\simeq \su(3)$. 
\end{theorem}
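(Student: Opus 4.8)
The plan is to use Theorem~\ref{thm:Spec(standard)}, which reduces the problem to identifying the smallest eigenvalue $\lambda^\pi$ among spherical representations $\pi\in\widehat{G}_H$. Since $G$ is $\SO(N)$ or $\Spin(N)$ and $\fg_\C$ is of type $B_n$ or $D_n$ with $n=\lfloor N/2\rfloor$ large (as $N\geq 8$), Freudenthal's formula~\eqref{eq:Freudenthal} gives an explicit value $\lambda^{\pi_\Lambda}$ for each dominant weight $\Lambda$. The standard normalization for type $B_n$/$D_n$ yields $\lambda^{\pi_{\omega_1}}=\tfrac{N-1}{N-2}$, $\lambda^{\pi_{2\omega_1}}=\lambda^{\pi_{\ee_1+\ee_1}}=2$, $\lambda^{\pi_{\omega_2}}=\lambda^{\pi_{\ee_1+\ee_2}}=\tfrac{2(N-2)}{N-2}=2$ (up to the precise constants, which I would compute from the Killing form), and $\lambda^{\pi_{\ee_1+\ee_2+\ee_3}}=\tfrac{3(N-3)}{2(N-2)}=\tfrac32-\tfrac{3}{2(N-2)}$. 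First I would list, using \cite[\S3]{SemmelmannWeingart22} and Remark~\ref{rem:autovalor_suma_dominantes}, all dominant weights $\Lambda$ with $\lambda^{\pi_\Lambda}<\lambda^{\pi_{\ee_1+\ee_2+\ee_3}}$, i.e.\ with $\lambda^{\pi_\Lambda}<\tfrac32$; for $N$ large the only candidates are $\omega_1$ and $\omega_2=\ee_1+\ee_2$ (and $2\omega_1$), all of which have eigenvalue $\geq\tfrac{N-1}{N-2}$ but $<\tfrac32$ once $N$ is moderately large, so these must be ruled out as spherical.

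The next step is exactly the exclusion: by Lemma~\ref{lem:omega2-no-esferica}\eqref{item:omega_2^H=2omega1=0} we have $V_{\pi_{2\omega_1}}^{\fh_\C}=V_{\pi_{\omega_2}}^{\fh_\C}=0$, and by part~\eqref{item:st^H=0}, $V_{\pi_{\omega_1}}^{\fh_\C}=0$. Together with~\eqref{eq:sufficient-lambda_1} this shows that, as soon as every $\Lambda$ with $\lambda^{\pi_\Lambda}<\tfrac32$ lies in $\{\omega_1,2\omega_1,\omega_2\}$, we get $\lambda_1=\lambda^{\pi_{\ee_1+\ee_2+\ee_3}}$ provided $\pi_{\ee_1+\ee_2+\ee_3}$ is itself spherical. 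For small $N$ (i.e.\ $8\le N\le$ some explicit bound) there may be finitely many additional weights with eigenvalue below $\tfrac32$ — for instance weights like $2\omega_2$, $\omega_1+\omega_2$, $3\omega_1$, or spinor weights $\omega_n,\omega_{n-1}$ when $n$ is small — and each of these I would treat by hand using the tensor formulas of Remark~\ref{rem:KoikeTerada} and the branching $\pi_{\omega_1}|_{\fh_\C'}\simeq(d\rho)_\C$, checking that the relevant $\fh_\C'$-invariants vanish. The spinor representations require a separate small argument: since $(d\rho)_\C$ has no trivial summand and the spin representations of $\so(N)$ restricted to $\fh_\C'$ decompose into irreducibles whose highest weights are computed from $(d\rho)_\C$, one checks $V^{\fh_\C'}=0$ there too, or simply notes $\lambda^{\pi_{\omega_n}}$ grows with $N$ and is $\geq\tfrac32$ for $N\geq 8$.

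To see that $\pi_{\ee_1+\ee_2+\ee_3}$ is spherical, I would compute $\bigwedge^3(\pi_{\omega_1})|_{\fh_\C'}\simeq\bigwedge^3((d\rho)_\C)$ and show it contains $1_{\fh_\C'}$: equivalently, by Remark~\ref{rem:rhoxrho^*} and the identity $\bigwedge^3(\theta)\hookrightarrow\theta\otimes\bigwedge^2(\theta)$, that $\bigwedge^2((d\rho)_\C)$ contains a copy of $(d\rho)_\C^*\simeq(d\rho)_\C$. For $\fh'$ simple, $(d\rho)_\C$ is a "small" fundamental-type representation ($\sigma_{\eta_1}$, $\sigma_{\eta_2}$, spin, etc.\ depending on the family), and in each case $\bigwedge^2$ or $\bigwedge^3$ of it is known to contain the adjoint or $(d\rho)_\C$ itself; this is where the case-by-case structure enters, but it is routine via known plethysm tables or the tensor formulas already cited. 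The multiplicity claim then follows from Theorem~\ref{thm:Spec(standard)}: $d_\pi=\dim\bigwedge^3(\C^N)=\binom{N}{3}$ and $d_\pi^H=\dim V_\pi^H$, which equals $1$ in general but equals a larger value ($168/\binom{8}{3}=3$) precisely when $\fh'\simeq\su(3)$ (here $N=8$), because $\bigwedge^3$ of the adjoint of $\SU(3)$ contains the trivial representation with multiplicity $3$ — a finite check.

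\medskip

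\textbf{Main obstacle.} The genuine difficulty is not the computation of $\lambda^{\pi_{\ee_1+\ee_2+\ee_3}}$ nor the exclusions from Lemma~\ref{lem:omega2-no-esferica}, which are clean; it is verifying that \emph{no} dominant weight $\Lambda\notin\{\omega_1,2\omega_1,\omega_2,\ee_1+\ee_2+\ee_3\}$ has $\lambda^{\pi_\Lambda}<\tfrac32-\tfrac{3}{2(N-2)}$ and is spherical — in particular handling the small values of $N$ (where the list of low-eigenvalue weights is longer, including spinor nodes) and confirming the $d_\pi^H$ computation for $\su(3)$. I expect this to require a finite but slightly delicate enumeration, using Remark~\ref{rem:autovalor_suma_dominantes} to bound the coefficients of $\Lambda$ in the fundamental weights and then branching each surviving candidate to $\fh_\C'$ via Remark~\ref{rem:KoikeTerada} and the known decomposition of $(d\rho)_\C$.
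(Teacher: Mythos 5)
Your overall plan—reduce via Theorem~\ref{thm:Spec(standard)}, exclude $\omega_1$, $\omega_2$, $2\omega_1$ via Lemma~\ref{lem:omega2-no-esferica}, and show $\pi_{\ee_1+\ee_2+\ee_3}$ is spherical—is the right skeleton, but three steps have genuine gaps.

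First, the claimed equivalence ``$1_{\fh_\C'}\subset\textstyle\bigwedge^3(\theta)$ iff $\theta^*\subset\textstyle\bigwedge^2(\theta)$'' is not an equivalence. The inclusion $\textstyle\bigwedge^3(\theta)\hookrightarrow\theta\otimes\textstyle\bigwedge^2(\theta)$ gives you only one direction: $1\subset\textstyle\bigwedge^3(\theta)$ implies $1\subset\theta\otimes\textstyle\bigwedge^2(\theta)$, hence $\theta^*\subset\textstyle\bigwedge^2(\theta)$. The converse fails in general, because $\theta\otimes\textstyle\bigwedge^2(\theta)$ decomposes as $\textstyle\bigwedge^3(\theta)$ plus a Schur-functor piece, and a trivial constituent of the tensor product may land entirely in the latter. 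Concretely, having an invariant antisymmetric product $\mu\colon\textstyle\bigwedge^2(\theta)\to\theta$ and an invariant pairing $B\colon\theta\otimes\theta\to\C$ produces the form $T(x,y,z)=B(\mu(x,y),z)$, which is antisymmetric in $(x,y)$ but not automatically in all three slots. For $\theta=\ad_{\fh_\C'}$ this does work—$T$ is the Cartan $3$-form and ad-invariance of the Killing form gives the extra antisymmetry—but you never establish it and instead assert a false general principle. The paper sidesteps the issue entirely: it observes that $\pi_{\ee_1+\ee_2+\ee_3}|_{\fh_\C'}\simeq\textstyle\bigwedge^3(\fh_\C')$ and then uses the Poincar\'e polynomial $\prod_{k\in D_{\fh_\C'}}(1+t^{2k-1})$ for the invariants in $\textstyle\bigwedge^\bullet(\fh_\C')$. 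Since every simple $\fh'$ has $2$ as its smallest degree, the coefficient of $t^3$ is exactly $1$, giving both sphericity and $\dim V_\pi^H=1$ in one stroke, uniformly. This is a cleaner and strictly stronger tool than what you propose, and you should use it rather than a case-by-case plethysm check.

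Second, your shortcut for spinor weights—``$\lambda^{\pi_{\omega_n}}$ grows with $N$ and is $\geq\tfrac32$ for $N\geq8$''—is false. The spinor eigenvalue is $\tfrac{N(N-1)}{16(N-2)}$, which equals $\tfrac{7}{12}$ at $N=8$ and only exceeds $\lambda^{\pi_{\ee_1+\ee_2+\ee_3}}=\tfrac{3(N-3)}{2(N-2)}$ once $N\geq22$ (this is Lemma~\ref{lem:SOlambda^pi<lambda^omega3}). So for all $8\leq N\leq21$ the spinor representations are genuine low-lying candidates and must be excluded by explicit branching; the relevant $\fh'$ with $\dim\fh'\leq21$ are $\su(3)$, $\so(5)$, $\fg_2$, $\su(4)$, $\so(7)$, $\spp(3)$, and in each case one checks the restriction of the spin representation(s) has no trivial $\fh_\C'$-summand.

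Third, your explanation of the multiplicity $168$ for $\fh'\simeq\su(3)$ is wrong. It is not that $\textstyle\bigwedge^3(\ad_{\su(3)})$ contains the trivial representation with multiplicity $3$—the Poincar\'e polynomial argument shows the multiplicity there is exactly $1$. Rather, when $N=8$ the highest weight $\ee_1+\ee_2+\ee_3=\omega_3+\omega_4$, and there are two additional $\Spin(8)$-irreducibles $\pi_{\omega_1+\omega_3}$, $\pi_{\omega_1+\omega_4}$ (each of dimension $56$) with the \emph{same} Casimir eigenvalue, each contributing a one-dimensional $H$-fixed subspace. The multiplicity is $3\cdot 56=168$, the sum over three distinct irreducibles, not a multiplicity-$3$ occurrence inside one of them. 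Getting this right requires the full equality statement in Lemma~\ref{lem:SOlambda^pi<lambda^omega3}, which lists exactly which $\Lambda$ have $\lambda^{\pi_\Lambda}=\lambda^{\pi_{\ee_1+\ee_2+\ee_3}}$ for small $N$.
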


\begin{proof}
Notice that $\fg=\so(N)$. 
For any $0\leq p\leq \lfloor\tfrac{N}{2}\rfloor$, we have that $V_{\pi_{\ee_1+\dots+\ee_p}} =\textstyle{\bigwedge^p} (\pi_{\omega_1}) =\textstyle{\bigwedge^p} (\C^N)$ (see Remark~\ref{rem:ExtSym-standard}). 
Hence
\begin{align*}
\pi_{\ee_1+\dots+\ee_p}|_{\fh}&
\simeq \pi_{\ee_1+\dots+\ee_p}|_{\fh_\C'}
= \pi_{\ee_1+\dots+\ee_p}\circ\ad_{\fh_\C'}
= \textstyle{\bigwedge^p} (\pi_{\ee_1+\dots+\ee_p}\circ\ad_{\fh_\C'}) 
\simeq  \textstyle{\bigwedge^p} (\fh_\C'). 
\end{align*}

Let $n$ denote the rank of $\fh_\C'$. 
The Poincaré polynomial associated with $\fh_\C'$ is given by 
\begin{equation}\label{eq:PoincarePolynomial}
P(1_{\fh_\C'}, {\textstyle\bigwedge}(\fh_\C'),x) :=\prod_{k\in D_{\fh_\C'}} (1+t^{2k-1}),
\end{equation} 
where $D_{\fh_\C'}$ consists of the degrees of the generators for the $H'$-invariant polynomials (see Table~\ref{table:exponents}). 
It is well known (see e.g.\ \cite[\S1]{DiTranti}) that the $k$-th term of $P$ equals the number of times the trivial representation $1_{\fh_\C'}$ occurs in $\textstyle{\bigwedge^k}(\fh_\C')$. 
We note from Table~\ref{table:exponents} that $2$ is the smallest exponent for any choice of $\fh_\C'$.
It follows immediately that the coefficient next to $t^3$ is exactly one. 
We conclude that $1_{\fh_\C'}$ occurs once in $\pi_{\ee_1+\ee_2+\ee_3}|_{\fh_\C'}$, which means by Lemma~\ref{lem3:dimV_pi^H} that $V_{\pi_{\ee_1+\ee_2+\ee_3}}^{\fh_\C'}\neq 0$.

\begin{table}[!htbp]
\renewcommand{\arraystretch}{1.4}
\caption{Degrees of the generators for the $H'$-invariant polynomials in $\fh_\C'$}\label{table:exponents}
$
\begin{array}[t]{cc}
\hline\hline
\fh' &  \text{degrees} 
\\ \hline\hline \rule{0pt}{14pt}
\su(n+1) & 2,3,\dots,n+1\\
\so(2n+1) & 2,4,\dots,2n\\
\spp(n) & 2,4,\dots,2n\\
\so(2n),\,n\geq3 &  n,2,4,\dots,2(n-1)\\
\end{array}
$
\qquad
$
\begin{array}[t]{cc}
\hline\hline
\fh' &  \text{degrees} 
\\ \hline\hline \rule{0pt}{14pt}
%
%
\fe_6  & 2,5,6,8,9,12\\
\fe_7  & 2,6,8,10,12,14,18\\
\fe_8 & 2,8,12,14,18,20,24,30\\
\ff_4 & 2,6,8,12\\
\fg_2 & 2,6 
\end{array}
$
\end{table}

Since $\ee_1+\ee_2+\ee_3 \in\PP^+(G)$ (because $G=\Spin(N)$ or $G=\SO(N)$), we have seen that $\pi_{\ee_1+\ee_2+\ee_3}\in\widehat G_H$, and Theorem~\ref{thm:Spec(standard)} gives 
$
\lambda^{\pi_{\ee_1+\ee_2+\ee_3}}
\in \Spec\big(\widetilde M_{H'}, g_{\st}\big)
.
$

In order to establish $\lambda_1(G/H,g_{\st}) =\lambda^{\pi_{\ee_1+\ee_2+\ee_3}}$, it remains to check \eqref{eq:sufficient-lambda_1}, i.e.\ $V_\pi^{H}=V_\pi^{\fh_\C'}=0$ for all $\pi\in\widehat G\smallsetminus\{1_G\}$ satisfying  $\lambda^\pi<\lambda^{\pi_{\omega_3}}$. 
Those irreducible representations $\pi$'s are explicitly described in Lemma~\ref{lem:SOlambda^pi<lambda^omega3} below. 
We already know that $V_{\pi}^{\fh_\C'}=0$ for $\pi=\pi_{\omega_1},\pi_{\omega_2},\pi_{2\omega_1}$ by Lemma~\ref{lem:omega2-no-esferica}. 
We next list the compact simple Lie algebras $\fh_\C'$ of dimension $N\leq 21$, and the corresponding branching rules for the spin representations (see Notation~\ref{not:irrep-G-H} for the notation of the irreducible representations of $\fh_\C'$): 
\begin{itemize}
\item
For $\fh'=\su(3)$ ($N=8$),  $\pi_{\omega_4}|_{\fh_\C'}\simeq \sigma_{\eta_1+\eta_2}$ and $\pi_{\omega_3}|_{\fh_\C'}\simeq \sigma_{\eta_1+\eta_2}$ by \cite[pp.\ 209]{LieART}. 

\item
For $\fh'=\so(5)\simeq\spp(2)$ ($N=10$),  $\pi_{\omega_5}|_{\fh_\C'}\simeq\pi_{\omega_4}|_{\fh_\C'} \simeq \sigma_{\eta_1+\eta_2}$ by \cite[pp.\ 229]{LieART}.

\item
For $\fh'=\fg_2$ ($N=14$),  $\pi_{\omega_7}|_{\fh_\C'}\simeq\pi_{\omega_6}|_{\fh_\C'} \simeq \sigma_{\eta_1+\eta_2}$
by \cite[pp.\ 255]{LieART}.

\item
For $\fh'=\su(4)\simeq \so(6)$ ($N=15$),  $\pi_{\omega_7}|_{\fh_\C'}\simeq 2 \sigma_{\eta_1+\eta_2+\eta_3}$
by \cite[pp.\ 262]{LieART}. 

\item
For $\fh'=\so(7)$ ($N=21$), 
$\pi_{\omega_{10}}|_{\fh_\C'}\simeq  2\sigma_{\eta_1+\eta_2+\eta_3}$
by \cite[pp.\ 284]{LieART}.

\item
For $\fh'=\spp(3)$ ($N=21$), 
$\pi_{\omega_{10}}|_{\fh_\C'}\simeq  2\sigma_{\eta_1+\eta_2+\eta_3}$
by \cite[pp.\ 284]{LieART}.
\end{itemize}
Since the trivial representation $1_{\fh_\C'}=\sigma_0$ never occurs, it follows that $V_{\pi_{\omega_m}}^{\fh_\C'}=0$ and $V_{\pi_{\omega_{m-1}}}^{\fh_\C'}=0$ in all of these cases, and $\lambda_1(G/H,g_{\st}) =\lambda^{\pi_{\ee_1+\ee_2+\ee_3}}$ follows.

{
It only remains to compute the multiplicity of $\lambda^{\pi_{\ee_1+\ee_2+\ee_3}}$ in $\Spec(G/H,g_{\st})$. 
According to Theorem~\ref{thm:Spec(standard)}, this number is equal to 
\begin{equation*}
\sum_{\pi\in\widehat G_H\,:\, \lambda^\pi=\lambda^{\pi_{\ee_1+\ee_2+\ee_3}}} 
\dim V_{\pi}\cdot\dim V_{\pi}^H.
\end{equation*}
The last part of Lemma~\ref{lem:SOlambda^pi<lambda^omega3} implies that the required multiplicity is given by
$$
\dim V_{\pi_{\ee_1+\ee_2+\ee_3}}\cdot \dim V_{\pi_{\ee_1+\ee_2+\ee_3}}^H 
=\dim V_{\pi_{\ee_1+\ee_2+\ee_3}}
=\dim {\textstyle\bigwedge^3}(\C^N)
=\tbinom{N}{3}
$$ 
if $N=\dim\fh\geq 10$. 
Furthermore, since there are no complex simple Lie algebras of dimension $9$, the remaining case is $N=8$, i.e.\ $\fh'= \su(3)$ and $G=\Spin(8)$.  
Now, \cite[pp.\ 209]{LieART} gives $\pi_{\omega_1+\omega_3}|_{\fh_\C'}\simeq \pi_{\omega_1+\omega_4}|_{\fh_\C'}\simeq  \sigma_{0}\oplus \sigma_{3\eta_1}\oplus \sigma_{\eta_1+\eta_2} \oplus \sigma_{3\eta_2} \oplus \sigma_{2\eta_1+2\eta_2}$, 
which implies $\dim V_{\pi_{\omega_1+\omega_3}}^{\fh_\C'} = \dim V_{\pi_{\omega_1+\omega_4}}^{\fh_\C'} = 1$ by Lemma~\ref{lem3:dimV_pi^H}. 
We conclude that the multiplicity in this case is given by $\dim V_{\pi_{\ee_1+\ee_2+\ee_3}}+\dim V_{\pi_{\omega_1+\omega_3}}+\dim V_{\pi_{\omega_1+\omega_4}}=3\cdot 56=168$, and the proof is complete. 
}
\end{proof}

\begin{remark}
The highest weight $\ee_1+\ee_2+\ee_3$ of $\pi_{\ee_1+\ee_2+\ee_3}$ in Theorem~\ref{thm:lambda1(SO(N)/Ad(H))} is equal to $\omega_3$ in all cases except for $\fh'=\su(3)$ when $N=8$ and $\ee_1+\ee_2+\ee_3=\omega_3+\omega_4$. 
\end{remark}

\begin{lemma}\label{lem:SOlambda^pi<lambda^omega3}
Let $N$ be an integer $\geq8$, $m=\lfloor N/2\rfloor$, and $\Lambda\in\PP^+(\so(N)_\C)$.

We have that $\lambda^{\pi_{\Lambda}}<\lambda^{\pi_{\ee_1+\ee_2+\ee_3}}$ if and only if $\Lambda$ is one of the following:
\begin{itemize}
\item $0$, $\omega_1$, $\omega_2$ for every $N$.

\item $2\omega_1$ if $N\geq 10$. 

\item $\omega_{m}$ if $N\leq 21$.
\item $\omega_{m-1}$ if $N\leq 20$ even. 
\end{itemize}

We have that $\lambda^{\pi_{\Lambda}} =\lambda^{\pi_{\ee_1+\ee_2+\ee_3}}$ if and only if $\Lambda$ is one of the following:
$\ee_1+\ee_2+\ee_3$, 
or also $\omega_1+\omega_3$, $\omega_1+\omega_4$ if $N=8$, 
or also $2\omega_1$, $\omega_1+\omega_4$ if $N=9$.
\end{lemma}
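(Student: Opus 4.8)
The plan is to reduce everything to Freudenthal's formula \eqref{eq:Freudenthal}: with the identification $\innerdots=\kil_{\fg}^*(\cdot,\cdot)$ on $(\mi\ft)^*$ and $\fg=\so(N)_\C$, one has $\lambda^{\pi_\Lambda}=\langle\Lambda,\Lambda+2\rho_{\fg}\rangle$, so the inequality $\lambda^{\pi_\Lambda}<\lambda^{\pi_{\ee_1+\ee_2+\ee_3}}$ becomes a purely combinatorial condition on the finite-dimensional irreducibles of $\so(N)_\C$. First I would fix the normalization of $\kil_{\fg}^*$ on $B_m$ and $D_m$ in the standard $\ee_i$-coordinates (this is where the factor $\tfrac{1}{2(N-2)}$ enters, since $\kil_{\so(N)}(X,Y)=(N-2)\tr(XY)$), record $\rho_{\fg}$, and compute $\lambda^{\pi_{\ee_1+\ee_2+\ee_3}}=\tfrac32-\tfrac{3}{2(N-2)}=\tfrac{3(N-3)}{2(N-2)}$ directly. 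I would also record $\lambda^{\pi_{\omega_1}}=\tfrac{N-1}{2(N-2)}$, $\lambda^{\pi_{\omega_2}}=\tfrac{N-2}{N-2}\cdot(\text{something})$, and the value for the spin (or half-spin) fundamental weight $\omega_m$, which is $\tfrac{N}{8}$ up to the normalization — the key arithmetic fact being that this grows linearly in $N$ while the target eigenvalue stays below $\tfrac32$, so $\omega_m$ (and $\omega_{m-1}$ for $D_m$) qualifies only for small $N$.

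Next I would carry out the pruning. By Remark~\ref{rem:autovalor_suma_dominantes}, $\lambda^{\pi_\Lambda}>\sum_j b_j\lambda^{\pi_{\omega_j}}$ when $\Lambda=\sum_j b_j\omega_j$, so any $\Lambda$ with $\lambda^{\pi_\Lambda}<\tfrac32$ can involve at most a very small number of fundamental weights with small coefficients; concretely, since each $\lambda^{\pi_{\omega_j}}$ for $3\le j\le m-2$ already exceeds $\tfrac32$ (the eigenvalue of $\pi_{\ee_1+\cdots+\ee_j}$ is $\tfrac{j(N-j)}{N-2}$-type and is $\ge$ that of $j=3$ for $3\le j\le N-3$, hence $\ge\tfrac{3(N-3)}{2(N-2)}$... wait, one must be careful: for $j=3$ it equals the target, so strictly the interior fundamental weights all have $\lambda^{\pi_{\omega_j}}\ge\lambda^{\pi_{\ee_1+\ee_2+\ee_3}}$ for $3\le j\le N-3$), the only candidates are built from $\omega_1$, $\omega_2$, and the end weights $\omega_m$ (resp.\ $\omega_{m-1},\omega_m$). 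Then it is a short case analysis: $\Lambda\in\{0,\omega_1,\omega_2,2\omega_1,3\omega_1,\omega_1+\omega_2,2\omega_2,\dots\}$ and the end-weight combinations, computing $\lambda^{\pi_\Lambda}$ for each and comparing with $\tfrac{3(N-3)}{2(N-2)}$ as a function of $N$. One finds $3\omega_1$ and $\omega_1+\omega_2$ have eigenvalue $\ge\tfrac32>$ target (strictly, once $N$ is not too small), $2\omega_2$ likewise, so $0,\omega_1,\omega_2$ survive for all $N$; $2\omega_1$ has eigenvalue $\tfrac{N}{N-2}\cdot(\text{normalized})$ which is $<$ target iff $N\ge10$ and $=$ target iff $N=9$; and the spin weights $\omega_m,\omega_{m-1}$ need the explicit cutoffs $N\le21$ and $N\le20$ respectively.

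For the equality statement I would pin down exactly which $\Lambda$ give $\lambda^{\pi_\Lambda}=\lambda^{\pi_{\ee_1+\ee_2+\ee_3}}$: besides $\ee_1+\ee_2+\ee_3$ itself (which is $\omega_3$ for $D_m,B_m$ with $N\ge7$, and $\omega_3+\omega_4$ for $N=8$ where $D_4$ triality is in play), one checks the borderline small-$N$ coincidences. For $N=8$ ($\so(8)$, highly symmetric), the three weights $\omega_2\,?$ — no: $\omega_1+\omega_3$ and $\omega_1+\omega_4$ (images of $\omega_3+\omega_4$ under triality, since $\lambda^\pi$ is triality-invariant as the Casimir is central) have the target eigenvalue; for $N=9$ ($\so(9)$), $2\omega_1$ and $\omega_1+\omega_4$ hit it. All of this is a finite check once the normalization is fixed. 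The main obstacle is organizational rather than conceptual: one must be confident that the interior fundamental weights $\omega_3,\dots,\omega_{m-2}$ are genuinely excluded for \emph{all} $N$ (using that $\lambda^{\pi_{\ee_1+\cdots+\ee_j}}$ is unimodal in $j$ with its minimum-on-$\{3,\dots,N-3\}$ at the endpoints equal to the target) and that no combination of low-coefficient fundamental weights sneaks below the threshold for some small $N$ that is not one of the listed exceptions — so the care goes into making the "small $N$" boundary cases ($N=8,9,10,\dots,21$) exhaustive and into correctly handling the $B$ versus $D$ distinction for the terminal weight(s).
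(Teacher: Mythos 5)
Your plan is correct and matches the paper's own proof essentially line for line: both proceed via Freudenthal's formula \eqref{eq:Freudenthal} with $\langle\ee_i,\ee_j\rangle=\delta_{ij}/(N-2)$, use Remark~\ref{rem:autovalor_suma_dominantes} to prune $\Lambda=\sum b_j\omega_j$ down to low-coefficient combinations of $\omega_1,\omega_2$ and the (half-)spin weight(s), record $\lambda^{\pi_{\omega_k}}=\tfrac{k(N-k)}{2(N-2)}$ for small $k$ and $\lambda^{\pi_{\omega_m}}=\tfrac{N(N-1)}{16(N-2)}$ for the spin weight, and then verify the stated thresholds $N\geq10$ for $2\omega_1$, $N\leq21$ (resp.\ $N\leq20$ even) for $\omega_m$ (resp.\ $\omega_{m-1}$), and the $N=8,9$ equality coincidences by direct computation. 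Your triality remark for $D_4$ is a pleasant conceptual shortcut for identifying the $N=8$ equality cases $\omega_1+\omega_3,\,\omega_1+\omega_4$ (the paper just computes them directly); the only slip is the offhand claim that the spin-weight eigenvalue is ``$\tfrac N8$ up to normalization'' — it is $\tfrac{N(N-1)}{16(N-2)}\sim\tfrac N{16}$ — but the qualitative point you actually use (it grows unboundedly while the target stays below $\tfrac32$) is right, so this does not affect the argument.
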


\begin{proof}
Write $\Lambda=\sum_{i=1}^{m} b_j\omega_j \in \PP^+(\so(N)_\C)$ with $b_1,\dots,b_n\in\Z_{\geq0}$. By Remark~\ref{rem:autovalor_suma_dominantes}, we have  $\lambda^{\pi_{\Lambda}}> \sum b_j\lambda^{\pi_{\omega_j}}$.
One has that $\rho_{\fg_\C}=\sum_{j=1}^m (\frac{N}{2}-j)\ee_j$	and $\inner{\ee_i}{\ee_j} =\delta_{i,j}\frac{1}{N-2}$.
Now, \eqref{eq:Freudenthal} gives
	\begin{align*}
		\lambda^{\pi_{\omega_k}} = \begin{cases}
		\frac{k(N-k)}{2(N-2)} \qquad \mbox{if $1\leq k\leq \lfloor \frac{N+1}{2} \rfloor-2$}, \\[2mm]
		\frac{N(N-1)}{16(N-2)} \qquad
		\mbox{if $k\geq \lfloor \frac{N+1}{2} \rfloor -1$}.
		\end{cases}
	\end{align*}

One can easily check that 
	$\lambda^{\pi_{\omega_1}} <\lambda^{\pi_{\omega_2}} <\lambda^{\pi_{\omega_3}}$,
	$\lambda^{\pi_{\omega_k}} >\lambda^{\pi_{\omega_3}}$ for all $4\leq k\leq \lfloor \frac{N+1}{2} \rfloor-2$, 
	$\lambda^{\pi_{\omega_k}} +\lambda^{\pi_{\omega_j}} >\lambda^{\pi_{\omega_3}}$ for all $1\leq k<j\leq \lfloor\tfrac{N+1}{2}\rfloor-2$,
	$\lambda^{\pi_{\omega_k}} +\lambda^{\pi_{\omega_m}} >\lambda^{\pi_{\omega_3}}$ for all $k\geq2$,
	$\lambda^{\pi_{k\omega_1}}> 3\lambda^{\pi_{\omega_1}} >\lambda^{\pi_{\omega_3}}$ for all $k\geq3$, 
	$\lambda^{\pi_{2\omega_2}}> 2\lambda^{\pi_{\omega_2}} >\lambda^{\pi_{\omega_3}}$ for all $k\geq2$, 
	and
	$\lambda^{\pi_{\omega_m}} >\lambda^{\pi_{\omega_3}}$ if and only if $N\geq22$ (and the same holds for $\lambda^{\pi_{\omega_{m-1}}}$ if $N$ is even).
In addition, if $N\in\{8,9\}$ (i.e. $m=4$), then
$$
\lambda^{\pi_{\omega_1+\omega_4}} =
\inner{\tfrac12(3\ee_1+\ee_2+\ee_3+\ee_4)} {\tfrac12(3\ee_1+\ee_2+\ee_3+\ee_4) +{\textstyle \sum\limits_{j=1}^4} (N-2j)\ee_j}
= \tfrac{3N-9}{2(N-2)}
= \lambda^{\pi_{\omega_3}}
,
$$
and the same holds for $\lambda^{\pi_{\omega_1+\omega_3}}$ for $N=8$.
Furthermore,
	$$\lambda^{\pi_{2\omega_1}} = \tfrac{N}{N-2}\leq \tfrac{3(N-3)}{2(N-2)}=  \lambda^{\pi_{\omega_3}}
	\quad \Longleftrightarrow \quad
	N\geq9,$$ and the equality is attained only at $N=9$.   
The proof is complete since all the cases have been considered. 
\end{proof}

\begin{remark}\label{rem:isolated-Ad(H)}
Some cases of Theorem~\ref{thm:lambda1(SO(N)/Ad(H))} had been established in \cite{LauretTolcachier-nu-stab} with the help of a computer. 
They are
$3\leq n\leq 8$ for Family V, 
$3\leq n\leq 6$ for Family VII, 
$7\leq n\leq 10$ for Family IX,
and the Isolated case No.~10 (i.e.\ $G/H=\Spin(14)/\op{G}_2$). 
\end{remark}

\subsection{Family VI}

We set $H'=\Sp(n)$ for some $n\geq3$. 
The irreducible representation $\rho:=\sigma_{\eta_2}$ of $H'$ is of real type, and has dimension $N:=2n^2-n-1$. 
The space $\SO(N)/\rho(H')$ is isotropy irreducible. 
Its universal cover is given by $G/H$ where 
\begin{align}\label{eq:G/H-fliaVI}
G&=
\begin{cases}
\Spin(N)&\text{ if }n\equiv 0,1\pmod 4,\\
\SO(N)&\text{ if }n\equiv 2,3\pmod 4,
\end{cases}
& H\simeq\Sp(n)/\Z_2.
\end{align}

\begin{lemma}\label{lem:Sym^2-Lambda^2(eta_2)}
For $\fh'=\spp(n)$, we have 
	$\displaystyle 
	\Sym^2(\sigma_{\eta_2})
	\simeq \sigma_{\eta_4} 
	\oplus \sigma_{2\eta_2}
	\oplus \sigma_{\eta_2}
	\oplus \sigma_{0}
	$ and $\displaystyle 
	\textstyle\bigwedge^2(\sigma_{\eta_2})
	\simeq \sigma_{\eta_1+\eta_3} 
	\oplus \sigma_{2\eta_1}	$.
\end{lemma}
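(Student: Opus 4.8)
The representation $\sigma_{\eta_2}$ of $\spp(n)$ is the irreducible constituent of $\bigwedge^2(\st_{\spp(n)})$ complementary to the trivial representation given by the symplectic form; explicitly $\bigwedge^2(\sigma_{\eta_1})\simeq\sigma_{\eta_2}\oplus\sigma_0$. So I would compute $\Sym^2(\sigma_{\eta_2})$ and $\bigwedge^2(\sigma_{\eta_2})$ by decomposing the fourth tensor power $\sigma_{\eta_1}^{\otimes 4}$ of the standard representation, tracking the action of the symmetric group $\mathbb{S}_4$, and then projecting onto the $\mathbb{S}_2$-isotypic pieces inside $(\bigwedge^2\sigma_{\eta_1})^{\otimes 2}$. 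Concretely: $\sigma_{\eta_2}\otimes\sigma_{\eta_2}\simeq(\bigwedge^2\sigma_{\eta_1}\ominus\sigma_0)^{\otimes 2}$ expands to $\bigwedge^2\sigma_{\eta_1}\otimes\bigwedge^2\sigma_{\eta_1}\ominus 2(\bigwedge^2\sigma_{\eta_1})\oplus\sigma_0$, and the first term is handled by the $C_n$ tensor formula for $\pi_{\omega_2}\otimes\pi_{\omega_2}$ from Remark~\ref{rem:KoikeTerada}. I would then sort the resulting constituents into symmetric and antisymmetric parts.

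For the symmetric/antisymmetric split the cleanest route is a dimension count combined with the self-duality structure. Using Remark~\ref{rem:KoikeTerada} with $p=q=2$ gives $\pi_{\omega_2}\otimes\pi_{\omega_2}=\bigoplus_{j=0}^{2}\bigoplus_{i=0}^{j}\pi_{\omega_{4-j-i}+\omega_{j-i}}$, i.e.\ the constituents $\pi_{2\omega_2}$, $\pi_{\omega_1+\omega_3}$, $\pi_{2\omega_1}$, $\pi_{\omega_2}$, $\pi_{\omega_4}$, $\pi_0$ (with the convention $\omega_k=0$ for $k>n$; for small $n$, say $n=3$, some of these vanish or collapse and must be checked separately). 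Subtracting $2\sigma_{\eta_2}\oplus(\ominus\sigma_0)$ — wait, more carefully: $\sigma_{\eta_2}^{\otimes 2}=(\pi_{\omega_2}\otimes\pi_{\omega_2})\ominus 2\pi_{\omega_2}\oplus\pi_0$, which after cancellation yields $\sigma_{\eta_4}\oplus\sigma_{\eta_1+\eta_3}\oplus\sigma_{2\eta_1}\oplus\sigma_{2\eta_2}\oplus\sigma_{\eta_2}\oplus\sigma_0$. It remains to distribute these six summands between $\Sym^2$ and $\bigwedge^2$. The trivial summand $\sigma_0$ lies in $\Sym^2$ since $\sigma_{\eta_2}$ is of real type (Remark~\ref{rem:rhoxrho^*}). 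For the others I would either invoke the standard fact that the ``Cartan'' piece $\sigma_{2\eta_2}$ and the ``leading'' pieces $\sigma_{\eta_4},\sigma_{\eta_2}$ sit in $\Sym^2$ while $\sigma_{\eta_1+\eta_3},\sigma_{2\eta_1}$ sit in $\bigwedge^2$ (which follows, e.g., from plethysm for $C_n$, or by noting $\sigma_{2\eta_2}$ is the Cartan product hence always symmetric), or — most robustly — verify it by comparing $\dim\Sym^2(V_{\sigma_{\eta_2}})=\binom{N+1}{2}$ against the sum of the dimensions of $\sigma_{\eta_4},\sigma_{2\eta_2},\sigma_{\eta_2},\sigma_0$ via the Weyl dimension formula, and likewise $\dim\bigwedge^2(V_{\sigma_{\eta_2}})=\binom{N}{2}$ against $\dim\sigma_{\eta_1+\eta_3}+\dim\sigma_{2\eta_1}$.

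\textbf{Main obstacle.} The genuinely delicate point is not the abstract decomposition but ensuring the $C_n$ tensor formula of Remark~\ref{rem:KoikeTerada} is applied in its valid range and that the degenerate small-rank cases ($n=3$, where $\omega_4=0$ so $\sigma_{\eta_4}=0$, and any coincidences among constituents) are handled correctly; there the dimension check is indispensable as a sanity test. A secondary subtlety is justifying the symmetric-vs-antisymmetric assignment of the two ``middle'' constituents $\sigma_{2\eta_1}$ and $\sigma_{\eta_1+\eta_3}$ without invoking a plethysm black box — the dimension bookkeeping forces it, but one must actually carry out that arithmetic with the Weyl dimension formula for $C_n$. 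Once those checks are in place the two displayed isomorphisms follow immediately.
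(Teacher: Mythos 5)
Your overall strategy coincides with the paper's: decompose $\sigma_{\eta_2}\otimes\sigma_{\eta_2}$ via the $C_n$ tensor formula of Remark~\ref{rem:KoikeTerada}, anchor certain constituents in $\Sym^2$ or $\bigwedge^2$ by general principles, and force the remaining assignment by a Weyl-dimension count. Two remarks on where your execution drifts from the cleaner route the paper actually takes.

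First, the detour through $(\bigwedge^2\sigma_{\eta_1}\ominus\sigma_0)^{\otimes 2}$ is both unnecessary and, as written, incorrect: once you identify $\sigma_{\eta_2}$ with the \emph{irreducible} $\pi_{\omega_2}$ of $C_n$, the formula $\pi_{\omega_2}\otimes\pi_{\omega_2}=\bigoplus_{j=0}^{2}\bigoplus_{i=0}^{j}\pi_{\omega_{4-j-i}+\omega_{j-i}}$ already \emph{is} the decomposition of $\sigma_{\eta_2}\otimes\sigma_{\eta_2}$; there is nothing left to subtract. Your proposed "cancellation" $\sigma_{\eta_2}^{\otimes 2}=(\pi_{\omega_2}\otimes\pi_{\omega_2})\ominus 2\pi_{\omega_2}\oplus\pi_0$ conflates the irreducible $\pi_{\omega_2}$ with the reducible $\bigwedge^2(\st)=\pi_{\omega_2}\oplus\pi_0$, and applying Remark~\ref{rem:KoikeTerada} to the latter is not what that formula says. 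You happened to land on the correct list of six constituents, but only because that list is already the direct output of Koike--Terada for the irreducibles.

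Second, on the $\Sym^2$ vs.\ $\bigwedge^2$ assignment: the paper anchors \emph{three} constituents before dimension counting — $\sigma_0$ and $\sigma_{2\eta_2}$ in $\Sym^2$ (real type, resp.\ Cartan square, exactly as you note), and also $\sigma_{2\eta_1}$ in $\bigwedge^2$ via the Panyushev--Yakimova theorem that the adjoint representation always sits in $\bigwedge^2(\rho)$ for a self-dual faithful $\rho$. With those three fixed, only three candidates remain, and a single subset-sum check against $\dim\bigwedge^2(\sigma_{\eta_2})-\dim\sigma_{2\eta_1}$ pins down $\sigma_{\eta_1+\eta_3}$. Your version, anchoring only $\sigma_0$ and $\sigma_{2\eta_2}$, leaves four constituents to distribute and hence a larger subset-sum uniqueness verification; it can be made to work, but you must actually run through all the alternative subsets and show each fails, which you gesture at but do not carry out. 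Citing the adjoint-in-$\bigwedge^2$ fact buys you a meaningfully shorter computation. The $n=3$ degeneracy you flag is real but harmless: the paper's dimension formula for $\sigma_{\eta_4}$ evaluates to $0$ at $n=3$, so the term drops out consistently.
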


\begin{proof}
We know that $\sigma_{\eta_2}\otimes \sigma_{\eta_2}=
	\Sym^2(\sigma_{\eta_2}) \oplus \displaystyle 
	\textstyle\bigwedge^2(\sigma_{\eta_2})$. 
Furthermore (see e.g.\ Remark~\ref{rem:KoikeTerada}) 
\begin{equation}\label{eq:eta2xeta2}
\sigma_{\eta_2}\otimes \sigma_{\eta_2}
\simeq \sigma_{\eta_4} 
	\oplus \sigma_{2\eta_2}
	\oplus 
	\sigma_{\eta_1+\eta_3} 
	\oplus \sigma_{2\eta_1} \oplus \sigma_{\eta_2}
	\oplus \sigma_{0}.
\end{equation}

The task is now to determine, for each irreducible term at the right-hand side, whether it lies in $\Sym^2(\sigma_{\eta_2})$ or in $\textstyle\bigwedge^2(\sigma_{\eta_2})$. 
The trivial representation $\sigma_0$ is in $\Sym^2(\sigma_{\eta_2})$ because $\sigma_{\eta_2}$ is of real type (see e.g.\ \cite[\S26.3]{FultonHarris-book}). 
The representation $\sigma_{2\eta_2}$ is also in $\Sym^2(\sigma_{\eta_2})$. 
Indeed, if $v$ denotes a highest weight vector of $\sigma_{\eta_2}$, then $v^2$ is a highest weight vector of $\Sym^2(\sigma_{\eta_2})$ with highest weight $\eta_2+\eta_2=2\eta_2$. 
The adjoint representation $\sigma_{2\eta_1}$ is included in $\textstyle\bigwedge^2(\sigma_{\eta_2})$ (see  \cite[Thm.~3.3]{PanyushevYakimova08}). 

Now, the only way to fill the number
\begin{align*}
\dim\textstyle\bigwedge^2(\sigma_{\eta_2})-\dim\sigma_{2\eta_1}&
= \tbinom{2n^2-n-1}{2}-\tbinom{2n+1}{2}
	=\tfrac{4n^4-4n^3-9n^2+n+2}{2}
\end{align*}
with the values 
$\dim\sigma_{\eta_4}= \binom{2n}{4}-\binom{2n}{2}=\frac{4n^4-12n^3-n^2+3n}{6}$, 
$\dim\sigma_{\eta_1+\eta_{3}} =\frac{4n^4-4n^3-9n^2+n+2}{2}$,  
and
$\dim\sigma_{\eta_2} = \binom{2n}{2}-1=2n^2-n-1$, 
is clearly with $\dim\sigma_{\eta_1+\eta_{3}}$. 
(All the dimensions can be computed using Weyl dimension formula \cite[Thm.~5.84]{Knapp-book-beyond}.)
We conclude that $\textstyle\bigwedge^2(\sigma_{\eta_2}) =\sigma_{\eta_1+\eta_3}\oplus\sigma_{2\eta_1}$, and the other identity follows. 
\end{proof}

\begin{theorem}\label{thm:flia6}
If $G$ and $H$ are as in \eqref{eq:G/H-fliaVI}, then  
$$
\lambda_1(G/H,g_{\st})=\lambda^{\pi_{3\omega_1}}
= \frac{3(N+1)}{2(N-2)}
= \frac{3}{2}+\frac{9}{2(N-2)}
.
$$ 
{
Its multiplicity in $\Spec(G/H,g_{\st})$ is $\dim V_{\pi_{3\omega_1}}=\frac{N+4}{N-2}\binom{N}{3}$ for $n\geq4$ and $2548$ for $n=3$.}
\end{theorem}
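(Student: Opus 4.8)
The plan is to follow the strategy of Theorem~\ref{thm:lambda1(SO(N)/Ad(H))}. Since $\fg=\so(N)$ with $N=2n^2-n-1$, Freudenthal's formula \eqref{eq:Freudenthal}, computed exactly as for the $\lambda^{\pi_{\omega_k}}$ in the proof of Lemma~\ref{lem:SOlambda^pi<lambda^omega3}, gives $\lambda^{\pi_{3\omega_1}}=\frac{3}{2}+\frac{9}{2(N-2)}=\frac{3(N+1)}{2(N-2)}$. It then remains to show that $\pi_{3\omega_1}\in\widehat G_H$ and that it satisfies the sufficiency criterion \eqref{eq:sufficient-lambda_1}, namely $V_\pi^{\fh_\C'}=0$ for all $\pi\in\widehat G\smallsetminus\{1_G\}$ with $\lambda^\pi<\lambda^{\pi_{3\omega_1}}$.

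The key point is a single computation that simultaneously handles $\pi_{3\omega_1}$, $\pi_{\omega_3}$ and $\pi_{\omega_1+\omega_2}$. Since $\pi_{\omega_1}|_{\fh_\C'}\simeq\rho=\sigma_{\eta_2}$ (Remark~\ref{rem:embedding}), Remark~\ref{rem:ExtSym-standard} and Lemma~\ref{lem:Sym^2-Lambda^2(eta_2)} give $\pi_{\omega_2}|_{\fh_\C'}\simeq\bigwedge^2(\sigma_{\eta_2})\simeq\sigma_{\eta_1+\eta_3}\oplus\sigma_{2\eta_1}$ and $\pi_{2\omega_1}|_{\fh_\C'}\simeq\sigma_{\eta_4}\oplus\sigma_{2\eta_2}\oplus\sigma_{\eta_2}$ (omitting $\sigma_{\eta_4}$ if $n=3$). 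By Remark~\ref{rem:KoikeTerada}, $\pi_{2\omega_1}\otimes\pi_{\omega_1}\simeq\pi_{3\omega_1}\oplus\pi_{\omega_1+\omega_2}\oplus\pi_{\omega_1}$ and $\pi_{\omega_1}\otimes\pi_{\omega_2}\simeq\pi_{\omega_1+\omega_2}\oplus\pi_{\omega_3}\oplus\pi_{\omega_1}$. Restricting the second identity to $\fh_\C'$ gives $\sigma_{\eta_2}\otimes(\sigma_{\eta_1+\eta_3}\oplus\sigma_{2\eta_1})$, which contains no copy of $\sigma_0$ by Remark~\ref{rem:rhoxrho^*} (as $\sigma_{\eta_2}$ is self-adjoint and $\not\simeq\sigma_{\eta_1+\eta_3},\sigma_{2\eta_1}$); hence $V_{\pi_{\omega_3}}^{\fh_\C'}=V_{\pi_{\omega_1+\omega_2}}^{\fh_\C'}=0$. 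Restricting the first identity gives $(\sigma_{\eta_4}\oplus\sigma_{2\eta_2}\oplus\sigma_{\eta_2})\otimes\sigma_{\eta_2}$, in which $\sigma_0$ occurs exactly once (only the summand $\sigma_{\eta_2}\simeq\sigma_{\eta_2}^*$ contributes); since $\pi_{\omega_1+\omega_2}$ and $\pi_{\omega_1}$ carry no $\fh_\C'$-invariant, we obtain $\dim V_{\pi_{3\omega_1}}^{\fh_\C'}=1$. As $3\omega_1\in\PP^+(G)$ (the standard representation lifts to $G$ in all cases), this shows $\pi_{3\omega_1}\in\widehat G_H$, so $\lambda^{\pi_{3\omega_1}}\in\Spec(G/H,g_{\st})$.

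For minimality, an eigenvalue comparison analogous to Lemma~\ref{lem:SOlambda^pi<lambda^omega3}, now with $\lambda^{\pi_{3\omega_1}}$ in place of $\lambda^{\pi_{\omega_3}}$, shows that the nonzero $\Lambda\in\PP^+(G)$ with $\lambda^{\pi_\Lambda}<\lambda^{\pi_{3\omega_1}}$ are precisely $\omega_1,\omega_2,2\omega_1,\omega_3,\omega_1+\omega_2$, together with $\omega_4$ when $n=3$; in particular no spin weight occurs, because $G=\Spin(N)$ forces $N\geq27$ and then $\lambda^{\pi_{\omega_m}}=\frac{N(N-1)}{16(N-2)}>\lambda^{\pi_{3\omega_1}}$ (with $m=\lfloor N/2\rfloor$), while for $n=3$ one has $G=\SO(14)$, which has no spin weights. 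Now $V_\pi^{\fh_\C'}=0$ for $\omega_1,\omega_2,2\omega_1$ by Lemma~\ref{lem:omega2-no-esferica} and for $\omega_3,\omega_1+\omega_2$ by the previous paragraph, so for $n\geq4$ the criterion \eqref{eq:sufficient-lambda_1} is met and $\lambda_1(G/H,g_{\st})=\lambda^{\pi_{3\omega_1}}$. By Theorem~\ref{thm:Spec(standard)} the multiplicity is $\sum\dim V_{\pi_\Lambda}\dim V_{\pi_\Lambda}^H$ over $\Lambda$ with $\lambda^{\pi_\Lambda}=\lambda^{\pi_{3\omega_1}}$; for $n\geq4$ the unique such $\Lambda$ is $3\omega_1$, and since $\Sym^3(\pi_{\omega_1})\simeq\pi_{3\omega_1}\oplus\pi_{\omega_1}$ (Remark~\ref{rem:ExtSym-standard}), the multiplicity equals $\dim V_{\pi_{3\omega_1}}=\binom{N+2}{3}-N=\frac{N(N-1)(N+4)}{6}=\frac{N+4}{N-2}\binom{N}{3}$.

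The remaining case $n=3$ (so $\fg=\so(14)$, $\fh'=\spp(3)$, $\rho=\sigma_{\eta_2}$ of dimension $14$) is the main obstacle: it requires two facts that do \emph{not} follow from the Pieri/plethysm bookkeeping above, namely $V_{\pi_{\omega_4}}^{\fh_\C'}=0$ (otherwise $\lambda_1$ would drop below $\lambda^{\pi_{3\omega_1}}$, since $\lambda^{\pi_{\omega_4}}=\frac53<\frac{15}{8}=\lambda^{\pi_{3\omega_1}}$) and $\dim V_{\pi_{\omega_5}}^{\fh_\C'}=1$ (needed for the multiplicity, since $\lambda^{\pi_{\omega_5}}=\frac{15}{8}=\lambda^{\pi_{3\omega_1}}$ and $\dim V_{\pi_{\omega_5}}=\binom{14}{5}=2002$, so that the multiplicity becomes $\dim V_{\pi_{3\omega_1}}+\dim V_{\pi_{\omega_5}}=546+2002=2548$). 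Equivalently one must show that $\bigwedge^4(\sigma_{\eta_2})$ carries no $\spp(3)$-invariant while $\bigwedge^5(\sigma_{\eta_2})$ carries a one-dimensional one; the subtlety is that the two $\spp(3)$-invariants sitting inside $\Sym^2(\bigwedge^2\sigma_{\eta_2})$ must be correctly distributed between $\bigwedge^4(\sigma_{\eta_2})$ and its complement, so this has to be settled by an explicit branching computation for $\spp(3)\hookrightarrow\so(14)$, obtainable from branching software as in the proof of Theorem~\ref{thm:lambda1(SO(N)/Ad(H))} or from the computer-assisted calculations of \cite{LauretTolcachier-nu-stab}.
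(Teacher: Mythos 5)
Your proof is correct and follows essentially the same strategy as the paper: both isolate the trivial $\fh_\C'$-representation inside the tensor products $\pi_{\omega_2}\otimes\pi_{\omega_1}$ and $\pi_{2\omega_1}\otimes\pi_{\omega_1}$ via Lemma~\ref{lem:Sym^2-Lambda^2(eta_2)} and Remark~\ref{rem:rhoxrho^*}, then invoke the eigenvalue comparison (the paper's Lemma~\ref{lem:lambda^pi<lambda^3omega1}) to verify \eqref{eq:sufficient-lambda_1}, and finally fall back on explicit branching data for the low-rank case $n=3$.

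The one worthwhile difference is your observation that for $n=3$ one has $G=\SO(14)$ rather than $\Spin(14)$, so that the half-integral weights $\omega_6,\omega_7,\omega_1+\omega_6,\omega_1+\omega_7$ appearing in the paper's Lemma~\ref{lem:lambda^pi<lambda^3omega1} are simply not $G$-integral and need not be branched. This is a genuine (small) simplification: the paper's proof checks them anyway by citing \cite{LieART}. Your version also restricts $\pi_{2\omega_1}$ directly rather than $\Sym^2(\pi_{\omega_1})=\pi_{2\omega_1}\oplus\pi_0$, which changes nothing since $\sigma_0\otimes\sigma_{\eta_2}$ has no invariants. You correctly flag that the two remaining facts for $n=3$ --- $V_{\pi_{\omega_4}}^{\fh_\C'}=0$ and $\dim V_{\pi_{\omega_5}}^{\fh_\C'}=1$ --- cannot be read off from the plethysm alone (the two invariants of $\Sym^2(\bigwedge^2\sigma_{\eta_2})$ must be located in $\bigwedge^4$ versus $\mathbb{S}_{(2,2)}$), and must be settled by an explicit $\spp(3)\hookrightarrow\so(14)$ branching; this is exactly what the paper does via LieART. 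Both proofs then get the same multiplicity, $546+2002=2548$.
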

\begin{proof}
One can check (see e.g.\ Remark~\ref{rem:KoikeTerada}) that 
\begin{align}\label{eq:2omega1xomega1;omega2xomega1}
	\pi_{\omega_2}\otimes\pi_{\omega_{1}}
	\simeq \pi_{\omega_3}
	\oplus \pi_{\omega_1+\omega_2}
	\oplus \pi_{\omega_1}
\quad \text{ and } \quad
\pi_{2\omega_1}\otimes\pi_{\omega_{1}}
\simeq \pi_{3\omega_1}
\oplus \pi_{\omega_1+\omega_2}
\oplus \pi_{\omega_1}
.	
\end{align}
Restricting to $\fh_\C$, or equivalently to $\fh_\C'$, we obtain that
\begin{equation}\label{eq:omega2xomega1|_h}
\begin{aligned}
\pi_{\omega_2}\otimes\pi_{\omega_1}|_{\fh_\C}&
	\simeq \left({\textstyle\bigwedge^2}(\pi_{\omega_1}) \otimes \pi_{\omega_1} \right) |_{\fh_\C'}
	={\textstyle\bigwedge^2}(\pi_{\omega_1}|_{\fh_\C'}) \otimes \pi_{\omega_1} |_{\fh_\C'}
	\\ & 
	\simeq {\textstyle\bigwedge}^2(\sigma_{\eta_2}) \otimes\sigma_{\eta_2}
	\simeq (\sigma_{\eta_1+\eta_3} 
	\oplus \sigma_{2\eta_1}) \otimes \sigma_{\eta_2},
\end{aligned}
\end{equation}
and
\begin{equation}\label{eq:Sym^2xomega1|_h}
\begin{aligned}
\big(\pi_{2\omega_1}\oplus\pi_0\big) \otimes\pi_{\omega_{1}}|_{\fh_\C}
&\simeq \left(\Sym^2(\pi_{\omega_1}) \otimes\pi_{\omega_{1}}\right)|_{\fh_\C'}
= \Sym^2(\pi_{\omega_1}|_{\fh_\C'}) \otimes \pi_{\omega_{1}}|_{\fh_\C'}
\\ &
= \Sym^2(\sigma_{\eta_2}) \otimes \sigma_{\eta_2}
\simeq \big(\sigma_{\eta_4} 
	\oplus \sigma_{2\eta_2}
	\oplus \sigma_{\eta_2}
	\oplus \sigma_{0}\big) \otimes \sigma_{\eta_2}
	.
\end{aligned}
\end{equation}
In both cases, we used that $\pi_{\omega_1}|_{\fh_\C'}=\sigma_{\eta_2}$ (because $\pi_{\omega_1}$ is the standard representation of $\fg_\C$) and Lemma~\ref{lem:Sym^2-Lambda^2(eta_2)} in the last step.

Taking into account Remark~\ref{rem:rhoxrho^*}, it follows immediately from \eqref{eq:omega2xomega1|_h} and \eqref{eq:Sym^2xomega1|_h} that
\begin{align}
\dim V_{\pi_{\omega_2}\otimes\pi_{\omega_1}}^{\fh_\C'}=0
\qquad \mbox{ and } \qquad
\dim V_{(\pi_{2\omega_1}\oplus\pi_0) \otimes\pi_{\omega_{1}}}^{\fh_\C'}=1.
\end{align}
(Note that $\sigma_{\eta_2}^*\simeq\sigma_{\eta_2}$ for the identity at the right.)
Now, \eqref{eq:2omega1xomega1;omega2xomega1} gives
$
	\dim V_{\pi_{\omega_3}}^{\fh_\C'}
	+\dim V_{\pi_{\omega_1+\omega_2}}^{\fh_\C'}
	+\dim V_{\pi_{\omega_1}}^{\fh_\C'}
	=\dim V_{\pi_{\omega_2}\otimes\pi_{\omega_{1}}}^{\fh_\C'}=0
$, which implies 
$\dim V_{\pi_{\omega_3}}^{\fh_\C'}
= \dim V_{\pi_{\omega_1+\omega_2}}^{\fh_\C'}
= \dim V_{\pi_{\omega_1}}^{\fh_\C'}=0
$, and also 
\begin{align*}
	\dim V_{\pi_{3\omega_1}}^{\fh_\C'}&
	=
	\dim V_{\pi_{3\omega_1}}^{\fh_\C'}
	+\dim V_{\pi_{\omega_1+\omega_2}}^{\fh_\C'}
	+2\dim V_{\pi_{\omega_1}}^{\fh_\C'}
	\\ & 
	=
	\dim V_{\pi_{3\omega_1}\oplus \pi_{\omega_1+\omega_2}\oplus \pi_{\omega_1}}^{\fh_\C'}+ \dim V_{\pi_{\omega_1}}^{\fh_\C'}
	= \dim V_{(\pi_{2\omega_1}\oplus \pi_{0})\otimes \pi_{\omega_1}}^{\fh_\C'} =1
.
\end{align*}

We have seen that $\lambda^{\pi_{3\omega_1}}\in\Spec(G/H,g_{\st})$, and it remains to check \eqref{eq:sufficient-lambda_1}. 
Since $N=2n^2-n-1\geq 27$ for all $n\geq4$, and $N=14$ for $n=3$, then those $\pi$'s are described in Lemma~\ref{lem:lambda^pi<lambda^3omega1} below. 
We already proved that $V_\pi^{\fh_\C'}=0$ for $\pi=\pi_{\omega_1}, \pi_{\omega_3}, \pi_{\omega_1+\omega_2}$.
The case $V_{\pi_{\omega_2}}^{\fh_\C'}=0$ follows by  Lemma \ref{lem:omega2-no-esferica}.
This completes the cases $n\geq4$. 

We now assume $n=3$, thus $N=14$. 
From \cite[pp.\ 255]{LieART}, we have that 
$\pi_{\omega_4}|_{\fh_\C'} \simeq \sigma_{\eta_2}\oplus \sigma_{\eta_1+\eta_2}\oplus \sigma_{2\eta_2}\oplus \sigma_{2\eta_1+\eta_2}\oplus \sigma_{\eta_1+\eta_2+\eta_3}\oplus \sigma_{4\eta_1}$, 
$\pi_{\omega_{i}}|_{\fh_\C'} \simeq \sigma_{\eta_1+\eta_2}$ 
and
$\pi_{\omega_1+\omega_{i}}|_{\fh_\C'} \simeq \sigma_{\eta_1}\oplus \sigma_{\eta_3}\oplus \sigma_{3\eta_1}\oplus \sigma_{\eta_1+\eta_2}\oplus \sigma_{\eta_2+\eta_3}\oplus \sigma_{2\eta_1+\eta_3}\oplus \sigma_{\eta_1+2\eta_2}$ for any $i=6,7$. 
Since $\sigma_0=1_{\fh_\C'}$ does not occur in any of them, ${V_{\pi_{\omega_4}}^{\fh_\C'}=V_{\pi_{\omega_{i}}}^{\fh_\C'}= }V_{\pi_{\omega_1+\omega_{i}}}^{\fh_\C'}=0$ by Lemma~\ref{lem3:dimV_pi^H}.

We now compute the multiplicity of $\lambda_1(G/H,g_{\st})=\lambda^{\pi_{3\omega_1}}$ in $\Spec(G/H,g_{\st})$, which is given by 
$
\sum_{\pi} 
\dim V_{\pi}\cdot\dim V_{\pi}^H,
$
where the sum is over $\pi\in\widehat G_H$ satisfying $\lambda^\pi=\lambda^{\pi_{3\omega_1}}$. 
If $n\geq4$, then $N\geq27$ and Lemma~\ref{lem:lambda^pi<lambda^3omega1} implies that the multiplicity is 
	$
	\dim V_{\pi_{3\omega_1}}\cdot \dim V_{\pi_{3\omega_1}}^H 
	=\dim V_{\pi_{3\omega_1}}
	=\frac{N+4}{N-2}\binom{N}{3}
	$.
If $n=3$, then $N=14$ and Lemma~\ref{lem:lambda^pi<lambda^3omega1} ensures that the multiplicity is $\dim V_{\pi_{3\omega_1}}+\dim V_{\pi_{\omega_5}}\dim V_{\pi_{\omega_5}}^H =546+2002$, since $\dim V_{\pi_{\omega_5}}^H=1$ by \cite[pp.\ 255]{LieART}. 
\end{proof}

\begin{lemma}\label{lem:lambda^pi<lambda^3omega1}
Let $\Lambda\in\PP^+(\so(N)_\C)$ for some integer $N$. 

If $N\geq 26$, we have  $\lambda^{\pi_{\Lambda}}<\lambda^{\pi_{3\omega_1}}$ if and only if $\Lambda$ is one of 
$0$, $\omega_1$, $2\omega_1$, $\omega_2$, $\omega_1+\omega_2$, $\omega_3$.
 
If $N=14$, we have   $\lambda^{\pi_{\Lambda}} <\lambda^{\pi_{3\omega_1}}$ if and only if  $\Lambda$ is one of 
$0$, $\omega_1$, $2\omega_1$, $\omega_2$, $\omega_1+\omega_2$, $\omega_3$, $\omega_4$, $\omega_6$, $\omega_7$, $\omega_1+\omega_6$, $\omega_1+\omega_7$. 

If $N=20$, we have  $\lambda^{\pi_{\Lambda}} <\lambda^{\pi_{3\omega_1}}$ if and only if  $\Lambda$ is one of 
$0$, $\omega_1$, $2\omega_1$, $\omega_2$, $\omega_1+\omega_2$, $\omega_3$,  $\omega_9$, $\omega_{10}$. 

If $N\geq 14$, we have that $\lambda^{\pi_{\Lambda}} =\lambda^{\pi_{3\omega_1}}$ if and only if either $\Lambda=3\omega_1$ or $\Lambda=\omega_5$ for $N=14$, 
or $\Lambda=\omega_4$ for $N=19$.
\end{lemma}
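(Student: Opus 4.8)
\emph{Proof proposal.} The plan is to follow the template used in the proof of Lemma~\ref{lem:SOlambda^pi<lambda^omega3}. Write $\Lambda=\sum_j b_j\omega_j\in\PP^+(\so(N)_\C)$ with $b_j\in\Z_{\geq0}$, and recall from there the explicit values $\lambda^{\pi_{\omega_k}}=\frac{k(N-k)}{2(N-2)}$ for $1\leq k\leq\lfloor\frac{N+1}{2}\rfloor-2$ (the ``wedge'' fundamental weights) and $\lambda^{\pi_{\omega_k}}=\frac{N(N-1)}{16(N-2)}$ for $k\geq\lfloor\frac{N+1}{2}\rfloor-1$ (the ``spin'' fundamental weights), together with $\lambda^{\pi_{3\omega_1}}=\frac{3(N+1)}{2(N-2)}$ from Theorem~\ref{thm:flia6}. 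As a first reduction, note that for $N\geq 8$ one has $\lambda^{\pi_{\omega_j}}\geq\lambda^{\pi_{\omega_1}}=\frac{N-1}{2(N-2)}$ for every $j$ (the wedge values increase with $k$ up to the middle index, and $\frac{N(N-1)}{16(N-2)}\geq\frac{N-1}{2(N-2)}\Leftrightarrow N\geq 8$); hence, by Remark~\ref{rem:autovalor_suma_dominantes}, any $\Lambda$ with $\sum_j b_j\geq 4$ satisfies $\lambda^{\pi_\Lambda}>4\lambda^{\pi_{\omega_1}}=\frac{2(N-1)}{N-2}>\frac{3(N+1)}{2(N-2)}=\lambda^{\pi_{3\omega_1}}$. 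So only the finitely many $\Lambda$ with $\sum_j b_j\leq 3$ need to be inspected.

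First I would treat the fundamental weights ($\sum_j b_j=1$). For the wedge weights, $k\mapsto\lambda^{\pi_{\omega_k}}$ is strictly increasing, and the comparison $k(N-k)\lessgtr 3(N+1)$ shows: $\omega_1,\omega_2,\omega_3$ lie strictly below $\lambda^{\pi_{3\omega_1}}$ for all $N$; $\omega_4$ lies below, at, or above according as $N<19$, $N=19$, or $N>19$; $\omega_5$ lies below, at, or above according as $N<14$, $N=14$, or $N>14$; and no wedge weight $\omega_k$ with $k\geq 6$ lies at or below the threshold in the range where the wedge formula applies. For the spin weights $\omega_m$ (and $\omega_{m-1}$ when $N$ is even), $\lambda^{\pi_{\omega_m}}<\lambda^{\pi_{3\omega_1}}$ is equivalent to $N(N-1)<24(N+1)$, i.e.\ $N^2-25N-24<0$, which holds exactly for $N\leq 25$, with no equality since $721$ is not a perfect square. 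This yields all single-fundamental-weight entries in the three regimes $N\geq 26$, $N=20$, $N=14$, and shows that the only fundamental weight realizing $\lambda^{\pi_{3\omega_1}}$ is $\omega_4$ (when $N=19$) or $\omega_5$ (when $N=14$).

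Next I would handle the composite weights ($\sum_j b_j\in\{2,3\}$), using $\lambda^{\pi_\Lambda}>\sum_j b_j\lambda^{\pi_{\omega_j}}$ from Remark~\ref{rem:autovalor_suma_dominantes}. If $\sum_j b_j=3$ and $\Lambda\neq 3\omega_1$, then $\Lambda$ contains some $\omega_j$ with $j\geq 2$, so $\lambda^{\pi_\Lambda}>2\lambda^{\pi_{\omega_1}}+1=\frac{2N-3}{N-2}>\lambda^{\pi_{3\omega_1}}$ for $N\geq 10$ (here I use that $\lambda^{\pi_{\omega_j}}\geq 1$ for every $j\geq 2$, which holds when $N\geq 8$); thus $3\omega_1$ is the only candidate with $\sum_j b_j=3$, and it realizes the threshold. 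If $\sum_j b_j=2$ with both indices $\geq 2$, a short case check over the relevant pairs $(i,j)$ (namely $(2,2),(2,3),(2,m),(3,m),(m,m),(m-1,m)$, all others being even larger) gives $\lambda^{\pi_\Lambda}>\lambda^{\pi_{3\omega_1}}$ for every $N\geq 14$. It remains to analyze $\omega_1+\omega_j$, for which a direct computation with \eqref{eq:Freudenthal} gives $\lambda^{\pi_{2\omega_1}}=\frac{N}{N-2}$, $\lambda^{\pi_{\omega_1+\omega_2}}=\frac{3(N-1)}{2(N-2)}$, $\lambda^{\pi_{\omega_1+\omega_3}}=2$, $\lambda^{\pi_{\omega_1+\omega_j}}>\lambda^{\pi_{\omega_1}}+\lambda^{\pi_{\omega_4}}>\lambda^{\pi_{3\omega_1}}$ for $j\geq 4$ wedge and $N\geq 14$, and $\lambda^{\pi_{\omega_1+\omega_m}}=\frac{N(N+7)}{16(N-2)}$ (equal to $\lambda^{\pi_{\omega_1+\omega_{m-1}}}$ when $N$ is even, by the diagram automorphism of $\so(N)_\C$). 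Comparing each with $\lambda^{\pi_{3\omega_1}}=\frac{3(N+1)}{2(N-2)}$: $2\omega_1$ and $\omega_1+\omega_2$ lie strictly below for every $N$; $\omega_1+\omega_3$ lies strictly above for $N\geq 12$; and $\omega_1+\omega_m$ (hence $\omega_1+\omega_{m-1}$) lies strictly below iff $N^2-17N-24<0$, i.e.\ iff $N\leq 18$, with no equality since $385$ is not a perfect square.

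Assembling Steps~2 and~3 gives exactly the four assertions of the lemma: for $N\geq 26$ the weights $\Lambda$ with $\lambda^{\pi_\Lambda}<\lambda^{\pi_{3\omega_1}}$ are $0,\omega_1,2\omega_1,\omega_2,\omega_1+\omega_2,\omega_3$; for $N=20$ one adds the spin weights $\omega_9,\omega_{10}$; for $N=14$ one adds $\omega_4,\omega_6,\omega_7,\omega_1+\omega_6,\omega_1+\omega_7$; and for every $N\geq 14$ the weights with $\lambda^{\pi_\Lambda}=\lambda^{\pi_{3\omega_1}}$ are $3\omega_1$, plus $\omega_5$ if $N=14$ and $\omega_4$ if $N=19$. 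The whole argument is elementary, being just comparisons of rational functions of $N$; the only steps that call for care are identifying, for the small values $N=14$ and $N=20$, which fundamental weights are of wedge versus spin type (so that the correct closed form for $\lambda^{\pi_{\omega_k}}$ is used), and verifying that the short list of borderline composite candidates above is complete — both of which follow readily from the monotonicity of $k\mapsto\lambda^{\pi_{\omega_k}}$ on wedge weights and the bound $\lambda^{\pi_\Lambda}>\sum_j b_j\lambda^{\pi_{\omega_j}}$.
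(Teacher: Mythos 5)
Your proof follows the same template as the paper's proof of Lemma~\ref{lem:SOlambda^pi<lambda^omega3}, which is exactly what the paper intends here (the proof of this lemma is omitted precisely because it ``parallels'' that one), and your final lists of weights agree with the statement in every regime, including the borderline equalities at $N=14$ and $N=19$.

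There is, however, one incorrect supporting claim in your treatment of $\sum_j b_j=3$: you assert that $\lambda^{\pi_{\omega_j}}\geq 1$ for every $j\geq 2$ as soon as $N\geq 8$. This is true for the wedge weights (where $\lambda^{\pi_{\omega_2}}=1$ and $k\mapsto k(N-k)$ increases), but it fails for the spin weights when $N\leq 14$: one needs $\tfrac{N(N-1)}{16(N-2)}\geq 1$, i.e.\ $N^2-17N+32\geq 0$, which requires $N\geq 15$. In particular, for $N=14$ the half-spin weights $\omega_6,\omega_7$ of $D_7$ have $\lambda^{\pi_{\omega_6}}=\lambda^{\pi_{\omega_7}}=\tfrac{91}{96}<1$. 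Your conclusion survives because the honest bound $2\lambda^{\pi_{\omega_1}}+\min_{j\geq2}\lambda^{\pi_{\omega_j}}=\tfrac{104}{96}+\tfrac{91}{96}=\tfrac{195}{96}$ still exceeds $\lambda^{\pi_{3\omega_1}}=\tfrac{15}{8}=\tfrac{180}{96}$, but the step should be patched: replace the claimed ``$\geq 1$'' by the actual minimum of $\lambda^{\pi_{\omega_j}}$ over $j\geq 2$ (namely $\lambda^{\pi_{\omega_2}}=1$ for $N\geq 15$, and the spin value for $N=14$), and verify the resulting inequality directly. For $N=20$ and $N\geq 26$ your parenthetical claim, and hence the argument as written, is correct.
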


\begin{proof}
The proof
is omitted as it parallels that of Lemma~\ref{lem:SOlambda^pi<lambda^omega3}.
\end{proof}

\subsection{Family X}

We set $H'=\SO(n)$ for some $n\geq5$. 
The irreducible representation $\rho:=\sigma_{2\eta_1}$ of $H'$ is of real type, and it has dimension $N:= \frac{(n-1)(n+2)}{2}=\frac{n^2+n-2}{2}$. 
The space $\SO(N)/\rho(H')$ is isotropy irreducible. 
Its universal cover is $G/H$, where
\begin{align}\label{eq:G/H-fliaX}
	G&=
	\begin{cases}
		\Spin(N)&\text{ if }n\equiv 0\pmod 4,\\
		\SO(N)&\text{ if }n\equiv 1,2,3\pmod 4,
	\end{cases}
	&
H&\simeq
\begin{cases}
\SO(n)/\Z_2&\text{ if }n\equiv0\pmod2,\\
\SO(n)&\text{ if }n\equiv1\pmod2.
\end{cases}
\end{align}

This case is very similar to Family VI, so we omit several details. 

\begin{lemma}\label{lem:Sym^2-Lambda^2(2eta_1)}
For $\fh'=\so(n)$, we have that 
	$\displaystyle 
\Sym^2(\sigma_{2\eta_1})
\simeq \sigma_{4\eta_1} 
\oplus \sigma_{2\eta_2}
\oplus \sigma_{2\eta_1}
\oplus \sigma_{0}
$ and $\displaystyle 
\textstyle\bigwedge^2(\sigma_{2\eta_1})
\simeq \sigma_{2\eta_1+\eta_2} 
\oplus \sigma_{\eta_2}	$.
\end{lemma}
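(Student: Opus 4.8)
The plan is to mimic the proof of Lemma~\ref{lem:Sym^2-Lambda^2(eta_2)}, exploiting that $\sigma_{2\eta_1}$ is the second symmetric power of the standard representation of $\so(n)$ (up to subtracting a trivial summand). First I would recall from Remark~\ref{rem:KoikeTerada} (the tensor formula $\pi_{r\omega_1}\otimes\pi_{s\omega_1}$ with $r=s=2$, applied to $\fl_\C=\so(n)_\C$) the decomposition
\begin{equation*}
\sigma_{2\eta_1}\otimes\sigma_{2\eta_1}
\simeq \sigma_{4\eta_1}\oplus\sigma_{2\eta_1+\eta_2}\oplus\sigma_{2\eta_2}\oplus\sigma_{2\eta_1}\oplus\sigma_{\eta_2}\oplus\sigma_{0}.
\end{equation*}
(One must be slightly careful when $n$ is small, so that $\eta_2$, $2\eta_2$, $4\eta_1$, $2\eta_1+\eta_2$ are genuine dominant weights; for $n\ge5$ and $\so(n)$ of rank $\ge2$ this holds, with the usual identifications $\eta_2\leftrightarrow\ee_1+\ee_2$, etc., for the low-rank coincidences.) Then, since $\sigma_{2\eta_1}\otimes\sigma_{2\eta_1}\simeq\Sym^2(\sigma_{2\eta_1})\oplus\bigwedge^2(\sigma_{2\eta_1})$, the task reduces to sorting the six summands between the symmetric and the alternating part.

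Next I would sort them by three standard mechanisms. The trivial representation $\sigma_0$ lies in $\Sym^2(\sigma_{2\eta_1})$ because $\sigma_{2\eta_1}$ is of real type (Remark~\ref{rem:rhoxrho^*}, or \cite[\S26.3]{FultonHarris-book}). The Cartan component $\sigma_{4\eta_1}$ lies in $\Sym^2(\sigma_{2\eta_1})$ since the square of a highest-weight vector is a highest-weight vector of weight $2\eta_1+2\eta_1=4\eta_1$; likewise $\sigma_{2\eta_2}$, with highest weight $2(\ee_1+\ee_2)$, is the Cartan-type ``second-row'' piece and sits in $\Sym^2$ (this is exactly the symmetric analogue used in Lemma~\ref{lem:Sym^2-Lambda^2(eta_2)}). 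For the adjoint-type summand $\sigma_{\eta_2}$ (the adjoint representation of $\so(n)$ has highest weight $\eta_2$), I would invoke that it occurs in $\bigwedge^2$ of a real-type self-dual representation, as in \cite[Thm.~3.3]{PanyushevYakimova08}. That accounts for $\sigma_0,\sigma_{4\eta_1},\sigma_{2\eta_2}$ in $\Sym^2$ and $\sigma_{\eta_2}$ in $\bigwedge^2$, leaving only $\sigma_{2\eta_1}$ and $\sigma_{2\eta_1+\eta_2}$ to place.

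To pin down the remaining two, I would argue by a dimension count exactly as in the proof of Lemma~\ref{lem:Sym^2-Lambda^2(eta_2)}: with $\dim\sigma_{2\eta_1}=N=\frac{(n-1)(n+2)}{2}$, the space $\bigwedge^2(\sigma_{2\eta_1})$ has dimension $\binom{N}{2}$, of which $\dim\sigma_{\eta_2}=\binom{n}{2}$ is already used; I would check via the Weyl dimension formula (\cite[Thm.~5.84]{Knapp-book-beyond}) that $\binom{N}{2}-\binom{n}{2}$ equals $\dim\sigma_{2\eta_1+\eta_2}$ and is \emph{strictly larger} than $\dim\sigma_{2\eta_1}=N$, so $\sigma_{2\eta_1}$ cannot fit in $\bigwedge^2$ and both $\sigma_{2\eta_1+\eta_2}$ and $\sigma_{2\eta_1}$ must lie in $\Sym^2$. (Equivalently one can count $\Sym^2$: $\binom{N+1}{2}=1+\dim\sigma_{4\eta_1}+\dim\sigma_{2\eta_2}+\dim\sigma_{2\eta_1}+\dim\sigma_{2\eta_1+\eta_2}$.) This gives
\begin{equation*}
\Sym^2(\sigma_{2\eta_1})\simeq\sigma_{4\eta_1}\oplus\sigma_{2\eta_2}\oplus\sigma_{2\eta_1}\oplus\sigma_{0},
\qquad
\textstyle\bigwedge^2(\sigma_{2\eta_1})\simeq\sigma_{2\eta_1+\eta_2}\oplus\sigma_{\eta_2},
\end{equation*}
and the second identity follows from the first together with the tensor decomposition above.

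The main obstacle I anticipate is not any single deep step but rather the bookkeeping of low-rank $\so(n)$ exceptions: for $n=5,6$ (where $\so(5)\simeq\spp(2)$, $\so(6)\simeq\su(4)$) one should verify the weights $\eta_2,2\eta_2,2\eta_1+\eta_2$ are interpreted correctly and that the tensor formula from Remark~\ref{rem:KoikeTerada} still applies (it does, with the stated replacements), and for $n$ small one must confirm the strict dimension inequality $\binom{N}{2}-\binom{n}{2}>N$ does not degenerate — this is immediate since $N$ grows quadratically in $n$ while the comparison is against $N$ itself, but it is the one place where a careful (if routine) check is needed. Everything else is a direct transcription of the Family VI argument.
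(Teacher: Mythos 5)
Your overall strategy matches the paper's: decompose $\sigma_{2\eta_1}\otimes\sigma_{2\eta_1}$ via the Koike--Terada formula, place $\sigma_0$ in $\Sym^2$ by real type, place $\sigma_{4\eta_1}$ in $\Sym^2$ as the Cartan component, place $\sigma_{\eta_2}$ in $\bigwedge^2$ by Panyushev--Yakimova, and finish by a dimension count. However, your justification for placing $\sigma_{2\eta_2}$ in $\Sym^2$ does not hold. You call $\sigma_{2\eta_2}$ ``the Cartan-type second-row piece'' and say this is ``exactly the symmetric analogue used in Lemma~\ref{lem:Sym^2-Lambda^2(eta_2)},'' but in that lemma $\sigma_{2\eta_2}$ lands in $\Sym^2$ precisely because $\eta_2$ is the highest weight of the representation being squared, so $v^2$ has weight $2\eta_2$. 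Here the representation being squared is $\sigma_{2\eta_1}$, whose Cartan component is $\sigma_{4\eta_1}$, not $\sigma_{2\eta_2}$; the highest-weight-vector argument gives you nothing about $\sigma_{2\eta_2}$. Since your two-summand dimension count only works after $\sigma_{2\eta_2}$ has been removed from the running, this is a genuine gap: the placement it relies on is asserted, not proved.

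The paper avoids this by not trying to place $\sigma_{2\eta_2}$ a priori; it runs the dimension count over all three unplaced summands $\sigma_{2\eta_2},\sigma_{2\eta_1},\sigma_{2\eta_1+\eta_2}$ against the gap $\binom{N}{2}-\binom{n}{2}$ and observes that the only subset matching that number is $\{\sigma_{2\eta_1+\eta_2}\}$. If you restructure your final step along those lines, the $\sigma_{2\eta_2}$ placement becomes a conclusion rather than a premise and the proof is complete. One further slip: your parenthetical display
$\binom{N+1}{2}=1+\dim\sigma_{4\eta_1}+\dim\sigma_{2\eta_2}+\dim\sigma_{2\eta_1}+\dim\sigma_{2\eta_1+\eta_2}$
contradicts the answer you are trying to prove, since $\sigma_{2\eta_1+\eta_2}$ should not appear in the $\Sym^2$ sum; the correct equation is
$\binom{N+1}{2}=1+\dim\sigma_{4\eta_1}+\dim\sigma_{2\eta_2}+\dim\sigma_{2\eta_1}$.
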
	  
\begin{proof}
One can check (see, e.g.\ Remark~\ref{rem:KoikeTerada}) that 
$$
\Sym^2(\sigma_{2\eta_1}) \oplus 
\textstyle\bigwedge^2(\sigma_{2\eta_1})
\simeq \sigma_{2\eta_1}\otimes \sigma_{2\eta_1}
\simeq \sigma_{4\eta_1} 
	\oplus \sigma_{2\eta_1}
	\oplus 
	\sigma_{2\eta_1+\eta_1} 
	\oplus \sigma_{2\eta_2} \oplus \sigma_{\eta_2}
	\oplus \sigma_{0}.
$$
As in the proof of Lemma~\ref{lem:Sym^2-Lambda^2(eta_2)}, $\sigma_0$ is in $\Sym^2(\sigma_{2\eta_1})$ because $\sigma_{2\eta_1}$ is of real type (see e.g.\ \cite[\S26.3]{FultonHarris-book}),
$\sigma_{4\eta_1}$ is also in $\Sym^2(\sigma_{2\eta_1})$,  
and
$\sigma_{\eta_2}$ is in $\textstyle\bigwedge^2(\sigma_{2\eta_1})$ by \cite[Thm.~3.3]{PanyushevYakimova08}. 
	
One can verify that the only way to fill the number
\begin{align*}
\dim\textstyle\bigwedge^2(\sigma_{2\eta_1})-\dim\sigma_{\eta_2}  &
= \tbinom{N}{2} -\tbinom{n}{2}
= \tfrac{n^4+2n^3-9n^2-2n+8}{8}
\end{align*}
with the values $\dim\sigma_{2\eta_2} = \frac{n^4-7n^2-6n}{12}$, $\dim\sigma_{2\eta_1+\eta_{2}} =\frac{n^4+2n^3-9n^2-2n+8}{8}$, and  $\dim \sigma_{2\eta_1} = \binom{n+1}{2}-1$ is the one described in the statement. 
\end{proof}

\begin{theorem}
If $G$ and $H$ are as in \eqref{eq:G/H-fliaX}, then  
$$
\lambda_1(G/H,g_{\st})=\lambda^{\pi_{3\omega_1}}
={\frac{3(N+1)}{2(N-2)}}
=\frac32 + \frac{9}{2(N-2)}
.
$$ 
{
Its multiplicity in $\Spec(G/H,g_{\st})$ is equal to $\dim V_{\pi_{3\omega_1}}=\frac{N+4}{N-2}\binom{N}{3}$ for $n\geq 6$, and is $2548$ when $n=5$.
}
\end{theorem}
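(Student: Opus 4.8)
The plan is to follow the blueprint just used for Family VI, since Family X is structurally identical: here $\fg=\so(N)$ with $N=\frac{(n-1)(n+2)}{2}$, and the isotropy embedding is $\pi_{\omega_1}|_{\fh_\C'}\simeq\sigma_{2\eta_1}$, the standard representation of $\fg_\C$ restricted to $\fh_\C'=\so(n)_\C$. The candidate spherical representation is $\pi_{3\omega_1}$, and the eigenvalue $\lambda^{\pi_{3\omega_1}}=\frac{3(N+1)}{2(N-2)}$ follows from \eqref{eq:Freudenthal} exactly as in the Family VI proof. The two tasks are: (i) show $\dim V_{\pi_{3\omega_1}}^{\fh_\C'}=1$, so that $\pi_{3\omega_1}\in\widehat G_H$; and (ii) verify the minimality condition \eqref{eq:sufficient-lambda_1}, i.e.\ that every $\pi\in\widehat G\smallsetminus\{1_G\}$ with $\lambda^\pi<\lambda^{\pi_{3\omega_1}}$ satisfies $V_\pi^{\fh_\C'}=0$.

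For (i) I would argue verbatim as in Theorem~\ref{thm:flia6}. From Remark~\ref{rem:KoikeTerada} one has $\pi_{2\omega_1}\otimes\pi_{\omega_1}\simeq \pi_{3\omega_1}\oplus\pi_{\omega_1+\omega_2}\oplus\pi_{\omega_1}$ and $\pi_{\omega_2}\otimes\pi_{\omega_1}\simeq \pi_{\omega_3}\oplus\pi_{\omega_1+\omega_2}\oplus\pi_{\omega_1}$. Restricting to $\fh_\C'$ and using $\pi_{\omega_1}|_{\fh_\C'}=\sigma_{2\eta_1}$ together with Lemma~\ref{lem:Sym^2-Lambda^2(2eta_1)}, we get $\big(\pi_{\omega_2}\otimes\pi_{\omega_1}\big)|_{\fh_\C'}\simeq(\sigma_{2\eta_1+\eta_2}\oplus\sigma_{\eta_2})\otimes\sigma_{2\eta_1}$ and $\big((\pi_{2\omega_1}\oplus\pi_0)\otimes\pi_{\omega_1}\big)|_{\fh_\C'}\simeq(\sigma_{4\eta_1}\oplus\sigma_{2\eta_2}\oplus\sigma_{2\eta_1}\oplus\sigma_0)\otimes\sigma_{2\eta_1}$. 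Since $\sigma_{2\eta_1}$ is self-adjoint and none of $\sigma_{2\eta_1+\eta_2},\sigma_{\eta_2}$ is isomorphic to $\sigma_{2\eta_1}$ while $\sigma_0\otimes\sigma_{2\eta_1}$ contributes exactly once, Remark~\ref{rem:rhoxrho^*} gives $\dim V_{\pi_{\omega_2}\otimes\pi_{\omega_1}}^{\fh_\C'}=0$ and $\dim V_{(\pi_{2\omega_1}\oplus\pi_0)\otimes\pi_{\omega_1}}^{\fh_\C'}=1$. From the first, $\dim V_{\pi_{\omega_3}}^{\fh_\C'}=\dim V_{\pi_{\omega_1+\omega_2}}^{\fh_\C'}=\dim V_{\pi_{\omega_1}}^{\fh_\C'}=0$; plugging these into the second yields $\dim V_{\pi_{3\omega_1}}^{\fh_\C'}=1$, hence $\pi_{3\omega_1}\in\widehat G_H$ and $\lambda^{\pi_{3\omega_1}}\in\Spec(G/H,g_{\st})$.

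For (ii) I would invoke an analogue of Lemma~\ref{lem:lambda^pi<lambda^3omega1} classifying the $\Lambda\in\PP^+(\so(N)_\C)$ with $\lambda^{\pi_\Lambda}\leq\lambda^{\pi_{3\omega_1}}$. For $n\geq6$ (equivalently $N\geq20$) the only candidates below the threshold are $0,\omega_1,2\omega_1,\omega_2,\omega_1+\omega_2,\omega_3$ together with the spin weights $\omega_{m-1},\omega_m$ when $N$ is small; the first six are already killed ($V_{\pi_{\omega_1}}^{\fh_\C'}=V_{\pi_{\omega_2}}^{\fh_\C'}=V_{\pi_{2\omega_1}}^{\fh_\C'}=0$ by Lemma~\ref{lem:omega2-no-esferica}, and $V_{\pi_{\omega_3}}^{\fh_\C'}=V_{\pi_{\omega_1+\omega_2}}^{\fh_\C'}=0$ from step (i)), while the spin weights require a case-by-case branching-rule check for the finitely many small $N$. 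This handles $n\geq6$. The case $n=5$ gives $N=14$, coinciding with the $\Spin(14)$ situation, and I would note that $G/H=\SO(14)/\SO(5)$ here shares the relevant branching data: one checks directly, using tables such as \cite{LieART}, that $V_{\pi_{\omega_4}}^{\fh_\C'}=V_{\pi_{\omega_6}}^{\fh_\C'}=V_{\pi_{\omega_7}}^{\fh_\C'}=V_{\pi_{\omega_1+\omega_6}}^{\fh_\C'}=V_{\pi_{\omega_1+\omega_7}}^{\fh_\C'}=0$, so \eqref{eq:sufficient-lambda_1} holds and $\lambda_1=\lambda^{\pi_{3\omega_1}}$. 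Finally the multiplicity follows from Theorem~\ref{thm:Spec(standard)}: for $n\geq6$ it is $\dim V_{\pi_{3\omega_1}}=\frac{N+4}{N-2}\binom{N}{3}$ (the unique $\Lambda$ with $\lambda^{\pi_\Lambda}=\lambda^{\pi_{3\omega_1}}$ by the lemma, and $\dim V_{\pi_{3\omega_1}}^{H}=1$), while for $n=5$ one adds the contribution $\dim V_{\pi_{\omega_5}}\cdot\dim V_{\pi_{\omega_5}}^{H}$ from the $N=14$ coincidence $\lambda^{\pi_{\omega_5}}=\lambda^{\pi_{3\omega_1}}$, giving $546+2002=2548$.

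The main obstacle is the bookkeeping for the spin representations at small $N$: for $n$ in the range where $\lfloor N/2\rfloor$ is small enough that $\lambda^{\pi_{\omega_{m}}}$ or $\lambda^{\pi_{\omega_{m-1}}}$ (or products like $\pi_{\omega_1}\otimes\pi_{\omega_m}$) can dip below $\lambda^{\pi_{3\omega_1}}$, one cannot rely on the clean dimension-counting argument and must extract the relevant branching rules $\pi_{\omega_m}|_{\fh_\C'}$ explicitly — essentially the only place where the Family VI and Family X arguments genuinely diverge, since the embedding $\so(n)\hookrightarrow\so(N)$ via $\sigma_{2\eta_1}$ differs from $\spp(n)\hookrightarrow\so(N)$ via $\sigma_{\eta_2}$. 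Once the small-$N$ cases are dispatched, everything else is a transcription of the Family VI proof.
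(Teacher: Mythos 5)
Your proposal mirrors the paper's proof: it restricts the same tensor products via Lemma~\ref{lem:Sym^2-Lambda^2(2eta_1)}, invokes Remark~\ref{rem:rhoxrho^*} to count the trivial constituents, and uses Lemma~\ref{lem:lambda^pi<lambda^3omega1} to dispatch the low-eigenvalue candidates and read off the multiplicity. Two small clarifications: the paper applies Lemma~\ref{lem:lambda^pi<lambda^3omega1} verbatim rather than an ``analogue'' (since $N=14$, $20$, or $N\geq 27$ in Family X, all covered by that lemma), and for $n=5$ the ambient group $\SO(14)$ coincides with the Family VI, $n=3$ case but the isotropy subgroup does not ($\SO(5)$ embedded by $\sigma_{2\eta_1}$ versus $\Sp(3)/\Z_2$ embedded by $\sigma_{\eta_2}$), so the relevant spin and exterior-power branching rules must be extracted independently from \cite{LieART} rather than carried over --- which your final ``check directly'' correctly acknowledges despite the slightly misleading phrase ``shares the relevant branching data''.
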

\begin{proof}
Similarly to \eqref{eq:omega2xomega1|_h} and \eqref{eq:Sym^2xomega1|_h}, we obtain that 
\begin{align*}
\pi_{\omega_2}\otimes\pi_{\omega_1}|_{\fh_\C}&
\simeq {\textstyle\bigwedge}^2(\sigma_{2\eta_1}) \otimes\sigma_{2\eta_1}
\simeq (\sigma_{2\eta_1+\eta_2} \oplus \sigma_{\eta_2}) \otimes \sigma_{2\eta_1},
\\ 
\big(\pi_{2\omega_1}\oplus\pi_0\big) \otimes\pi_{\omega_{1}}|_{\fh_\C}
&
\simeq \Sym^2(\sigma_{2\eta_1}) \otimes \sigma_{2\eta_1}
\simeq \big(\sigma_{4\eta_1} \oplus \sigma_{2\eta_2}\oplus \sigma_{2\eta_1} \oplus \sigma_{0}\big) \otimes \sigma_{2\eta_1}
.
\end{align*} 
The last identity in both cases follows from Lemma~\ref{lem:Sym^2-Lambda^2(2eta_1)}.

We observe that $\sigma_0$ occurs exactly once in $(\sigma_{4\eta_1} \oplus \sigma_{2\eta_2} \oplus \sigma_{2\eta_1} \oplus \sigma_{0}) \otimes \sigma_{2\eta_1}$, and does not occur in $(\sigma_{2\eta_1+\eta_2} \oplus \sigma_{\eta_2}) \otimes \sigma_{2\eta_1}$.  
Proceeding analogously to the proof of Theorem~\ref{thm:flia6}, we prove $V_{\pi_{\omega_1}}^{\fh_\C'} = V_{\pi_{\omega_1+\omega_2}}^{\fh_\C'} = V_{\pi_{\omega_3}}^{\fh_\C'}= 0$ and $V_{\pi_{3\omega_1}}^{\fh_\C'}=1$.	

It remains to verify \eqref{eq:sufficient-lambda_1}. 
The set of $\pi\in\widehat G\smallsetminus\{1_G\}$ satisfying $\lambda^\pi<\lambda^{\pi_{3\omega_1}}$ is described in Lemma~\ref{lem:lambda^pi<lambda^3omega1} since $N=14,20$ for $n=5,6$ respectively, and $N\geq 27$ for $n\geq7$. 
The case $V_{\pi_{\omega_2}}^{\fh_\C}=0$ follows by  Lemma \ref{lem:omega2-no-esferica}.
This completes the cases $n\geq7$. 
The finitely many remaining cases for $n=5,6$ can be handled using the tables of branching rules in \cite{LieART}.

The multiplicity of $\lambda_1(G/H,g_{\st})$
can be determined by the same argument as in the proof of Theorem~\ref{thm:flia6} using Lemma~\ref{lem:lambda^pi<lambda^3omega1}, and we leave the computation to the reader.
\end{proof}

\subsection{Family IV}

We set $H'=\Sp(1)\times \SO(n)$ for some $n\geq3$, and
$\rho:= \sigma_{\eta_1}\widehat\otimes\,\sigma_{\eta_1'}'$, which is of symplectic type and has dimension $N:=2n$. 
The space $\Sp(N/2)/\rho(H')$ is isotropy irreducible, with universal cover $G/H$, where 
\begin{align}\label{eq:G/H-fliaIV}
	G&=\Sp(n)/\Z_2,
	&
	H&\simeq 
\begin{cases}
(\Sp(1)/\Z_2)\otimes(\SO(n)/\Z_2)
	&\text{ if }n\equiv0\pmod2,\\
(\Sp(1)/\Z_2)\otimes\SO(n)
&\text{ if }n\equiv1\pmod2. 
	\end{cases}
\end{align}

\begin{lemma}\label{lem-flia4:branchings}
For $\fg_\C$ and $\fh_\C'$ defined as above, we have that 
\begin{align*}
\pi_{2\omega_1}|_{\fh_\C'}
&\simeq \,
 \begin{cases}
 	\sigma_{2\eta_1}\widehat\otimes\,  \sigma_{0}'
 	\, \oplus \, 
 	\sigma_{2\eta_1}\widehat\otimes\,  \sigma_{4\eta_1'}' 
 	\, \oplus \, 
 	\sigma_0\widehat\otimes\,  \sigma_{2\eta_1'}', &\text{ if }n=3, \\
 	\sigma_{2\eta_1}\widehat\otimes\,  \sigma_{0}'
 	\, \oplus \, 
 	\sigma_{2\eta_1}\widehat\otimes\,  \sigma_{2\eta_1'+2\eta_2'}' 
 	\, \oplus \, 
 	\sigma_0\widehat\otimes\,  \sigma_{2\eta_1'}' \, \oplus \, 
 	\sigma_0\widehat\otimes\,  \sigma_{2\eta_2'}', &\text{ if }n=4, \\
 	\sigma_{2\eta_1}\widehat\otimes\,  \sigma_{0}'
 	\, \oplus \, 
 	\sigma_{2\eta_1}\widehat\otimes\,  \sigma_{2\eta_1'}' 
 	\, \oplus \, 
 	\sigma_0\widehat\otimes\,  \sigma_{2\eta_2'}', &\text{ if }n= 5,\\
 	\sigma_{2\eta_1}\widehat\otimes\,  \sigma_{0}'
 	\, \oplus \, 
 	\sigma_{2\eta_1}\widehat\otimes\,  \sigma_{2\eta_1'}' 
 	\, \oplus \, 
 	\sigma_0\widehat\otimes\,  \sigma_{\eta_2'+\eta_3'}', &\text{ if }n= 6,\\
 	\sigma_{2\eta_1}\widehat\otimes\,  \sigma_{0}'
 	\, \oplus \, 
 	\sigma_{2\eta_1}\widehat\otimes\,  \sigma_{2\eta_1'}' 
 	\, \oplus \, 
 	\sigma_0\widehat\otimes\,  \sigma_{\eta_2'}', &\text{ if }n\geq 7,
 \end{cases}	 
\\
\pi_{\omega_2}|_{\fh_\C'}
&\simeq  \, 
 \begin{cases}
		\sigma_{0}\widehat\otimes\,  \sigma_{4\eta_1'}'
	\, \oplus \,
	\sigma_{2\eta_1}\widehat\otimes\,  \sigma_{2\eta_1'}'
	, &\text{ if }n=3, \\
	\sigma_{0}\widehat\otimes\, \sigma_{2\eta_1'+2\eta_2'}' 
	\, \oplus \, 
	\sigma_{2\eta_1}\widehat\otimes\,  \sigma_{2\eta_1'}'
	\, \oplus \, 
	\sigma_0\widehat\otimes\,  \sigma_{2\eta_2'}', &\text{ if }n=4, \\
	\sigma_{0}\widehat\otimes\, \sigma_{2\eta_1'}' 
	\, \oplus \, 
	\sigma_{2\eta_1}\widehat\otimes\,  \sigma_{2\eta_2'}', &\text{ if }n= 5,\\
	\sigma_{0}\widehat\otimes\, \sigma_{2\eta_1'}' 
	\, \oplus \, 
	\sigma_{2\eta_1}\widehat\otimes\,  \sigma_{\eta_2'+\eta_3'}', &\text{ if }n= 6,\\
	\sigma_{0}\widehat\otimes\, \sigma_{2\eta_1'}' 
	\, \oplus \, 
	\sigma_{2\eta_1}\widehat\otimes\,  \sigma_{\eta_2'}', &\text{ if }n\geq 7,
\end{cases}
\\
\pi_{\omega_3}|_{\fh_\C'}
&\simeq \,
	 \begin{cases}
		\sigma_{3\eta_1}\widehat\otimes\,  \sigma_{0}'
		\, \oplus \,
		\sigma_{\eta_1}\widehat\otimes\,  \sigma_{4\eta_1'}'
		, &\text{ if }n=3, \\
		\sigma_{3\eta_1}\widehat\otimes\,  \sigma_{\eta_1'+\eta_2'}'
		\, \oplus \,
		\sigma_{\eta_1}\widehat\otimes\,  \sigma_{3\eta_1'+\eta_2'}'
		\, \oplus \,
		\sigma_{\eta_1}\widehat\otimes\,  \sigma_{\eta_1'+3\eta_2'}', &\text{ if }n=4, \\
		\sigma_{3\eta_1}\widehat\otimes\,  \sigma_{2\eta_2'}'
		\, \oplus \,
		\sigma_{\eta_1}\widehat\otimes\,  \sigma_{\eta_1'+2\eta_2'}', &\text{ if }n= 5,\\
		\sigma_{3\eta_1}\widehat\otimes\,  \sigma_{2\eta_2'}'
		\, \oplus \,
		\sigma_{3\eta_1}\widehat\otimes\,  \sigma_{2\eta_3'}'
		\, \oplus \,
		\sigma_{\eta_1}\widehat\otimes\,  \sigma_{\eta_1'+\eta_2'+\eta_3'}', &\text{ if }n= 6,\\
			\sigma_{3\eta_1}\widehat\otimes\,  \sigma_{2\eta_3'}'
		\, \oplus \,
		\sigma_{\eta_1}\widehat\otimes\,  \sigma_{\eta_1'+\eta_2'}', &\text{ if }n= 7,\\
			\sigma_{3\eta_1}\widehat\otimes\,  \sigma_{\eta_3'+\eta_4'}'
		\, \oplus \,
		\sigma_{\eta_1}\widehat\otimes\,  \sigma_{\eta_1'+\eta_2'}', &\text{ if }n= 8,\\
		\sigma_{3\eta_1}\widehat\otimes\,  \sigma_{\eta_3'}'
		\, \oplus \,
		\sigma_{\eta_1}\widehat\otimes\,  \sigma_{\eta_1'+\eta_2'}', &\text{ if }n\geq 9.
	\end{cases}
	\end{align*}
\end{lemma}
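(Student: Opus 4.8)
The plan is to obtain the three restrictions from the plethysms of the standard representation of $\fg_\C=\spp(n)_\C$, which is of type $C_n$. By Remark~\ref{rem:embedding}, the standard representation $\pi_{\omega_1}$ restricts to $\rho$; writing $V:=\sigma_{\eta_1}$ for the $2$-dimensional standard representation of $\spp(1)_\C$ and $W:=\sigma_{\eta_1'}'$ for the $n$-dimensional standard representation of $\so(n)_\C$, we get $\pi_{\omega_1}|_{\fh_\C'}\simeq V\widehat\otimes W$. Since $\fg_\C$ is of type $C_n$, Remark~\ref{rem:ExtSym-standard} gives $\pi_{2\omega_1}\simeq\Sym^2(\pi_{\omega_1})$, ${\textstyle\bigwedge^2}(\pi_{\omega_1})\simeq\pi_{\omega_2}\oplus\pi_0$ and ${\textstyle\bigwedge^3}(\pi_{\omega_1})\simeq\pi_{\omega_3}\oplus\pi_{\omega_1}$ as $\fg_\C$-modules, so it suffices to decompose $\Sym^2(V\widehat\otimes W)$, ${\textstyle\bigwedge^2}(V\widehat\otimes W)$ and ${\textstyle\bigwedge^3}(V\widehat\otimes W)$ as $\fh_\C'$-modules and peel off the known summands $\pi_0|_{\fh_\C'}=\sigma_0\widehat\otimes\,\sigma_0'$ and $\pi_{\omega_1}|_{\fh_\C'}=\sigma_{\eta_1}\widehat\otimes\,\sigma_{\eta_1'}'$.

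For $\pi_{2\omega_1}$ and $\pi_{\omega_2}$ I would apply Remark~\ref{rem:ExtSym(suma)o(tensor)} together with $\Sym^2(V)\simeq\sigma_{2\eta_1}$ and ${\textstyle\bigwedge^2}(V)\simeq\sigma_0$ (valid since $\dim V=2$), which gives
\begin{align*}
\pi_{2\omega_1}|_{\fh_\C'}&\simeq \sigma_{2\eta_1}\widehat\otimes\,\Sym^2(W)\ \oplus\ \sigma_0\widehat\otimes\,{\textstyle\bigwedge^2}(W),\\
\big(\pi_{\omega_2}\oplus\pi_0\big)|_{\fh_\C'}&\simeq \sigma_{2\eta_1}\widehat\otimes\,{\textstyle\bigwedge^2}(W)\ \oplus\ \sigma_0\widehat\otimes\,\Sym^2(W);
\end{align*}
as $\sigma_0'$ occurs in $\Sym^2(W)$ but not in ${\textstyle\bigwedge^2}(W)$, removing $\sigma_0\widehat\otimes\,\sigma_0'$ yields $\pi_{\omega_2}|_{\fh_\C'}$. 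For $\pi_{\omega_3}$ I would instead use the third-exterior-power decomposition of a tensor product (a special case of the Cauchy formula; see \cite[\S6]{FultonHarris-book}), ${\textstyle\bigwedge^3}(V\widehat\otimes W)\simeq\Sym^3(V)\widehat\otimes\,{\textstyle\bigwedge^3}(W)\ \oplus\ \mathrm{S}^{(2,1)}(V)\widehat\otimes\,\mathrm{S}^{(2,1)}(W)\ \oplus\ {\textstyle\bigwedge^3}(V)\widehat\otimes\,\Sym^3(W)$, where $\mathrm{S}^{(2,1)}(U)$ denotes the complement of ${\textstyle\bigwedge^3}(U)$ in ${\textstyle\bigwedge^2}(U)\otimes U$. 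Since $\dim V=2$ we have ${\textstyle\bigwedge^3}(V)=0$, $\Sym^3(V)\simeq\sigma_{3\eta_1}$ and $\mathrm{S}^{(2,1)}(V)\simeq\sigma_{\eta_1}$, hence ${\textstyle\bigwedge^3}(\pi_{\omega_1})|_{\fh_\C'}\simeq\sigma_{3\eta_1}\widehat\otimes\,{\textstyle\bigwedge^3}(W)\ \oplus\ \sigma_{\eta_1}\widehat\otimes\,\mathrm{S}^{(2,1)}(W)$, and removing the copy of $\pi_{\omega_1}|_{\fh_\C'}=\sigma_{\eta_1}\widehat\otimes\,\sigma_{\eta_1'}'$ sitting inside $\sigma_{\eta_1}\widehat\otimes\,\mathrm{S}^{(2,1)}(W)$ gives $\pi_{\omega_3}|_{\fh_\C'}$.

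It then remains to decompose the $\so(n)_\C$-modules $\Sym^2(W)$, ${\textstyle\bigwedge^2}(W)$, ${\textstyle\bigwedge^3}(W)$ and $\mathrm{S}^{(2,1)}(W)$ into irreducibles. One always has $\Sym^2(W)\simeq\sigma_{2\eta_1'}'\oplus\sigma_0'$; for $n$ large, ${\textstyle\bigwedge^2}(W)\simeq\sigma_{\eta_2'}'$ (the adjoint) and ${\textstyle\bigwedge^3}(W)\simeq\sigma_{\eta_3'}'$, while Remark~\ref{rem:KoikeTerada} gives ${\textstyle\bigwedge^2}(W)\otimes W\simeq\sigma_{\eta_3'}'\oplus\sigma_{\eta_1'+\eta_2'}'\oplus\sigma_{\eta_1'}'$, whence $\mathrm{S}^{(2,1)}(W)\simeq\sigma_{\eta_1'+\eta_2'}'\oplus\sigma_{\eta_1'}'$. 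Substituting these into the formulas above and simplifying produces the cases $n\geq7$ (resp.\ $n\geq9$ for $\pi_{\omega_3}$). For the small values $n\in\{3,4,5,6\}$ (and $n\in\{7,8\}$ for $\pi_{\omega_3}$) the same computation applies word for word; only the irreducible decompositions of ${\textstyle\bigwedge^2}(W)$, ${\textstyle\bigwedge^3}(W)$ and $\mathrm{S}^{(2,1)}(W)$ over $\so(n)_\C$ differ, because of Hodge duality ${\textstyle\bigwedge^p}(W)\simeq{\textstyle\bigwedge^{n-p}}(W)$, the splitting of the middle exterior power in type $D$, and the fact that the Bourbaki fundamental weights of $B_m$ and $D_m$ for small $m$ are not the stable ones (e.g.\ $\ee_1+\ee_2=2\eta_2'$ in type $B_2$ but $\ee_1+\ee_2=\eta_2'+\eta_3'$ in type $D_3$). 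These finitely many low-rank branchings can be pinned down with the Weyl dimension formula, or simply read off from \cite{LieART}. This last bookkeeping --- precisely where the stable pattern breaks, and hence the source of the case distinctions in the statement --- is the only delicate part of the argument.
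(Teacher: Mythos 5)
Your proposal is correct, and for $\pi_{2\omega_1}$ and $\pi_{\omega_2}$ it coincides with the paper's proof: both apply Remarks~\ref{rem:ExtSym-standard} and~\ref{rem:ExtSym(suma)o(tensor)} to $\Sym^2(\rho)$ and $\bigwedge^2(\rho)$ with $\rho=\sigma_{\eta_1}\widehat\otimes\sigma_{\eta_1'}'$, then peel off $\pi_0|_{\fh_\C'}$, and both defer the small-$n$ cases to \cite{LieART}/\cite{Sage}. For $\pi_{\omega_3}$, however, you take a genuinely different and arguably cleaner route. The paper restricts the two tensor products $\pi_{\omega_2}\otimes\pi_{\omega_1}\simeq\pi_{\omega_3}\oplus\pi_{\omega_1+\omega_2}\oplus\pi_{\omega_1}$ and $\pi_{2\omega_1}\otimes\pi_{\omega_1}\simeq\pi_{3\omega_1}\oplus\pi_{\omega_1+\omega_2}\oplus\pi_{\omega_1}$ to $\fh_\C'$ via Remark~\ref{rem:KoikeTerada}, identifies $\sigma_{3\eta_1}\widehat\otimes\sigma_{\eta_3'}'$ as a constituent of $\pi_{\omega_3}|_{\fh_\C'}$ by comparing the two lists, and then pins down the remaining summand of $\pi_{\omega_3}|_{\fh_\C'}$ by a dimension count among the candidates. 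You instead invoke the Cauchy decomposition $\bigwedge^3(V\otimes W)\simeq\Sym^3(V)\widehat\otimes\bigwedge^3(W)\oplus S^{(2,1)}(V)\widehat\otimes S^{(2,1)}(W)\oplus\bigwedge^3(V)\widehat\otimes\Sym^3(W)$, which collapses drastically when $\dim V=2$, and then subtract the copy of $\pi_{\omega_1}|_{\fh_\C'}$. This avoids the auxiliary comparison and the dimension-matching step, at the modest cost of introducing the Schur functor $S^{(2,1)}$ (not used elsewhere in the paper), whose $\so(n)_\C$-decomposition you still obtain from $\bigwedge^2(W)\otimes W$ via Remark~\ref{rem:KoikeTerada}. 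Your diagnosis of where the stable pattern breaks for small $n$ (low-rank coincidences in the Bourbaki labelling, Hodge duality, the split middle exterior power of type $D$) is accurate and explains the case distinctions in the statement.
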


\begin{proof}
We provide a proof only for $n\geq9$. 
The cases $n=3,\dots,8$ can be checked directly, e.g.,\ by using \cite{LieART} or \cite{Sage}. 
	
From Remarks~\ref{rem:ExtSym-standard} and \ref{rem:ExtSym(suma)o(tensor)}, it follows that 
\begin{align*}
\pi_{2\omega_1}|_{\fh_\C'}
&= {\Sym^2}(\pi_{\omega_1}) |_{\fh_\C'}
= {\Sym^2}(\pi_{\omega_1}|_{\fh_\C'}) 
\simeq {\Sym}^2(\rho) 
= {\Sym}^2(\sigma_{\eta_1}\widehat\otimes \, \sigma_{\eta_1'}')\\
		&\simeq {\Sym}^2(\sigma_{\eta_1})\widehat\otimes \, \Sym^2(\sigma_{\eta_1'}')
		\, \oplus \,  {\textstyle\bigwedge}^2(\sigma_{\eta_1})\widehat\otimes \, {\textstyle\bigwedge}^2(\sigma_{\eta_1'}')\\
		&\simeq \sigma_{2\eta_1}\widehat\otimes \, (\sigma_{0}'\oplus \sigma_{2\eta_1'}') \, \oplus \, \sigma_0\widehat\otimes \, \sigma_{\eta_2'}',
\\
\pi_0\oplus \pi_{\omega_2}|_{\fh_\C'}&
= {{\textstyle\bigwedge}^2}(\pi_{\omega_1}) |_{\fh_\C'}
= {{\textstyle\bigwedge}^2}(\pi_{\omega_1}|_{\fh_\C'})
\simeq {{\textstyle\bigwedge}}^2(\rho) 
= {\textstyle\bigwedge^2}(\sigma_{\eta_1}\widehat\otimes \,\, \sigma_{\eta_1'}')\\
&\simeq {{\textstyle\bigwedge}}^2(\sigma_{\eta_1})\widehat\otimes \, \Sym^2(\sigma_{\eta_1'}')
		\, \oplus \,  {\Sym}^2(\sigma_{\eta_1})\widehat\otimes \,{\textstyle\bigwedge}^2(\sigma_{\eta_1'}')
\\&
\simeq 
	\sigma_{0}\widehat\otimes \, \sigma_{0}' 
	\, \oplus \, 
	\sigma_{0}\widehat\otimes \, \sigma_{2\eta_1'}' 
	\, \oplus \,
	\sigma_{2\eta_1}\widehat\otimes \, \sigma_{\eta_2'}'. 
	\end{align*}
This establishes the first two identities.

Using Remark~\ref{rem:KoikeTerada} several times, we obtain that
\begin{align*}
\big(\pi_{\omega_3} \oplus \pi_{\omega_1+\omega_2} \oplus \pi_{\omega_1} \big) |_{\fh_\C'}&
\simeq \big( \pi_{\omega_2}\otimes\pi_{\omega_{1}}\big) |_{\fh_\C'}
= (\pi_{\omega_2}|_{\fh_\C'}) \otimes (\pi_{\omega_{1}}|_{\fh_\C'})
\\ & 
\simeq 
\big(
	\sigma_{0}\widehat\otimes\,  \sigma_{2\eta_1'}' 
	\, \oplus \, 
	\sigma_{2\eta_1}\widehat\otimes\, \sigma_{\eta_2'}'
\big)
\otimes 
\big( 
	\sigma_{\eta_1}\widehat\otimes\,\sigma_{\eta_1'}' 
\big) 
\\& 
\simeq  
	2\,\sigma_{\eta_1}\widehat\otimes\, \sigma_{\eta_1'}' 
	\, \oplus \,
	2\,\sigma_{\eta_1}\widehat\otimes\, \sigma_{\eta_1'+\eta_2'}' 
	\, \oplus \,
	\sigma_{\eta_1}\widehat\otimes\,  \sigma_{3\eta_1'}' 
\\
&\quad \oplus
	\sigma_{ \eta_1}\widehat\otimes\,  \sigma_{\eta_3'}' 
	\, \oplus\, 
	\sigma_{3\eta_1}\widehat\otimes\,  \sigma_{\eta_1'}' 
	\, \oplus\, 
	\sigma_{3\eta_1}\widehat\otimes\,  \sigma_{\eta_1'+\eta_2'}' 
	\, \oplus\, 
	\sigma_{3\eta_1}\widehat\otimes\,  \sigma_{\eta_3'}',
\\
\big(\pi_{3\omega_1} \oplus \pi_{\omega_1+\omega_2}
	\oplus \pi_{\omega_1} \big)|_{\fh_\C'} &
=\big(\pi_{2\omega_1}\otimes\pi_{\omega_{1}}\big)|_{\fh_\C'}
=(\pi_{2\omega_1}|_{\fh_\C'}) \otimes (\pi_{\omega_{1}}|_{\fh_\C'})
\\ & 
\simeq \big(\sigma_{2\eta_1}\widehat\otimes\,  \sigma_{0}'\oplus \sigma_{2\eta_1}\widehat\otimes\, \sigma_{2\eta_1'}' \, \oplus \, \sigma_0\widehat\otimes\,  \sigma_{\eta_2'}'\big) \otimes \big( \sigma_{\eta_1}\widehat\otimes\,\sigma_{\eta_1'}'\big)
\\&
\simeq  
	3\,\sigma_{\eta_1}\widehat\otimes\, \sigma_{\eta_1'}' 
	\, \oplus\, 
	2\,\sigma_{\eta_1}\widehat\otimes\, \sigma_{\eta_1'+\eta_2'}' 
	\, \oplus\, 
	\sigma_{\eta_1}\widehat\otimes\, \sigma_{3\eta_1'}'
\\
&\quad 
\oplus
	\sigma_{\eta_1}\widehat\otimes\, \sigma_{\eta_3'}' 
	\, \oplus\, 
	2\,\sigma_{3\eta_1}\widehat\otimes\, \sigma_{\eta_1'}' 
	\, \oplus\, 
	\sigma_{3\eta_1}\widehat\otimes\, \sigma_{\eta_1'+\eta_2'}' 
	\, \oplus\, 
	\sigma_{3\eta_1}\widehat\otimes\, \sigma_{3\eta_1'}'.
\end{align*}
The term $\sigma_{3\eta_1}\widehat\otimes \, \sigma_{\eta_3'}'$ occurs in the first identity but not in the second one, thus it must appear in $\pi_{\omega_3}|_{\fh_\C'}$. 
One can check that the only way to fill 
\begin{equation*}
\dim {\pi_{\omega_3}} 
- \dim \sigma_{3\eta_1}\widehat\otimes\, \sigma_{\eta_3'}'
= \left( \tbinom{2n}{3}-2n\right)
-4\tbinom{n}{3}
= \tfrac{2n(n-2)(n+2)}{3}
\end{equation*}
with the values 
$
2\dim \sigma_{\eta_1}\widehat\otimes \, \sigma_{\eta_1'}'$,
$2\dim \sigma_{\eta_1}\widehat\otimes \, \sigma_{\eta_1'+\eta_2'}'$,
$\dim \sigma_{\eta_1}\widehat\otimes \, \sigma_{3\eta_1'}'$, 
$\dim \sigma_{\eta_1}\widehat\otimes  \,\sigma_{\eta_3'}'$, 
$\dim \sigma_{3\eta_1}\widehat\otimes \, \sigma_{\eta_1'}'$,
$\dim \sigma_{3\eta_1}\widehat\otimes \, \sigma_{\eta_1'+\eta_2'}'
$ is with 
$\dim\sigma_{\eta_1}\widehat\otimes \,\sigma_{\eta_1'+\eta_2'}'$, and the proof is complete. 
\end{proof}

\begin{theorem}\label{thm:flia4}
If $G$ and $H$ are as in \eqref{eq:G/H-fliaIV}, then  
$$
\lambda_1(G/H,g_{\st})
=\lambda^{\pi_{2\omega_2}} 
=\frac{2n+1}{n+1}
=2-\frac{1}{n+1}
.
$$ 
Its multiplicity 
is equal to $\dim V_{\pi_{2\omega_2}}=\frac{4n^4-7n^2+3n}{3}$ for $n\neq 4$ and to $2\cdot \dim V_{\pi_{2\omega_2}}=616$ for $n= 4$.
\end{theorem}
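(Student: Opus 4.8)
The plan is to follow the scheme of the proof of Theorem~\ref{thm:flia6}, now with $\fg_\C=\spp(n)_\C$ of type $C_n$ and target representation $\pi_{2\omega_2}$. First I would record that $\pi_{\omega_1}$ is the standard representation of $\spp(n)_\C$, so $\pi_{\omega_1}|_{\fh_\C'}=\rho=\sigma_{\eta_1}\widehat\otimes\,\sigma_{\eta_1'}'$, and compute the Laplace eigenvalues directly from \eqref{eq:Freudenthal}: using $\rho_{\fg_\C}=\sum_{j=1}^{n}(n+1-j)\ee_j$ and $\inner{\ee_i}{\ee_j}=\frac{\delta_{ij}}{4(n+1)}$ one gets $\lambda^{\pi_{k\omega_1}}=\frac{k(k+2n)}{4(n+1)}$ and $\lambda^{\pi_{\omega_j}}=\frac{j(2n+2-j)}{4(n+1)}$, so that $\lambda^{\pi_{2\omega_2}}=\frac{8n+4}{4(n+1)}=\frac{2n+1}{n+1}=2-\frac1{n+1}$.

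The key step is to show $\pi_{2\omega_2}\in\widehat G_H$ with $\dim V_{\pi_{2\omega_2}}^{\fh_\C'}=1$ (and $=2$ when $n=4$). Using the type $C_n$ tensor formulas of Remark~\ref{rem:KoikeTerada}, I would write $\pi_{\omega_2}\otimes\pi_{\omega_2}\simeq\pi_{2\omega_2}\oplus\pi_{\omega_1+\omega_3}\oplus\pi_{\omega_4}\oplus\pi_{2\omega_1}\oplus\pi_{\omega_2}\oplus\pi_0$ and $\pi_{\omega_1}\otimes\pi_{\omega_3}\simeq\pi_{\omega_1+\omega_3}\oplus\pi_{\omega_4}\oplus\pi_{\omega_2}$ (some summands being absent for small $n$). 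Restricting to $\fh_\C'$ and inserting the branching rules of Lemma~\ref{lem-flia4:branchings}: each $\fh_\C'$-irreducible summand of $\pi_{\omega_2}|_{\fh_\C'}$ is self-dual (of real or quaternionic type, cf.\ Remark~\ref{rem:embedding}), so by Remark~\ref{rem:rhoxrho^*} the trivial $\fh_\C'$-representation occurs in $\pi_{\omega_2}|_{\fh_\C'}\otimes\pi_{\omega_2}|_{\fh_\C'}$ exactly as many times as there are irreducible summands of $\pi_{\omega_2}|_{\fh_\C'}$, namely $2$ for $n\neq4$ and $3$ for $n=4$ (when $\so(4)$ fails to be simple and an extra summand appears). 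A similar inspection, carried out case by case from Lemma~\ref{lem-flia4:branchings}, shows $\pi_{\omega_1}|_{\fh_\C'}\otimes\pi_{\omega_3}|_{\fh_\C'}$ contains no trivial subrepresentation. Since $\dim V_{\pi_{2\omega_1}}^{\fh_\C'}=\dim V_{\pi_{\omega_2}}^{\fh_\C'}=0$ by Lemma~\ref{lem:omega2-no-esferica} and $\dim V_{\pi_0}^{\fh_\C'}=1$, comparing $\fh_\C'$-invariants term by term in both decompositions forces $\dim V_{\pi_{\omega_1+\omega_3}}^{\fh_\C'}=\dim V_{\pi_{\omega_4}}^{\fh_\C'}=0$ and hence $\dim V_{\pi_{2\omega_2}}^{\fh_\C'}=1$ (resp.\ $2$ if $n=4$); in particular $\lambda^{\pi_{2\omega_2}}\in\Spec(G/H,g_{\st})$.

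It then remains to verify the optimality condition \eqref{eq:sufficient-lambda_1}, i.e.\ $V_\pi^{\fh_\C'}=0$ for every $\pi\in\widehat G\smallsetminus\{1_G\}$ with $\lambda^\pi<\lambda^{\pi_{2\omega_2}}$. I would prove an auxiliary lemma, parallel to Lemma~\ref{lem:SOlambda^pi<lambda^omega3}, listing all dominant weights $\Lambda$ of $\spp(n)_\C$ with $\lambda^{\pi_\Lambda}\leq\lambda^{\pi_{2\omega_2}}$; using $\lambda^{\pi_\Lambda}>\sum_jb_j\lambda^{\pi_{\omega_j}}$ (Remark~\ref{rem:autovalor_suma_dominantes}) together with the requirement that $\pi_\Lambda$ descend to $\Sp(n)/\Z_2$ (equivalently, $\sum_j j\,b_j$ even), the list collapses to $\pi_{2\omega_1}$, $\pi_{\omega_2}$, $\pi_{\omega_1+\omega_3}$ and $\pi_{\omega_4}$ (the last only for $n\geq4$), together with the single extra weight $\pi_{\omega_6}$ when $n=6$, while $\pi_{2\omega_2}$ is the unique weight attaining equality. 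Vanishing of $V^{\fh_\C'}$ for $\pi_{2\omega_1}$ and $\pi_{\omega_2}$ is Lemma~\ref{lem:omega2-no-esferica}; for $\pi_{\omega_1+\omega_3}$ and $\pi_{\omega_4}$ it was obtained above; and the single case $\pi_{\omega_6}$ with $n=6$ is checked with the tables of \cite{LieART}. This gives $\lambda_1(G/H,g_{\st})=\lambda^{\pi_{2\omega_2}}$. Finally, as $\pi_{2\omega_2}$ is the only spherical representation realizing this eigenvalue, Theorem~\ref{thm:Spec(standard)} gives multiplicity $\dim V_{\pi_{2\omega_2}}\cdot\dim V_{\pi_{2\omega_2}}^{\fh_\C'}$, and the Weyl dimension formula gives $\dim V_{\pi_{2\omega_2}}=\frac{4n^4-7n^2+3n}{3}$, which is the asserted multiplicity for $n\neq4$ and, since $\dim V_{\pi_{2\omega_2}}^{\fh_\C'}=2$ and $\dim V_{\pi_{2\omega_2}}=308$ for $n=4$, yields $2\cdot308=616$ in that case.

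The main obstacle is the exceptional behaviour at $n=4$: there $\so(4)\cong\su(2)\oplus\su(2)$ is not simple, the branchings of Lemma~\ref{lem-flia4:branchings} acquire an extra summand, $V_{\pi_{2\omega_2}}^{\fh_\C'}$ becomes two-dimensional, and one must confirm that this does not introduce a new spherical representation of eigenvalue $\leq\lambda^{\pi_{2\omega_2}}$; the small-rank values $n=3,\dots,8$, where the branching rules are not yet in their stable form, likewise require separate but routine verification.
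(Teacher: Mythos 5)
Your plan follows the same route as the paper's proof: use Freudenthal's formula for the eigenvalues, tensor $\pi_{\omega_2}$ with itself to isolate $\dim V_{\pi_{2\omega_2}}^{\fh_\C'}$ via the branching rules of Lemma~\ref{lem-flia4:branchings}, tensor $\pi_{\omega_1}$ with $\pi_{\omega_3}$ to kill $\pi_{\omega_1+\omega_3}$ and $\pi_{\omega_4}$, and then verify \eqref{eq:sufficient-lambda_1} with an auxiliary classification of small-eigenvalue weights. Your version is slightly leaner than the paper's: the paper also decomposes $\pi_{2\omega_1}\otimes\pi_{2\omega_1}$, $\pi_{2\omega_1}\otimes\pi_{\omega_2}$ and $\pi_{2\omega_1}\otimes\pi_{\omega_1}$, but those products are there to obtain $\dim V_{\pi_{4\omega_1}}^{H}=1$ for later use in Remark~\ref{rem4:upperbound-Sp(4)/Sp(1)xSO(4)}, not for Theorem~\ref{thm:flia4} itself, so omitting them loses nothing here.

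There is, however, one genuine inaccuracy in your proposed auxiliary lemma. You claim that ``$\pi_{2\omega_2}$ is the unique weight attaining equality.'' In fact, $\lambda^{\pi_{\omega_6}}=\lambda^{\pi_{2\omega_2}}$ when $n=7$: both equal $\tfrac{15}{8}$. This is exactly what the paper's Lemma~\ref{lem:Sp-lambda^pi<lambda^2omega2} records, and it matters for the multiplicity claim at $n=7$: without separately checking that $\dim V_{\pi_{\omega_6}}^{H}=0$ for $n=7$ (the paper does so via \cite{LieART}), the multiplicity could a priori be larger than $\dim V_{\pi_{2\omega_2}}$. Your determination of $\lambda_1$ itself is unaffected, but the multiplicity statement needs this extra check. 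You did catch the analogous strict-inequality case $\omega_6$ at $n=6$, so the omission at $n=7$ appears to be a slip in the bookkeeping of the $\omega_{2p}$ weights for medium $n$; it is worth redoing the inequality $2p(2n+2-2p)\leq 8n+4$ carefully to confirm which $(n,p)$ yield equality.
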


\begin{proof}
By Lemma~\ref{lem-flia4:branchings} and Remark~\ref{rem:rhoxrho^*},  $1_{\fh_\C'}=\sigma_0\widehat\otimes \sigma_{0}'$ occurs in $\big(\pi_{\omega_2} \otimes\pi_{\omega_2} \big)|_{\fh_\C'}$ twice for $n\neq4$ and three times for $n=4$.
Indeed, its two (respectively three) irreducible constituents are self-adjoint and non-equivalent. 
Lemma~\ref{lem3:dimV_pi^H} implies $\dim V_{\pi_{\omega_2} \otimes \pi_{\omega_2}}^{\fh_\C'}$ is equal to $2$ (resp.\ $3$). 
Proceeding similarly, we obtain the following identities at the left, and  Remark~\ref{rem:KoikeTerada} ensures the tensor decompositions at the right: 
\begin{align*}
	\dim V_{\pi_{\omega_2}\otimes \pi_{\omega_2}}^{\fh_\C'} &=
\begin{cases}
	2
	&\text{ if }n\neq 4,\\
	3
	&\text{ if }n=4. 
\end{cases},
 &
	\pi_{\omega_2}\otimes\pi_{\omega_{2}}
	&\simeq \pi_{\omega_4}
	\oplus \pi_{\omega_1+\omega_3}
	\oplus \pi_{2\omega_2}
	\oplus \pi_{2\omega_1}
	\oplus \pi_{\omega_2}
	\oplus \pi_{0},
\\
\dim V_{\pi_{2\omega_1}\otimes \pi_{2\omega_1}}^{\fh_\C'} &=
\begin{cases}
	3
	&\text{ if }n\neq 4,\\
	4
	&\text{ if }n=4. 
\end{cases},&
	\pi_{2\omega_1}\otimes\pi_{2\omega_{1}}
	&\simeq \pi_{4\omega_1}
	\oplus \pi_{2\omega_1+\omega_2}
	\oplus \pi_{2\omega_2}
	\oplus \pi_{2\omega_1}
	\oplus \pi_{\omega_2}
	\oplus \pi_{0},
\\
\dim V_{\pi_{2\omega_1}\otimes \pi_{\omega_2}}^{\fh_\C'} &=0,&
	\pi_{2\omega_1}\otimes\pi_{\omega_{2}}
	&\simeq  \pi_{2\omega_1+\omega_2} \oplus \pi_{\omega_1+\omega_3}
	\oplus \pi_{2\omega_1}
	\oplus \pi_{\omega_2},
\\
\dim V_{\pi_{\omega_3}\otimes \pi_{\omega_1}}^{\fh_\C'} &=0,&
	\pi_{\omega_3}\otimes\pi_{\omega_{1}}
	&\simeq  \pi_{\omega_4} \oplus \pi_{\omega_1+\omega_3}
	\oplus \pi_{\omega_2},
\\
\dim V_{\pi_{2\omega_1}\otimes \pi_{\omega_1}}^{\fh_\C'} &=0, &
	\pi_{2\omega_1}\otimes\pi_{\omega_{1}}
	& \simeq   \pi_{3\omega_1} \oplus \pi_{\omega_1+\omega_2}
	\oplus \pi_{\omega_1}.
\end{align*}
It follows from the last three rows that $V_\pi^{\fh_\C'}=0$ for all irreducible representations $\pi$ occurring in the corresponding tensors. 
Furthermore, the first two rows imply
$
1=\dim V_{\pi_{2\omega_2}}^{\fh_\C'}
$
and 
$2=\dim V_{\pi_{4\omega_1}}^{\fh_\C'}\oplus \dim V_{\pi_{2\omega_2}}^{\fh_\C'}$, so $\dim V_{\pi_{4\omega_1}}^{\fh_\C'}=1$ for $n\neq 4$, and $\dim V_{\pi_{2\omega_2}}^{\fh_\C'}=2$ and  $\dim V_{\pi_{4\omega_1}}^{\fh_\C'}=1$ for $n=4$. 
In addition, $V_{\pi_{\omega_3}}^{\fh_\C'}=0$ follows immediately from Lemma~\ref{lem-flia4:branchings}.

We next check \eqref{eq:sufficient-lambda_1} to ensure $\lambda_1(G/H,g_{\st})=\lambda^{\pi_{2\omega_2}}$. 
Lemma~\ref{lem:Sp-lambda^pi<lambda^2omega2} below lists all non-zero $\Lambda\in \PP^+(G)=\PP^+(\Sp(n)/\Z_2)$ such that $\lambda^{\pi_{\Lambda}} <\lambda^{\pi_{2\omega_2}}$. 
We have already shown that $V_{\pi_{\Lambda}}^{\fh_\C'}=0$ (so $\pi_{\Lambda}\notin \widehat G_H$) for all these $\Lambda$ except for $\Lambda=\omega_6$ when $n=6$. 
By \cite[pp.\ 345]{LieART}, 
$\pi_{\omega_6}|_{\fh_\C'}\simeq
	\sigma_{2\eta_1}\widehat\otimes \sigma_{2\eta_2'+2\eta_3'}'
	\oplus
	\sigma_{4\eta_1}\widehat\otimes \sigma_{2\eta_1'}'
	\oplus
	\sigma_{6\eta_1}\widehat\otimes \sigma_{0}'
	\oplus
	\sigma_{0}\widehat\otimes \sigma_{4\eta_2'}'
	\oplus
	\sigma_{0}\widehat\otimes \sigma_{4\eta_3'}'
$, thus $V_{\pi_{\omega_6}}^{\fh_\C'}=0$.

Theorem~\ref{thm:Spec(standard)} and Lemma~\ref{lem:Sp-lambda^pi<lambda^2omega2} imply that the multiplicity of $\lambda_1(G/H,g_{\st})$ in $\Spec(G/H,g_{\st})$ is equal to 
$
\dim V_{\pi_{2\omega_2}}\cdot \dim V_{\pi_{2\omega_2}}^H = \dim V_{\pi_{2\omega_2}}= \frac{4n^4-7n^2+3n}{3}
$ 
for any  $n\neq  7$. 
For the remaining case $n=7$, since $\dim V_{\pi_{\omega_6}}^H=0$ by \cite[pp.\ 350]{LieART}, the multiplicity equals $\dim V_{\pi_{2\omega_2}}$, and the proof is complete. 
\end{proof}

\begin{lemma}\label{lem:Sp-lambda^pi<lambda^2omega2}
Let $n$ be an integer $\geq3$. 
For $\Lambda\in\PP^+(\Sp(n)/\Z_2)$, we have  $\lambda^{\pi_{\Lambda}} <\lambda^{\pi_{2\omega_2}}$ if and only if $\Lambda$ is one of 
$0$, $2\omega_1$, $\omega_2$, $\omega_1+\omega_3$, 
and, in addition,  
$\omega_4$ if $n\geq 4$, or
$\omega_6$ if $n=6$.
Moreover, $\lambda^{\pi_{\Lambda}} =\lambda^{\pi_{2\omega_2}}$ if and only if  $\Lambda=2\omega_2$, or $\Lambda=\omega_6$ if $n=7$.
\end{lemma}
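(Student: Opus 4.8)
The plan is to apply Freudenthal's formula \eqref{eq:Freudenthal} to $\fg_\C=\spp(n)_\C$, which is of type $C_n$, and reduce the statement to elementary comparisons of rational functions of $n$, in complete analogy with the proof of Lemma~\ref{lem:SOlambda^pi<lambda^omega3}. First I would record the relevant data (see \cite[\S{}VI.4]{Bourbaki-Lie4-6}): in type $C_n$ one has $\omega_k=\ee_1+\dots+\ee_k$ for $1\le k\le n$, $\rho_{\fg_\C}=\sum_{j=1}^{n}(n+1-j)\ee_j$, and, with the Killing-form normalisation used throughout, $\inner{\ee_i}{\ee_j}=\delta_{i,j}\tfrac{1}{4(n+1)}$. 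Then \eqref{eq:Freudenthal} gives, for $\Lambda=\sum_j b_j\omega_j$,
\begin{equation*}
4(n+1)\,\lambda^{\pi_{\Lambda}}
=\sum_{j=1}^{n} b_j\, j\,(b_j+2n+1-j)
\;+\;2\sum_{i<j} i\, b_i\, b_j ,
\end{equation*}
whence $\lambda^{\pi_{\omega_k}}=\frac{k(2n+2-k)}{4(n+1)}$ (strictly increasing in $k$ for $1\le k\le n$), $\lambda^{\pi_{2\omega_1}}=1$, $\lambda^{\pi_{\omega_1+\omega_3}}=\frac{2n}{n+1}$, $\lambda^{\pi_{2\omega_2}}=\frac{2n+1}{n+1}$, and the two identities that will drive the argument,
\begin{equation*}
\lambda^{\pi_{2\omega_2}}=\lambda^{\pi_{2\omega_1}}+\lambda^{\pi_{\omega_2}}=4\,\lambda^{\pi_{\omega_1}} .
\end{equation*}
I would also recall that $\pi_{\Lambda}$ factors through $\Sp(n)/\Z_2$ exactly when $b_1+b_3+b_5+\cdots$ is even; thus $\PP^+(\Sp(n)/\Z_2)$ consists of the $\Lambda=\sum_j b_j\omega_j$ in which an even number (counted with multiplicity) of odd-indexed fundamental weights occurs.

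The ``if'' part is then a direct evaluation: $\lambda^{\pi_{2\omega_1}},\lambda^{\pi_{\omega_2}},\lambda^{\pi_{\omega_1+\omega_3}}<\lambda^{\pi_{2\omega_2}}$ for all $n\ge3$; $\lambda^{\pi_{\omega_4}}<\lambda^{\pi_{2\omega_2}}$ for $n\ge4$; $\lambda^{\pi_{\omega_6}}<\lambda^{\pi_{2\omega_2}}$ for $n=6$; and $\lambda^{\pi_{\omega_6}}=\lambda^{\pi_{2\omega_2}}$ for $n=7$. For the converse, write $\Lambda=\sum_j b_j\omega_j\in\PP^+(\Sp(n)/\Z_2)\smallsetminus\{0\}$ and set $m=\sum_j b_j$. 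If $m\ge4$, then by Remark~\ref{rem:autovalor_suma_dominantes} and the monotonicity of $k\mapsto\lambda^{\pi_{\omega_k}}$ we get $\lambda^{\pi_{\Lambda}}>m\,\lambda^{\pi_{\omega_1}}\ge4\,\lambda^{\pi_{\omega_1}}=\lambda^{\pi_{2\omega_2}}$. If $m=3$, write $\Lambda=\omega_a+\omega_b+\omega_c$; the parity constraint forces the number of odd indices among $a,b,c$ to be $0$ or $2$. When it is $0$, all of $a,b,c$ are $\ge2$, so $\lambda^{\pi_{\omega_a+\omega_b}}=\lambda^{\pi_{\omega_a}}+\lambda^{\pi_{\omega_b}}+2\inner{\omega_a}{\omega_b}\ge2\lambda^{\pi_{\omega_2}}+2\inner{\omega_2}{\omega_2}=\lambda^{\pi_{2\omega_2}}$, and hence $\lambda^{\pi_{\Lambda}}>\lambda^{\pi_{\omega_a+\omega_b}}+\lambda^{\pi_{\omega_c}}>\lambda^{\pi_{2\omega_2}}$; when it is $2$, say the odd indices are $p,q$ and the even one is $r\ge2$, then $\lambda^{\pi_{\omega_p+\omega_q}}\ge\lambda^{\pi_{2\omega_1}}$ and $\lambda^{\pi_{\omega_r}}\ge\lambda^{\pi_{\omega_2}}$ by the same monotonicity, so $\lambda^{\pi_{\Lambda}}>\lambda^{\pi_{\omega_p+\omega_q}}+\lambda^{\pi_{\omega_r}}\ge\lambda^{\pi_{2\omega_1}}+\lambda^{\pi_{\omega_2}}=\lambda^{\pi_{2\omega_2}}$. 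Finally, if $m\le2$ the parity constraint restricts $\Lambda$ to the shapes $0$, $\omega_{2k}$, $2\omega_i$ or $\omega_i+\omega_j$ with $i,j$ odd, or $\omega_i+\omega_j$ with $i,j$ even; for each of these finitely many shapes the displayed formula turns ``$\lambda^{\pi_{\Lambda}}<\lambda^{\pi_{2\omega_2}}$'', resp.\ ``$\lambda^{\pi_{\Lambda}}=\lambda^{\pi_{2\omega_2}}$'', into a linear inequality in $n$, and solving these gives exactly the list in the statement (e.g.\ $\lambda^{\pi_{\omega_{2k}}}<\lambda^{\pi_{2\omega_2}}$ forces $2k\le4$, or $2k=6$ with $n\le6$, with equality precisely when $n=7$; and among $\omega_i+\omega_j$ with $i,j$ even, only $2\omega_2$ has $\lambda^{\pi_{\Lambda}}\le\lambda^{\pi_{2\omega_2}}$, with equality).

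The step I expect to be the genuine obstacle is the case $m=3$, together with the bookkeeping of the center on which it rests. Since $\omega_1,\omega_3,\omega_5,\dots$ are individually not weights of $\Sp(n)/\Z_2$ while many of their pairwise sums are, the crude estimate $\lambda^{\pi_{\Lambda}}>m\,\lambda^{\pi_{\omega_1}}$ coming from Remark~\ref{rem:autovalor_suma_dominantes} is just barely insufficient at $m=3$ (indeed it is sharp at $m=4$, as $4\,\lambda^{\pi_{\omega_1}}=\lambda^{\pi_{2\omega_2}}$); one has to use the even-parity constraint to pair up the odd-indexed summands and then invoke the sharper identity $\lambda^{\pi_{2\omega_1}}+\lambda^{\pi_{\omega_2}}=\lambda^{\pi_{2\omega_2}}$. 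Once this reduction is set up, the remaining verifications are routine and entirely parallel to Lemma~\ref{lem:SOlambda^pi<lambda^omega3}, so I would abbreviate them.
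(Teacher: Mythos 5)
Your proof is correct, and it takes a genuinely different route from the paper's own proof of this lemma. The paper works in the $\ee$-coordinates, writing $\Lambda = \sum_i a_i\ee_i$ with $a_1\geq\dots\geq a_n\geq0$ and $\sum_i a_i$ even, and then runs a direct case analysis on $a_2$ (whether $a_2\geq2$, $a_2=0$, or $a_2=1$, the last split further by $a_1$). You instead use the fundamental-weight coordinates $\Lambda = \sum_j b_j\omega_j$ and case on $m=\sum_j b_j$, which aligns more with the paper's proof of Lemma~\ref{lem:SOlambda^pi<lambda^omega3} than with its proof of the present lemma. The payoff of your route is that the infinite tail $m\geq3$ is disposed of by uniform estimates anchored on the two identities you isolate, $\lambda^{\pi_{2\omega_2}} = 4\lambda^{\pi_{\omega_1}}$ (handling $m\geq4$) and $\lambda^{\pi_{2\omega_2}} = \lambda^{\pi_{2\omega_1}}+\lambda^{\pi_{\omega_2}}$ together with the parity constraint $b_1+b_3+\cdots$ even (handling $m=3$), reducing the rest to a finite residual check at $m\leq2$. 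The paper's route is more directly computational and does not need the parity bookkeeping until the $a_2\leq1$ subcases. One small addition worth making explicit in your $m=3$ argument: the inner products satisfy $\inner{\omega_a}{\omega_b}=\tfrac{\min(a,b)}{4(n+1)}$, so they too are non-decreasing in each index; this is what justifies $\lambda^{\pi_{\omega_a+\omega_b}}\geq\lambda^{\pi_{2\omega_2}}$ (resp. $\geq\lambda^{\pi_{2\omega_1}}$) from the decomposition $\lambda^{\pi_{\omega_a+\omega_b}}=\lambda^{\pi_{\omega_a}}+\lambda^{\pi_{\omega_b}}+2\inner{\omega_a}{\omega_b}$, and it is not implied by the monotonicity of $k\mapsto\lambda^{\pi_{\omega_k}}$ alone.
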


\begin{proof}
Let $\Lambda=\sum_{i=1}^na_i\ee_i\in\PP^+(\Sp(n)/\Z_2)$. Then
$a_i\in\Z$ for all $i$, 
$a_1\geq a_2\geq\dots\geq a_n$, and
$\sum_{i=1}^m a_i$ is even (see e.g.\ \cite[Rmk.~2.8]{LauretTolcachier-nu-stab}). 
A direct computation using \eqref{eq:Freudenthal} shows that 
$$
\lambda^{\pi_{\Lambda}}=\frac{1}{4(n+1)}\sum_{i=1}^n a_i(a_i+2n+2-2i).
$$ 
In particular, 
$
\lambda^{\pi_{2\omega_2}} 
= \lambda^{\pi_{2\ee_1+2\ee_2}}
=\frac{8n+4}{4(n+1)}
=\frac{2n+1}{n+1}
$.

Clearly, if $a_2\geq2$, $\lambda^{\pi_{\Lambda}} \geq \frac{8n+4}{4(n+1)}= 
\lambda^{\pi_{2\omega_2}}$ with equality only if $\Lambda=2(\ee_1+\ee_2)=2\omega_2$. 

If $a_2=0$, then $\Lambda=2a\ee_1$ with $a\in\Z_{\geq0}$, and a straightforward computation shows that $\lambda^{\pi_{\Lambda}}\leq \lambda^{\pi_{2\omega_2}}$ only if $\Lambda=0$ or $\Lambda=2\ee_1=2\omega_1$.

We now assume $a_2=1$. 
Since 
$
\lambda^{\pi_{\Lambda}}
\geq \frac{a_1(a_1+2n) + 2n-1}{4(n+1)} 
,
$
it follows that $\lambda^{\pi_{\Lambda}}> \lambda^{\pi_{2\omega_2}}$ whenever $a_1\geq3$. 
If $a_1=2$, 
$
4(n+1)\lambda^{\pi_{\Lambda}}=
	6n+3
	+\sum_{i=3}^n a_i(a_i+2n+2-2i)	
	\leq 8n+4=4(n+1)\lambda^{\pi_{2\omega_2}}
$ 
if and only if  
$\Lambda=2\ee_1+\ee_2+\ee_3$, and the equality cannot occur.

We now assume in addition that $a_1=1$, thus $\Lambda=\ee_1+\dots+\ee_{2p}=\omega_{2p}$ for some $p\leq \lfloor\tfrac n2\rfloor$. 
We have 
$
4(n+1)\lambda^{\pi_{\omega_{2p}}}=
	2p(2n+2-2p)
$. 
If $2p=4$, then $n\geq4$ and $4(n+1)\lambda^{\pi_{\omega_{2p}}}= 8n-8<8n+4=4(n+1)\lambda^{\pi_{2\omega_2}}$. 
If $2p=6$, then $n\geq6$ and 
\begin{align*}
4(n+1)\lambda^{\pi_{\omega_{2p}}}= 6(2n-4)\leq 8n+4=4(n+1)\lambda^{\pi_{2\omega_2}}
\iff n\leq 7,
\end{align*} 
with equality attained only if $n=7$.
If $2p\geq 8$, then $n\geq8$ and $4(n+1)\lambda^{\pi_{\omega_{2p}}}= 8(2n-6)<8n+4=4(n+1)\lambda^{\pi_{2\omega_2}}
\iff 8n<52$, a contradiction. 
\end{proof}

\subsection{Family VIII} 

We set $H'=\Sp(1)\times \Sp(n)$ for some $n\geq2$ and
$\rho= \sigma_{\eta_1}\widehat\otimes\sigma_{\eta_1'}'$, which is of real type and has dimension $N:=4n$. 
The space $\SO(N)/\rho(H')$ turns out to be isotropy irreducible with universal cover given by $G/H$, where 
\begin{align}\label{eq:G/H-flia8}
	G&=\SO(4n)/\Z_2,
	&
	H&\simeq 
	(\Sp(1)/\Z_2)\otimes(\Sp(n)/\Z_2).
\end{align}

The procedure for this family is very similar to the one in Family IV, so we will omit most of the details, including the proof of the next lemma.

\begin{lemma}\label{lem-flia8:branchings}
For $\fg_\C$ and $\fh_\C'$ defined as above, we obtain the following decompositions: 
\begin{align*}
\pi_{2\omega_1}|_{\fh_\C'}
&\simeq  
	\sigma_{2\eta_1}\widehat\otimes\, \sigma_{2\eta_1'}' 
	\, \oplus \, 
	\sigma_0\widehat\otimes\, \sigma_{\eta_2'}',
\\
\pi_{\omega_2}|_{\fh_\C'}
&\simeq  
	\sigma_{0}\widehat\otimes\,  \sigma_{2\eta_1'}' 
	\, \oplus \,
	\sigma_{2\eta_1}\widehat\otimes \, \sigma_{\eta_2'}' 
	\, \oplus \, 
	\sigma_{2\eta_1}\widehat\otimes\, \sigma_{0}',
\\
\pi_{3\omega_1}|_{\fh_\C'}
&\simeq 
	\sigma_{3\eta_1}\widehat\otimes\, \sigma_{3\eta_1'}'
	\, \oplus \, 
	\sigma_{\eta_1}\widehat\otimes\, \sigma_{\eta_1'+\eta_2'}'.
	\end{align*}
\end{lemma}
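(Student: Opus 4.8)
The plan is to imitate the argument used for Family~IV (Lemma~\ref{lem-flia4:branchings}). Since $\pi_{\omega_1}$ is the standard representation of $\fg_\C=\so(4n)_\C$, restriction to $\fh_\C'$ gives $\pi_{\omega_1}|_{\fh_\C'}\simeq\rho=\sigma_{\eta_1}\widehat\otimes\,\sigma_{\eta_1'}'$. The first two identities then follow exactly as in the proof of Lemma~\ref{lem-flia4:branchings}, using Remark~\ref{rem:ExtSym-standard} (which gives $\pi_{2\omega_1}\simeq\Sym^2(\pi_{\omega_1})\ominus\pi_0$ and $\pi_{\omega_2}\oplus\pi_0\simeq\bigwedge^2(\pi_{\omega_1})$ in type $D_n$; note that here $N=4n$ is large compared to the ranks so $\omega_2$ is genuinely $\ee_1+\ee_2$) together with Remark~\ref{rem:ExtSym(suma)o(tensor)} for $\Sym^2$ and $\bigwedge^2$ of a tensor product. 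Concretely, $\Sym^2(\sigma_{\eta_1})\simeq\sigma_{2\eta_1}$, $\bigwedge^2(\sigma_{\eta_1})\simeq\sigma_{\eta_2}'$-type representation of $\spp(n)$ realized as $\sigma_{\eta_2'}'$ plus the trivial one for $\sll_2$, etc.; collecting terms yields $\Sym^2(\rho)\simeq\sigma_{2\eta_1}\widehat\otimes\,(\sigma_0'\oplus\sigma_{2\eta_1'}')\oplus\sigma_0\widehat\otimes\,\sigma_{\eta_2'}'$ and $\bigwedge^2(\rho)\simeq\sigma_0\widehat\otimes\,\sigma_0'\oplus\sigma_0\widehat\otimes\,\sigma_{2\eta_1'}'\oplus\sigma_{2\eta_1}\widehat\otimes\,\sigma_{\eta_2'}'$, which after removing $\sigma_0\widehat\otimes\,\sigma_0'$ gives precisely the stated $\pi_{2\omega_1}|_{\fh_\C'}$ and $\pi_{\omega_2}|_{\fh_\C'}$.

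For the third identity I would compute $\pi_{2\omega_1}\otimes\pi_{\omega_1}$ on both sides. On the $\fg_\C$ side, Remark~\ref{rem:KoikeTerada} (Pieri's formula for type $D_n$, valid here since $N=4n$ is large) gives $\pi_{2\omega_1}\otimes\pi_{\omega_1}\simeq\pi_{3\omega_1}\oplus\pi_{\omega_1+\omega_2}\oplus\pi_{\omega_1}$. On the $\fh_\C'$ side, tensor the already-computed $\pi_{2\omega_1}|_{\fh_\C'}\simeq\sigma_{2\eta_1}\widehat\otimes\,\sigma_{0}'\oplus\sigma_{2\eta_1}\widehat\otimes\,\sigma_{2\eta_1'}'\oplus\sigma_0\widehat\otimes\,\sigma_{\eta_2'}'$ with $\rho=\sigma_{\eta_1}\widehat\otimes\,\sigma_{\eta_1'}'$, using the $\sll_2$ Clebsch--Gordan rule on the first factor and Remark~\ref{rem:KoikeTerada} for $\spp(n)$ on the second. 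Similarly, the already-known $\pi_{\omega_2}|_{\fh_\C'}$ tensored with $\rho$ decomposes $\pi_{\omega_2}\otimes\pi_{\omega_1}\simeq\pi_{\omega_3}\oplus\pi_{\omega_1+\omega_2}\oplus\pi_{\omega_1}$ (again Pieri). Subtracting: the term $\sigma_{3\eta_1}\widehat\otimes\,\sigma_{3\eta_1'}'$ appears in $(\pi_{3\omega_1}\oplus\pi_{\omega_1+\omega_2}\oplus\pi_{\omega_1})|_{\fh_\C'}$ but, by inspection, not in $(\pi_{\omega_2}\oplus\pi_{\omega_1+\omega_2}\oplus\pi_{\omega_1})|_{\fh_\C'}$, so it must lie in $\pi_{3\omega_1}|_{\fh_\C'}$; likewise $\sigma_{\eta_1}\widehat\otimes\,\sigma_{\eta_1'+\eta_2'}'$ can be pinned down this way. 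A dimension count then forces $\pi_{3\omega_1}|_{\fh_\C'}\simeq\sigma_{3\eta_1}\widehat\otimes\,\sigma_{3\eta_1'}'\oplus\sigma_{\eta_1}\widehat\otimes\,\sigma_{\eta_1'+\eta_2'}'$: one checks that $\dim\pi_{3\omega_1}-\dim(\sigma_{3\eta_1}\widehat\otimes\,\sigma_{3\eta_1'}')$ equals $\dim(\sigma_{\eta_1}\widehat\otimes\,\sigma_{\eta_1'+\eta_2'}')$ and no other combination of the available constituents works, exactly as in the proof of Lemma~\ref{lem-flia4:branchings}.

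The main obstacle is purely bookkeeping: correctly applying the type-$D_n$ Pieri rules and the $\spp(n)\times\sll_2$ tensor decompositions, and then running the dimension count with Weyl's dimension formula to eliminate spurious combinations. There is no conceptual difficulty beyond what already appears in Family~IV; the ranks ($\so(4n)_\C$ on one side, $\spp(n)_\C\oplus\sll_2$ on the other) are comfortably in the stable range where the quoted tensor formulas apply without correction terms, so the only real care needed is to track multiplicities of repeated summands (e.g.\ the $\sigma_{\eta_1}\widehat\otimes\,\sigma_{\eta_1'}'$ terms) when subtracting the two decompositions.
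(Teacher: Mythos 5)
Your overall strategy is the intended one (and the one the paper itself invokes by reference to Family~IV): compute $\Sym^2(\rho)$ and $\bigwedge^2(\rho)$ using Remark~\ref{rem:ExtSym(suma)o(tensor)}, match them against $\Sym^2(\pi_{\omega_1})$ and $\bigwedge^2(\pi_{\omega_1})$ on the $\fg_\C$-side, then tensor with $\rho$ once more and run a dimension count to extract $\pi_{3\omega_1}|_{\fh_\C'}$. The third-identity argument is sound as written.

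However, your explicit formulas for $\Sym^2(\rho)$ and $\bigwedge^2(\rho)$ are wrong: you have transcribed the Family~IV decompositions verbatim, and they do not adapt to Family~VIII. In Family~IV the second factor is $\SO(n)$, so $\Sym^2(\sigma_{\eta_1'}')=\sigma_{2\eta_1'}'\oplus\sigma_0'$ and $\bigwedge^2(\sigma_{\eta_1'}')=\sigma_{\eta_2'}'$. Here the second factor is $\Sp(n)$, so the symplectic form sits in the wedge: $\Sym^2(\sigma_{\eta_1'}')=\sigma_{2\eta_1'}'$ and $\bigwedge^2(\sigma_{\eta_1'}')=\sigma_{\eta_2'}'\oplus\sigma_0'$. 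Carrying this through Remark~\ref{rem:ExtSym(suma)o(tensor)} gives
\begin{align*}
\Sym^2(\rho)&\simeq \sigma_{2\eta_1}\widehat\otimes\,\sigma_{2\eta_1'}' \,\oplus\, \sigma_0\widehat\otimes\,(\sigma_{\eta_2'}'\oplus\sigma_0'),\\
\textstyle\bigwedge^2(\rho)&\simeq \sigma_0\widehat\otimes\,\sigma_{2\eta_1'}' \,\oplus\, \sigma_{2\eta_1}\widehat\otimes\,(\sigma_{\eta_2'}'\oplus\sigma_0'),
\end{align*}
so the trivial summand $\sigma_0\widehat\otimes\sigma_0'$ now lives in $\Sym^2(\rho)$, consistent with $\rho$ being of real type. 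Correspondingly, since $\fg_\C=\so(4n)_\C$ rather than $\spp$, the trivial on the $\fg$-side also moves: $\Sym^2(\pi_{\omega_1})=\pi_{2\omega_1}\oplus\pi_0$ and $\bigwedge^2(\pi_{\omega_1})=\pi_{\omega_2}$. Your two displayed formulas have $\sigma_0\widehat\otimes\sigma_0'$ and $\sigma_{2\eta_1}\widehat\otimes\sigma_0'$ swapped, which makes them internally inconsistent with the lemma (your $\Sym^2(\rho)$ has three nontrivial terms while $\pi_{2\omega_1}|_{\fh_\C'}$ should have two, and your $\bigwedge^2(\rho)$ minus the trivial yields two terms while $\pi_{\omega_2}|_{\fh_\C'}$ should have three). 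Once the two decompositions above are substituted, the first two identities of the lemma drop out immediately and your dimension-count argument for $\pi_{3\omega_1}|_{\fh_\C'}$ proceeds as you describe, yielding $\sigma_{3\eta_1}\widehat\otimes\sigma_{3\eta_1'}'\oplus\sigma_{\eta_1}\widehat\otimes\sigma_{\eta_1'+\eta_2'}'$ (and indeed $\dim\pi_{3\omega_1}-4\binom{2n+2}{3}=\tfrac{16n(n^2-1)}{3}=2\dim\sigma_{\eta_1'+\eta_2'}'$).
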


\begin{theorem}\label{thm:flia8}
If $G$ and $H$ are as in \eqref{eq:G/H-flia8}, then  
$$
\lambda_1(G/H,g_{\st})
=\lambda^{\pi_{\ee_1+\ee_2+\ee_3+\ee_4}}
=  \frac{4(n-1)}{2n-1}
=  2-\frac{2}{2n-1}
.
$$ 
Its multiplicity in $\Spec(G/H,g_{\st})$ equals $ \dim V_{\pi_{\omega_4}} =\binom{4n}{4}$ for $n\geq 3$ and $\dim V_{\pi_{\ee_1+\ee_2+\ee_3+\ee_4}}=35$ for $n=2$.
\end{theorem}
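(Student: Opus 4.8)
The plan is to follow the scheme used for Theorem~\ref{thm:flia6} and Theorem~\ref{thm:flia4}. Throughout, $\fg=\so(N)$ with $N=4n$, and $\pi_{\omega_1}|_{\fh_\C'}\simeq\rho=\sigma_{\eta_1}\widehat\otimes\sigma_{\eta_1'}'$ is the standard representation of $\fg_\C$. I first treat $n\geq3$, where $N\geq12$ and $\pi_{\ee_1+\ee_2+\ee_3+\ee_4}=\pi_{\omega_4}={\textstyle\bigwedge^4}(\pi_{\omega_1})$, and handle $n=2$ (where $N=8$, $G=\SO(8)/\Z_2$, and $\pi_{\ee_1+\ee_2+\ee_3+\ee_4}=\pi_{2\omega_4}$ is only one of the two $35$-dimensional halves of ${\textstyle\bigwedge^4}\C^8$) at the end.

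\emph{Step 1: the candidate is $H$-spherical, and its eigenvalue.} For $n\geq3$ one has $V_{\pi_{\omega_4}}^{\fh_\C'}=({\textstyle\bigwedge^4}\rho)^{\fh_\C'}$, and I would compute its dimension exactly as in the proof of Theorem~\ref{thm:flia6}. First, Lemma~\ref{lem-flia8:branchings} shows that every $\fh_\C'$-constituent of $(\pi_{\omega_2}|_{\fh_\C'})\otimes\rho$ and of $(\pi_{2\omega_1}|_{\fh_\C'})\otimes\rho$ has first tensor factor $\sigma_{\eta_1}$ or $\sigma_{3\eta_1}$; since $\pi_{\omega_2}\otimes\pi_{\omega_1}=\pi_{\omega_3}\oplus\pi_{\omega_1+\omega_2}\oplus\pi_{\omega_1}$ and $\pi_{2\omega_1}\otimes\pi_{\omega_1}=\pi_{3\omega_1}\oplus\pi_{\omega_1+\omega_2}\oplus\pi_{\omega_1}$ by Remark~\ref{rem:KoikeTerada}, this forces $V_{\pi_{\omega_1}}^{\fh_\C'}=V_{\pi_{\omega_3}}^{\fh_\C'}=V_{\pi_{\omega_1+\omega_2}}^{\fh_\C'}=V_{\pi_{3\omega_1}}^{\fh_\C'}=0$. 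Next, restricting to $\fh_\C'$ the type-$D$ decompositions of $\pi_{\omega_2}\otimes\pi_{\omega_2}$, $\pi_{2\omega_1}\otimes\pi_{2\omega_1}$, $\bigwedge^2(\pi_{\omega_2})$, $\bigwedge^2(\pi_{2\omega_1})$ together with $\Sym^4(\pi_{\omega_1})=\pi_{4\omega_1}\oplus\pi_{2\omega_1}\oplus\pi_0$, counting $\fh_\C'$-invariants with Remark~\ref{rem:rhoxrho^*} (the constituents that appear are all of real type and pairwise inequivalent, so a tensor square contains as many invariants as it has constituents and a $\bigwedge^2$ contains none), and using Lemma~\ref{lem:omega2-no-esferica} for $\pi_{\omega_2}$ and $\pi_{2\omega_1}$, one solves a small linear system to obtain $V_{\pi_{\omega_1+\omega_3}}^{\fh_\C'}=V_{\pi_{4\omega_1}}^{\fh_\C'}=0$ and $\dim V_{\pi_{\omega_4}}^{\fh_\C'}=\dim V_{\pi_{2\omega_2}}^{\fh_\C'}=1$. (Equivalently and more directly: in the plethysm ${\textstyle\bigwedge^4}(\C^2\otimes\C^{2n})=\bigoplus_{\mu\vdash4}S^\mu(\C^2)\widehat\otimes S^{\mu'}(\C^{2n})$ only the self-conjugate rectangle $\mu=(2,2)$ contributes an $\spp(1)$-invariant, and $S^{(2,2)}(\C^{2n})$ carries a single $\spp(n)$-invariant by the classical $\GL_{2n}\downarrow\Sp(n)$ branching rule since the columns of $(2,2)$ have even length; hence $\dim({\textstyle\bigwedge^4}\rho)^{\fh_\C'}=1$.) In either case $\pi_{\omega_4}\in\widehat G_H$, and \eqref{eq:Freudenthal} yields $\lambda^{\pi_{\omega_4}}=\frac{4(N-4)}{2(N-2)}=2-\tfrac{2}{2n-1}$, so this value belongs to $\Spec(G/H,g_{\st})$.

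\emph{Step 2: minimality, multiplicity, and the case $n=2$.} I would next establish, in the style of Lemma~\ref{lem:SOlambda^pi<lambda^omega3}, that the only $\Lambda\in\PP^+(\so(N)_\C)\smallsetminus\{0\}$ with $\lambda^{\pi_\Lambda}<\lambda^{\pi_{\omega_4}}$ are $\omega_1,\omega_2,2\omega_1,\omega_3$, together with $\omega_1+\omega_2$ when $N\geq16$ and the half-spin weights $\omega_{2n-1},\omega_{2n}$ when $N\leq28$ (i.e.\ $n\leq7$), while $\lambda^{\pi_\Lambda}=\lambda^{\pi_{\omega_4}}$ holds only for $\Lambda=\omega_4$. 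For $\omega_1,\omega_2,2\omega_1$ use Lemma~\ref{lem:omega2-no-esferica}; $\omega_3$ and $\omega_1+\omega_2$ were handled in Step 1; and for the half-spin weights, relevant only in the finitely many cases $n=2,\dots,7$, I would read $V_{\pi_{\omega_{2n-1}}}^{\fh_\C'}=V_{\pi_{\omega_{2n}}}^{\fh_\C'}=0$ off the branching tables of \cite{LieART}. This verifies \eqref{eq:sufficient-lambda_1}, so $\lambda_1(G/H,g_{\st})=\lambda^{\pi_{\omega_4}}$; since $\pi_{\omega_4}$ is the unique $\pi$ with $\lambda^\pi=\lambda_1$, Theorem~\ref{thm:Spec(standard)} gives the multiplicity $\dim V_{\pi_{\omega_4}}\cdot\dim V_{\pi_{\omega_4}}^H=\binom{4n}{4}$. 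For $n=2$ ($G=\SO(8)/\Z_2$) the half-spin representations of $\so(8)$ are not representations of $G$, so the only $\pi\in\widehat G\smallsetminus\{1\}$ with $\lambda^\pi\leq\tfrac43$ are $\pi_{\omega_2},\pi_{2\omega_1}$ (excluded by Lemma~\ref{lem:omega2-no-esferica}) and the two $35$-dimensional halves $\pi_{2\omega_3},\pi_{2\omega_4}$ of ${\textstyle\bigwedge^4}\C^8$; the plethysm of Step 1 gives $\dim({\textstyle\bigwedge^4}\rho)^{\fh_\C'}=1$, so exactly one half is $\fh_\C'$-spherical, and \cite{LieART} identifies it as $\pi_{\ee_1+\ee_2+\ee_3+\ee_4}=\pi_{2\omega_4}$, yielding $\lambda_1=\tfrac43$ with multiplicity $35$.

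The hardest point is pinning down $\dim V_{\pi_{\omega_4}}^{\fh_\C'}=1$ exactly: Lemma~\ref{lem-flia8:branchings} does not give it directly, and separating $\pi_{\omega_4}$ from $\pi_{2\omega_2}$ (and from $\pi_{4\omega_1}$) inside the degree-four tensors requires either the full linear-algebra bookkeeping with the Koike--Terada and $\Sym^4$ identities or the $\bigwedge^4$/$\Sym^4$ plethysm combined with the classical $\GL_{2n}\downarrow\Sp(n)$ branching rule. A secondary nuisance is the range $n\leq7$, where the half-spin representations of $\so(4n)$ lie below $\lambda^{\pi_{\omega_4}}$ in the spectrum of $\fg_\C$ and must be ruled out of $\widehat G_H$ case by case with \cite{LieART}; for $n\geq8$ no half-spin weight is small enough and the general argument closes the proof.
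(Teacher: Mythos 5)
Your proposal is essentially correct and follows the same overall scheme as the paper, though with two noteworthy deviations — one a nice addition, the other an avoidable detour. In Step~1 your Koike--Terada bookkeeping (using $\textstyle\bigwedge^2(\pi_{\omega_2})$, $\textstyle\bigwedge^2(\pi_{2\omega_1})$, $\Sym^4(\pi_{\omega_1})$) differs slightly from the paper, which instead decomposes the five tensor products $\pi_{\omega_2}\otimes\pi_{\omega_2}$, $\pi_{2\omega_1}\otimes\pi_{2\omega_1}$, $\pi_{2\omega_1}\otimes\pi_{\omega_2}$, $\pi_{3\omega_1}\otimes\pi_{\omega_1}$, $\pi_{2\omega_1}\otimes\pi_{\omega_1}$ and uses Lemma~\ref{lem-flia8:branchings} with Remark~\ref{rem:rhoxrho^*}; both routes close the linear system and reach $\dim V_{\pi_{\omega_4}}^{\fh_\C'}=1$. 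Your parenthetical plethysm argument, using $\textstyle\bigwedge^4(\C^2\otimes\C^{2n})=\bigoplus_{\mu\vdash4}S^\mu(\C^2)\widehat\otimes\,S^{\mu'}(\C^{2n})$ together with the classical $\GL_{2n}\downarrow\Sp(n)$ branching rule, is a genuinely cleaner shortcut that is not in the paper and pins down the invariant in one stroke. In Step~2, however, you do noticeably more work than is needed: you enumerate small-eigenvalue dominant weights of the Lie algebra $\so(N)_\C$ and then propose to eliminate the half-spin representations $\pi_{\omega_{2n-1}},\pi_{\omega_{2n}}$ (and also $\pi_{\omega_1},\pi_{\omega_3},\pi_{\omega_1+\omega_2}$) by a LieART branching check for $n=3,\dots,7$. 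But $G=\SO(4n)/\Z_2$ has $Z(G)=e$ (Table~\ref{table4:isotropyirred-families}), i.e.\ $G$ is the adjoint group, so $\PP(G)$ is the root lattice of $D_{2n}$: only $\Lambda=\sum_i a_i\ee_i$ with $a_i\in\Z$ and $\sum_i a_i$ even are $G$-integral. The half-spin weights and $\omega_1,\omega_3,\omega_1+\omega_2$ are therefore not in $\widehat G$ at all — you noticed this for $n=2$ but did not apply it uniformly. This is exactly what Lemma~\ref{lem:SO-lambda^pi<lambda^omega4} is formulated to exploit: for $\Lambda\in\PP^+(\SO(4n)/\Z_2)$ the only candidates at or below $\lambda^{\pi_{\omega_4}}$ are $0,\omega_2,2\omega_1,\omega_4$ (plus $2\omega_3$ when $n=2$). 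With that restriction your Step~2 collapses to the paper's and the LieART lookups for $n\leq7$ disappear.
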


\begin{proof}
First assume that $n\geq3$.
One can check using Lemma~\ref{lem-flia4:branchings}, Remark~\ref{rem:rhoxrho^*}, Lemma~\ref{lem3:dimV_pi^H}, and Remark~\ref{rem:KoikeTerada} that 
\begin{align*}
\dim V_{\pi_{\omega_2}\otimes \pi_{\omega_2}}^{\fh_\C'} &=3, &
	\pi_{\omega_2}\otimes\pi_{\omega_{2}}
	&\simeq 
	\pi_{\omega_4}
	\oplus \pi_{\omega_1+\omega_3}
	\oplus \pi_{2\omega_2}
	\oplus \pi_{2\omega_1}
	\oplus \pi_{\omega_2}
	\oplus \pi_{0},
\\
\dim V_{\pi_{2\omega_1}\otimes \pi_{2\omega_1}}^{\fh_\C'} &=2,&
	\pi_{2\omega_1}\otimes\pi_{2\omega_{1}}
	&\simeq 
	\pi_{4\omega_1}
	\oplus \pi_{2\omega_1+\omega_2}
	\oplus \pi_{2\omega_2}
	\oplus \pi_{2\omega_1}
	\oplus \pi_{\omega_2}
	\oplus \pi_{0},
\\
\dim V_{\pi_{2\omega_1}\otimes \pi_{\omega_2}}^{\fh_\C'} &=0,&
	\pi_{2\omega_1}\otimes\pi_{\omega_{2}}
	&\simeq  
	\pi_{2\omega_1+\omega_2} 
	\oplus \pi_{\omega_1+\omega_3}
	\oplus \pi_{2\omega_1}
	\oplus \pi_{\omega_2},
\\
\dim V_{\pi_{3\omega_1}\otimes \pi_{\omega_1}}^{\fh_\C'} &=0,&
	\pi_{\omega_3}\otimes\pi_{\omega_{1}}
	&\simeq 
	\pi_{4\omega_1} 
	\oplus \pi_{2\omega_1+\omega_2}
	\oplus \pi_{2\omega_1},
\\
\dim V_{\pi_{2\omega_1}\otimes \pi_{\omega_1}}^{\fh_\C'} &=0, &
	\pi_{2\omega_1}\otimes\pi_{\omega_{1}}
	& \simeq   \pi_{3\omega_1} \oplus \pi_{\omega_1+\omega_2}
	\oplus \pi_{\omega_1}.
\end{align*}
The last three rows yield $V_\pi^{\fh_\C'}=0$ for every irreducible term appearing in them, and thus the first two rows give  
$
\dim V_{\pi_{\omega_4}}^H =  \dim V_{\pi_{2\omega_2}}^H = 
1$.

Equation \eqref{eq:sufficient-lambda_1} holds by Lemma~\ref{lem:SO-lambda^pi<lambda^omega4} below, so we conclude that $\lambda_1(G/H,g_{\st})=\lambda^{\pi_{\omega_4}}$. 
Moreover,
Theorem~\ref{thm:Spec(standard)} and Lemma~\ref{lem:SO-lambda^pi<lambda^omega4} immediately imply that the multiplicity of $\lambda_1(G/H,g_{\st})$ in $\Spec(G/H,g_{\st})$ equals 
$
\dim V_{\pi_{\omega_4}}\cdot \dim V_{\pi_{\omega_4}}^H = \dim V_{\pi_{\omega_4}} = \dim {\textstyle\bigwedge}^4(\pi_{\omega_1})=\binom{4n}{4}
$.

For $n=2$, one must replace $\omega_4$ by $\ee_1+\dots+\ee_4$, obtaining the same results, including the tensor product formulas from Remark~\ref{rem:KoikeTerada}. From \cite[pp.\ 213]{LieART}, we have $\dim V_{\pi_{2\omega_1}}^H=V_{\pi_{2\omega_3}}^H=0$,  which implies that the multiplicity equals $\dim V_{\pi_{\ee_1+\ee_2+\ee_3+\ee_4}}=\frac{1}{2}\dim {\textstyle\bigwedge}^4(\pi_{\omega_1})=35$, and the proof is complete. 
\end{proof}

\begin{lemma}\label{lem:SO-lambda^pi<lambda^omega4}
Let $n$ be an integer $\geq 2$. 
For $\Lambda\in\PP^+(\SO(4n)/\Z_2)$, we have  $\lambda^{\pi_{\Lambda}} <\lambda^{\pi_{\ee_1+\ee_2+\ee_3+\ee_4}}$ if and only if $\Lambda$ is one of 
$0$, $\omega_2$, and, in addition,
$2\omega_1$ if $n\geq3$.
Moreover,
$\lambda^{\pi_{\Lambda}}
 =\lambda^{\pi_{\ee_1+\ee_2+\ee_3+\ee_4}}$ if and only if  $\Lambda={\ee_1+\ee_2+\ee_3+\ee_4}$, or $\Lambda=2\omega_1$ or $\Lambda=2\omega_3={\ee_1+\ee_2+\ee_3-\ee_4}$ when $n=2$.
\end{lemma}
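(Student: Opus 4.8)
The proof parallels that of Lemma~\ref{lem:SOlambda^pi<lambda^omega3} (and of the omitted Lemma~\ref{lem:lambda^pi<lambda^3omega1}), so the plan is to carry out the same kind of explicit estimate, keeping careful track of the weight lattice. Recall that $\fg_\C=\so(4n)_\C$ is of type $D_{2n}$ and that $\SO(4n)/\Z_2$ is the adjoint group of this type; hence $\PP^+(\SO(4n)/\Z_2)$ consists of the weights $\Lambda=\sum_{i=1}^{2n}a_i\ee_i$ with $a_1\geq\dots\geq a_{2n-1}\geq|a_{2n}|$, all $a_i\in\Z$, and $\sum_i a_i$ even (as in \cite[Rmk.~2.8]{LauretTolcachier-nu-stab}). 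As in the proof of Lemma~\ref{lem:SOlambda^pi<lambda^omega3}, \eqref{eq:Freudenthal} yields
\[
\lambda^{\pi_{\Lambda}}=\frac{1}{2(4n-2)}\sum_{i=1}^{2n}a_i\,(a_i+4n-2i),
\]
so that $\lambda^{\pi_{\ee_1+\ee_2+\ee_3+\ee_4}}=\frac{4(n-1)}{2n-1}$, and $\lambda^{\pi_{\Lambda}}\leq\lambda^{\pi_{\ee_1+\ee_2+\ee_3+\ee_4}}$ (resp.\ $=$) is equivalent to $\sum_i a_i(a_i+4n-2i)\leq 16(n-1)$ (resp.\ $=$).

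Each summand $a_i(a_i+4n-2i)$ is nonnegative: for $i<2n$ dominance forces $a_i\geq0$ and $4n-2i>0$, while for $i=2n$ the term equals $a_{2n}^2$. The first summand alone equals $4n-1,\,8n,\,12n+3,\,16n+8$ for $a_1=1,2,3,4$ respectively, so necessarily $a_1\leq3$. If $a_1=3$, the remaining summands must total at most $4n-19$, which forces $a_2=0$ and $\Lambda=3\ee_1$; but $3\ee_1$ has odd coordinate sum, hence is not in $\PP^+(\SO(4n)/\Z_2)$. If $a_1=2$, a short case check shows the only admissible weight is $\Lambda=2\ee_1=2\omega_1$, for which $\lambda^{\pi_{2\omega_1}}=\frac{2n}{2n-1}$; this is $<\frac{4(n-1)}{2n-1}$ when $n\geq3$ and $=\frac{4(n-1)}{2n-1}$ when $n=2$. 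If $a_1=1$, then $\Lambda$ is either $\omega_{2j}=\ee_1+\dots+\ee_{2j}$ with $1\leq j\leq n-1$ (the parity condition rules out $\omega_{2j+1}$), or one of the two ``spin-type'' vectors $\ee_1+\dots+\ee_{2n}$ and $\ee_1+\dots+\ee_{2n-1}-\ee_{2n}$.

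A direct evaluation gives $\lambda^{\pi_{\omega_{2j}}}=\frac{j(2n-j)}{2n-1}$ and, for both spin-type vectors, $\lambda^{\pi_{\Lambda}}=\frac{n^2}{2n-1}$. Since $j(2n-j)-4(n-1)=(j-2)(2n-2-j)$, which is $\geq0$ for $1\leq j\leq n-1$ precisely when $j\geq2$, and vanishes in that range exactly at $j=2$ (and $j=2$ forces $n\geq3$, giving $\omega_4=\ee_1+\ee_2+\ee_3+\ee_4$), while $n^2-4(n-1)=(n-2)^2$ vanishes exactly at $n=2$, the stated lists follow: for $n\geq3$ the weights with $\lambda^{\pi_{\Lambda}}<\lambda^{\pi_{\ee_1+\ee_2+\ee_3+\ee_4}}$ are $0,\omega_2,2\omega_1$, and the only one with equality is $\omega_4=\ee_1+\ee_2+\ee_3+\ee_4$; for $n=2$ only $0,\omega_2$ give strict inequality, while $2\omega_1$, $2\omega_3=\ee_1+\ee_2+\ee_3-\ee_4$, and $\ee_1+\ee_2+\ee_3+\ee_4=2\omega_4$ give equality.

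The computations themselves are entirely routine; the only points requiring care are the correct description of the weight lattice of the adjoint group $\SO(4n)/\Z_2$ (in particular the even-sum condition, which is what eliminates $3\omega_1$, the weights $\omega_{2j+1}$, and weights such as $\omega_1+\omega_2$, $\omega_1+\omega_3$ from the discussion) and the separate handling of the two spin-type weight vectors specific to type $D$, which do not appear among the fundamental weights but must be compared against $\ee_1+\ee_2+\ee_3+\ee_4$ directly.
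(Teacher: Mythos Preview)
Your proof is correct and is precisely the kind of direct case analysis the paper has in mind when it says the argument parallels Lemmas~\ref{lem:SOlambda^pi<lambda^omega3}, \ref{lem:lambda^pi<lambda^3omega1}, and \ref{lem:Sp-lambda^pi<lambda^2omega2}. The identification of $\PP^+(\SO(4n)/\Z_2)$ with the dominant elements of the root lattice of $D_{2n}$, the explicit Freudenthal computation, the bound $a_1\leq 3$, and the factorization $(j-2)(2n-2-j)$ together with $(n-2)^2$ for the two ``spin-type'' weights $2\omega_{2n-1}$ and $2\omega_{2n}$ are all correct and cover every case.
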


\begin{proof}
The proof is very similar (and simpler) to the proofs of Lemmas~\ref{lem:SOlambda^pi<lambda^omega3},  \ref{lem:lambda^pi<lambda^3omega1}, and \ref{lem:Sp-lambda^pi<lambda^2omega2}, and is therefore left to the reader. 
\end{proof}

\begin{remark}\label{rem8:e1+..+e4=omega4}
The highest weight $\ee_1+\ee_2+\ee_3+\ee_4$ of $\pi_{\ee_1+\ee_2+\ee_3+\ee_4}= {\textstyle\bigwedge^4}(\pi_{\omega_1})$ in Theorem~\ref{thm:flia8} is equal to $\omega_4$ for $n\geq3$. 
When $n=2$, $\ee_1+\dots+\ee_4=2\omega_4$ and ${\textstyle\bigwedge^4}(\pi_{\omega_1})=\pi_{\ee_1+\dots+\ee_4}\oplus \pi_{\ee_1+\ee_2+\ee_3-\ee_4}$ (see Remark~\ref{rem:ExtSym-standard}). 
\end{remark}

\subsection{Families I and II}
In this subsection, we treat two families simultaneously, since the procedures are very similar. 

We set $H'=\SU(n)$ for some $n\geq3$, 
$\rho_1= \sigma_{\eta_2}$, 
$\rho_2= \sigma_{2\eta_1}$ irreducible representations of $H'$. 
The underlying vector spaces of these representations are $V_{\rho_1}\simeq \textstyle{\bigwedge^2}(\pi_{\omega_1}) \simeq \textstyle{\bigwedge^2}(\C^n)$ and $V_{\rho_2}\simeq \Sym^2(\pi_{\omega_1})\simeq \Sym^2(\C^n)$, thus both are of complex type (since $\rho_i^*\not\simeq\rho_i$ for $i=1,2$) and $N_1:=\dim\rho_1= \binom{n}{2}$, $N_2:=\dim\rho_2= \binom{n+1}{2}$. 
The spaces $\SU(N_i)/\rho_i(H')$ for $i=1,2$ turn out to be isotropy irreducible with universal cover given by $G_i/H_i$, where 
\begin{align}\label{eq:G/H-flia1-2}
	G_i&=\SU(N_i)/\Z_d,
	&
	H_i&\simeq \SU(n)/\Z_n,&
	d&=\begin{cases}
	\tfrac{n}{2}&\text{ if $n$ is even,}\\
	n&\text{ if $n$ is odd}. 
	\end{cases}
\end{align}
For $i=1$, we assume that $n\geq5$ in order to avoid symmetric spaces.

\begin{theorem}\label{thm:flia1-2}
If $G_i$ and $H_i$ are as in \eqref{eq:G/H-flia1-2}, then  
$$
\lambda_1(G_i/H_i,g_{\st})=
\begin{cases}
\lambda^{\pi_{2\omega_1+2\omega_{N_i-1}}}
= \frac{2N_i+2}{N_i}= 2+ \frac{2}{N_i} 
&\quad\text{if $i=1$ and $n\neq 6$, or $i=2$ and $n\neq 3$}, 
\\
\lambda^{\pi_{3\omega_1}}
= \frac{42}{25}=1.68
&\quad\text{if $i=1$ and $n=6$},
\\
\lambda^{\pi_{3\omega_1}}
= \frac{15}{8}=1.875
&\quad\text{if $i=2$ and $n=3$}.
\end{cases}
$$ 
Its multiplicity in $\Spec(G/H,g_{\st})$ equals 
	$\dim V_{\pi_{2\omega_1+2\omega_{N_i-1}}}=\frac{N_i^4+2N_i^3-3N_i^2}{4}$ for the first row, 
	$\dim V_{\pi_{3\omega_1}}+\dim V_{\pi_{3\omega_{14}}}=1360$ for the second row, 
	and $\dim V_{\pi_{3\omega_1}}+\dim V_{\pi_{3\omega_{5}}}=112$ for the third row. 
\end{theorem}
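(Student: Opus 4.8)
The plan is to follow the scheme already used for Families IV, VI, VIII and X: exhibit a spherical representation attaining the claimed value, verify the minimality criterion \eqref{eq:sufficient-lambda_1} with the aid of an eigenvalue‑comparison lemma for $\SU(N_i)/\Z_d$, and then read off the multiplicity from Theorem~\ref{thm:Spec(standard)}. Throughout one has $\fg_\C=\su(N_i)_\C$, $\fh_\C'=\su(n)_\C$, with $\pi_{\omega_1}|_{\fh_\C'}\simeq\rho_i$ and $\pi_{\omega_{N_i-1}}|_{\fh_\C'}\simeq\rho_i^*$; recall $\rho_i$ is of complex type, so $\rho_i^*\not\simeq\rho_i$. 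By Remark~\ref{rem:ExtSym-standard} (type $A$) we have $\pi_{2\omega_1}|_{\fh_\C'}\simeq\Sym^2(\rho_i)$, $\pi_{\omega_2}|_{\fh_\C'}\simeq{\textstyle\bigwedge^2}(\rho_i)$ and $\pi_{3\omega_1}|_{\fh_\C'}\simeq\Sym^3(\rho_i)$. Using the classical plethysms ($\Sym^2({\textstyle\bigwedge^2}\C^n)\simeq\sigma_{\eta_4}\oplus\sigma_{2\eta_2}$ and ${\textstyle\bigwedge^2}({\textstyle\bigwedge^2}\C^n)\simeq\sigma_{\eta_1+\eta_3}$ for $\rho_1$; $\Sym^2(\Sym^2\C^n)\simeq\sigma_{4\eta_1}\oplus\sigma_{2\eta_2}$ and ${\textstyle\bigwedge^2}(\Sym^2\C^n)\simeq\sigma_{2\eta_1+\eta_2}$ for $\rho_2$), obtained from Pieri's rule \cite[Prop.~15.25]{FultonHarris-book} together with a dimension count as in Lemma~\ref{lem:Sym^2-Lambda^2(eta_2)}, one finds in both cases that $\Sym^2(\rho_i)$ splits into two inequivalent irreducible $\fh_\C'$‑modules, that ${\textstyle\bigwedge^2}(\rho_i)$ is irreducible, and that $\rho_i$ is a constituent of neither.

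Next I would compute the relevant invariant dimensions. On the $\SU(N)$ side one has the Pieri decompositions $\pi_{\omega_1}\otimes\pi_{\omega_{N-1}}\simeq\pi_{\omega_1+\omega_{N-1}}\oplus\pi_0$, $\pi_{2\omega_1}\otimes\pi_{2\omega_{N-1}}\simeq\pi_{2\omega_1+2\omega_{N-1}}\oplus\pi_{\omega_1+\omega_{N-1}}\oplus\pi_0$, $\pi_{\omega_2}\otimes\pi_{\omega_{N-2}}\simeq\pi_{\omega_2+\omega_{N-2}}\oplus\pi_{\omega_1+\omega_{N-1}}\oplus\pi_0$, $\pi_{2\omega_1}\otimes\pi_{\omega_{N-1}}\simeq\pi_{2\omega_1+\omega_{N-1}}\oplus\pi_{\omega_1}$ and $\pi_{\omega_2}\otimes\pi_{\omega_{N-1}}\simeq\pi_{\omega_2+\omega_{N-1}}\oplus\pi_{\omega_1}$. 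Restricting to $\fh_\C'$ and counting occurrences of $1_{\fh_\C'}$ via Remark~\ref{rem:rhoxrho^*} and Lemma~\ref{lem3:dimV_pi^H}: $\rho_i\otimes\rho_i^*$ contains $1_{\fh_\C'}$ once, $\Sym^2(\rho_i)\otimes\Sym^2(\rho_i)^*$ twice (two irreducible constituents), ${\textstyle\bigwedge^2}(\rho_i)\otimes{\textstyle\bigwedge^2}(\rho_i)^*$ once, while $\Sym^2(\rho_i)\otimes\rho_i^*$ and ${\textstyle\bigwedge^2}(\rho_i)\otimes\rho_i^*$ contain it not at all. Since $V_{\pi_{\omega_1}}^{\fh_\C'}=0$ and $V_{\pi_{\omega_1+\omega_{N-1}}}^{\fh_\C'}=V_{\Ad}^{\fh_\C'}=0$ by Lemma~\ref{lem:omega2-no-esferica}, this forces $V_{\pi_{2\omega_1+\omega_{N-1}}}^{\fh_\C'}=V_{\pi_{\omega_2+\omega_{N-1}}}^{\fh_\C'}=V_{\pi_{\omega_2+\omega_{N-2}}}^{\fh_\C'}=0$ and $\dim V_{\pi_{2\omega_1+2\omega_{N-1}}}^{\fh_\C'}=2-0-1=1$. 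Hence $\pi_{2\omega_1+2\omega_{N_i-1}}$ is $(G_i,H_i)$‑spherical and $\lambda^{\pi_{2\omega_1+2\omega_{N_i-1}}}\in\Spec(G_i/H_i,g_{\st})$, with value $2+2/N_i$ by \eqref{eq:Freudenthal}.

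For the first row it remains to verify \eqref{eq:sufficient-lambda_1}, for which I would state and prove an eigenvalue‑comparison lemma paralleling Lemma~\ref{lem:SOlambda^pi<lambda^omega3}: for $0\neq\Lambda\in\PP^+(\SU(N_i)/\Z_d)$ one has $\lambda^{\pi_\Lambda}<\lambda^{\pi_{2\omega_1+2\omega_{N_i-1}}}$ only for $\Lambda\in\{\omega_1+\omega_{N_i-1},\ \omega_2+\omega_{N_i-2}\}$ when $n$ is large. The point is that $\pi_\Lambda$ descends to $\SU(N_i)/\Z_d$ exactly when $d\mid|\Lambda|$, and this integrality condition — together with the Casimir estimate of Remark~\ref{rem:autovalor_suma_dominantes}, which forces a competing $\Lambda$ to be supported on $\{\omega_1,\omega_2,\omega_{N_i-2},\omega_{N_i-1}\}$ with small coefficients — leaves only the two weights above (the $\omega_j+\omega_{N_i-j}$ with $j\geq3$ being too large, and weights like $2\omega_1+\omega_{N_i-1}$ failing $d\mid|\Lambda|$); for the finitely many small values $n=5$ ($i=1$) and $n=4,5$ ($i=2$) a few extra weights survive and are disposed of with the $\SU(N_i)\!\downarrow\!\SU(n)$ branching tables of \cite{LieART}. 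Combined with the vanishing results above this gives $\lambda_1(G_i/H_i,g_{\st})=\lambda^{\pi_{2\omega_1+2\omega_{N_i-1}}}$, and since $\pi_{2\omega_1+2\omega_{N_i-1}}$ is then the unique spherical representation of minimal eigenvalue, Theorem~\ref{thm:Spec(standard)} gives the multiplicity $\dim V_{\pi_{2\omega_1+2\omega_{N_i-1}}}$, evaluated by Weyl's dimension formula.

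Finally, the exceptional rows. For $i=1$, $n=6$ ($N_1=15$, $d=3$) and $i=2$, $n=3$ ($N_2=6$, $d=3$) one has $d\mid|3\omega_1|$, so $\pi_{3\omega_1}$ descends to $G_i$, and $\Sym^3(\rho_i)$ contains $1_{\fh_\C'}$: for $\rho_2=\Sym^2\C^3$ this is the cubic $A\mapsto\det A$ on symmetric $3\times3$ matrices, which is $\SL(3)$‑invariant hence $\SU(3)/\Z_3$‑invariant; for $\rho_1={\textstyle\bigwedge^2}\C^6$ it is the Pfaffian on alternating $6\times6$ matrices, $\SL(6)$‑invariant hence $\SU(6)/\Z_6$‑invariant. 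Thus $\pi_{3\omega_1}$ is $(G_i,H_i)$‑spherical with $\lambda^{\pi_{3\omega_1}}<\lambda^{\pi_{2\omega_1+2\omega_{N_i-1}}}$, and a direct inspection with \cite{LieART} of the (small) branchings $\SU(6)\!\downarrow\!\SU(3)$ and $\SU(15)\!\downarrow\!\SU(6)$ shows that no representation of strictly smaller eigenvalue is spherical, giving $\lambda_1=\lambda^{\pi_{3\omega_1}}=\tfrac{15}{8}$, resp.\ $\tfrac{42}{25}$; the conjugate $\pi_{3\omega_{N_i-1}}=(\pi_{3\omega_1})^*$ is likewise spherical with the same eigenvalue, so the multiplicity is $\dim V_{\pi_{3\omega_1}}+\dim V_{\pi_{3\omega_{N_i-1}}}$ (plus any further equal‑eigenvalue spherical term the check turns up), equal to $1360$, resp.\ $112$. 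The main obstacle is the eigenvalue‑comparison lemma for $\SU(N_i)/\Z_d$: one must make the interplay between the integrality condition $d\mid|\Lambda|$ and the Casimir lower bound of Remark~\ref{rem:autovalor_suma_dominantes} completely watertight so that the list of competing dominant weights is genuinely as short as claimed, and handle cleanly the small‑$n$ exceptions; a secondary difficulty is confirming in the two exceptional cases that $\Sym^3(\rho_i)$ contains $1_{\fh_\C'}$ \emph{exactly once} — the $\det$, resp.\ Pfaffian, invariant shows it contains it, but pinning down the multiplicity (needed for the multiplicity count) requires the plethysm decomposition or the branching tables.
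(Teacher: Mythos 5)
Your route for establishing that $\pi_{2\omega_1+2\omega_{N_i-1}}$ is spherical with $\dim V_\pi^{\fh_\C'}=1$ is a legitimate alternative to the paper's: you work with $\Sym^2$ and ${\textstyle\bigwedge^2}$ of the \emph{standard} representation and its dual (decomposing $\pi_{\omega_1}\otimes\pi_{\omega_{N-1}}$, $\pi_{2\omega_1}\otimes\pi_{2\omega_{N-1}}$, $\pi_{\omega_2}\otimes\pi_{\omega_{N-2}}$, etc.), whereas the paper works with $\Sym^2$ and ${\textstyle\bigwedge^2}$ of the \emph{adjoint} representation $\Ad_{\fg}=\fh\oplus\fp$ and their known decompositions for $\su(N)_\C$. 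The plethysms you quote are correct, and your bookkeeping recovers $\dim V_{\pi_{2\omega_1+2\omega_{N-1}}}^{\fh_\C'}=1$ and the vanishing of $V_{\pi_{\omega_1+\omega_{N-1}}}^{\fh_\C'}$, $V_{\pi_{\omega_2+\omega_{N-2}}}^{\fh_\C'}$, $V_{\pi_{2\omega_1+\omega_{N-1}}}^{\fh_\C'}$, $V_{\pi_{\omega_2+\omega_{N-1}}}^{\fh_\C'}$.

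However, your comparison lemma has a genuine gap. For $\SU(N_i)/\Z_d$ the relevant integral dominant weights with $\lambda^{\pi_\Lambda}<\lambda^{\pi_{2\omega_1+2\omega_{N_i-1}}}$ are (for $n\geq 9$, as in Lemma~\ref{lem:SU-flia1-lambda^pi<lambda^2omega1+2omegaN-1}) exactly $0$, $\omega_1+\omega_{N_i-1}$, $\omega_2+\omega_{N_i-2}$, $2\omega_1+\omega_{N_i-2}$ and $\omega_2+2\omega_{N_i-1}$. Your list omits the last two. These lie in the root lattice $\mathbb A_{N_i-1}$ (their levels are $N_i$ and $2N_i$, both divisible by $d$ since $d\mid N_i$), so they \emph{do} descend to $\SU(N_i)/\Z_d$ — unlike $2\omega_1+\omega_{N_i-1}$ and $\omega_2+\omega_{N_i-1}$, whose vanishing you did establish but which are not $G_i$-integral and hence not needed. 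Your tensor products never produce $\pi_{2\omega_1+\omega_{N_i-2}}$ or $\pi_{\omega_2+2\omega_{N_i-1}}$, so the required vanishing of $V_{\pi_{2\omega_1+\omega_{N_i-2}}}^{\fh_\C'}$ (equivalently of its dual) is never proved. The paper gets this for free because these two weights are precisely the non-adjoint constituents of ${\textstyle\bigwedge^2}(\Ad_{\fg})$, and ${\textstyle\bigwedge^2}(\fh\oplus\fp)^{\fh_\C'}=0$ since $\fh,\fp$ are inequivalent real $\fh$-irreducibles. You could patch your version by additionally decomposing $\pi_{2\omega_1}\otimes\pi_{\omega_{N-2}}\simeq\pi_{2\omega_1+\omega_{N-2}}\oplus\pi_{\omega_1+\omega_{N-1}}$, whose restriction $\Sym^2(\rho_i)\otimes{\textstyle\bigwedge^2}(\rho_i)^*$ has no $\fh_\C'$-invariants because the constituents of $\Sym^2(\rho_i)$ and ${\textstyle\bigwedge^2}(\rho_i)$ are all inequivalent; but as written your comparison lemma is false and the minimality argument does not close.

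A secondary inaccuracy: the small-$n$ cases needing special care are governed by $d\leq 4$, i.e.\ $n\in\{3,4,6,8\}$ (where extra cosets $\omega_p+\mathbb A_{N_i-1}$ contribute competing weights), not $n=5$ for $i=1$ and $n=4,5$ for $i=2$ as you state. In fact the paper's Lemma~\ref{lem:SU-flia1-lambda^pi<lambda^2omega1+2omegaN-1} holds for all $n\geq9$, and $n=5,7$ are handled identically since $d>4$; only $n=3,4,6,8$ require extra branching data, of which $n=6$ ($i=1$) and $n=3$ ($i=2$) produce the exceptional cubic invariants you correctly identify (Pfaffian, determinant).
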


\begin{proof}
The decomposition into irreducible components of the restriction of the adjoint representation 
of $\fg_i$ to $\fh_i'$ is given by the Cartan decomposition $\fg_i=\su(N_i)=\fh_i\oplus\fp_i$, where $\fh_i$ is the adjoint representation of $\fh'$ and $\fp_i$ is an irreducible real representation of $\fh'$ 
($(\fp_1)_\C=\sigma_{\eta_2+\eta_{n-2}}$ and $(\fp_2)_\C=\sigma_{2\eta_1+2\eta_{n-1}}$ by \cite[Thm.~11.1]{Wolf68}). 
Now, Remark~\ref{rem:ExtSym(suma)o(tensor)} implies 
\begin{align*}
\Sym^2(\fg_i)|_{\fh'}&
	= \Sym^2(\fh_i\oplus\fp_i)
	\simeq \Sym^2(\fh_i) \oplus \Sym^2(\fp_i) \oplus \fh_i\otimes\fp_i,
\\
\textstyle\bigwedge^2(\fg_i)|_{\fh'}&
	= \textstyle\bigwedge^2(\fh_i\oplus\fp_i)
	\simeq \textstyle\bigwedge^2(\fh_i) \oplus \textstyle\bigwedge^2(\fp_i) \oplus \fh_i\otimes\fp_i
. 
\end{align*}
Since $(\fh_i)_\C$ and $(\fp_i)_\C$ are non-equivalent complex irreducible representations of $\fh_\C'$ of real type, 
Remark~\ref{rem:rhoxrho^*} ensures that $1_{\fh_\C'}$ occurs once in $\Sym^2(\fh_i)$ and in $\Sym^2(\fp_i)$, and it does not occur in $\textstyle\bigwedge^2(\fh_i)$, $\textstyle\bigwedge^2(\fp_i)$, or $\fh_i\otimes\fp_i$.
Lemma~\ref{lem3:dimV_pi^H} gives 
\begin{align*}\label{eq-flia1-2:Sym^2(ad)^H}
\dim \Sym^2(\pi_{\omega_1+\omega_{N-1}})^{\fh_\C'}&=2,
&
\dim {\textstyle\bigwedge^2} (\pi_{\omega_1+\omega_{N-1}})^{\fh_\C'}&=0
. 
\end{align*}

In general, the adjoint representation $\pi_{\omega_1+\omega_{N-1}}$ of $\su(N)_\C$ satisfies (see e.g.\ \cite[p.~3]{Hannabuss})
\begin{equation*}\label{eq-flia1-2:Sym^2(Ad)}
\begin{aligned}	
\Sym^2(\pi_{\omega_1+\omega_{N-1}})
		&\simeq \pi_{2(\omega_1+\omega_{N-1})} 
		\oplus \pi_{\omega_2+\omega_{N-2}}
		\oplus \pi_{\omega_1+\omega_{N-1}}
		\oplus \pi_0
,
\\
\textstyle\bigwedge^2(\pi_{\omega_1+\omega_{N-1}})
		&\simeq \pi_{\omega_2+2\omega_{N-1}} 
		\oplus \pi_{2\omega_1+\omega_{N-2}}
		\oplus \pi_{\omega_1+\omega_{N-1}} .
\end{aligned}
\end{equation*}
It follows immediately that 
$V_{\pi_{\omega_2+2\omega_{N_i-1}}}^{\fh_\C'}
=V_{\pi_{2\omega_1+\omega_{N_i-2}}}^{\fh_\C'}
=V_{\pi_{\omega_1+\omega_{N_i-1}}}^{\fh_\C'}
=0$ and 
\begin{equation}\label{eq-flia1-2:2=dim+dim+0+1}
2
=\dim V_{\pi_{2(\omega_1+\omega_{N_i-1})}}^{\fh_\C'} 
+\dim V_{\pi_{\omega_2+\omega_{N_i-2}}}^{\fh_\C'}
+0+1
.
\end{equation}
Our next goal is to show that $ V_{\pi_{\omega_2+\omega_{N_i-2}}}^{\fh_\C'}=0$. 

By Pieri's formula (see e.g.~\cite[Prop.~15.25]{FultonHarris-book}), we have that
$
\pi_{\omega_2}\otimes\pi_{\omega_{N_i-2}}
	\simeq \pi_{\omega_2+\omega_{N_i-2}} 
	\oplus \pi_{\omega_1+\omega_{N_i-1}}
	\oplus \pi_0
	. 
$
Then 
\begin{equation}\label{eq-flia1-2:util}
	\dim V_{\pi_{\omega_2+\omega_{N-2}}}^{\fh_\C'}
	= \dim \big(V_{\pi_{\omega_2}}\otimes V_{\pi_{\omega_{N-2}}}\big)^{\fh_\C'}
	-1.
\end{equation}
Furthermore, 
\begin{align*}
(\pi_{\omega_2}\otimes\pi_{\omega_{N-2}})|_{\fh_\C'}&
\simeq \big(
	{\textstyle\bigwedge^2}(\pi_{\omega_1}) 
	\otimes 
	{\textstyle\bigwedge^2}(\pi_{\omega_{1}})^*
\big)|_{\fh_\C'}
\simeq {\textstyle\bigwedge}^2(\rho_i) \otimes {\textstyle\bigwedge^2}(\rho_i)^*
.
\end{align*}
We now claim that ${\textstyle\bigwedge^2}(\rho_i)$ is $\fh_\C'$ irreducible, which implies that $\dim V_{\pi_{\omega_2}\otimes\pi_{\omega_{N-2}}}^{\fh_\C'}=1$ by Remark~\ref{rem:rhoxrho^*}. Thus,  $\dim V_{\pi_{\omega_2+\omega_{N-2}}}^{\fh_\C'}=0$ by \eqref{eq-flia1-2:util}, and consequently $\dim V_{\pi_{2(\omega_1+\omega_{N_i-1})}}^{\fh_\C'} =1$ by \eqref{eq-flia1-2:2=dim+dim+0+1}, which means that $\lambda^{\pi_{2(\omega_1+\omega_{N_i-1})}}\in\Spec(G_i/H_i,g_{\st})$.

We now prove separately, for $i=1$ and $i=2$, the claim that ${\textstyle\bigwedge^2}(\rho_i)$ is $\fh_\C'$-irreducible. 
It follows immediately from  \cite[Ex.~15.32]{FultonHarris-book} that ${\textstyle\bigwedge^2}(\rho_1)
={\textstyle\bigwedge^2}(\sigma_{\eta_2})=\sigma_{\eta_1+\eta_3}$. 
For $i=2$, $\sigma_{\eta_1}\otimes \sigma_{\eta_1} \simeq \sigma_{2\eta_1} \oplus \sigma_{\eta_2}$ by \cite[Prop.~15.25]{FultonHarris-book}, thus 
$\Sym^2(\sigma_{\eta_1})\simeq \sigma_{2\eta_1}$,  ${\textstyle\bigwedge}^2(\sigma_{\eta_1})\simeq \sigma_{\eta_2}$ (see Remark~\ref{rem:rhoxrho^*}), and 
$${\textstyle\bigwedge}^2(\sigma_{\eta_1}\otimes \sigma_{\eta_1}) \simeq {\textstyle\bigwedge}^2(\sigma_{2\eta_1} \oplus \sigma_{\eta_2}).$$
By applying Remark~\ref{rem:ExtSym(suma)o(tensor)} to both sides, we get 
\begin{align*}
2\;\Big(
{\textstyle\bigwedge}^2(\sigma_{\eta_1}) \otimes \Sym^2(\sigma_{\eta_1})
\Big)&
	= {\textstyle\bigwedge}^2(\sigma_{2\eta_1}) \oplus {\textstyle\bigwedge}^2(\sigma_{\eta_2}) \oplus \sigma_{2\eta_1}\otimes \sigma_{\eta_2}
\\ \Longrightarrow \qquad 
	2 (\sigma_{\eta_2} \otimes \sigma_{2\eta_1})&
	= {\textstyle\bigwedge}^2(\sigma_{2\eta_1}) \oplus \sigma_{\eta_1+\eta_3} \oplus \sigma_{\eta_2}\otimes \sigma_{2\eta_1}
\\ \Longrightarrow \qquad 
	\sigma_{\eta_2} \otimes \sigma_{2\eta_1}&
	= {\textstyle\bigwedge}^2(\sigma_{2\eta_1}) \oplus \sigma_{\eta_1+\eta_3} 
.
\end{align*}
Since $\sigma_{\eta_2} \otimes \sigma_{2\eta_1} = \sigma_{\eta_1+\eta_3} \oplus \sigma_{2\eta_1+\eta_{2}}$  (see \cite[Prop.~ 15.25]{FultonHarris-book}), we get ${\textstyle\bigwedge}^2(\rho_2)= {\textstyle\bigwedge}^2(\sigma_{2\eta_1})
\simeq \sigma_{2\eta_1+\eta_{2}}$, as claimed. 

The cases $n\leq 8$ were handled computationally in \cite{LauretTolcachier-nu-stab} (see Remark~\ref{rem:SU-flia1y2-n<9} for further details). 
For the rest of the proof we assume $n \geq 9$. In fact, the same argument also applies for $n=5$ and $n=7$.

Next, we check \eqref{eq:sufficient-lambda_1} to ensure $\lambda_1(G_i/H_i,g_{\st}) =\lambda^{\pi_{2\omega_1+2\omega_{N_i-1}}}$. 
We have already shown that $V_{\pi_\Lambda}^{\fh_\C}=0$ for $\Lambda$ equal to $0$, $\omega_1+\omega_{N_i-1}$, $\omega_2+\omega_{N_1-2}$, $2\omega_1+\omega_{N_1-2}$, or $\omega_2+2\omega_{N_1-1}$. 
Lemma~\ref{lem:SU-flia1-lambda^pi<lambda^2omega1+2omegaN-1} ensures that this is enough to check \eqref{eq:sufficient-lambda_1}, thus $\lambda_1(G_i/H_i,g_{\st}) =\lambda^{\pi_{2\omega_1+2\omega_{N_i-1}}}$. 
Moreover, the second part of Lemma~\ref{lem:SU-flia1-lambda^pi<lambda^2omega1+2omegaN-1} implies that its multiplicity is equal to $\dim V_{\pi_{2\omega_1+2\omega_{N_i-1}}}$. 
\end{proof}

\begin{lemma}\label{lem:SU-flia1-lambda^pi<lambda^2omega1+2omegaN-1}
Let $n$ be an integer $\geq 9$.
Set $d=n$ if $n$ is odd, or $d=\frac n2$ if $n$ is even,
and write $N_1=\frac{n(n-1)}{2}$ and  $N_2=\frac{n(n+1)}{2}$.
Fix $i\in\{1,2\}$.
	
If $\Lambda\in\PP^+(\SU(N_i)/\Z_d)$ satisfies $\lambda^{\pi_{\Lambda}} <\lambda^{\pi_{2\omega_1+2\omega_{N_i-1}}}$, then $\Lambda$ is one of the following:
$0$, $\omega_1+\omega_{N_i-1}$, $\omega_2+\omega_{N_i-2}$, $2\omega_1+\omega_{N_i-2}$, $\omega_2+2\omega_{N_i-1}$.
Moreover, $\lambda^{\pi_{\Lambda}} =\lambda^{\pi_{2\omega_1+2\omega_{N_i-1}}}$ if and only if $\Lambda = 2\omega_1+2\omega_{N_i-1}$.
\end{lemma}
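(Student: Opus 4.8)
The plan is to mimic the strategy of Lemmas~\ref{lem:SOlambda^pi<lambda^omega3}--\ref{lem:SO-lambda^pi<lambda^omega4}, the new feature being that the quotient $\SU(N_i)/\Z_d$ with $d$ large ($d=n\geq 9$ if $n$ is odd, $d=n/2\geq 5$ if $n$ is even) imposes an arithmetic constraint that does most of the work. First I would record the numerology for $\fg_\C=\su(N_i)_\C$: with $\inner{\ee_a}{\ee_b}=\frac1{2N_i}\big(\delta_{ab}-\frac1{N_i}\big)$ and $\rho_{\fg_\C}$ the usual Weyl vector, \eqref{eq:Freudenthal} gives, for $1\leq j\leq N_i-1$,
$$
\lambda^{\pi_{\omega_j}}=\frac{N_i+1}{2N_i^2}\,j'(N_i-j'),\qquad j':=\min(j,N_i-j),
$$
and in particular $\lambda^{\pi_{2\omega_1+2\omega_{N_i-1}}}=\frac{2N_i+2}{N_i}=\frac{N_i+1}{2N_i^2}\cdot 4N_i$. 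Since $d\mid N_i$ in all cases, I would also use the standard fact that $\Lambda=\sum_j a_j\omega_j$ lies in $\PP^+(\SU(N_i)/\Z_d)$ if and only if $d\mid\sum_j j\,a_j$.

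The heart of the argument is a two-step size reduction. By Remark~\ref{rem:autovalor_suma_dominantes} one has $\lambda^{\pi_\Lambda}>\sum_j a_j\lambda^{\pi_{\omega_j}}$, so the hypothesis $\lambda^{\pi_\Lambda}\leq\lambda^{\pi_{2\omega_1+2\omega_{N_i-1}}}$ forces $\sum_j a_j\,j'(N_i-j')<4N_i$. Using $N_i-j'\geq N_i/2$ this first yields $\sum_j a_j j'<8$; hence every $j$ in the support of $\Lambda$ has $j'\leq 7$, so $N_i-j'\geq N_i-7$ and therefore $\sum_j a_j j'<\frac{4N_i}{N_i-7}<5$ (here $N_i\geq 36$ because $n\geq 9$). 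Thus $t:=\sum_j a_j j'\in\{0,1,2,3,4\}$, with $t=0$ exactly when $\Lambda=0$.

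Next I would bring in the arithmetic constraint. Splitting the sum according to whether $j\leq N_i/2$ or $j>N_i/2$ and writing $t=t_++t_-$ for the corresponding partial sums, one checks $\sum_j j\,a_j\equiv t_+-t_-\pmod{N_i}$. Since $|t_+-t_-|\leq t\leq 4<d$ and $d\mid N_i$, membership in $\PP^+(\SU(N_i)/\Z_d)$ forces $t_+-t_-=0$; so $t\in\{0,2,4\}$ and $t_+=t_-=t/2$. Enumerating: $t=0$ gives $\Lambda=0$; for $t_+=t_-=1$ the only index $j\leq N_i/2$ with $a_j j'=1$ is $j=1$ (every other such index contributes at least $2$), hence $\Lambda=\omega_1+\omega_{N_i-1}$; for $t_+=t_-=2$ the ``lower'' part of $\Lambda$ is $2\omega_1$ or $\omega_2$ and the ``upper'' part is $2\omega_{N_i-1}$ or $\omega_{N_i-2}$, giving exactly $\omega_2+\omega_{N_i-2}$, $2\omega_1+\omega_{N_i-2}$, $\omega_2+2\omega_{N_i-1}$ and $2\omega_1+2\omega_{N_i-1}$. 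This produces precisely the six weights in the statement.

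Finally, a direct evaluation of \eqref{eq:Freudenthal} at each surviving weight gives $\lambda^{\pi_{\omega_1+\omega_{N_i-1}}}=1$ (the adjoint), $\lambda^{\pi_{\omega_2+\omega_{N_i-2}}}=2-\frac2{N_i}$, $\lambda^{\pi_{2\omega_1+\omega_{N_i-2}}}=\lambda^{\pi_{\omega_2+2\omega_{N_i-1}}}=2$, and $\lambda^{\pi_{2\omega_1+2\omega_{N_i-1}}}=2+\frac2{N_i}$, so all of them lie strictly below the target except $2\omega_1+2\omega_{N_i-1}$, which attains it; this settles both assertions. I do not anticipate a genuine obstacle: the only points needing care are verifying that the crude bound $\sum_j a_j j'<8$ really upgrades to $\leq 4$ once $N_i\geq 36$, and tracking the residue $t_+-t_-$ modulo $N_i$ accurately so that the hypothesis $d\geq 5$ can be invoked to force $t_+=t_-$.
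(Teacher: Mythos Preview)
Your proof is correct and follows essentially the same strategy as the paper's: both reduce via Remark~\ref{rem:autovalor_suma_dominantes} to weights supported on $\{\omega_j : j'\leq 4\}$ with total ``folded weight'' $t=\sum a_j j'\leq 4$, and then invoke the constraint $d\geq 5$ coming from the $\Z_d$-quotient to cut down to the short list. The only difference is organizational: the paper tabulates all $\Lambda$ with $\ell_\Lambda\leq 4$ grouped by coset $\omega_p+\mathbb A_{N_i-1}$ and then discards all cosets with $p\not\equiv 0\pmod d$, whereas you encode the same coset condition as the congruence $\sum_j j a_j\equiv t_+-t_-\pmod{N_i}$ (using $d\mid N_i$) to force $t_+=t_-$ directly, which makes the enumeration a bit cleaner. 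One small point worth tightening in your write-up: the strict inequality $\lambda^{\pi_\Lambda}>\sum_j a_j\lambda^{\pi_{\omega_j}}$ from Remark~\ref{rem:autovalor_suma_dominantes} degenerates to an equality when $\Lambda=\omega_k$ is a single fundamental weight, so your first bound is a priori only $\sum_j a_j j'\leq 8$; but since $j'(N_i-j')>4N_i$ already for $j'\geq 5$ and $N_i\geq 36$, the bootstrap (and the conclusion $t\leq 4$) survives unchanged.
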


\begin{proof}
We recall that 
\begin{equation}\label{eq:PP(SU(N)/Z_d)}
	\PP(\SU(N_i)/\Z_d)
=\bigcup_{1\leq p\leq N_i:\, d\mid p} \mathbb (\omega_p+ \mathbb A_{N_i-1}),
\end{equation}
where $\mathbb A_{N_i-1}=\{\sum_{j=1}^{N_i} b_j\ee_j: \sum_{j=1}^{N_i}b_j=0,\; b_j\in\Z\;\forall\, j\}$ is the root lattice of $\su(N_i)_\C$, and $\omega_{N_i}=0$ by convention. 

Write $\Lambda= \sum_{j=1}^{N_i-1} b_j\omega_j \in\PP^+(\su(N_i-1)_\C)$. 
One can check by using \eqref{eq:Freudenthal} that
\begin{align*}
\lambda^{\pi_{2\omega_1+2\omega_{N_i-1}}} = \frac{2(N_i+1)}{N_i} \qquad \mbox{ and } \qquad \lambda^{\pi_{\omega_k}} = \frac{k(N_i+1)(N_i-k)}{2N_i^2}
\quad \text{$\forall\,1\leq k\leq N_i-1$}.
\end{align*}

We have  $\lambda^{\pi_{\Lambda}}> \sum_{j=1}^{N_i-1} b_j\lambda^{\pi_{\omega_j}}$ by Remark~\ref{rem:autovalor_suma_dominantes}.
It is a simple matter to check that 
$\lambda^{\pi_{\omega_k}}> \lambda^{\pi_{2\omega_1+2\omega_{N_i-1}}}$ for all $5\leq k \leq N_i-5$ provided $N_i\geq 16$. 
Moreover, one can check that 
$$ \sum_{j=1}^4 (b_j\lambda^{\pi_{\omega_j}}+b_j'\lambda^{\pi_{\omega_{N_i-j}}}) >\lambda^{\pi_{2\omega_1+2\omega_{N_i-1}}}
\quad\text{if }\sum_{j=1}^4 j(b_j+b_j')>4.
$$ 
Consequently, we only have to consider those $\Lambda$'s satisfying $\sum_{j=1}^4 j(b_j+b_j')\leq 4$, which are listed in Table~\ref{table:pesosA_n+omega_p}. 

\begin{table}
\caption{All weights $\Lambda=\sum_{j=1}^{N-1} b_j\omega_j\in\PP^+(\su(N)_\C)$, with $N\geq 16$, satisfying $\ell_{\Lambda}:=\sum\limits_{j=1}^4j(b_j+b_j')\leq 4$.}\label{table:pesosA_n+omega_p}
$
\begin{array}{cccccc}
\ell_{\Lambda} 
& \mathbb A_{N-1} & \omega_1+\mathbb A_{N-1} & \omega_2+\mathbb A_{N-1} & \omega_3+\mathbb A_{N-1} & \omega_4+\mathbb A_{N-1} 
\\ \hline \hline
0 & 
	0
\\ \hline 
1 
	& 
	& 
	\begin{array}{c}
	\omega_1 \\
	\omega_{N-1}
	\end{array}
	& 
	& 
	& 
\\ \hline 
2 & 
	\omega_1+\omega_{N-1}
	&
	&
	\begin{array}{c}
	2\omega_1\\
	2\omega_{N-1}\\
	\omega_2\\
	\omega_{N-2}
	\end{array}
	& 
	& 
\\ \hline 
3 
	& 
	& 
	\begin{array}{c} 
	2\omega_1+\omega_{N-1} \\
	\omega_1+2\omega_{N-1} \\
	\omega_1+\omega_{N-2} \\
	\omega_2+\omega_{N-1}
	\end{array}
	& 
	& 
	\begin{array}{c}
	3\omega_1\\
	3\omega_{N-1}\\
	\omega_3\\
	\omega_{N-3}\\
	\omega_1+\omega_{2}\\
	\omega_{N-1}+\omega_{N-2}
	\end{array}
	& 
\\ \hline 
4 
	& 
	\begin{array}{c}
	2\omega_1+2\omega_{N-1} \\ 
	\omega_2+\omega_{N-2}\\
	2\omega_1+\omega_{N-2} \\ 
	\omega_2+2\omega_{N-1}
	\end{array}
	& 
	& 
	\begin{array}{c}
	3\omega_1+\omega_{N-1}\\
	\omega_1+3\omega_{N-1}\\
	2\omega_2\\
	2\omega_{N-2}\\
	\omega_1+\omega_{N-3}\\
	\omega_3+\omega_{N-1}
	\end{array}
	& 
	& 
	\begin{array}{c}
	4\omega_1\\
	4\omega_{N-1}\\
	\omega_1+\omega_{3}\\
	\omega_{N-1}+\omega_{N-3}\\
	\omega_4\\
	\omega_{N-4}
	\end{array}
\\ \hline 
\end{array}
$
\end{table}

Since $n\geq9$ by hypothesis, we have $d>4$. 
Hence, it is sufficient to check whether  $\lambda^{\pi_{\Lambda}}<\lambda^{\pi_{2\omega_1+2\omega_{N_i-2}}}$ or $\lambda^{\pi_{\Lambda}}=\lambda^{\pi_{2\omega_1+2\omega_{N_i-2}}}$ for those $\Lambda\in\mathbb A_{N_i-1}$ in Table~\ref{table:pesosA_n+omega_p}. 
One can check that $\lambda^{\pi_{\Lambda}}<\lambda^{\pi_{2\omega_1+2\omega_{N_i-2}}}$ in all cases except for $\Lambda=2\omega_1+2\omega_{N_i-1}$ where the equality obviously holds. 
\end{proof}

\begin{remark}\label{rem:SU-flia1y2-n<9}
The cases $n\leq 8$ were omitted in the proof of Theorem~\ref{thm:flia1-2} for brevity reasons.  
Although there is a computational proof of them, here we explain how to fill the details. 

For instance, we assume first $n=7$, thus $d=7$ (and $N_1=21$, $N_2=28$). 
According to the proof of Lemma~\ref{lem:SU-flia1-lambda^pi<lambda^2omega1+2omegaN-1}, since $d=7>4$, Table~\ref{table:pesosA_n+omega_p} ensures that $\lambda^{\pi_{\Lambda}}<\lambda^{\pi_{2\omega_1+2\omega_{N_i-1}}}$ for $\Lambda$ in 
$\{
	\omega_1+\omega_{N_i-1},
	\omega_2+2\omega_{N_i-2},
	2\omega_1+\omega_{N_i-2},
	\omega_2+2\omega_{N_i-1},
\}$.
Furthermore, $V_{\pi_\Lambda}^{\fh_\C'}=0$ was already established for each of these $\Lambda$'s in the proof of Theorem~\ref{thm:flia1-2}, and the proof is complete. 
The case $n=5$ is very similar. 

We now assume $n=8$, thus $d=4$ (and $N_1=28$ and $N_2=36$). 
Following the proof of  Lemma~\ref{lem:SU-flia1-lambda^pi<lambda^2omega1+2omegaN-1}, \eqref{eq:PP(SU(N)/Z_d)} implies that it is also necessary to check that $V_{\pi_{\Lambda}}^{\fh_\C'}=0$ for each $\Lambda$ in the column $\omega_4+\mathbb A_{N_i-1}$ in Table~\ref{table:pesosA_n+omega_p}. 
These finitely many branching rules can be done computationally. 

The cases $n=3,4,6$ are similar to the previous one, although the elements from Table~\ref{table:pesosA_n+omega_p} added depends on $d$ via \eqref{eq:PP(SU(N)/Z_d)}. 
An important difference occurs when $i=1$ and $n=6$ ($N_1=15$), or $i=2$ and $n=3$ ($N_2=6$). 
In both cases $d=3$, and the dual representations $\pi_{3\omega_1}, \pi_{3\omega_{N_i-1}}$ are spherical (i.e.\ $V_{\pi_{3\omega_1}}^{\fh_\C'}\neq0$ and $V_{\pi_{3\omega_{N_i-1}}}^{\fh_\C'}\neq0$) unlike previously. 
To check the multiplicity of $\lambda^{\pi_{3\omega_1}}$ in $\Spec(G_i/H_i,g_{\st})$, one can prove that $\lambda^{\pi_{\Lambda}}\leq \lambda^{\pi_{3\omega_1}}$ only for 
$\Lambda$ in 
$\{0, \omega_3, \omega_{12}, \omega_1+\omega_2, \omega_{13}+\omega_{14}, \omega_1+\omega_{14}\}$ for $i=1$, in 
$\{0, \omega_3, \omega_1+\omega_2, \omega_4+\omega_5, \omega_1+\omega_{5}, \omega_2+\omega_4\}$
for $i=2$, 
and the equality holds only for $3\omega_1$ and $3\omega_{N_i-1}$. 
\end{remark}

\subsection{Family III}
For integers $p\geq q\geq2$ with $p+q\neq 4$, we set $H'=\SU(p)\times\SU(q)$ and the irreducible representation $\rho= \sigma_{\eta_1}\widehat  \otimes \,\sigma_{\eta_1'}'$ of $H'$ which is complex type and has dimension $N:=pq$. 
It turns out that $\SU(pq)/\rho(H')$ is isotropy irreducible with universal cover given by $G/H$, where 
\begin{align}\label{eq:G/H-flia3}
G&=\SU(pq)/\Z_d,
&
H&\simeq (\SU(p)/\Z_p)\times (\SU(q)/\Z_q),&
d&=\op{lcm}(p,q). 
\end{align}

\begin{theorem}\label{thm:flia3}
If $G$ and $H$ are as in \eqref{eq:G/H-flia3}, then  
$$
\lambda_1(G/H,g_{\st})
=\lambda^{\pi_{\omega_2+\omega_{pq-2}}}
=\frac{2pq-2}{pq}=2-\frac{2}{pq} 
.
$$ 
	Its multiplicity in $\Spec(G/H,g_{\st})$ is equal to $\dim V_{\pi_{\omega_2+\omega_{pq-2}}}= \binom{pq}{2}^2-p^2q^2$ for $(p,q)\neq(3,3)$, and 
$\dim V_{\pi_{\omega_2+\omega_{7}}}
+\dim V_{\pi_{3\omega_1}}
+\dim V_{\pi_{3\omega_8}}= 1545$ for $(p,q)=(3,3)$.
\end{theorem}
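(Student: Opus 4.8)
The plan is to follow the template of Theorems~\ref{thm:flia1-2} and \ref{thm:flia4}: produce $\pi_{\omega_2+\omega_{pq-2}}$ as a spherical representation, compute $\lambda^{\pi_{\omega_2+\omega_{pq-2}}}$ via \eqref{eq:Freudenthal}, and then verify the sufficient condition \eqref{eq:sufficient-lambda_1}. Write $N=pq$ and $\fh_\C'=\su(p)_\C\oplus\su(q)_\C$. Since $\pi_{\omega_1}$ is the standard representation, $\pi_{\omega_1}|_{\fh_\C'}\simeq\rho=\sigma_{\eta_1}\widehat\otimes\,\sigma_{\eta_1'}'$, and Remarks~\ref{rem:ExtSym-standard} and \ref{rem:ExtSym(suma)o(tensor)} give $\pi_{\omega_2}|_{\fh_\C'}\simeq{\textstyle\bigwedge^2}(\rho)\simeq A\oplus B$ with $A:=\sigma_{\eta_2}\widehat\otimes\,\sigma_{2\eta_1'}'$ and $B:=\sigma_{2\eta_1}\widehat\otimes\,\sigma_{\eta_2'}'$ (with the convention $\sigma_{\eta_2}=\sigma_0$ if $p=2$, and $\sigma_{\eta_2'}'=\sigma_0'$ if $q=2$). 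A dimension count, using $p+q\neq 4$, shows that $A$ and $B$ are irreducible, non-trivial, non-equivalent, and $A^*\not\simeq B$. Hence $V_{\pi_{\omega_1}}^{\fh_\C'}=V_{\pi_{\omega_2}}^{\fh_\C'}=0$ by Lemma~\ref{lem3:dimV_pi^H}, and $V_{\Ad}^H=0$ by Lemma~\ref{lem:omega2-no-esferica} (using the Cartan decomposition $\fg=\fh\oplus\fp$ with $\fp$ irreducible of real type).

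Next I would compute $\dim V_{\pi_{\omega_2+\omega_{N-2}}}^{\fh_\C'}$. On one hand $(\pi_{\omega_2}\otimes\pi_{\omega_{N-2}})|_{\fh_\C'}\simeq{\textstyle\bigwedge^2}(\rho)\otimes{\textstyle\bigwedge^2}(\rho)^*\simeq(A\oplus B)\otimes(A^*\oplus B^*)$, and by Remark~\ref{rem:rhoxrho^*} the trivial representation occurs once in $A\otimes A^*$, once in $B\otimes B^*$, and in neither $A\otimes B^*$ nor $B\otimes A^*$ (since $A^*\not\simeq B$), so $\dim V_{\pi_{\omega_2}\otimes\pi_{\omega_{N-2}}}^{\fh_\C'}=2$. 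On the other hand, Pieri's formula (see e.g.\ \cite[Prop.~15.25]{FultonHarris-book}) gives $\pi_{\omega_2}\otimes\pi_{\omega_{N-2}}\simeq\pi_{\omega_2+\omega_{N-2}}\oplus\pi_{\omega_1+\omega_{N-1}}\oplus\pi_0$, where $\pi_{\omega_1+\omega_{N-1}}=\Ad$ satisfies $V_{\Ad}^H=0$ and $\dim V_{\pi_0}^H=1$; therefore $\dim V_{\pi_{\omega_2+\omega_{N-2}}}^{\fh_\C'}=1$. Moreover $\omega_2+\omega_{N-2}$ lies in the root lattice (its level $N$ is divisible by $d=\op{lcm}(p,q)$, since $d\mid pq$), hence by \eqref{eq:PP(SU(N)/Z_d)} it belongs to $\PP^+(\SU(pq)/\Z_d)$, i.e.\ $\pi_{\omega_2+\omega_{N-2}}\in\widehat G_H$; and \eqref{eq:Freudenthal} (or the identity $\lambda^{\pi_{\omega_k+\omega_{N-k}}}=\tfrac{k(N-k+1)}{N}$ for $k\leq N/2$) yields $\lambda^{\pi_{\omega_2+\omega_{N-2}}}=2-\tfrac{2}{pq}$, so $\lambda^{\pi_{\omega_2+\omega_{N-2}}}\in\Spec(G/H,g_{\st})$.

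Then I would verify \eqref{eq:sufficient-lambda_1} by classifying, as in Lemma~\ref{lem:SU-flia1-lambda^pi<lambda^2omega1+2omegaN-1}, the $\Lambda\in\PP^+(\SU(pq)/\Z_d)$ with $\lambda^{\pi_{\Lambda}}<2-\tfrac{2}{pq}$, using Remark~\ref{rem:autovalor_suma_dominantes}, $\lambda^{\pi_{\omega_k}}=\tfrac{k(N+1)(N-k)}{2N^2}$, and the lattice restriction of \eqref{eq:PP(SU(N)/Z_d)}. One checks $\lambda^{\pi_{\omega_1+\omega_{N-1}}}=1$, $\lambda^{\pi_{2\omega_1+\omega_{N-2}}}=\lambda^{\pi_{\omega_2+2\omega_{N-1}}}=2$, and $\lambda^{\pi_{\omega_k+\omega_{N-k}}}=\tfrac{k(N-k+1)}{N}\geq 2-\tfrac2N$ for $k\geq 2$, and concludes that apart from the finitely many pairs $(p,q)\in\{(4,2),(6,2),(3,3),(4,4)\}$ — the ones for which $\PP^+(\SU(pq)/\Z_d)$ contains a fundamental weight of small enough eigenvalue — the only weights below the threshold are $0$ and $\omega_1+\omega_{N-1}$. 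Since $V_{\pi_{\omega_1+\omega_{N-1}}}^H=V_{\Ad}^H=0$, this settles the generic case. For each of the four exceptional pairs I would check directly, using the branching tables of \cite{LieART} (or the Cauchy and Littlewood--Richardson formulas, e.g.\ $\Sym^m(V\otimes W)=\bigoplus_{\lambda}S_\lambda V\otimes S_\lambda W$), that the extra weights below the threshold carry no $\fh_\C'$-invariant vector; in particular, for $(p,q)=(3,3)$ one has $\Sym^3(\rho)\simeq\sigma_{3\eta_1}\widehat\otimes\,\sigma_{3\eta_1'}'\oplus\sigma_{\eta_1+\eta_2}\widehat\otimes\,\sigma_{\eta_1'+\eta_2'}'\oplus\sigma_0\widehat\otimes\,\sigma_0'$, so $\pi_{3\omega_1}$ and $\pi_{3\omega_8}$ are spherical with $\dim V^H=1$ and with $\lambda^{\pi_{3\omega_1}}=\lambda^{\pi_{3\omega_8}}=\tfrac{16}{9}=\lambda^{\pi_{\omega_2+\omega_7}}$, whereas $\pi_{\omega_1+\omega_5}$ and $\pi_{\omega_4+\omega_8}$, also of eigenvalue $\tfrac{16}{9}$, are not.

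Finally, from the Pieri decomposition one gets $\dim V_{\pi_{\omega_2+\omega_{N-2}}}=\binom{N}{2}^2-(N^2-1)-1=\binom{pq}{2}^2-p^2q^2$, so by Theorem~\ref{thm:Spec(standard)} and the classification just carried out the multiplicity of $\lambda_1(G/H,g_{\st})$ in $\Spec(G/H,g_{\st})$ equals this number when $(p,q)\neq(3,3)$, and $\binom{9}{2}^2-81+2\dim V_{\pi_{3\omega_1}}=1215+2\cdot165=1545$ when $(p,q)=(3,3)$. I expect the main obstacle to be this classification lemma — concretely, dispatching the four exceptional pairs with $\gcd(p,q)>1$, where $\PP^+(\SU(pq)/\Z_d)$ contains several weights of eigenvalue below (or equal to) $2-\tfrac2{pq}$ that must be shown one by one to be non-spherical, together with the bookkeeping for $(3,3)$, where $\pi_{\omega_2+\omega_7}$, $\pi_{3\omega_1}$, $\pi_{3\omega_8}$, $\pi_{\omega_1+\omega_5}$ and $\pi_{\omega_4+\omega_8}$ all realize the eigenvalue $\tfrac{16}{9}$ and only the first three contribute to the multiplicity.
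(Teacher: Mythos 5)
Your proposal follows the paper's proof essentially step for step: restricting $\textstyle\bigwedge^2(\rho)\otimes\textstyle\bigwedge^2(\rho)^*$ to $\fh_\C'$ to get $\dim V_{\pi_{\omega_2}\otimes\pi_{\omega_{N-2}}}^{\fh_\C'}=2$, combining with Pieri's formula and $V_{\Ad}^H=0$ to conclude $\dim V_{\pi_{\omega_2+\omega_{N-2}}}^{\fh_\C'}=1$, then verifying \eqref{eq:sufficient-lambda_1} via the eigenvalue classification (the paper's Lemma~\ref{lem:SU-flia3-lambda^pi<lambda^omega2+omegaN-2}) with the same four exceptional pairs $(4,2),(6,2),(3,3),(4,4)$ requiring extra checks, and the same multiplicity computation. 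The only cosmetic differences are that you package the two irreducible constituents of $\textstyle\bigwedge^2(\rho)$ as $A$ and $B$ and use the Cauchy identity for $\Sym^3(\rho)$ in the $(3,3)$ case where the paper cites the \cite{LieART} tables.
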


\begin{proof}
Using Remarks~\ref{rem:ExtSym-standard} and \ref{rem:ExtSym(suma)o(tensor)}, we obtain the following equivalences of $\fh_\C'$-modules: 
\begin{align*}
\big(
	\pi_{\omega_2}\otimes\pi_{\omega_{pq-2}}
\big)|_{\fh_\C'}
&= \big(
	{\textstyle\bigwedge^2}(\pi_{\omega_1}) \otimes {\textstyle\bigwedge^2}(\pi_{\omega_{pq-1}})
\big)|_{\fh_\C'}
\simeq {\textstyle\bigwedge}^2(\rho) \otimes {\textstyle\bigwedge}^2(\rho^*) 
\\
&\simeq {\textstyle\bigwedge}^2 \big(\sigma_{\eta_1}\widehat\otimes \, \sigma_{\eta_1'}'\big)
	\otimes {\textstyle\bigwedge}^2 \big(\sigma_{\eta_{p-1}} \widehat\otimes \, \sigma_{\eta_{q-1}'}'\big)
\\
&\simeq \big({\textstyle\bigwedge}^2(\sigma_{\eta_1})\widehat\otimes \, \Sym^2(\sigma_{\eta_1'}')
\, \oplus \,  \Sym^2(\sigma_{\eta_1})\widehat\otimes\, {\textstyle\bigwedge}^2(\sigma_{\eta_1'}') \big)
\\ & \quad \otimes
\big({\textstyle\bigwedge}^2(\sigma_{\eta_{p-1}})\widehat\otimes\, \Sym^2(\sigma_{\eta_{q-1}'}')
\, \oplus \,  \Sym^2(\sigma_{\eta_{p-1}}) \widehat\otimes\, {\textstyle\bigwedge}^2(\sigma_{\eta_{q-1}'}')\big)
\\
&\simeq 
	\big(\sigma_{\eta_2} \otimes \sigma_{\eta_{p-2}}\big) 
	\widehat\otimes   \big(\sigma_{2\eta_{1}'}'\otimes \sigma_{2\eta_{q-1}'}' \big) 
\, \oplus \, 
	\big(\sigma_{\eta_2} \otimes \sigma_{2\eta_{p-1}}\big) 
	\widehat\otimes   \big(\sigma_{2\eta_{1}'}'\otimes \sigma_{\eta_{q-2}'}' \big)
\\ &\quad 
\oplus 
	\big(\sigma_{2\eta_1} \otimes \sigma_{2\eta_{p-1}}\big) 
	\widehat\otimes    \big(\sigma_{\eta_{2}'}'\otimes \sigma_{\eta_{q-2}'}' \big) 
\, \oplus \, 
	\big(\sigma_{2\eta_1} \otimes \sigma_{\eta_{p-2}}\big) 
	\widehat\otimes    \big(\sigma_{\eta_{2}'}'\otimes \sigma_{2\eta_{q-1}'}' \big)
.
\end{align*}
We have that $\sigma_0$ appears exactly once in 
$\sigma_{\eta_2} \otimes \sigma_{\eta_{p-2}}$ and once in $\sigma_{2\eta_1} \otimes \sigma_{2\eta_{p-1}}$ (see Remark~\ref{rem:rhoxrho^*}).
Similarly, $\sigma_0'$ occurs once in 
$\sigma_{\eta_{2}'}'\otimes \sigma_{\eta_{q-2}'}'$ and in 
$\sigma_{2\eta_{1}'}'\otimes \sigma_{2\eta_{q-1}'}'$. 
Hence, $\sigma_0\widehat\otimes\, \sigma_0'$ appears twice in  $(\pi_{\omega_2}\otimes\pi_{\omega_{pq-2}})|_{\fh_\C'}$, and consequently $\dim V_{\pi_{\omega_2}\otimes\pi_{\omega_{pq-2}}}^{\fh_\C'}=2$ by Lemma~\ref{lem3:dimV_pi^H}.

Since 
$
	\pi_{\omega_2}\otimes\pi_{\omega_{pq-2}}
	\simeq \pi_{\omega_2+\omega_{pq-2}}
	\oplus \pi_{\omega_1+\omega_{pq-1}}
	\oplus \pi_{0}
$ (see e.g.\ \cite[Prop 15.25]{FultonHarris-book}), it follows that  
$$
\dim V_{\pi_{\omega_2+\omega_{pq-2}} }^{\fh_\C'}
= \dim V_{\pi_{\omega_2}\otimes\pi_{\omega_{pq-2}} }^{\fh_\C'}
- \dim V_{\pi_{\omega_1+\omega_{pq-1}} }^{\fh_\C'}
-\dim V_{\pi_{0} }^{\fh_\C'}
=2-0-1
=1. 
$$
Here we use Lemma~\ref{lem:omega2-no-esferica}\eqref{item:Ad^H=0} to ensure $V_{\pi_{\omega_1+\omega_{pq-1}}}^{\fh_\C'}=0$ because $\pi_{\omega_1+\omega_{pq-1}}$ is equivalent to the adjoint representation. 

We have noted that $\pi_{\omega_2+\omega_{pq-2}}\in\widehat G_H$. 
In order to establish $\lambda_1(G/H,g_{\st})=\lambda^{\pi_{\omega_2 +\omega_{pq-2}}}$ we check  \eqref{eq:sufficient-lambda_1}. 
We already proved that $V_{\pi_{\omega_1+\omega_{pq-1}}}^{\fh_\C'}=0$.
When $pq> 16$, Lemma~\ref{lem:SU-flia3-lambda^pi<lambda^omega2+omegaN-2} yields $\lambda_1(G/H,g_{\st}) =\lambda^{\pi_{\omega_2+\omega_{pq-2}}}$ with multiplicity in $\Spec(G/H,g_{\st})$ equal to the dimension of ${\pi_{\omega_2+\omega_{pq-2}}}$. 

The cases $pq \le 16$ were treated computationally in \cite{LauretTolcachier-nu-stab}, although they could also be verified using arguments similar to those above.
We only discuss their multiplicity here.

By Theorem~\ref{thm:Spec(standard)} and Lemma~\ref{lem:SU-flia3-lambda^pi<lambda^omega2+omegaN-2}, the multiplicity of $\lambda_1(G/H,g_{\st})$ in $\Spec(G/H,g_{\st})$ is 
$$
\dim V_{\pi_{\omega_2+\omega_{pq-2}}}
=\dim( {\textstyle\bigwedge^2}(\pi_{\omega_1}) 
	\otimes 
	{\textstyle\bigwedge^2}(\pi_{\omega_{1}})^* )
-\dim(V_{\pi_{\omega_1+\omega_{pq-1}}})
-\dim(V_{\pi_0})
=\tbinom{pq}{2}^2-p^2q^2
$$
if $(p,q)\neq(3,3)$.
For $(p,q)=(3,3)$, 
$\dim V_{\pi_{3\omega_1} }^{\fh_\C'}=\dim V_{\pi_{3\omega_8} }^{\fh_\C'}=1$ and $\dim V_{\pi_{\omega_1+\omega_5} }^{\fh_\C'}=\dim V_{\pi_{\omega_4+\omega_8} }^{\fh_\C'}=0$ by \cite[pp.\ 168]{LieART}, thus the multiplicity of $\lambda_1(G/H,g_{\st})$ 
equals
$\dim V_{\pi_{\omega_2+\omega_{7}}}+ \dim V_{\pi_{3\omega_1}} + \dim V_{\pi_{3\omega_8}} = 1215+165+165=1545,$
and the proof is complete.
\end{proof}

\begin{lemma}\label{lem:SU-flia3-lambda^pi<lambda^omega2+omegaN-2}
Let $p\geq q\geq 2$ be integers with $p+q\neq  4$, and set $d=\op{lcm}(p,q)$. 
Let $\Lambda\in\PP^+(\SU(pq)/\Z_d)$. 
If $\lambda^{\pi_{\Lambda}} <\lambda^{\pi_{\omega_2+\omega_{pq-2}}}$, then $\Lambda=0$ or $\Lambda=\omega_1+\omega_{pq-1}$, or additionally:
$\omega_4$ if $(p,q)=(4,2)$,
$\omega_3,\omega_6$ if $(p,q)=(3,3)$,
$\omega_6$ if $(p,q)=(6,2)$,
$\omega_4,\omega_{12}$ if $(p,q)=(4,4)$.

Moreover, 
when $(p,q)\neq (3,3)$, $\lambda^{\pi_{\Lambda}} =\lambda^{\pi_{\omega_2+\omega_{pq-2}}}$ if and only if $\Lambda = \omega_2+\omega_{pq-2}$. 
When $(p,q)=(3,3)$, $\lambda^{\pi_{\Lambda}} =\lambda^{\pi_{\omega_2+\omega_{7}}}$ if and only if $\Lambda$ is one of  $\omega_2+\omega_{7}$, $3\omega_1$, $3\omega_8$, $\omega_1+\omega_5$, or $\omega_4+\omega_8$.
\end{lemma}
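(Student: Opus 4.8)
The plan is to follow the same template as the proof of Lemma~\ref{lem:SU-flia1-lambda^pi<lambda^2omega1+2omegaN-1}, now with $\omega_2+\omega_{pq-2}$ in the role of the distinguished weight. Set $N=pq$; since $d=\op{lcm}(p,q)$ divides $N$, equation~\eqref{eq:PP(SU(N)/Z_d)} presents $\PP(\SU(N)/\Z_d)$ as the union of the cosets $\omega_{p'}+\mathbb{A}_{N-1}$ over the multiples $p'$ of $d$ with $0\le p'<N$. The first step is the arithmetic input: from $\langle\omega_1,\omega_1\rangle=\tfrac{N-1}{2N^2}$, $\rho_{\fg_\C}=\tfrac12\sum_i(N+1-2i)\ee_i$ and Freudenthal's formula~\eqref{eq:Freudenthal} one obtains $\lambda^{\pi_{\omega_k}}=\tfrac{k(N-k)(N+1)}{2N^2}$ for $1\le k\le N-1$, hence $\lambda^{\pi_{\omega_2+\omega_{N-2}}}=2-\tfrac2N$, while $\lambda^{\pi_{\omega_1+\omega_{N-1}}}=1$ because $\pi_{\omega_1+\omega_{N-1}}$ is the adjoint representation.

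Next I would reduce to a finite list of candidate highest weights. Writing $\Lambda=\sum_{j=1}^{N-1}b_j\omega_j\in\PP^+(\su(N)_\C)$, Remark~\ref{rem:autovalor_suma_dominantes} gives $\lambda^{\pi_\Lambda}>\sum_j b_j\lambda^{\pi_{\omega_j}}$, and elementary estimates entirely analogous to those in the proof of Lemma~\ref{lem:SOlambda^pi<lambda^omega3} show $\lambda^{\pi_{\omega_k}}>2-\tfrac2N$ for $5\le k\le N-5$ once $N\ge18$ (which holds whenever $pq>16$, as $17$ is prime), and, more generally, $\lambda^{\pi_\Lambda}>2-\tfrac2N$ as soon as $\sum_j\min(j,N-j)\,b_j$ exceeds $4$. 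This confines $\Lambda$ to the analogue of Table~\ref{table:pesosA_n+omega_p} (with $2\omega_1+2\omega_{N-1}$ replaced by $\omega_2+\omega_{N-2}$). One then intersects this short list with the integrality condition from~\eqref{eq:PP(SU(N)/Z_d)}: the only pairs with $d=\op{lcm}(p,q)\le4$ are $(p,q)\in\{(4,2),(3,3),(4,4)\}$, all having $pq\le16$, so whenever $pq>16$ one has $d\ge5$; since every weight in the list other than $0,\ \omega_1+\omega_{N-1},\ 2\omega_1+\omega_{N-2},\ \omega_2+\omega_{N-2},\ \omega_2+2\omega_{N-1},\ 2\omega_1+2\omega_{N-1}$ has class $\sum_j jb_j\bmod N$ equal to a nonzero residue of absolute value at most $4$, it is not admissible. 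Evaluating $\lambda^{\pi_\Lambda}$ on these six admissible weights (obtaining $0$, $1$, $2$, $2-\tfrac2N$, $2$, $2+\tfrac2N$ respectively) leaves only $0$ and $\omega_1+\omega_{N-1}$ strictly below $2-\tfrac2N$, with $\omega_2+\omega_{N-2}$ attaining it; this settles the lemma for $pq>16$.

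Finally, the finitely many pairs with $pq\le16$ must be handled separately, because for small $N$ a fundamental weight $\omega_{p'}$ with $d\mid p'$ can have eigenvalue $\le2-\tfrac2N$ even when its index lies in the ``middle'' range (e.g.\ $\omega_6$ for $(p,q)=(6,2)$, where $N=12$, $d=6$ and $\lambda^{\pi_{\omega_6}}=\tfrac{13}{8}<\tfrac{11}{6}$). For each such $(p,q)$ one lists the dominant weights in the admissible cosets $\omega_{p'}+\mathbb{A}_{N-1}$ whose eigenvalue, computed from the above formula together with Remark~\ref{rem:autovalor_suma_dominantes}, does not exceed $2-\tfrac2N$; this is a finite verification (carried out computationally in \cite{LauretTolcachier-nu-stab}, or by hand), and it yields the exceptional weights recorded in the statement, as well as the equality list for $(p,q)=(3,3)$, which one checks directly via~\eqref{eq:Freudenthal} (for instance $\lambda^{\pi_{3\omega_1}}=\tfrac{3\cdot8\cdot12}{2\cdot81}=\tfrac{16}{9}=\lambda^{\pi_{\omega_2+\omega_7}}$, and similarly for $3\omega_8$, $\omega_1+\omega_5$, $\omega_4+\omega_8$). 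The main obstacle is not any single inequality — these are all routine — but the bookkeeping: setting up the candidate table uniformly in $(p,q)$ and then, for every $(p,q)$, pinning down exactly which of its entries survive the $\Z_d$-integrality constraint, including the sporadic low-rank exceptions.
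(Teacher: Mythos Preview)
Your proposal is correct and follows essentially the same approach as the paper's own proof: compute $\lambda^{\pi_{\omega_k}}$ and $\lambda^{\pi_{\omega_2+\omega_{N-2}}}$ explicitly, use Remark~\ref{rem:autovalor_suma_dominantes} to bound $\lambda^{\pi_\Lambda}$ from below by $\sum b_j\lambda^{\pi_{\omega_j}}$, reduce to the short list in Table~\ref{table:pesosA_n+omega_p} via the condition $\sum_j\min(j,N-j)b_j\le4$, then cut this list down with the $\Z_d$-integrality constraint from~\eqref{eq:PP(SU(N)/Z_d)}, and finally handle the small pairs $(p,q)$ with $pq\le16$ individually. Your observation that $pq>16$ forces $N\ge18$ (since $17$ is prime) and hence $d\ge5$ is a clean way to organize the case split; the paper instead takes $pq\ge16$ as the generic range and treats $(p,q)=(4,4)$ (where $d=4$) inside it, but this is only a cosmetic difference.
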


\begin{proof}
The argument is essentially the same as in the proof of Lemma~\ref{lem:SU-flia1-lambda^pi<lambda^2omega1+2omegaN-1}. 
We consider $\Lambda= \sum_{j=1}^{pq-1} b_j\omega_j \in\PP^+(\su(pq-1)_\C)$. 
By \eqref{eq:Freudenthal}, we get $\lambda^{\pi_{\omega_2+\omega_{pq-2}}} = \frac{2(pq-1)}{pq}$ and 
$ \lambda^{\pi_{\omega_k}} = \frac{k(pq+1)(pq-k)}{2p^2q^2}$
for all $1\leq k\leq pq-1$, and also $\lambda^{\pi_{\Lambda}}> \sum_{j=1}^{pq-1} b_j\lambda^{\pi_{\omega_j}}$ by Remark~\ref{rem:autovalor_suma_dominantes}.

We assume $pq\geq16$. 
As in the proof of Lemma~\ref{lem:SU-flia1-lambda^pi<lambda^2omega1+2omegaN-1}, 
$\lambda^{\pi_{\omega_k}}> \lambda^{\pi_{\omega_2+\omega_{pq-2}}}$ for all $5\leq k \leq pq-5$, and 
$$ 
\sum_{j=1}^4 (b_j\lambda^{\pi_{\omega_j}}+b_j'\lambda^{\pi_{\omega_{pq-j}}}) >\lambda^{\pi_{\omega_2+\omega_{pq-2}}}
\quad\text{if }\sum_{j=1}^4 j(b_j+b_j')>4.
$$ 
Consequently, we only need to consider those $\Lambda$ in Table~\ref{table:pesosA_n+omega_p}. 
When $d>4$ (i.e.\ $pq\geq 17$ or $(p,q)=(8,2)$), it suffices to check the inequality $\lambda^{\pi_{\Lambda}}\leq \lambda^{\pi_{\omega_2+\omega_{pq-2}}}$ for $\Lambda$ in the first column. 
For $(p,q)=(4,4)$, we have $d=4$, so in this case it is enough to check the weights in the first and last columns to complete the proof.

For $pq<16$ the lemma can be verified case by case.
For instance, suppose $(p,q)=(6,2)$, so $d=6$.  
A straightforward computation shows that $\lambda^{\pi_{\omega_k}}< \lambda^{\pi_{\omega_2+\omega_{10}}}$ for all $1\leq k\leq 11$, but $2\lambda^{\pi_{\omega_k}}> \lambda^{\pi_{\omega_2+\omega_{10}}}$ for $3\leq k\leq 9$, $3\lambda^{\pi_{\omega_2}}> \lambda^{\pi_{\omega_2+\omega_{10}}}$, and $4\lambda^{\pi_{\omega_1}}> \lambda^{\pi_{\omega_2+\omega_{10}}}$. Hence, we only need to consider those $\Lambda$ satisfying $\sum_{j=1}^4 j(b_j+b_j')\leq 4$, together with $\Lambda=\omega_5$ and $\Lambda=\omega_6$.
Since $d=6$, we have $\PP(\SU(12)/\Z_6)	= \mathbb  A_{11} \cup (\omega_6+ \mathbb A_{11})$. Therefore, one verifies that the only weights satisfying the inequality are $0$, $\omega_1+\omega_{11}$, and $\omega_6$.

We now consider the case $(p,q)=(3,3)$. 
As above, it is sufficient to consider those $\Lambda$ satisfying $\sum_{j=1}^4 j(b_j+b_j')\leq 4$.
Since $d=3$, we have $\PP(\SU(9)/\Z_3)
= \mathbb  A_{8} \cup (\omega_3+ \mathbb A_{8})\cup (\omega_6+ \mathbb A_{8})$. 
In this case, several weights in $(\omega_3+ \mathbb A_{8}) \cup (\omega_6+ \mathbb A_{8})$ turn out to have the same eigenvalue as $\omega_2+\omega_{7}$, namely, $3\omega_1, 3\omega_8, \omega_1+\omega_7,\omega_4+\omega_8$.
\end{proof}

\subsection{Isolated cases}

For the non-symmetric simply connected strongly isotropy irreducible spaces not belonging to the Families I--X, i.e.\ the isolated cases in Tables~\ref{table4:isotropyirredexcepcion-classical}-\ref{table4:isotropyirredexcepcion-exceptional}, the smallest positive eigenvalue of the Laplace-Beltrami operator 
was obtained computationally in \cite{LauretTolcachier-nu-stab} 
except for Nos.\ 4, 14--18. 
Theorem~\ref{thm:lambda1(SO(N)/Ad(H))} already provides the value for the Isolated cases Nos.\ 4, 14, 16 and 18 (see Remark~\ref{rem:isolated-Ad(H)}). 
In this section we complete the cases Nos.\ 15 and 17.

We set $H_1'=\SU(8)$ and $H_2'=\Spin(16)$.
The irreducible representations  $\rho_1=\sigma_{\eta_4}$ and 
$\rho_2=\sigma_{\eta_7}$ of $H_1'$ and $H_2'$ respectively are of real type and have dimensions $N_1:=70$ and $N_2:=128$. 
The spaces $\SO(N_i)/\rho_i(H_i')$ for $i=1,2$ are isotropy irreducible with universal cover given by $G_i/H_i$, where 
\begin{equation}\label{eq:G/H-isolatedcases}
\begin{aligned}
G_1&=\SO(70)/\Z_2,&\qquad \qquad
G_2&=\Spin(128)/\Z_2,\\
H_1&\simeq \SU(8)/\Z_8,&
H_2&\simeq \SO(16)/\Z_2. 
\end{aligned}
\end{equation}

\begin{remark}\label{rmk:pesosSpin}
There is an element $z$ in $\Spin(128)$ (namely $z=e_1\dots e_{2n}$ in the notation of \cite[Ex.~1.35]{Sepanski})
such that $z^2=1$ and the center of $\Spin(128)$ is given by $\{\pm1,\pm z\}\simeq \langle -1\rangle\oplus \langle z\rangle\simeq\Z_2\oplus\Z_2$. 
The subgroup $\Z_2$ of $\Spin(128)$ indicated for $G_2$ is $\{1,z\}$ or $\{1,-z\}$. Consequently  $\Spin(128)/\Z_2\not\simeq \SO(128)=\Spin(128)/\{\pm1\}$. 

The set of weights of $G_2=\Spin(128)/\Z_2$ is given by 
$$
\PP\big(\Spin(128)/\{1,\pm z\}\big) = \Span_\Z\big(
	\DD_{64},\tfrac12(\ee_1+\dots+\ee_{63} \pm\ee_{64}) 
\big)
=: \DD_{64}^{\pm}
,
$$ 
where 
$
\DD_{64}=
\left\{{\textstyle \sum_{i=1}^{64}}a_i\ee_i: a_i\in\Z\;\forall \,i,\; \; {\textstyle \sum_{i=1}^{64}} a_i\text{ is even}\right\}
$ is the root lattice of $\so(128)_\C$. 
\end{remark}

\begin{theorem}\label{thm:isolatedcases}
If $G_i$ and $H_i$ are as in \eqref{eq:G/H-isolatedcases}, then  
$$
\lambda_1(G_i/H_i, g_{\st})
=\lambda^{\pi_{2\omega_2}}
=
\begin{cases}
\frac{69}{34}=2+\frac{1}{34} &\text{ if }i=1,\\[1mm]
\frac{127}{63}=2+\frac{1}{63} &\text{ if }i=2.
\end{cases}
$$ 
Its multiplicity in $\Spec(G_i/H_i, g_{\st})$ is equal to 
$$
\dim V_{\pi_{2\omega_2}} 
= 
\begin{cases}
1997940&\quad\text{for $i=1$},\\
22360000&\quad\text{for $i=2$}.
\end{cases}
$$
\end{theorem}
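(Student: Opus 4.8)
The plan is to mimic the arguments of Theorems~\ref{thm:flia6} and \ref{thm:flia4}. Fix $i\in\{1,2\}$ and write $N=N_i$, $\rho=\rho_i$, $\fg=\so(N)$, $\fh'=\fh_i'$; since $\pi_{\omega_1}$ is the standard representation of $\fg_\C$ we have $\pi_{\omega_1}|_{\fh'}\cong\rho$, and $\rho$ is self-adjoint because it is of real type. The strategy is to compute the $\fh'$-module structure of a few small tensor powers of $\rho$, transport this through the $\so(N)$-tensor identities of Remark~\ref{rem:KoikeTerada} to conclude $\dim V_{\pi_{2\omega_2}}^{\fh'}=1$ (so $\pi_{2\omega_2}\in\widehat{G_i}_{H_i}$ and $\lambda^{\pi_{2\omega_2}}\in\Spec(G_i/H_i,g_{\st})$ by Theorem~\ref{thm:Spec(standard)}), and finally verify \eqref{eq:sufficient-lambda_1}.

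First I would determine $\Sym^2(\rho)$ and $\bigwedge^2(\rho)$ as $\fh'$-modules. For $i=1$, $\rho=\sigma_{\eta_4}\cong\bigwedge^4(\C^8)$ and the Littlewood--Richardson rule gives $\rho\otimes\rho\cong\sigma_0\oplus\sigma_{\eta_1+\eta_7}\oplus\sigma_{\eta_2+\eta_6}\oplus\sigma_{\eta_3+\eta_5}\oplus\sigma_{2\eta_4}$; for $i=2$, $\rho=\sigma_{\eta_7}$ is a half-spin representation of $\Spin(16)$ and the classical formula for the square of a half-spinor gives $\rho\otimes\rho\cong\sigma_0\oplus\sigma_{\eta_2}\oplus\sigma_{\eta_4}\oplus\sigma_{\eta_6}\oplus\sigma_{2\eta_7}$ (the even exterior powers of the standard representation of $\so(16)$). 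In each case $\sigma_0$ and the Cartan square lie in $\Sym^2(\rho)$ (the trivial one because $\rho$ is of real type, see e.g.\ \cite[\S26.3]{FultonHarris-book}; the Cartan square by the highest-weight-vector argument), the adjoint representation of $\fh'$ lies in $\bigwedge^2(\rho)$ (because $\rho$ is orthogonal), and the placement of the one remaining summand is forced by comparing $\dim\Sym^2(\rho)=\binom{N+1}{2}$ and $\dim\bigwedge^2(\rho)=\binom{N}{2}$ with the Weyl dimensions of the constituents \cite[Thm.~5.84]{Knapp-book-beyond}. This yields $\pi_{\omega_2}|_{\fh'}\cong\bigwedge^2(\rho)$ and $\pi_{2\omega_1}|_{\fh'}\cong\Sym^2(\rho)\ominus\sigma_0$, both multiplicity-free, with disjoint self-adjoint constituents, $1_{\fh'}$ occurring in neither.

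Next, Remark~\ref{rem:rhoxrho^*} and Lemma~\ref{lem3:dimV_pi^H} give $\dim V_{\pi_{\omega_2}\otimes\pi_{\omega_2}}^{\fh'}=\dim V_{\pi_{2\omega_1}\otimes\pi_{2\omega_1}}^{\fh'}=2$ and $\dim V_{\pi_{2\omega_1}\otimes\pi_{\omega_2}}^{\fh'}=0$. Since $N$ is large, Remark~\ref{rem:KoikeTerada} expresses each of these three tensor products as a sum with small multiplicities of $\pi_{2\omega_2},\pi_{4\omega_1},\pi_{\omega_4},\pi_{\omega_1+\omega_3},\pi_{2\omega_1+\omega_2},\pi_{\omega_2},\pi_{2\omega_1},\pi_0$; in particular $\pi_{\omega_2}\otimes\pi_{\omega_2}\cong\pi_{\omega_4}\oplus\pi_{\omega_1+\omega_3}\oplus\pi_{2\omega_2}\oplus\pi_{2\omega_1}\oplus\pi_{\omega_2}\oplus\pi_0$ and $\pi_{\omega_3}\otimes\pi_{\omega_1}\cong\pi_{\omega_4}\oplus\pi_{\omega_1+\omega_3}\oplus\pi_{\omega_2}$. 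Vanishing of $\dim V_{\pi_{2\omega_1}\otimes\pi_{\omega_2}}^{\fh'}$ forces $V_{\pi_{\omega_1+\omega_3}}^{\fh'}=0$ and reconfirms $V_{\pi_{\omega_2}}^{\fh'}=V_{\pi_{2\omega_1}}^{\fh'}=0$ (also a consequence of Lemma~\ref{lem:omega2-no-esferica}); then the identity for $\pi_{\omega_3}\otimes\pi_{\omega_1}$ reduces the computation of $V_{\pi_{\omega_4}}^{\fh'}$ to $\dim(\bigwedge^3(\rho)\otimes\rho)^{\fh'}$, i.e.\ to the multiplicity of $\rho$ in $\bigwedge^3(\rho)$. \emph{This is the main obstacle.} I would resolve it by computing the plethysm $\bigwedge^3(\rho)$ (equivalently $\pi_{\omega_3}|_{\fh'}$) as an $\fh'$-module --- a computation that, unlike branching from $\fg$, involves only $\fh'=\su(8)$ or $\so(16)$ of small rank, hence stays within reach --- and checking that $\rho$ is not among its constituents (equivalently, that $1_{\fh'}$ does not occur in $\pi_{\omega_4}|_{\fh'}=\bigwedge^4(\rho)$); this gives $V_{\pi_{\omega_4}}^{\fh'}=0$. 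Plugging the vanishings into the decomposition of $\pi_{\omega_2}\otimes\pi_{\omega_2}$ yields $2=\dim V_{\pi_{2\omega_2}}^{\fh'}+1$, so $\dim V_{\pi_{2\omega_2}}^{\fh'}=1$, and by Remark~\ref{rem:Kconexo} also $\dim V_{\pi_{2\omega_2}}^{H_i}=1$.

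To finish, \eqref{eq:Freudenthal} gives $\lambda^{\pi_{2\omega_2}}=\frac{2(N-1)}{N-2}$, equal to $\frac{69}{34}=2+\frac{1}{34}$ for $N_1=70$ and $\frac{127}{63}=2+\frac{1}{63}$ for $N_2=128$. I would then prove a lemma in the style of Lemmas~\ref{lem:SOlambda^pi<lambda^omega3}, \ref{lem:lambda^pi<lambda^3omega1}, \ref{lem:SO-lambda^pi<lambda^omega4}: since $N$ is large the spin weights have eigenvalue far above $\lambda^{\pi_{2\omega_2}}$, and since the weight lattice of $G_i$ omits $\omega_1,\omega_3,3\omega_1,\omega_1+\omega_2$ (these have odd coordinate sum, hence lie outside the root lattice of $\so(N)_\C$ for $i=1$, resp.\ outside $\DD_{64}^{\pm}$ for $i=2$; see Remark~\ref{rmk:pesosSpin}), the only $\Lambda\in\PP^+(G_i)\smallsetminus\{0\}$ with $\lambda^{\pi_{\Lambda}}\le\lambda^{\pi_{2\omega_2}}$ are $\omega_2,2\omega_1,\omega_4,\omega_1+\omega_3$ --- each with strict inequality and already shown non-spherical --- together with $2\omega_2$ itself. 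Hence \eqref{eq:sufficient-lambda_1} applies and $\lambda_1(G_i/H_i,g_{\st})=\lambda^{\pi_{2\omega_2}}$; as $2\omega_2$ is the unique weight realizing this eigenvalue, Theorem~\ref{thm:Spec(standard)} gives multiplicity $\dim V_{\pi_{2\omega_2}}\cdot\dim V_{\pi_{2\omega_2}}^{H_i}=\dim V_{\pi_{2\omega_2}}$, which Weyl's dimension formula evaluates to $1997940$ for $N_1=70$ and $22360000$ for $N_2=128$.
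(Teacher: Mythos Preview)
Your proposal is correct and follows essentially the same route as the paper: the paper also computes the plethysms $\bigwedge^2(\rho_i)$ and $\bigwedge^3(\rho_i)$ explicitly, uses the two tensor identities $\pi_{\omega_2}\otimes\pi_{\omega_2}$ and $\pi_{\omega_3}\otimes\pi_{\omega_1}$ from Remark~\ref{rem:KoikeTerada} to deduce $\dim V_{\pi_{2\omega_2}}^{\fh'}=1$ and $V_{\pi_{\omega_4}}^{\fh'}=V_{\pi_{\omega_1+\omega_3}}^{\fh'}=0$, and closes with a weight-lattice lemma identical to the one you sketch. The paper is slightly more economical: it never computes $\Sym^2(\rho_i)$ or the tensors $\pi_{2\omega_1}\otimes\pi_{2\omega_1}$ and $\pi_{2\omega_1}\otimes\pi_{\omega_2}$, since $\pi_{\omega_3}\otimes\pi_{\omega_1}$ alone already forces $V_{\pi_{\omega_1+\omega_3}}^{\fh'}=0$ (note also that in your dimension count there are \emph{two} remaining summands to place, not one, though the argument still goes through).
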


\begin{proof}
As representations of $(\fg_i)_\C=\so(N_i)_\C$, we have  $V_{\pi_{\omega_p}}\simeq {\textstyle\bigwedge^p}(\pi_{\omega_1})$ for $p=1,2,3$ (see Remark~\ref{rem:ExtSym-standard}). 
Thus
\begin{align*}
\big(
	\pi_{\omega_2}\otimes\pi_{\omega_{2}} 
\big)|_{(\fh_i')_\C}
	&\simeq {\textstyle\bigwedge}^2(\pi_{\omega_1}|_{(\fh_i')_\C})
	\otimes {\textstyle\bigwedge}^2(\pi_{\omega_1}|_{(\fh_i')_\C}) 
	\simeq 
	{\textstyle\bigwedge}^2(\rho_i)
	\otimes 
	{\textstyle\bigwedge}^2(\rho_i) 
,
\\
\big(
	\pi_{\omega_3}\otimes\pi_{\omega_{1}} 
\big)|_{(\fh_i')_\C}
	&\simeq {\textstyle\bigwedge}^3(\pi_{\omega_1}|_{(\fh_i')_\C})
	\otimes (\pi_{\omega_1}|_{(\fh_i')_\C}) 
	\simeq 
	{\textstyle\bigwedge}^3(\rho_i)
	\otimes 
	\rho_i
. 
\end{align*}
One checks that
\begin{align*}
	 {\textstyle\bigwedge}^2(\rho_1)&= {\textstyle\bigwedge}^2(\sigma_{\eta_4})
	 \simeq \sigma_{\eta_3+\eta_5} \oplus \sigma_{\eta_1+\eta_7}, 
\\
	 {\textstyle\bigwedge}^3(\rho_1)&= {\textstyle\bigwedge}^3(\sigma_{\eta_4})
	 \simeq \sigma_{\eta_1+\eta_3} \oplus \sigma_{\eta_5+\eta_7}
	 \oplus \sigma_{\eta_1+\eta_4+\eta_7} \oplus \sigma_{\eta_2+2\eta_5} \oplus \sigma_{2\eta_3+2\eta_6}, 
 \\
	 {\textstyle\bigwedge}^2(\rho_2)&= {\textstyle\bigwedge}^2(\sigma_{\eta_7})
	 \simeq \sigma_{\eta_2} \oplus \sigma_{\eta_6}, 
\\
	 {\textstyle\bigwedge}^3(\rho_2)&= {\textstyle\bigwedge}^3(\sigma_{\eta_7})
	 \simeq \sigma_{\eta_2+\eta_7} \oplus \sigma_{\eta_1+\eta_8}
	 \oplus \sigma_{\eta_5+\eta_8}.
\end{align*}
Since all irreducible constituents appearing in ${\textstyle\bigwedge}^2(\rho_i)$ are self-conjugate, we obtain $\dim V_{\pi_{\omega_2}\otimes\pi_{\omega_{2}}}^{(\fh_i')_\C}=2$ (see Remark~\ref{rem:rhoxrho^*}). 
Similarly, $\dim V_{\pi_{\omega_3}\otimes\pi_{\omega_{1}}}^{(\fh_i')_\C}=0$ because $\rho_i^*$ does not occur in the decomposition of ${\textstyle\bigwedge}^3(\rho_i)$.

We summarize these computations in the left column of the following decomposition, while the tensor decompositions shown in the right column follow from Remark~\ref{rem:KoikeTerada}:
\begin{align*}
\dim V_{\pi_{\omega_2}\otimes\pi_{\omega_{2}}}^{\fh_\C'} &=2,&
	\pi_{\omega_2}\otimes\pi_{\omega_{2}}
	&\simeq \pi_{\omega_4}
	\oplus \pi_{\omega_1+\omega_3}
	\oplus \pi_{2\omega_2}
	\oplus \pi_{2\omega_1}
	\oplus \pi_{\omega_2}
	\oplus \pi_{0},
\\
\dim V_{\pi_{\omega_3}\otimes\pi_{\omega_{1}}}^{\fh_\C'} &=0,&
	\pi_{\omega_3}\otimes\pi_{\omega_{1}}
	&\simeq  \pi_{\omega_4} \oplus \pi_{\omega_1+\omega_3}
	\oplus \pi_{\omega_2}
.
\end{align*}
From the second row we see that 
$V_{\pi_{\omega_4}}^{(\fh_i')_\C} = V_{\pi_{\omega_1+\omega_3}}^{(\fh_i')_\C} = V_{\pi_{\omega_2}}^{(\fh_i')_\C}=0$. 
We know that $V_{\pi_{2\omega_1}}^{(\fh_i')_\C} =0$ from Lemma~\ref{lem:omega2-no-esferica}\eqref{item:omega_2^H=2omega1=0}.
Furthermore, the first row implies 
$
\dim V_{\pi_{2\omega_2}}^{(\fh_i')_\C}=1. 
$

In order to establish $\lambda_1(G/H,g_{\st})=\lambda^{\pi_{2\omega_2}}$, it remains to check \eqref{eq:sufficient-lambda_1}, which follows immediately from Lemma~\ref{lem:SO-isolated} and the above conclusions. 

Finally, by Lemma~\ref{lem:SO-isolated} the multiplicity of $\lambda_1(G/H,g_{\st})$ in $\Spec(G/H,g_{\st})$ is $\dim V_{\pi_{2\omega_2}}$ .
\end{proof}

\begin{lemma}\label{lem:SO-isolated}
For $\Lambda\in\PP^+(G_i)$ for any $i=1,2$, we have $\lambda^{\pi_{\Lambda}} <\lambda^{\pi_{2\omega_2}}$ if and only if $\Lambda$ is one of the following:
$0$, $2\omega_1$, $\omega_2$, $\omega_1+\omega_3$, $\omega_4$. 
Moreover, $\lambda^{\pi_{\Lambda}} =\lambda^{\pi_{2\omega_2}}$ if and only if $\Lambda = 2\omega_2$.
\end{lemma}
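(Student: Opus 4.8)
The plan is to follow the pattern of Lemmas~\ref{lem:SOlambda^pi<lambda^omega3}, \ref{lem:lambda^pi<lambda^3omega1}, and \ref{lem:SO-lambda^pi<lambda^omega4}: compute the Casimir eigenvalues coming from \eqref{eq:Freudenthal}, use Remark~\ref{rem:autovalor_suma_dominantes} to cut the problem down to finitely many candidate highest weights, and then discard the wrong ones. The only genuinely new feature is that the weight lattices of $G_1=\SO(70)/\Z_2$ and $G_2=\Spin(128)/\Z_2$ must be tracked carefully. Writing $N=N_i$, so that $(\fg_i)_\C=\so(N)_\C$ is of type $D_{N/2}$, the first step is to record (exactly as in the proof of Lemma~\ref{lem:SOlambda^pi<lambda^omega3}) that
\begin{align*}
\lambda^{\pi_{\omega_k}}&=\frac{k(N-k)}{2(N-2)}\quad (1\le k\le \tfrac{N}{2}-2), &
\lambda^{\pi_{\omega_{N/2-1}}}&=\lambda^{\pi_{\omega_{N/2}}}=\frac{N(N-1)}{16(N-2)},\\
\lambda^{\pi_{2\omega_1}}&=\frac{N}{N-2}, &
\lambda^{\pi_{2\omega_2}}&=\frac{2(N-1)}{N-2},
\end{align*}
together with $\lambda^{\pi_{\omega_1+\omega_3}}=2$, $\lambda^{\pi_{4\omega_1}}=\frac{2(N+2)}{N-2}$, and $\lambda^{\pi_{2\omega_1+\omega_2}}=\frac{2N}{N-2}$, the last two serving only to settle the equality case.

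Since $k\mapsto k(N-k)$ is increasing on $\{1,\dots,N/2\}$, one checks in a line that $\lambda^{\pi_{\omega_k}}\ge\lambda^{\pi_{\omega_5}}>\lambda^{\pi_{2\omega_2}}$ for all $5\le k\le N/2-2$ and both $N\in\{70,128\}$, while $\lambda^{\pi_{\omega_{N/2-1}}}=\lambda^{\pi_{\omega_{N/2}}}$ is far larger still. By Remark~\ref{rem:autovalor_suma_dominantes}, $\lambda^{\pi_\Lambda}\ge\sum_jb_j\lambda^{\pi_{\omega_j}}$ for $\Lambda=\sum_jb_j\omega_j$, strictly once $\sum_jb_j\ge2$; hence any $\Lambda$ with $\lambda^{\pi_\Lambda}\le\lambda^{\pi_{2\omega_2}}$ is a non-negative combination of $\omega_1,\omega_2,\omega_3,\omega_4$ only, and substituting the displayed values shows $b_1+2b_2+3b_3+4b_4\le4$ (valid for $N>16$). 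This leaves the twelve weights
$$
0,\ \omega_1,\ 2\omega_1,\ 3\omega_1,\ 4\omega_1,\ \omega_2,\ \omega_1+\omega_2,\ 2\omega_1+\omega_2,\ 2\omega_2,\ \omega_3,\ \omega_1+\omega_3,\ \omega_4,
$$
and evaluating $\lambda^{\pi_\Lambda}$ via \eqref{eq:Freudenthal} for each, I would find $\lambda^{\pi_\Lambda}<\lambda^{\pi_{2\omega_2}}$ for the nine weights $0,\omega_1,2\omega_1,3\omega_1,\omega_2,\omega_1+\omega_2,\omega_3,\omega_1+\omega_3,\omega_4$, equality only for $2\omega_2$ itself, and $\lambda^{\pi_{4\omega_1}},\lambda^{\pi_{2\omega_1+\omega_2}}>\lambda^{\pi_{2\omega_2}}$.

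The last step is to impose the lattice constraint. The weight lattice of $G_1=\SO(70)/\Z_2$ is the root lattice $\DD_{35}=\{\sum a_j\ee_j:a_j\in\Z,\ \sum_ja_j\ \text{even}\}$ of $\so(70)_\C$, while that of $G_2=\Spin(128)/\Z_2$ is $\DD_{64}^{\pm}$ from Remark~\ref{rmk:pesosSpin}, whose integer part is again $\DD_{64}$; a dominant weight $\Lambda=\sum_jb_j\omega_j$ of $G_2$ is half-integer in the $\ee$-basis precisely when $b_{63}+b_{64}$ is odd, and then Remark~\ref{rem:autovalor_suma_dominantes} gives $\lambda^{\pi_\Lambda}\ge\lambda^{\pi_{\omega_{N/2}}}\gg\lambda^{\pi_{2\omega_2}}$. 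So for both groups only integer weights with even coordinate-sum can occur. Writing the nine weights above in the $\ee$-basis, $\omega_1=\ee_1$, $3\omega_1=3\ee_1$, $\omega_3=\ee_1+\ee_2+\ee_3$ and $\omega_1+\omega_2=2\ee_1+\ee_2$ all have odd coordinate-sum and are discarded, while $0$, $2\omega_1=2\ee_1$, $\omega_2=\ee_1+\ee_2$, $\omega_1+\omega_3=2\ee_1+\ee_2+\ee_3$, $\omega_4=\ee_1+\ee_2+\ee_3+\ee_4$ and $2\omega_2=2\ee_1+2\ee_2$ have even coordinate-sum and lie in both $\PP^+(G_1)$ and $\PP^+(G_2)$; this gives exactly the two lists in the statement.

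The one point that genuinely demands care is the identification of the two weight lattices, and in particular the fact (Remark~\ref{rmk:pesosSpin}) that $\Spin(128)/\Z_2$ is \emph{not} isomorphic to $\SO(128)$, so that $\PP(G_2)=\DD_{64}^{\pm}$ rather than the full $\Spin(128)$-weight lattice: it is precisely the even-coordinate-sum condition that eliminates the small-eigenvalue weights $\omega_1$, $\omega_3$, and $\omega_1+\omega_2$, which would otherwise have to be included in the list.
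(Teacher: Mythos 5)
Your proof is correct and takes essentially the same approach as the paper's, which simply defers to the argument of Lemma~\ref{lem:SOlambda^pi<lambda^omega3} and then applies the lattice constraints (citing Remark~\ref{rmk:pesosSpin} and the external reference for $\PP^+(G_i)$). You have filled in exactly the details that reference elides: the explicit Casimir eigenvalues, the reduction via Remark~\ref{rem:autovalor_suma_dominantes} to the twelve candidates with $\sum_j jb_j\le 4$, and the even-coordinate-sum criterion that prunes the list down to the six weights in the statement.
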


\begin{proof}
Arguing as in the proof of Lemma~\ref{lem:SOlambda^pi<lambda^omega3}, we obtain that the weights $\Lambda$ in $\PP^+(\so(70)_\C)$ or $\PP^+(\so(128)_\C)$ satisfying $\lambda^{\pi_{\Lambda}} \leq \lambda^{\pi_{2\omega_2}}$ are $0$, $\omega_1$, $2\omega_1$, $3\omega_1$, $\omega_2$,  $\omega_1+\omega_2$, $\omega_3$, $2\omega_2$, $\omega_1+\omega_3$, $\omega_4$, and the equality holds only for $2\omega_2$.
Among these, the $G_i$-integral ones 
(see \cite[Rmk.~2.8]{LauretTolcachier-nu-stab} and  Remark~\ref{rmk:pesosSpin})
are $0$, $2\omega_1$,  $\omega_2$, $2\omega_2$,  $\omega_1+\omega_3$, and $\omega_4$. 
\end{proof}

\section{Uniform bounds for the first eigenvalue}\label{sec:bounds}

The goal of this section is to provide lower and upper bounds for $\lambda_1(G'/H',g_{\st})$, where $G'/H'$ is an arbitrary compact strongly isotropy irreducible space, including the irreducible compact symmetric spaces. 

It turns out that $G'/H'$ is covered by a simply connected strongly isotropy irreducible space $G/H$ (see Remark~\ref{rem:universalcover} for details for the non-symmetric case). 
Moreover, $E(G/H,g_{\st})=E(G'/H',g_{\st})$. 
Since $\lambda_1(M,g)\leq \lambda_1(N,g')$ for any Riemannian covering $(M,g)\to (N,g')$, we have  
$$
\lambda_1(G/H,g_{\st})\leq \lambda_1(G'/H',g_{\st}).
$$
Moreover, $\lambda_1(G/H,g_{\st})$ has been computed by Urakawa~\cite{Urakawa86} for any compact irreducible symmetric space and in Section~\ref{sec:lambda_1(simplyconnected)} for the remaining cases.

\subsection{Non-symmetric strongly isotropy irreducible spaces}
By the comments above, the lower bound for $\lambda_1(G'/H',g_{\st})$ follows by a simple inspection of Tables~\ref{table4:isotropyirred-families2}--\ref{table4:isotropyirredexcepcion-exceptional}. 

\begin{proposition}\label{prop4:lowerbound-non-symmetric}
If $G'/H'$ is any non-symmetric strongly isotropy irreducible space, then 
$$
\lambda_1(G'/H',g_{\st})\geq  
\lambda_1(\op{G}_2/\SU(3),g_{\st})
=\frac12.
$$
Moreover, the equality holds only for $G'/H'=\op{G}_2/\SU(3)$. 
\end{proposition}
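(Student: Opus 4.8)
The plan is to reduce to the simply connected case and then read off the values established in Section~\ref{sec:lambda_1(simplyconnected)}. As recalled at the start of this section, every non-symmetric strongly isotropy irreducible space $G'/H'$ is covered by a simply connected one $G/H$ from the classification (Remark~\ref{rem:universalcover}), the covering is Riemannian for the standard metrics, and hence $\lambda_1(G/H,g_{\st})\leq\lambda_1(G'/H',g_{\st})$. So the first task is to check that $\lambda_1(G/H,g_{\st})\geq\tfrac12$ for every simply connected non-symmetric strongly isotropy irreducible space $G/H$, with equality only for $G/H=\op{G}_2/\SU(3)$.

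For the $33$ isolated cases this is immediate from the column ``Lower bound $\lambda_1(G/H,g_{\st})$'' of Tables~\ref{table4:isotropyirredexcepcion-classical} and~\ref{table4:isotropyirredexcepcion-exceptional}: every listed value is $\geq\tfrac12$, and $\tfrac12$ is attained exactly at No.~$33$, where $\lambda_1=\lambda^{\pi_{\omega_1}}=\tfrac12$; the next smallest value is $\tfrac{21}{40}$, at No.~$3$ ($\Spin(7)/\op{G}_2$).

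For the ten families I would use the closed-form expressions proved in Section~\ref{sec:lambda_1(simplyconnected)} and collected in Table~\ref{table4:isotropyirred-families2}, verifying that each strictly exceeds $\tfrac12$ over its whole admissible parameter range. This is elementary: Families~I and~II give values $>2$; Family~III gives $2-\tfrac{2}{pq}\geq\tfrac53$ since $pq\geq 6$; Family~IV gives $2-\tfrac1{n+1}\geq\tfrac74$; Families~VI and~X give values $>\tfrac32$; and Families~V, VII, VIII, IX give expressions of the form $\tfrac32-\tfrac{c}{m}$ or $2-\tfrac{c}{m}$ that increase in the parameter, the smallest among the four being $\tfrac54$ (Family~V, $n=3$). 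In particular the inequality is strict for every family, so among simply connected non-symmetric strongly isotropy irreducible spaces equality $\lambda_1=\tfrac12$ occurs only for $\op{G}_2/\SU(3)$.

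Finally, I would settle the equality case for arbitrary $G'/H'$. If $\lambda_1(G'/H',g_{\st})=\tfrac12$, the covering inequality forces $\lambda_1(G/H,g_{\st})=\tfrac12$ for the simply connected cover $G/H$, hence $G/H=\op{G}_2/\SU(3)\simeq S^6$. As $N_{\op{G}_2}(\SU(3))/\SU(3)\cong\Z_2$, the only strongly isotropy irreducible spaces covered by $\op{G}_2/\SU(3)$ are $\op{G}_2/\SU(3)$ itself and $\op{G}_2/N_{\op{G}_2}(\SU(3))\simeq\mathbb{RP}^6$; for the latter the eigenfunctions attaining $\lambda^{\pi_{\omega_1}}$, namely the coordinate functions on $S^6$, are odd under the deck involution and do not descend, so its first eigenvalue is $\lambda^{\pi_{2\omega_1}}=\tfrac76>\tfrac12$ (matching the upper bound recorded for No.~$33$). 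Hence $G'/H'=\op{G}_2/\SU(3)$. I do not foresee a genuine obstacle: the argument is a finite verification resting on Section~\ref{sec:lambda_1(simplyconnected)}, the only mildly delicate parts being the uniform monotonicity estimates for the ten families and the short computation above excluding $\mathbb{RP}^6$ from the equality case.
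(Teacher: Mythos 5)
Your proof is correct and follows essentially the same route as the paper: reduce to the simply connected case via the Riemannian covering inequality, then read off the values from Tables~\ref{table4:isotropyirred-families2}--\ref{table4:isotropyirredexcepcion-exceptional}, where the minimum $\tfrac12$ is attained exactly at No.~33. Your explicit treatment of the equality case — using $N_{\op{G}_2}(\SU(3))/\SU(3)\cong\Z_2$ together with the oddness of the degree-one spherical harmonics to exclude $\mathbb{RP}^6$ — correctly supplies a step the paper leaves to the reader's "simple inspection."
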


We now focus on the upper bound, which is more delicate. 
We denote by $Z(G)$ and $N_G(H)$ the center of $G$ and the normalizer of $H$ in $G$, respectively. 

\begin{lemma}\label{lem4:Z(G)}
Let $G$ be a compact connected semisimple Lie group and $\pi$ a representation of $G$. 
Then $(\pi\otimes\pi^*)(z)=\Id_{V_\pi\otimes V_\pi^*}$ for all $z\in Z(G)$. 
In particular, $\pi'(z)= \Id_{V_{\pi'}}$ for all $z\in Z(G)$, for any irreducible constituent $\pi'$ of $\pi\otimes\pi^*$. 
\end{lemma}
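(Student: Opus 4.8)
The plan is to reduce the whole statement to Schur's Lemma applied to the central element. First I would fix $z\in Z(G)$ and observe that, since $z$ is central, $\pi(z)$ commutes with $\pi(g)$ for every $g\in G$; hence $\pi(z)$ is a self-intertwiner of $\pi$. As $V_\pi$ is a finite-dimensional complex vector space and $\pi$ is irreducible, Schur's Lemma forces $\pi(z)=\chi(z)\,\Id_{V_\pi}$ for some scalar $\chi(z)\in\C^{\times}$. Only finite-dimensionality over $\C$ is genuinely used here; compactness, connectedness and semisimplicity of $G$ serve only to make $Z(G)$ (and hence the lemma) interesting.

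The second step is to record the action of $z$ on the contragredient. From the defining identity $(\pi^{*}(g)\varphi)(v)=\varphi(\pi(g)^{-1}v)$ (for $\varphi\in V_\pi^{*}$, $v\in V_\pi$) one reads off $\pi^{*}(z)=\chi(z)^{-1}\,\Id_{V_\pi^{*}}$, so that
\[ (\pi\otimes\pi^{*})(z)=\pi(z)\otimes\pi^{*}(z)=\chi(z)\,\chi(z)^{-1}\,\Id_{V_\pi\otimes V_\pi^{*}}=\Id_{V_\pi\otimes V_\pi^{*}} . \]
This gives the first assertion, and note that neither the (finite) order of $z$ nor unitarity of $\pi$ is needed: the two scalars cancel formally.

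For the final assertion I would use complete reducibility of representations of a compact group: $\pi\otimes\pi^{*}$ is an internal direct sum of $G$-invariant subspaces on which $G$ acts irreducibly, and every irreducible constituent $\pi'$ of $\pi\otimes\pi^{*}$ is the representation carried by one of these subspaces (up to isomorphism). Restricting the operator $(\pi\otimes\pi^{*})(z)=\Id_{V_\pi\otimes V_\pi^{*}}$ to such a subspace yields $\pi'(z)=\Id_{V_{\pi'}}$. I do not expect any real obstacle; the one small point to get right is that the scalar on $V_\pi^{*}$ is the inverse $\chi(z)^{-1}$ of that on $V_\pi$ — it is this cancellation, rather than any positivity or finiteness property, that makes the product equal $\Id$.
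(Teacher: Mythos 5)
Your proof is correct and follows essentially the same route as the paper's: Schur's Lemma gives $\pi(z)=\chi(z)\,\Id_{V_\pi}$, the contragredient then acts by the reciprocal scalar, and the two cancel on the tensor product. The only difference is cosmetic: the paper first deduces $|\xi|=1$ from the finiteness of $Z(G)$ and writes $\pi^*(z)=\bar\xi\,\Id$, so the cancellation appears as $\xi\bar\xi=1$, whereas you observe directly that $\pi^*(z)=\chi(z)^{-1}\Id$ from the defining formula, so the cancellation $\chi(z)\chi(z)^{-1}=1$ is purely formal and needs neither unitarity nor finite order. Note also that both arguments (and the way the lemma is actually applied later) tacitly assume $\pi$ irreducible so that Schur's Lemma applies; you make this explicit, which is the correct reading of the statement.
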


\begin{proof}
Fix $z\in Z(G)$. 
Since $\pi(z)$ commutes with $\pi(a)$ for all $a\in G$, Schur's Lemma yields $\pi(z)=\xi\, \Id_{V_\pi}$ for some $\xi\in\C$. 
Since the order of $z$ is finite, because $Z(G)$ is finite, then $|\xi|=1$.

Note that $\pi^*(z)=\bar\xi\, \Id_{V_\pi^*}$. 
Indeed, for $\varphi\in V_\pi^*$ and $v\in V_\pi$, we compute:  $(\pi^*(z)\cdot \varphi)(v) = \varphi(\pi(z^{-1})\cdot v)=\varphi(\xi^{-1} \, v)=\bar\xi\varphi(v)$, where  
in the second step we use that $\pi(z^{-1})=\xi^{-1} \, \Id_{V_\pi} =\bar \xi \,\Id_{V_\pi}$ since $\pi(z)\circ\pi(z^{-1}) = \Id_{V_\pi}$. 

We conclude that $(\pi\otimes \pi^*)(z)\cdot v\otimes\varphi = (\pi(z)\cdot v)\otimes (\pi^*(z)\cdot \varphi)=  (\xi\, v)\otimes (\bar \xi\,\varphi)=v\otimes\varphi$ for all $v\in V_\pi$ and $\varphi\in V_\pi^*$. 
\end{proof}

\begin{lemma}\label{lem4:N/H-finite}
Let $G$ be a compact connected Lie group, and let $H,K$ be closed subgroups of $G$ satisfying that $H\subset K\subset N_G(H)$ and the group $K/H$ is finite. 
Let $\pi_{\Lambda}$ be a non-trivial irreducible representation of $G$ with highest weight $\Lambda$ satisfying $V_{\pi_\Lambda}^H\neq0$ (i.e.\ $\pi_{\Lambda}\in\widehat G_H$) and let $\Lambda^*$ denote the highest weight of $\pi_{\Lambda}^*$, i.e.\ $\pi_{\Lambda^*}\simeq\pi_{\Lambda}^*$. 
If $K/H$ is abelian or $\dim V_\pi^H=1$, then there is $\pi'\in\widehat G\smallsetminus\{1_G\}$ such that $V_{\pi'}^{K}\neq0$ and $\lambda^{\pi'}\leq\lambda^{\pi_{\Lambda+\Lambda^*}}$. 
\end{lemma}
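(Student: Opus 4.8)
\textbf{Proof plan for Lemma~\ref{lem4:N/H-finite}.}
The plan is to exploit the action of the finite group $K/H$ on the space $V_{\pi_\Lambda}^H$, together with the tensor-product decomposition connecting $\pi_{\Lambda+\Lambda^*}$ to $\pi_\Lambda\otimes\pi_\Lambda^*$. First I would observe that $\pi_{\Lambda+\Lambda^*}$ occurs (with multiplicity one) in $\pi_\Lambda\otimes\pi_\Lambda^*$, since $\Lambda+\Lambda^*$ is the highest weight appearing there; hence $\lambda^{\pi_{\Lambda+\Lambda^*}}=\lambda^{\pi_\Lambda}+\lambda^{\pi_\Lambda^*}+2\inner{\Lambda}{\Lambda^*}$ is the largest Casimir eigenvalue among the irreducible constituents of $\pi_\Lambda\otimes\pi_\Lambda^*$ (by Remark~\ref{rem:autovalor_suma_dominantes} applied componentwise, each other constituent has strictly smaller eigenvalue). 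So it suffices to produce a nontrivial irreducible constituent $\pi'$ of $\pi_\Lambda\otimes\pi_\Lambda^*$ with $V_{\pi'}^K\neq0$: automatically $\lambda^{\pi'}\leq\lambda^{\pi_{\Lambda+\Lambda^*}}$.

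Next I would analyze the $K$-fixed vectors. Since $K$ normalizes $H$, the group $K$ acts on $V_{\pi_\Lambda}^H$ (because $\pi_\Lambda(k)$ sends $H$-fixed vectors to $kHk^{-1}$-fixed vectors $= H$-fixed vectors), and $H$ acts trivially on it, so we get a representation of the finite group $K/H$ on $V_{\pi_\Lambda}^H$. The key point is that $(V_{\pi_\Lambda}\otimes V_{\pi_\Lambda}^*)^H \supseteq V_{\pi_\Lambda}^H\otimes (V_{\pi_\Lambda}^*)^H$ (in fact the trivial $H$-isotypic component of $V_{\pi_\Lambda}\otimes V_{\pi_\Lambda}^*$ contains $\mathrm{End}(V_{\pi_\Lambda}^H)$, which as a $K/H$-module is $\mathrm{Hom}(V_{\pi_\Lambda}^H, V_{\pi_\Lambda}^H)$ — the adjoint-type action). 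Now $\mathrm{Hom}(W,W)$ for any finite-group representation $W$ always contains a copy of the trivial representation (spanned by $\Id_W$). Therefore $(V_{\pi_\Lambda}\otimes V_{\pi_\Lambda}^*)^K \neq 0$. By Lemma~\ref{lem4:Z(G)}, $Z(G)$ acts trivially on $V_{\pi_\Lambda}\otimes V_{\pi_\Lambda}^*$, so this $K$-fixed vector is fixed by the (possibly disconnected) group generated by $K$ and $Z(G)$ as well; in particular it lies in a nontrivial irreducible $G$-constituent $\pi'$ of $\pi_\Lambda\otimes\pi_\Lambda^*$ with $V_{\pi'}^K\neq0$. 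Since the $K$-fixed vector is not in the trivial $G$-summand (that summand is one-dimensional and $\inner{\Lambda}{\Lambda^*}>0$ so $\pi_{\Lambda+\Lambda^*}\ne 1_G$; more carefully, one must check the $K$-fixed vectors are not all concentrated in $\pi_0$) we get the desired $\pi'$.

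The subtle point — and where the hypotheses "$K/H$ abelian or $\dim V_{\pi_\Lambda}^H=1$" enter — is exactly the last step: ensuring a nontrivial $G$-constituent carries a $K$-fixed vector, rather than the $K$-fixed vectors all sitting inside the trivial summand $\pi_0\subset \pi_\Lambda\otimes\pi_\Lambda^*$. The trivial $G$-summand contributes exactly one $H$-fixed line, and $K$ acts on it; if $\dim V_{\pi_\Lambda}^H=1$ then $\mathrm{End}(V_{\pi_\Lambda}^H)$ is itself one-dimensional so one needs a separate argument — here one uses that the trivial $G$-constituent's contribution to $(\pi_\Lambda\otimes\pi_\Lambda^*)^H$ is the line $\C\cdot(\text{evaluation}/\text{coevaluation})$, and compares $K$-characters: the $K/H$-character on the full $(\pi_\Lambda\otimes\pi_\Lambda^*)^H$ equals $\lvert\chi_W\rvert^2$ where $W=V_{\pi_\Lambda}^H$ as a $K/H$-module, whereas the trivial $G$-summand contributes the trivial $K/H$-character; when $\dim W=1$ and $K/H$ acts by a nontrivial character this forces a nontrivial $G$-constituent to absorb the (trivial) $K/H$-isotypic piece only if dimensions match — so one splits into cases. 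When $K/H$ is abelian, $W$ decomposes into characters and $\mathrm{End}(W)$ contains $\dim W$ copies of the trivial $K/H$-character, at least one of which (by dimension count against the single trivial $G$-summand) must land in a nontrivial $G$-constituent. I expect this case-splitting — carefully matching $K/H$-isotypic multiplicities in $(\pi_\Lambda\otimes\pi_\Lambda^*)^H$ against the one trivial $G$-summand — to be the main obstacle; the rest is the formal tensor/Casimir bookkeeping already available via \eqref{eq:Freudenthal} and Remark~\ref{rem:autovalor_suma_dominantes}.
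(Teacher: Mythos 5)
Your core idea is genuinely different from the paper's, and it is worth recording. The paper constructs a $K$-fixed vector $v\otimes\varphi$ in $V_\pi\otimes V_\pi^*$ by \emph{simultaneously diagonalizing} the commuting unitary action of the finite group $K/H$ on $V_\pi^H$ — this is precisely where the hypothesis \emph{$K/H$ abelian or $\dim V_\pi^H=1$} is used. You instead propose to take the element $\Id_W\in\End(W)\cong W\otimes W^*\subset V_\pi\otimes V_\pi^*$ (with $W:=V_\pi^H$), i.e. the projection $P_W$ onto $W$ along the $H$- (and hence $K$-) invariant complement; this is $K$-conjugation-invariant with no hypothesis on $K/H$ at all. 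So your route, where it works, is more elementary and would even remove the abelian/dim-one hypothesis.

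There is, however, a genuine gap, and your own discussion of the ``subtle point'' misses where the real danger lies. The only obstruction is that the $K$-fixed vector you produce might lie entirely in the trivial $G$-summand $\pi_0\subset\pi_\Lambda\otimes\pi_\Lambda^*$, which is spanned by $\Id_{V_\pi}$ (full rank $\dim V_\pi\geq 2$). Your vector $\Id_W=P_W$ has rank $\dim W$, so the two coincide exactly when $W=V_\pi$, i.e.\ when $H\subset\ker\pi_\Lambda$. In that edge case your argument produces nothing, whereas the paper's vector $v\otimes\varphi$ has rank one and therefore can never equal $\Id_{V_\pi}$, so the paper's proof survives. (Note that $\dim W=1$ is \emph{not} a problem for you — you flag it as one — since a rank-one $P_W$ is still $\neq\Id_{V_\pi}$; the convoluted $K/H$-character and dimension-count machinery in your last paragraph is unnecessary and in places inaccurate, e.g.\ $(V_\pi\otimes V_\pi^*)^H$ is $\End_H(V_\pi)$, not just $W\otimes W^*$.) Finally, the Casimir inequality $\lambda^{\pi'}\leq\lambda^{\pi_{\Lambda+\Lambda^*}}$ does not follow directly from Remark~\ref{rem:autovalor_suma_dominantes}: one must use that any constituent has highest weight $\Lambda'=\Lambda+\Lambda^*-\gamma$ with $\gamma$ a non-negative sum of simple roots, then expand $\inner{\Lambda'+\gamma+2\rho}{\Lambda'+\gamma}-\inner{\Lambda'+2\rho}{\Lambda'}\geq0$ using dominance of $\Lambda'$ and $2\rho$, as the paper does. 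In summary: your idea cleanly replaces the simultaneous-diagonalization step and dispenses with the abelian/dim-one hypothesis, but it requires adding the hypothesis $V_\pi^H\neq V_\pi$ (which the paper's version does not need), and the part of your writeup devoted to the ``subtle point'' does not land on the simple rank comparison that actually makes the argument close.
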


\begin{proof}
We abbreviate $\pi=\pi_\Lambda$. 
We first show that $\pi(k)$ preserves $V_{\pi}^H$ for all $k\in K$. 
Indeed, for $k\in K$ and $v\in V_{\pi}^H$, we have 
$$
\pi(h)\cdot\big(\pi(k)\cdot v\big)
= \pi(k)\cdot\big( \pi(k^{-1}hk)\cdot v\big)
= \pi(k)\cdot v
\qquad\forall \, h\in H,
$$
thus $\pi(k)\cdot v\in V_\pi^H$.

Let $m$ be the order of $K/H$.
For any $k\in K$, $k^m\in H$, thus $\pi(k)^m=\Id_{V_\pi^H}$, and consequently the eigenvalues of $\pi(k)$ are $m$-th roots of unity.

We claim that there exist a non-zero $v\in V_\pi^H$ and a character $\xi:K\to S^1$ such that $\pi(k)\cdot v=\xi(k)\, v$ and $\xi(k)^m=1$ for all $k\in K$. 
This is obvious if $\dim V_\pi^H=1$. 
We now prove the claim under the assumption that $K/H$ is abelian. 
Choose $k_1,\dots,k_m\in K$ such that $k_ik_j^{-1}\notin H$ for any $i\neq j$, so $K=\bigcup_{i=1}^m k_iH$. 
Since $\pi$ is unitary and $\{\pi(k_i)|_{V_\pi^H}: 1\leq i\leq m\}$ is a commuting family of linear maps (because $K/H$ is abelian), they can be simultaneously diagonalized. 
Thus, we pick $v$ a non-zero vector $v$ that is a joint eigenvector of $\{\pi(k_i)|_{V_\pi^H}:1\leq i\leq m\}$. 

It is well known that $\dim (V_\pi^*)^H=\dim V_\pi^H$. 
Similarly as above, we can pick $\varphi\in (V_\pi^*)^H$ with $\varphi(v)\neq0$ satisfying that $\pi^*(k)\cdot \varphi=\zeta(k)\, \varphi$ for some character $\zeta:K\to S^1$. 
Note that, for any $k\in K$, $\zeta(k)\, \varphi(v)=\big(\pi^*(k)\cdot \varphi\big)(v)
=\varphi\big(\pi(k^{-1})\cdot v\big)
=\varphi\big( \xi(k^{-1})\,  v\big)
=\overline{\xi(k)}\varphi\big(  v\big)
$.
Hence $\zeta(k)=\overline{\xi(k)}$ for every $k\in K$. 
We deduce that 
\begin{align*}
(\pi\otimes\pi^*)(k)\cdot v\otimes \varphi&
=\big(\pi(k)\cdot v\big)\otimes \big(\pi^*(k)\cdot \varphi\big)
=\big(\xi(k)\, v\big)\otimes \big(\overline{\xi(k)}\, \varphi\big)
=v\otimes \varphi
\end{align*}
for all $k\in K$, that is, $v\otimes \varphi\in (V_\pi\otimes V_\pi^*)^{K}$. 

We decompose $\pi\otimes\pi^*$ in irreducible factors:
there are $G$-invariant subspaces $W_0,\dots,W_p$ of $V_\pi\otimes V_\pi^*$ such that $V_\pi\otimes V_\pi^*=W_0\oplus\dots\oplus W_p$ with $\pi_j:=\pi\otimes \pi^*|_{W_j}$ irreducible for all $j$. 
By Remark~\ref{rem:rhoxrho^*}, we can assume that $\pi_0\simeq 1_G$ and $\pi_{j}\not\simeq 1_G$ for all $1\leq j\leq p$. 
Clearly, 
$$
(V_\pi\otimes V_\pi^*)^{K}
=W_{0}^{K}\oplus W_{1}^{K}\oplus\dots\oplus W_{p}^{K}
.
$$
The element $v\otimes \varphi\in (V_\pi\otimes V_\pi^*)^{K}$ is obviously not $G$-invariant (because $\pi\not\simeq 1_G$), i.e.\ $v\otimes \varphi\notin W_0^{K}=W_0$.
It follows that $v\otimes \varphi$ has a non-zero component in some $W_{j}^{K}$ for some $1\leq j\leq p$, or in other words, $V_{\pi_j}^{K}\neq0$. 

Let us denote by $\Lambda'$ the highest weight of $\pi':=\pi_j$, which is an irreducible representation occurring in  $\pi\otimes\pi^*=\pi_\Lambda\otimes\pi_{\Lambda^*}$. 
It only remains to show that $\lambda^{\pi_{\Lambda'}}\leq \lambda^{\pi_{\Lambda+\Lambda^*}}$. 

It is well known that (see e.g.\ \cite[Ex.~25.33]{FultonHarris-book}) $\Lambda'$ can be written as $\Lambda'=\Lambda+\mu$ for some weight $\mu$ of $\pi^*$. 
In particular, $\gamma:=\Lambda+\Lambda^*-\Lambda'=\Lambda^*-\mu$ is zero or a sum of positive simple roots.
Hence
\begin{align*}
\lambda^{\pi_{\Lambda+\Lambda^*}} - \lambda^{\pi_{\Lambda'}}&
= \inner{\Lambda+\Lambda^*+2\rho}{\Lambda+\Lambda^*} 
- \inner{\Lambda'+2\rho}{\Lambda'} 
\\ & 
= \inner{\Lambda'+\gamma +2\rho}{\Lambda'+\gamma } 
- \inner{\Lambda'+2\rho}{\Lambda'} 
\\ & 
= 
 \inner{\Lambda' +2\rho}{\gamma } 
+ \inner{\gamma}{\Lambda'} 
+ \inner{\gamma}{\gamma} 
\geq0
\end{align*}
since $\Lambda'$ and $2\rho$ are dominant. 
\end{proof}

\begin{remark}\label{rem:universalcover}
Let $M'=G'/H'$ be a (not necessarily simply connected) non-symmetric strongly isotropy irreducible space. 
Without loss of generality, we may assume that $G'/H'$ is an effective presentation with $G'$ connected and $H_0'$ (the connected component of $H'$) acting irreducibly on the tangent space of $M'$. 
According to the second paragraph of \cite[Thm.~11.1]{Wolf68}, there are:
\begin{itemize}
\item a simply connected strongly isotropy irreducible space $G/H$ as in Tables~\ref{table4:isotropyirred-families},%
\ref{table4:isotropyirredexcepcion-classical},%
\ref{table4:isotropyirredexcepcion-exceptional}, 

\item a central subgroup $Q$ of the center $Z(G)$ of $G$, 
\item a subgroup $H''$ of the normalizer $N_G(H)$ of $H$ in $G$ with $H=H_0''=N_G(H)_0$,
\end{itemize}
such that 
\begin{align*}
	G'&=G/Q,&
	H'=(Q\cdot H'')/Q. 
\end{align*}
Clearly, $G'/H'$ is diffeomorphic to $G/(Q\cdot H'')$, and $(G'/H',g_{\st})$ is isometric to $(G/Q\cdot H'', g_{\st})$. 
Consequently, without losing generality, we can assume $G'=G$ and $H'=Q\cdot H''$. 
\end{remark}

\begin{remark}\label{rem4:Z=eN_G(H)=H}
Family VIII and the Isolated cases Nos.\ 4,5,6,8,9,20,22,25,26,29,31,32 satisfy that $Z(G)$ and $N_G(H)/H$ are simultaneously trivial.
In any of these cases, $G/H$ as in Tables~\ref{table4:isotropyirred-families},%
\ref{table4:isotropyirredexcepcion-classical},%
\ref{table4:isotropyirredexcepcion-exceptional}
is the only strongly isotropy irreducible space associated.  
\end{remark}

We are now in a position to give an upper bound for $\lambda_1(G'/H',g_{\st})$.

\begin{theorem}\label{thm4:Lambda+Lambda^*}
Let $G'/H'$ be a non-symmetric strongly isotropy irreducible space and let $G/H$ be its universal cover as in Remark~\ref{rem:universalcover}. 
We assume that $G/H\not\simeq (\Sp(4)/\Z_2)/(\Sp(1)/\Z_2\times \SO(4)/\Z_2)$ (Family IV with $n=4$).
Write $\lambda_1(G/H,g_{\st})=\lambda^{\pi_{\Lambda}}$, where $\Lambda$ is explicitly indicated in Tables~\ref{table4:isotropyirred-families2}--\ref{table4:isotropyirredexcepcion-exceptional} in each case. 
Then, 
$$
\lambda_1(G'/H',g_{\st}) \leq \lambda^{\pi_{\Lambda+\Lambda^*}},
$$
where $\Lambda^*$ denotes the highest weight of $\pi_{\Lambda}^*$. 
\end{theorem}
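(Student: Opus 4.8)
The plan is to deduce this statement from Lemma~\ref{lem4:N/H-finite} together with the structural description of $G'/H'$ in Remark~\ref{rem:universalcover}. By that remark we may assume $G'=G$ and $H'=Q\cdot H''$, where $Q\subset Z(G)$ and $H''\subset N_G(H)$ with $H=H_0''=N_G(H)_0$. Since $N_G(H)/H$ is finite (it is a compact Lie group whose identity component is $H$), the group $K:=H''$ satisfies $H\subset K\subset N_G(H)$ and $K/H$ is finite; moreover $K/H$ is a subgroup of the group $N_G(H)/(Z(G)\cdot H)$ modulo the image of $Q$, and by the hypothesis excluding Family~IV with $n=4$, this quotient is abelian in all remaining cases (this is exactly the statement quoted after Theorem~\ref{thm1:Saloff-Coste}). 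Alternatively, in the cases where $K/H$ fails to be abelian we would instead invoke the hypothesis $\dim V_{\pi_\Lambda}^H=1$, which holds for the relevant $\Lambda$ appearing in Tables~\ref{table4:isotropyirred-families2}--\ref{table4:isotropyirredexcepcion-exceptional} (this multiplicity-one fact was established case by case in Section~\ref{sec:lambda_1(simplyconnected)}).

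First I would record that $\lambda_1(G/H,g_{\st})=\lambda^{\pi_\Lambda}$ with $V_{\pi_\Lambda}^H\neq0$ and $\pi_\Lambda\neq 1_G$, so the hypotheses of Lemma~\ref{lem4:N/H-finite} are met with this $K$. The lemma then produces $\pi'\in\widehat G\smallsetminus\{1_G\}$ with $V_{\pi'}^{K}\neq0$ and $\lambda^{\pi'}\leq \lambda^{\pi_{\Lambda+\Lambda^*}}$. Next I would upgrade $K$-invariance to $H'=Q\cdot H''=Q\cdot K$-invariance: since $\pi'$ is an irreducible constituent of $\pi_\Lambda\otimes\pi_\Lambda^*$, Lemma~\ref{lem4:Z(G)} gives $\pi'(z)=\Id_{V_{\pi'}}$ for every $z\in Z(G)\supset Q$, so $V_{\pi'}^{Q}=V_{\pi'}$ and hence $V_{\pi'}^{Q\cdot K}=V_{\pi'}^{K}\neq0$. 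Therefore $\pi'\in\widehat{G}_{H'}\smallsetminus\{1_G\}$, and Theorem~\ref{thm:Spec(standard)} gives $\lambda_1(G'/H',g_{\st})=\lambda_1(G/H',g_{\st})\leq\lambda^{\pi'}\leq\lambda^{\pi_{\Lambda+\Lambda^*}}$, which is the desired bound.

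The main obstacle I anticipate is purely organizational rather than analytic: one must be sure that for \emph{every} non-symmetric strongly isotropy irreducible $G'/H'$ other than the excluded Family~IV $n=4$ case, the relevant finite group $K/H$ is abelian \emph{or} the distinguished $\pi_\Lambda$ achieving $\lambda_1$ satisfies $\dim V_{\pi_\Lambda}^H=1$ — so that Lemma~\ref{lem4:N/H-finite} genuinely applies. This requires cross-referencing the column $N_G(H)/ZH$ in Tables~\ref{table4:isotropyirred-families},\ref{table4:isotropyirredexcepcion-classical},\ref{table4:isotropyirredexcepcion-exceptional} (all entries there are $e$, $\Z_2$, $\Z_2\oplus\Z_2$, $\Z_4$, which are abelian, except the $\mathbb{S}_3$ occurring precisely for Family~IV $n=4$) and, for any residual ambiguity about which abelian subgroup $K/H\subset N_G(H)/(Z(G)\cdot H)$ actually arises, falling back on the multiplicity-one assertions from Section~\ref{sec:lambda_1(simplyconnected)}. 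Once this bookkeeping is in place, the argument is the short chain of three lemmas described above, and no further computation is needed beyond observing that $\lambda^{\pi_{\Lambda+\Lambda^*}}$ is exactly the quantity tabulated in the ``Upper bound'' columns.
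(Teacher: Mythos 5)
Your proof is correct and follows the same overall strategy as the paper (reduce to $G'=G$, $H'=Q\cdot H''$, then apply Lemma~\ref{lem4:N/H-finite}), but with a slightly different bookkeeping. The paper sets $K=Z(G)\cdot N_G(H)=N_G(H)$ directly; since $Q\subset Z(G)\subset N_G(H)$ and $H''\subset N_G(H)$, one gets $H\subset H'\subset K$ immediately, so $V_{\pi'}^K\subset V_{\pi'}^{H'}$ and Lemma~\ref{lem4:Z(G)} is not needed. You instead take $K=H''$, obtain $\pi'$ with $V_{\pi'}^{K}\neq0$, and then separately handle the central factor $Q$ by invoking Lemma~\ref{lem4:Z(G)} to conclude $V_{\pi'}^{Q\cdot K}=V_{\pi'}^{K}$. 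Both routes reach the desired bound; the paper's is marginally shorter, while yours has the (unneeded here) flexibility that abelianness of $H''/H$ could hold even if $N_G(H)/H$ did not.

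One imprecision: you assert that ``$K/H$ is a subgroup of the group $N_G(H)/(Z(G)\cdot H)$ modulo the image of $Q$,'' which is not what Lemma~\ref{lem4:N/H-finite} requires. What is needed is that $K/H=H''/H$, which is a subgroup of $N_G(H)/H$, be abelian. The paper asserts that $N_G(H)/H$ is abelian in all cases except Family~IV with $n=4$, citing Wolf's computations. If one wishes to deduce this from the table column $N_G(H)/ZH$ alone, one must add the observation that $Z(G)H/H$ is \emph{central} in $N_G(H)/H$ (because $Z(G)$ is central in $G$), so that $N_G(H)/ZH$ being cyclic ($e$ or $\Z_2$ in all non-excluded rows) forces $N_G(H)/H$ to be abelian; this small step is not spelled out in your argument. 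Your fallback on $\dim V_{\pi_\Lambda}^H=1$ is consistent with the hypotheses of Lemma~\ref{lem4:N/H-finite} but is never actually required, since abelianness holds in every non-excluded case.
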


\begin{proof}
According to Remark~\ref{rem:universalcover}, we can assume that $G'=G$ and $H'=Q\cdot H''$ for some subgroup $Q$ of $Z:=Z(G)$ and $H''$ a subgroup of $N_G(H)$ satisfying $H=H_0''=N_G(H)_0$. 

We set $K=Z\cdot N_G(H)=N_G(H)$, thus $H\subset H'\subset K$. 
Since $V_{\pi'}^K\subset V_{\pi'}^{H'}$, it is sufficient to find $\pi'\in\widehat G$ satisfying that $\lambda^{\pi'}\leq \lambda^{\pi_{\Lambda+\Lambda^*}}$ and $V_{\pi'}^K\neq0$.

In each case, $Z(G)$ and $N_G(H)$ are specified in Tables~\ref{table4:isotropyirred-families}, \ref{table4:isotropyirredexcepcion-classical}, \ref{table4:isotropyirredexcepcion-exceptional}, originally computed in \cite[pages 107--110]{Wolf68} and \cite[page 142]{Wolf84-errata-isot-irred}. 
It turns out that $N_G(H)/H$ is always abelian except for $n=4$ in Family IV, which is precisely the omitted case in the statement. 
Consequently, since $Z$ is finite (because $G$ is a compact simple Lie group), $K/H=N_G(H)/H$ is a finite abelian group. 
Now, Lemma~\ref{lem4:N/H-finite} proves the claim. 
\end{proof}

We now consider the omitted case in Theorem~\ref{thm4:Lambda+Lambda^*}.

\begin{remark}\label{rem4:upperbound-Sp(4)/Sp(1)xSO(4)}
Let $G'/H'$ be a non-symmetric strongly isotropy irreducible space whose universal cover $G/H$ belongs to Family~IV with $n=4$, that is, 
$$ 
G=\Sp(4)/\Z_2 \qquad\text{and}\qquad H=\Sp(1)/\Z_2 \times \SO(4)/\Z_2. 
$$

We explain first why this case does not satisfy the conclusion of Theorem~\ref{thm4:Lambda+Lambda^*}.  
On the one hand, $N_G(H)/H\simeq \mathbb S_3$, the symmetric group on three letters, which is non-abelian.  
On the other hand, $\lambda_1(G/H,g_{\st})=\lambda^{\pi_{2\omega_2}}$ and $\dim V_{\pi_{2\omega_2}}^H=2$ (see the proof of Theorem~\ref{thm:flia4}).

From the proof of Theorem~\ref{thm:flia4}, we also know that $\dim V_{\pi_{4\omega_1}}^{H}=1$.  
Applying Lemma~\ref{lem4:N/H-finite} with $K=N_G(H)$, there exists $\pi'\in\widehat{G}$ such that $\lambda^{\pi'} \leq \lambda^{\pi_{8\omega_1}}$ and $0 \neq V_{\pi'}^{K} \subset V_{\pi'}^{H'}$. 
We conclude that
$$
\lambda_1(G'/H',g_{\st}) \leq \lambda^{\pi_{8\omega_1}}.
$$
\end{remark}

The upper bound from Theorem~\ref{thm4:Lambda+Lambda^*} can be improved in many cases. 
Here is an example. 

\begin{proposition}\label{prop4:N=ZH+pi(Z)trivial}
Let $G'/H'$ be a non-symmetric strongly isotropy irreducible space and let $G/H$ be its universal cover as in Remark~\ref{rem:universalcover}. 
If $N_G(H)= Z(G)\cdot H$ and $\pi \in\widehat G_H\smallsetminus\{1_G\}$ (thus $\lambda^{\pi}\in\Spec(G/H,g_{\st})$) satisfies $\pi(z)=\Id_{V_\pi}$ for all $z\in Z$, then 
$$
\lambda_1(G'/H',g_{\st}) \leq \lambda^{\pi}.
$$
\end{proposition}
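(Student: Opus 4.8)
The plan is to reduce to the setting of Remark~\ref{rem:universalcover} and then exploit the hypothesis $N_G(H) = Z(G)\cdot H$ together with the triviality of $\pi$ on $Z(G)$. By Remark~\ref{rem:universalcover}, we may assume $G' = G$ and $H' = Q\cdot H''$ for some subgroup $Q$ of $Z := Z(G)$ and some subgroup $H''$ of $N_G(H)$ with $H = H_0'' = N_G(H)_0$. Since by hypothesis $N_G(H) = Z\cdot H$, every element of $H''$ lies in $Z\cdot H$; consequently $H' = Q\cdot H'' \subset Z\cdot H$. The key point is then that the given $\pi \in \widehat{G}_H\smallsetminus\{1_G\}$ satisfies $\pi(z) = \Id_{V_\pi}$ for all $z\in Z$, so that for any $v\in V_\pi^H$, any $z\in Z$ and any $h\in H$ we have $\pi(zh)\cdot v = \pi(z)\cdot(\pi(h)\cdot v) = \pi(z)\cdot v = v$; hence $V_\pi^H \subset V_\pi^{Z\cdot H}$, and in fact $V_\pi^{Z\cdot H} = V_\pi^H$.

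The next step is to conclude $V_\pi^{H'} \neq 0$. Since $H' \subset Z\cdot H$, we have $V_\pi^{Z\cdot H} \subset V_\pi^{H'}$, so $0 \neq V_\pi^H = V_\pi^{Z\cdot H} \subset V_\pi^{H'}$. Therefore $\pi \in \widehat{G}_{H'}\smallsetminus\{1_G\}$, and by Theorem~\ref{thm:Spec(standard)} (or the observation following it), $\lambda^{\pi} \in \Spec(G/H', g_{\st})$ is a positive eigenvalue, so $\lambda_1(G'/H', g_{\st}) = \lambda_1(G/H', g_{\st}) \leq \lambda^{\pi}$. This uses the fact recalled just before Remark~\ref{rem:universalcover} that $(G'/H', g_{\st})$ is isometric to $(G/(Q\cdot H''), g_{\st})$, so the first eigenvalue is literally $\lambda_1(G/H', g_{\st})$ with $H' = Q\cdot H''$.

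I expect the argument to be essentially immediate once the reduction of Remark~\ref{rem:universalcover} is invoked; the only point requiring a little care is making sure the inclusions $H \subset H' = Q\cdot H'' \subset Z\cdot H$ are correctly justified from $N_G(H) = Z\cdot H$, and that the hypothesis ``$\pi(z) = \Id_{V_\pi}$ for all $z\in Z$'' is used to upgrade $H$-invariant vectors to $(Z\cdot H)$-invariant vectors. There is no genuine obstacle here: unlike Theorem~\ref{thm4:Lambda+Lambda^*}, where one must pass from $\pi$ to an irreducible constituent of $\pi\otimes\pi^*$ to kill the action of $Z$ (and handle the non-abelian normalizer case separately), here the triviality of $\pi$ on $Z$ is assumed outright, so no tensor-product trick is needed and the bound $\lambda^\pi$ (rather than $\lambda^{\pi_{\Lambda+\Lambda^*}}$) is attained directly. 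The remaining work in applications — verifying that a suitable $\pi$ with $\pi(Z) = \{\Id\}$ exists among the spherical representations computed in Section~\ref{sec:lambda_1(simplyconnected)} — is a case-by-case inspection of the tables rather than part of this proof.
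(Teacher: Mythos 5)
Your proof is correct and takes essentially the same route as the paper's: reduce via Remark~\ref{rem:universalcover} to $G'=G$ and $H'\subset Z(G)\cdot H$ (using $N_G(H)=Z(G)\cdot H$), observe that the hypothesis $\pi(Z(G))=\{\Id\}$ gives $V_\pi^{Z(G)\cdot H}=V_\pi^H\neq 0$, hence $V_\pi^{H'}\neq 0$, and conclude from Theorem~\ref{thm:Spec(standard)}. Your write-up is slightly more explicit about why $H'=Q\cdot H''\subset Z(G)\cdot H$, but this is exactly the step the paper compresses.
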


\begin{proof}
By Remark~\ref{rem:universalcover}, since $N_G(H)=Z(G)\cdot H$, we can assume that $G'=G$ and $H'=Q\cdot H$ for some subgroup $Q$ of $Z(G)$.
The hypotheses imply $V_\pi^{H'}\supset V_\pi^{Z(G)\cdot H}=V_\pi^H\neq0$, thus $\lambda^{\pi}\in\Spec(G'/H',g_{\st})$ by Theorem~\ref{thm:Spec(standard)}, and consequently $\lambda_1(G'/H',g_{\st}) \leq \lambda^{\pi}$.
\end{proof}

We next observe some cases that can be dealt with Proposition~\ref{prop4:N=ZH+pi(Z)trivial}.

\begin{remark}[Improved upper bound]
\label{rem4:improved-upperbound}
Suppose $G'/H'$ is a non-symmetric strongly isotropy irreducible space associated with Family I or II, and let $G/H$ be its universal cover with $G/H$ as in Table~\ref{table4:isotropyirred-families}. 

We next verify that the hypotheses in Proposition~\ref{prop4:N=ZH+pi(Z)trivial} are satisfied. 
We established in the proof of Theorem~\ref{thm:flia1-2}  that $\pi_{2\omega_1+2\omega_{N-1}}\in \widehat G_H$.
(Actually $\lambda_1(G/H,g_{\st})$ is attained in $\lambda^{\pi_{2\omega_1+2\omega_{N-1}}}$ in most cases.) 
We have that $N_G(H)\simeq Z(G)\cdot H$, see Table~\ref{table4:isotropyirred-families}. 
Furthermore, $\pi_{2\omega_1+2\omega_{N-1}}$ occurs with multiplicity one in $\pi_{2\omega_1}\otimes \pi_{2\omega_{N-1}}\simeq \pi_{2\omega_1}\otimes \pi_{2\omega_{1}}^*$, thus $\pi_{2\omega_1+2\omega_{N-1}}(z)$ acts trivially by Lemma~\ref{lem4:Z(G)} for every $z\in Z(G)$. 
We conclude by Proposition~\ref{prop4:N=ZH+pi(Z)trivial} that
$$
\lambda_1(G'/H',g_{\st})\leq \lambda^{2\omega_1+2\omega_{N-1}}. 
$$

Similarly, when $G'/H'$ corresponds to Family III or to the Isolated cases Nos.\ 11, 13 and 17, the hypotheses in Theorem~\ref{thm4:Lambda+Lambda^*} are again satisfied. 
We check here only that $\pi_{\omega_2+\omega_{pq-2}}\subset  \pi_{\omega_2}\otimes \pi_{\omega_{pq-2}}\simeq \pi_{\omega_2}\otimes \pi_{\omega_{2}}^*$ for Family III, and
$\pi_{2\omega_2}\subset  \pi_{\omega_2}\otimes \pi_{\omega_{2}}\simeq \pi_{\omega_2}\otimes \pi_{\omega_{2}}^*$ for the isolated cases. 
\end{remark}

\subsection{Compact irreducible symmetric spaces}

In this section we deal, for completeness, with strongly isotropy irreducible spaces that are \emph{symmetric}; they are irreducible compact symmetric spaces. 
The situation for them is much more clear because a great amount of bibliography (e.g.\ \cite{Helgason-book-DiffGeomLieGrSymSpaces}, \cite{Loos1}, \cite{Loos2}) and results. 
In particular, for the simply connected spaces, the determination of the first Laplace eigenvalue was completed
by Urakawa~\cite{Urakawa86} (see also \cite{CaoHe15} for tables showing $\lambda_1/E$). 

We now give an upper bound for the first Laplace eigenvalue of an arbitrary irreducible compact symmetric space. 
We start with those of group type. 
As before, we do not lose generality by always considering the standard metric. 

\begin{proposition}\label{prop4:symm-grouptype}
Let $(G',g_{\st})$ be an irreducible compact symmetric space of group type, with $G'$ a compact connected simple Lie group. 
Let $G$ denote the universal cover of $G'$. 
Then
$$
\frac{5}{12}\leq 
\lambda_1(G,g_{\st})\leq 
\lambda_1(G',g_{\st}) \leq 
\lambda^{\Ad_G}=1. 
$$
\end{proposition}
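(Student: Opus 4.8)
The plan is to handle the three inequalities separately. The middle inequality $\lambda_1(G,g_{\st})\leq \lambda_1(G',g_{\st})$ is immediate: $G\to G'$ is a Riemannian covering with respect to the standard metrics (the Killing form descends), and $\lambda_1$ is monotone under Riemannian coverings, exactly as recorded at the start of Section~\ref{sec:bounds}. So the content is in the outer two bounds.

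For the upper bound, I would use the fact that a compact symmetric space of group type $(G,g_{\st})$ is, as a Riemannian homogeneous space, $(G\times G)/\Delta G$ with the standard metric, and its Laplace spectrum is governed by Theorem~\ref{thm:Spec(standard)}: the eigenvalues are $\lambda^\pi$ for $\pi$ spherical with respect to $\Delta G$, which are exactly the $\pi\boxtimes \pi^*$ for $\pi\in\widehat G$. Equivalently — and more directly — one can work with the bi-invariant picture: functions on $G$ decompose as $\bigoplus_{\pi\in\widehat G} V_\pi\otimes V_\pi^*$, and the Laplacian acts on the $\pi$-isotypic block by the Casimir scalar $\lambda^\pi$ of \eqref{eq:Freudenthal} (with the convention that normalizes $g_{\st}$ to $-\kil_\fg|$). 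Thus $\lambda_1(G',g_{\st})=\min\{\lambda^\pi:\pi\in\widehat{G'}\smallsetminus\{1\}\}$, and since $\Ad_G$ is always a representation of $G'$ (the adjoint representation factors through the adjoint group, hence through every intermediate quotient including $G'$), we get $\lambda_1(G',g_{\st})\leq \lambda^{\Ad_G}$. Finally $\lambda^{\Ad_G}=1$: by Freudenthal's formula \eqref{eq:Freudenthal}, $\lambda^{\Ad_G}=\kil_\fg^*(\theta,\theta+2\rho_\fg)$ where $\theta$ is the highest root, and a standard identity for the Killing-form normalization gives this value as $1$ (equivalently, the Casimir of the adjoint representation equals $1$ in the normalization where it equals $-\kil_\fg$; see e.g.\ the scaling used throughout Section~\ref{sec:lambda_1(simplyconnected)}).

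For the lower bound $\lambda_1(G,g_{\st})\geq \tfrac{5}{12}$, since $G$ is simply connected $\widehat G\smallsetminus\{1\}$ consists of all nontrivial $\pi_\Lambda$ with $\Lambda\in\PP^+(\fg_\C)\smallsetminus\{0\}$, so $\lambda_1(G,g_{\st})=\min_{\Lambda\neq 0}\lambda^{\pi_\Lambda}$. By Remark~\ref{rem:autovalor_suma_dominantes}, $\lambda^{\pi_\Lambda}\geq \min_j \lambda^{\pi_{\omega_j}}$, so it suffices to check that $\lambda^{\pi_{\omega_j}}\geq \tfrac{5}{12}$ for every fundamental weight $\omega_j$ of every complex simple Lie algebra $\fg_\C$. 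This is a finite check per Cartan type: for the classical series one uses the explicit formulas for $\lambda^{\pi_{\omega_k}}$ (as in the proof of Lemma~\ref{lem:SOlambda^pi<lambda^omega3}, and the analogous ones for $A_n$, $C_n$), whose minimum over $j$ is attained at $\omega_1$ and tends to $\tfrac14$ from above but is always $>\tfrac14$; for the five exceptional types one simply evaluates $\lambda^{\pi_{\omega_j}}$ for each fundamental weight using \eqref{eq:Freudenthal}. The minimum over all of these is $\tfrac{5}{12}$, attained at $\fg_\C$ of type $G_2$ with $\Lambda=\omega_1$ (the $7$-dimensional representation), consistently with the entry $\op{G}_2/\SU(3)$ in Table~\ref{table4:isotropyirredexcepcion-exceptional} having $\lambda_1=\tfrac12$ but $\op{G}_2$ itself, as a symmetric space of group type, having $\lambda_1=\tfrac{5}{12}$.

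The main obstacle is the lower bound: it requires the case-by-case verification that $\min_j\lambda^{\pi_{\omega_j}}=\tfrac{5}{12}$ across all simple types, and in particular pinning down that the exceptional types (especially $E_8$, whose smallest fundamental representation is large) do not drop below $\tfrac{5}{12}$. This is routine but must be done carefully; the cleanest route is to cite the tabulated values of $\lambda^{\pi_{\omega_j}}$ (e.g.\ from \cite{SemmelmannWeingart22} or \cite{LauretTolcachier-nu-stab}) rather than recompute them, since the ordering of fundamental weights and the Killing-form normalization are already fixed in Section~\ref{sec:preliminaries}.
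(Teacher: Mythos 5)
Your treatment of the middle inequality (covering monotonicity) and of the upper bound (the adjoint representation lies in $\widehat G'$ and $\lambda^{\Ad_G}=1$) is correct and agrees with the paper's proof. For the lower bound, however, you take a genuinely different route from the paper: the paper simply cites Urakawa's tabulation \cite[Table~A.1]{Urakawa86} and records that the minimum is attained at $\lambda_1(\Spin(5),g_{\st})=\tfrac{5}{12}$, whereas you propose to recompute $\min_{\Lambda\neq 0}\lambda^{\pi_\Lambda}=\min_j\lambda^{\pi_{\omega_j}}$ directly using Remark~\ref{rem:autovalor_suma_dominantes}. That strategy is sound in outline, but your identification of the minimizer is wrong, and this is where the proposal breaks.

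Concretely: you claim the minimum is $\tfrac{5}{12}$, attained at $\fg_\C$ of type $G_2$ with $\Lambda=\omega_1$, and you assert that $\op{G}_2$ as a group-type symmetric space has $\lambda_1=\tfrac{5}{12}$. Both are false. The paper's own Table~\ref{table4:isotropyirredexcepcion-exceptional} (No.~33, $\op{G}_2/\SU(3)\simeq S^6$) records $\lambda^{\pi_{\omega_1}}=\tfrac12$ for $\op{G}_2$, and since the two fundamental representations of $G_2$ have $\lambda^{\pi_{\omega_1}}=\tfrac12$ and $\lambda^{\pi_{\omega_2}}=\lambda^{\Ad}=1$, one gets $\lambda_1(\op{G}_2,g_{\st})=\tfrac12$, not $\tfrac{5}{12}$. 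The actual minimizer is $\Spin(5)\simeq\Sp(2)$, via the spin representation $\pi_{\omega_2}$ of $\so(5)$ (equivalently the four-dimensional standard representation of $\spp(2)$), for which the formula in the proof of Lemma~\ref{lem:SOlambda^pi<lambda^omega3} gives $\lambda^{\pi_{\omega_2}}=\tfrac{N(N-1)}{16(N-2)}=\tfrac{5}{12}$ at $N=5$. Two further inaccuracies in your outline: you say the minimum over $j$ is always attained at $\omega_1$, but for low-rank $\so(N)$ the spin weight beats $\omega_1$ (indeed $\lambda^{\pi_{\omega_1}}_{\so(5)}=\tfrac23>\tfrac{5}{12}=\lambda^{\pi_{\omega_2}}_{\so(5)}$); and you say $\lambda^{\pi_{\omega_1}}$ tends to $\tfrac14$ along the classical series, whereas the explicit formulas already appearing in the paper (e.g.\ $\tfrac{N^2-1}{2N^2}$ for $\su(N)$, $\tfrac{N-1}{2(N-2)}$ for $\so(N)$, $\tfrac{2n+1}{4(n+1)}$ for $\spp(n)$) all tend to $\tfrac12$, not $\tfrac14$. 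If you wish to carry out the direct computation instead of citing Urakawa, you need to correct the location of the minimum and, in particular, scan the spin weights for low-rank $B_n$ and $D_n$ rather than restricting attention to $\omega_1$.
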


\begin{proof}
The upper bound is obvious because the adjoint representation of $G'$ always lies in $\widehat G'$, and $\lambda^{\Ad_{G'}}=\lambda^{\Ad_G}=1$ is well known. 
The lower bound follows immediately from \cite[Table A.1]{Urakawa86}, and it is attained in $\lambda_1(\Spin(5),g_{\st})=\frac{5}{12}$. 
\end{proof}

We now continue with irreducible compact symmetric spaces of non-group type. 

\begin{proposition}\label{prop4:symm-non-grouptype}
Let $G'/H'$ be an irreducible compact symmetric space of non-group type (i.e.\ $H'$ is not trivial and $G'$ is simple). 
Let $G/H$ be its universal cover, which is again a symmetric space, and write $\lambda_1(G/H,g_{\st})=\lambda^{\pi_{\Lambda}}$ ($\Lambda$ can be explicitly obtained from \cite[Table A.2]{Urakawa86} in each case). 
Then, 
$$
\frac12<
\lambda_1(G/H,g_{\st})\leq 
\lambda_1(G'/H',g_{\st}) \leq \lambda^{\pi_{\Lambda+\Lambda^*}}
\leq \frac{22}{5},
$$
where $\Lambda^*$ denotes the highest weight of $\pi_{\Lambda}^*$. 
\end{proposition}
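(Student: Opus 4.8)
The strategy mirrors the structure of the non-symmetric case but is cleaner because the relevant data are tabulated by Urakawa. First I would invoke the general covering principle stated at the beginning of Section~\ref{sec:bounds}: since $G/H$ is the universal cover of $G'/H'$, one has $E(G/H,g_{\st})=E(G'/H',g_{\st})$ and, because $\lambda_1$ is monotone under Riemannian coverings, $\lambda_1(G/H,g_{\st})\leq\lambda_1(G'/H',g_{\st})$. So the lower bound $\lambda_1(G'/H',g_{\st})>\tfrac12$ is immediate once we verify $\lambda_1(G/H,g_{\st})>\tfrac12$ for every simply connected irreducible compact symmetric space of non-group type, which is a finite-plus-families check against \cite[Table A.2]{Urakawa86}; the strict inequality (rather than $\geq$) must be confirmed by noting that the sphere case $\SO(n+1)/\SO(n)$ gives exactly $\tfrac{n}{2(n-1)}>\tfrac12$, and all other entries are bounded away from $\tfrac12$ from above, so no entry attains $\tfrac12$.

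For the upper bound, the key tool is Lemma~\ref{lem4:N/H-finite} (or Theorem~\ref{thm4:Lambda+Lambda^*}): by the symmetric-space analogue of Remark~\ref{rem:universalcover}, $G'/H'$ is presented as $G/(Q\cdot H'')$ with $Q\subset Z(G)$ and $H\subset H''\subset N_G(H)$ with $H=H_0''$; setting $K=N_G(H)$ we get $H\subset H'\subset K$ with $K/H$ finite. For irreducible symmetric spaces $N_G(H)/H$ is always \emph{abelian} — in fact it is elementary (it is controlled by the involution defining the symmetric structure, and is a subgroup of a $2$-group in the non-Hermitian case and of a torus-related group in the Hermitian case) — so Lemma~\ref{lem4:N/H-finite} applies with $\pi=\pi_\Lambda$ where $\lambda_1(G/H,g_{\st})=\lambda^{\pi_\Lambda}$. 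This yields $\pi'\in\widehat G\smallsetminus\{1_G\}$ with $V_{\pi'}^K\neq0$, hence $V_{\pi'}^{H'}\neq0$, and $\lambda^{\pi'}\leq\lambda^{\pi_{\Lambda+\Lambda^*}}$; therefore $\lambda_1(G'/H',g_{\st})\leq\lambda^{\pi_{\Lambda+\Lambda^*}}$.

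It then remains to verify the numerical bound $\lambda^{\pi_{\Lambda+\Lambda^*}}\leq\tfrac{22}{5}$ uniformly over all irreducible compact symmetric spaces of non-group type. Using Freudenthal's formula \eqref{eq:Freudenthal}, $\lambda^{\pi_{\Lambda+\Lambda^*}}=\langle\Lambda+\Lambda^*+2\rho,\Lambda+\Lambda^*\rangle$; when $\pi_\Lambda$ is self-dual this is $\lambda^{\pi_{2\Lambda}}$, and in general $\Lambda+\Lambda^*$ is the highest weight obtained by symmetrizing. Concretely, for each of the (finitely many) Cartan types of irreducible symmetric spaces $G/H$, Urakawa's Table A.2 records $\Lambda$; in every case $\Lambda$ is either a fundamental weight $\omega_i$ or $2\omega_i$ or a sum $\omega_i+\omega_j$ of at most two such, so $\Lambda+\Lambda^*$ is at most $2\omega_i+2\omega_j$-type, and one computes $\lambda^{\pi_{\Lambda+\Lambda^*}}$ via the explicit inner-product tables (e.g.\ as in the proof of Lemma~\ref{lem:SOlambda^pi<lambda^omega3}). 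The families with unbounded rank (Grassmannians, $\SU(n)/\SO(n)$, $\SU(2n)/\Sp(n)$, etc.) all have $\lambda^{\pi_{\Lambda+\Lambda^*}}$ decreasing to a limit strictly below $\tfrac{22}{5}$ as the rank grows, so only small-rank members and the exceptional cases need individual inspection; the maximum $\tfrac{22}{5}$ is attained at a single low-dimensional exceptional symmetric space (one should identify it explicitly, e.g.\ an $\op{E}_6$- or $\op{F}_4$-type quotient). The \textbf{main obstacle} is precisely this last finite-but-lengthy case analysis: one must handle every symmetric-space type, read off $\Lambda$ correctly from Urakawa's conventions (watch for normalization of the Killing form and Bourbaki vs.\ other orderings), compute $\Lambda^*$ (nontrivial only for $A_n$, $D_{2k+1}$, $E_6$ types), and confirm both that $\tfrac{22}{5}$ is an upper bound and that it is actually achieved; the sphere subfamily also needs care since there $\rho$ fails to be irreducible and Lemma~\ref{lem:omega2-no-esferica} does not apply, but for symmetric spaces $\lambda_1(G/H,g_{\st})$ is already known directly from Urakawa so no extra argument is needed there.
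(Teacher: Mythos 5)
Your overall architecture (covering monotonicity for the lower bound, then invoking Lemma~\ref{lem4:N/H-finite} to reach $\lambda^{\pi_{\Lambda+\Lambda^*}}$) matches the paper's. However, there is a genuine gap in the upper bound argument, plus an incorrect identification of the extremal case.

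The gap: you write that ``by the symmetric-space analogue of Remark~\ref{rem:universalcover}, $G'/H'$ is presented as $G/(Q\cdot H'')$ with \dots $H\subset H''\subset N_G(H)$,'' and then set $K=N_G(H)$. But Remark~\ref{rem:universalcover} rests on Wolf's structure theorem for \emph{non-symmetric} strongly isotropy irreducible spaces, which does not apply here, and the asserted inclusion $H''\subset N_G(H)$ is exactly the nontrivial step you need to justify. The paper handles this differently: it cites Helgason (\S{}VII.9) for the fact that every symmetric quotient $G'/H'$ is sandwiched between the simply connected space $G/H$ and the \emph{adjoint space} $G/K$ with $K=\{k\in G : k^{-1}\vartheta(k)\in Z(G)\}$, then gives a short direct computation showing $K\subset N_G(H)$ (for $k\in K$ and $h\in H$, $\vartheta(khk^{-1})=khk^{-1}$ because $k\vartheta(k^{-1})$ is central), and only then applies Lemma~\ref{lem4:N/H-finite} with that specific $K$ to get $\lambda_1(G/K,g_{\st})\le\lambda^{\pi_{\Lambda+\Lambda^*}}$. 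The monotonicity $\lambda_1(G'/H')\le\lambda_1(G/K)$ then closes the argument. Without the Helgason structure result and the inclusion $K\subset N_G(H)$, your application of the lemma is not justified.

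Also, your identification of the maximizer is off: you suggest ``an $\op{E}_6$- or $\op{F}_4$-type quotient,'' but the paper shows the maximum of $\lambda^{\pi_{\Lambda+\Lambda^*}}$ is attained at Type E VIII, $G/H=\op{E}_8/\SO(16)$, with $\Lambda=\Lambda^*=2\omega_8$ and $\lambda^{\pi_{4\omega_8}}=\tfrac{22}{5}$. The lower-bound part of your proposal (read off Urakawa's Table A.2, note the sphere family tends to $\tfrac12$ but never attains it) is correct and matches the paper.
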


\begin{proof}
The lower bound follows immediately from \cite[Table A.2]{Urakawa86} and
is sharp because $\lambda_1(\SO(n+1)/\SO(n),g_{\st})=\frac{n}{2(n-1)}\to\frac12$ as $n\to+\infty$. 

It is well known that any irreducible compact symmetric space $(G'/H',g_{\st})$ of non-group type is associated with an orthonormal symmetric Lie algebra $(\fg,\theta)$ of the compact type  such that $\fg'=\fg$ and the fixed point set $\fh\simeq\fh'$ of the involution $\theta:\fg\to\fg$ does not contain any non-trivial ideal. 
Let $G$ denote the simply connected Lie group with Lie algebra $\fg$, let $\vartheta:G\to G$ be the homomorphism of Lie groups satisfying $d\vartheta=\theta$, and set $H=\{x\in G: \vartheta(x)=x\}$.

From  \cite[\S{}VII.9]{Helgason-book-DiffGeomLieGrSymSpaces}, we have that $G'/H'$ is necessarily covered by the simply connected compact symmetric space associated with $(\fg,\theta)$, namely $G/H$, and furthermore, it covers the \emph{adjoint space} associated with $(\fg,\theta)$, namely, the compact symmetric space $G/K$ with 
$$K=\{k\in G: k^{-1}\vartheta(k)\in Z(G) \}.$$

The Riemannian coverings
$
(G/H,g_{\st})
\to
(G'/H',g_{\st})
\to 
(G/K,g_{\st})
$
give 
\begin{align*}
\lambda_1(G/H,g_{\st})
\leq
\lambda_1(G'/H',g_{\st})
\leq
\lambda_1(G/K,g_{\st}).
\end{align*}
It remains to show that $\lambda_1(G/K,g_{\st})\leq \lambda^{\pi_{\Lambda+\Lambda^*}}$.

We claim that $K\subset N_G(H)$. 
Indeed, for $k\in K$ and $h\in H$, we have $khk^{-1}\in H$ because
\begin{align*}
\vartheta(khk^{-1})  &
= \big(\vartheta(k) h k^{-1}\big) \cdot  \big(k\vartheta(k^{-1})\big) 
\\ & 
= \big(k\vartheta(k^{-1})\big) \cdot \big(\vartheta(k) h k^{-1}\big) 
	\qquad\text{(since $k\vartheta(k^{-1})\in Z(G)$)}
\\ & 
=  k  h k^{-1}.
\end{align*}
Moreover, it is well known that $N_G(H)/H$ is a finite abelian group (see e.g.~\cite[Ex.~C 1--4 in Ch.~X]{Helgason-book-DiffGeomLieGrSymSpaces} or \cite[\S{}IV.4]{Loos1}).
Lemma~\ref{lem4:N/H-finite} forces $\lambda_1(G/K,g_{\st})\leq \lambda^{\pi_{\Lambda+\Lambda^*}}$, as claimed. 

One can check that the maximum of $\lambda^{\pi_{\Lambda+\Lambda^*}}$ is attained at Type E VIII: $G/H=\op{E}_8/\SO(16)$, $\Lambda=\Lambda^*=2\omega_8$, and $\lambda^{\pi_{4\omega_8}}=22/5$. 
\end{proof}

\subsection{Upper bound for the first eigenvalue over the Einstein constant}

We are in a position to prove the main result of the article. 

\begin{proof}[Proof of Theorem~\ref{thm1:Saloff-Coste}]

For any connected standard Einstein manifold $(G/H,g_{\st})$, its Einstein constant $E(G/H,g_{\st})$ satisfies (see \cite[Cor.~1.6]{WangZiller85})
\begin{equation}\label{eq:E}
	\frac14\leq E(G/H,g_{\st})\leq \frac12. 
\end{equation}
Furthermore, $E(G/H,g_{\st})=\frac14$ if and only if $H=e$.
When $H\neq e$, $E(G/H,g_{\st})=\frac12$ if and only if $G/H$ is locally symmetric. 

We now assume that $G'/H'$ is a strongly isotropy irreducible space. 
In particular, the standard manifold $(G'/H',g_{\st})$ is Einstein, so \eqref{eq:E} holds.  
We abbreviate $\lambda_1=\lambda_1(G'/H',g_{\st})$ and $E=E(G'/H',g_{\st})$. 

If $H'=e$, then $(G'/H',g_{\st})$ is symmetric space of group type and $E=1/4$.
Proposition~\ref{prop4:symm-grouptype} yields
$
\frac{5}{3}\leq 
\lambda_1/E
\leq 4.
$

If $G'/H'$ is a symmetric space of non-group type, then $E=\frac12$, and Proposition~\ref{prop4:symm-non-grouptype} gives 
$1<\lambda_1/E\leq \frac{44}{5}<16$. 

We now assume that $G'/H'$ is not symmetric. 
It follows from
\eqref{eq:E} and Proposition~\ref{prop4:lowerbound-non-symmetric} that $\lambda_1/E\geq 2\lambda_1>1$ if $G'/H'\neq \op{G}_2/\SU(3)$. 
Furthermore, $\lambda_1(\op{G}_2/\SU(3),g_{\st})/E=\frac12/\frac5{12}=\frac{6}{5}>1$. 

Regarding the upper bound for $\lambda_1$, Tables~\ref{table4:isotropyirred-families2}--\ref{table4:isotropyirredexcepcion-exceptional} provide explicit values for each case, obtained either from Theorem~\ref{thm4:Lambda+Lambda^*}, or from Remarks~\ref{rem4:Z=eN_G(H)=H}, \ref{rem4:upperbound-Sp(4)/Sp(1)xSO(4)} or \ref{rem4:improved-upperbound}.
A simple inspection of these values shows that 
$$\lambda_1/E\leq  16=\tfrac{32}5/\tfrac25=\lambda^{\pi_{8\omega_1}}/E\big( \tfrac{\Sp(4)/\Z_2}{\Sp(1)/\Z_2\times\SO(4)/\Z_2},g_{\st}\big),
$$ 
where $\lambda^{\pi_{8\omega_1}}$ is the upper bound (see Remark~\ref{rem4:upperbound-Sp(4)/Sp(1)xSO(4)}) for $\lambda_1(G'/H',g_{\st})$ when $G'/H'$ is covered by $(\Sp(4)/\Z_2)/(\Sp(1)/\Z_2\times\SO(4)/\Z_2)$. 
This completes the proof. 
\end{proof}

\begin{remark}
One can check from Tables~\ref{table4:isotropyirred-families2}--\ref{table4:isotropyirredexcepcion-exceptional} that, among simply connected non-symmetric strongly isotropy irreducible spaces $G/H$ with $(G/H,g_{\st})$ not isometric to a round sphere (i.e.\ $\Spin(7)/\op{G}_2\simeq S^7$ and $\op{G}_2/\SU(3)\simeq S^6$ are omitted), one has 
$$
{\lambda_1(G/H,g_{\st})}/{E(G/H,g_{\st})} \geq {60}/{31}\approx 1.93,
$$
attained only at $G/H=(\SU(6)/\Z_6)/(\SU(3)/\Z_3\times\SU(2)/\Z_2)$ (Family III, $(p,q)=(2,3)$). 
\end{remark}

\bibliographystyle{plain}

\end{document}